\numberwithin{equation}{section}
\numberwithin{figure}{section}
\newlength{\lyxlabelwidth}      % auxiliary length 
\theoremstyle{remark}
\newtheorem*{rem*}{\protect\remarkname}
\theoremstyle{plain}
\newtheorem{thm}{\protect\theoremname}[section]
\theoremstyle{definition}
\newtheorem{defn}[thm]{\protect\definitionname}
\theoremstyle{plain}
\newtheorem{lem}[thm]{\protect\lemmaname}
\theoremstyle{plain}
\newtheorem{cor}[thm]{\protect\corollaryname}
	\newenvironment{elabeling}[2][]%
	{\settowidth{\lyxlabelwidth}{#2}
		\begin{description}[font=\normalfont,style=sameline,
			leftmargin=\lyxlabelwidth,#1]}
	{\end{description}}
\providecommand{\corollaryname}{Corollary}
\providecommand{\definitionname}{Definition}
\providecommand{\lemmaname}{Lemma}
\providecommand{\remarkname}{Remark}
\providecommand{\theoremname}{Theorem}
\begin{document}
\global\long\def\R{\mathbb{R}}%

\global\long\def\C{\mathbb{C}}%

\global\long\def\a#1{a_{T_{#1}}}%

\global\long\def\S{\mathbb{S}}%

\global\long\def\Q{\mathbb{Q}}%

\global\long\def\Z{\mathbb{Z}}%

\global\long\def\N{\mathbb{N}}%

\global\long\def\Z{\mathbb{Z}}%

\global\long\def\D{\mathcal{D}}%

\global\long\def\U{\mathcal{U}}%

\global\long\def\mU{\mathbb{U}}%

\global\long\def\G{\mathbb{G}}%

\global\long\def\Gbar{\underline{\mathbb{\G}}}%

\global\long\def\z{\mathcal{Z}}%

\global\long\def\Y{\mathcal{Y}}%

\global\long\def\X{\mathcal{X}}%

\global\long\def\W{\mathcal{W}}%

\global\long\def\mult{\Phi}%

\global\long\def\GL{\text{GL}}%

\global\long\def\SL{\text{SL}}%

\global\long\def\SO{\text{SO}}%

\global\long\def\ASL{\text{ASL}}%

\global\long\def\AGL{\text{AGL}}%

\global\long\def\primlat{\mathcal{\Z}_{\text{prim}}^{k,d}}%

\global\long\def\primlatof#1{\mathcal{\Z}_{\text{prim}}^{#1,d}}%

\global\long\def\zprim{\mathcal{\Z}_{\text{prim}}^{d}}%

\global\long\def\gr{\text{Gr}}%

\global\long\def\Hbold{\mathbf{H}}%

\global\long\def\Lbold{\mathbf{L}}%

\global\long\def\lat{\text{lat}}%

\global\long\def\df{\overset{\text{def}}{=}}%

\global\long\def\red{\vartheta}%

\global\long\def\norm#1{\left\Vert #1\right\Vert }%

\global\long\def\L{\mathcal{L}}%

\global\long\def\H{\mathcal{H}}%

\global\long\def\V{\mathcal{Z}}%

\global\long\def\M{\mathbb{M}}%

\global\long\def\Spin{\text{Spin}}%

\global\long\def\ZS{\Z\left[S^{-1}\right]}%

\global\long\def\ef{\varphi}%

\global\long\def\cov{\text{c}}%

\global\long\def\o{\theta}%

\global\long\def\ovec{\tau}%

\global\long\def\Rprim{R_{\text{prim}}^{d}}%

\global\long\def\Hprim#1{\H_{#1}^{\text{prim}}}%

\global\long\def\gr{\text{Gr}(d-1,d)}%

\global\long\def\covol{\text{covol}}%

\global\long\def\shape{\text{shape}}%

\title{Linnik's problem in fiber bundles over quadratic homogeneous varieties}
\author{Michael Bersudsky and Uri Shapira}
\begin{abstract}
We compute the statistics of $\SL_{d}(\Z)$ matrices lying on level
sets of an integral polynomial defined on $\SL_{d}(\R),$ a result
that is a variant of the well known theorem proved by Linnik about
the equidistribution of radially projected integral vectors from a
large sphere into the unit sphere.

Using the above result we generalize the work of Aka, Einsiedler and
Shapira in various directions. For example, we compute the joint distribution
of the residue classes modulo $q$ and the properly normalized orthogonal
lattices of primitive integral vectors lying on the level set $-(x_{1}^{2}+x_{2}^{2}+x_{3}^{2})+x_{4}^{2}=N$
as $N\to\infty$, where the normalized orthogonal lattices sit in
a submanifold of the moduli space of rank-$3$ discrete subgroups
of $\R^{4}$.

\end{abstract}

\maketitle

\section{Introduction}

\thispagestyle{empty}

\let\thefootnote\relax\footnotetext{This project has received funding from the European Research Council  (ERC) under the European Union's Horizon 2020 research and innovation programme (grant agreement No. 754475). The authors also acknowledge the support of ISF grant 871/17.}  

\subsection{\label{subsec:Linnik-type-problems}Linnik type problems}

To put our work in historical context, we will now recall a well known
work of Linnik and its generalizations.

Consider for an integral homogeneous polynomial $P:\R^{d}\to\R$ and
for $m\in\Z$ the level set
\[
\H_{m}(P,\R)\df P^{-1}(\{m\})=\left\{ \mathbf{v}\in\R^{d}\mid P(\mathbf{v})=m\right\} ,
\]
and let 
\[
\H_{m,\text{prim}}(P,\Z)\df\H_{m}(f,\R)\cap\zprim=\left\{ \mathbf{v}\in\zprim\mid P(\mathbf{v})=m\right\} ,
\]
 where $\Z_{\text{prim}}^{d}$ denotes the set of primitive integral
vectors in $\R^{d}$.

Assuming that the cardinalities of $\H_{m_{i},\text{prim}}(P,\Z)$
diverge to infinity along a sequence $\left\{ m_{i}\right\} _{i=1}^{\infty}\subseteq\N$,
it is natural to study the limiting statistics of $\H_{m_{i}}(P,\Z)$
when projected radially into $\H_{1}(P,\Z)$.

Linnik appears to be the first to consider the above problem in his
seminal work (see \cite{Lin68}) by computing the weak-{*} limits
of the uniform probability measures $\mu_{m}$ on the unit sphere
supported on $\frac{1}{\sqrt{m}}\H_{m,\text{prim}}(x^{2}+y^{2}+z^{2},\Z)$
as $m\to\infty$. Under suitable congruence conditions, Linnik was
able to prove that $\mu_{m}$ converges towards the natural measure
on $\S^{2}$ by developing a method known today as \emph{Linnik's
Ergodic method}, which has an arithmetic-dynamical nature.

Following Linnik's original work, the above problem was studied further
by Linnik and his collaborators, see \cite{Malyshev} for a review,
and more recently by a variety of other authors employing dynamical
or harmonic analysis tools, see for example the definitely not exhaustive
list \cite{elenberg_michel_venaktesh,Gan_Oh_Link,Michel-Venk-L.fnc_and_erg_Lin,Eskin_Oh_rep_of_intgrs,benoist-oh_effecting_equiv}.

\subsubsection{\label{subsec:Linnik-type-problem in sld}Linnik type problem in
$\protect\SL_{d}$}

The main results of our paper (see Theorems \ref{thm:main thm for Z}
and \ref{thm:main_thm_with_congruences-forZ}), concern a problem
which falls into a broader category of Linnik type problems in an
ambient manifold \emph{that is not necessarily the Euclidean space}.

More explicitly, we will replace Euclidean space with $\SL_{d}(\R)$
and primitive integral vectors with $\SL_{d}(\Z)$. We will consider
an integral polynomial $P:\SL_{d}(\R)\to\R$ such that its level sets
$\V_{T}(\R)=P^{-1}(\{T\})$ have a transitive action of a fixed group
$G\leq\SL_{d}(\R)\times\SL_{d}(\R)$ and such that there exists a
$G$-equivariant projection $\pi_{T}:\V_{T}(\R)\to\V_{T_{0}}(\R)$,
where $\V_{T_{0}}(\R)$ is a chosen reference level set. Then, similarly
to the Linnik type problems above, we will consider (properly) normalized
counting measure supported on $\V_{T_{0}}(\R)$ of the form $\mu_{T}\df\frac{1}{c(T)}\sum_{x\in\V_{T}(\Z)}\delta_{\pi_{T}(x)}$,
where $\V_{T}(\Z)\df\V_{T}(\R)\cap\SL_{d}(\Z)$, which are infinite,
locally finite, atomic measures.

Our main result will state that, under certain conditions on the range
of $T$, 
\[
\lim_{T\to\infty}\mu_{T}(f)=\mu_{\V}(f),\ \forall f\in C_{c}(\V_{T_{0}}(\R)),
\]
where $\mu_{\V}$ will be a measure on $\V_{T_{0}}(\R)$ induced by
the $G$-action.

\subsection{\label{subsec:Overview-and-motivation}On the work of Aka, Einsiedler
and Shapira}

Our original motivation for this paper comes from the work of Aka,
Einsiedler and Shapira that can be found in \cite{AES3d} and \cite{AESgrids}.
We will extend \cite{AESgrids} in various directions using the limiting
distribution of the measures $\mu_{T}$ discussed in Section \ref{subsec:Linnik-type-problem in sld}
(see Theorems \ref{thm:moduli main thm } and \ref{thm:moduli_main_thm_with_congruences}).
\begin{rem*}
This paper relies on the method of the proof of \cite{AESgrids},
and since analogue problems in dimension $d=3$ are treated by different
set of tools (see e.g. \cite{AES3d,Kuga_sato_khayutin}), the case
of dimension $d=3$ is not treated in this paper.
\end{rem*}
We will now recall the main results of \cite{AESgrids}. Fix $d\geq4$
and consider $X_{d-1}$ the space of $(d-1)$-unimodular lattices
in $\R^{d-1}$. The space of \emph{shapes} of $(d-1)$-lattices is
given by
\[
\mathcal{S}_{d-1}\df\SO_{d-1}(\R)\backslash X_{d-1}\cong\SO_{d-1}(\R)\backslash\SL_{d-1}(\R)/\SL_{d-1}(\Z)
\]
which is simply the space of full rank lattices in $\R^{d-1}$ identified
up-to a rotation.

For $\mathbf{v}\in\R^{d}$, we denote by $\mathbf{v}^{\perp}$ the
orthogonal hyperplane to $\mathbf{v}$ with respect to the usual Euclidean
inner product, and for $\mathbf{v}\in\zprim$ we define
\[
\Lambda_{\mathbf{v}}\df\mathbf{v}^{\perp}\cap\Z^{d},
\]
which is a rank $(d-1)$-discrete subgroup of $\R^{d}$.

We embed $\mathcal{S}_{d-1}$ into the space of rank $(d-1)$-discrete
subgroups of $\R^{d}$ by identifying the horizontal plane $\R^{d-1}\times\{0\}\subseteq\R^{d}$
with $\R^{d-1}$. Then, by scaling the $\Lambda_{\mathbf{v}}$'s into
unimodular lattices and by rotating them into $\R^{d-1}\times\{0\}$,
we obtain their ``shape'' in $\mathcal{S}_{d-1}$. More explicitly,
for a rank $(d-1)$-discrete subgroup $\Lambda\leq\R^{d}$, we denote
by $\covol(\Lambda)$ the volume of a fundamental domain of $\Lambda$
in the hyperplane containing $\Lambda$ with respect the volume form
obtained by the restriction of the Euclidean inner product to this
hyperplane. An elementary argument (see e.g. \cite{AESgrids}) shows
that 
\[
\covol(\Lambda_{\mathbf{v}})=\sqrt{\sum_{i=1}^{d}v_{i}^{2}}\df\norm{\mathbf{v}},\ \forall\mathbf{v}\in\zprim.
\]
By choosing $\rho_{\mathbf{v}}\in\SO_{d}(\R)$ such that $\rho_{\mathbf{v}}\mathbf{v}=\mathbf{e}_{d}$,
we get that $\rho_{\mathbf{v}}(\mathbf{\norm{\mathbf{v}}}^{-1/d-1}\Lambda_{\mathbf{v}})$
is a unimodular lattice in $\R^{d-1}\cong\R^{d-1}\times\{0\}$. We
denote by $K\cong\SO_{d-1}(\R)$ the subgroup of $\SO_{d}(\R)$ stabilizing
$\mathbf{e}_{d},$ and we define $\shape(\Lambda_{\mathbf{v}})\in\mathcal{S}_{d-1}$
by
\[
\shape(\Lambda_{\mathbf{v}})\df K\rho_{\mathbf{v}}(\mathbf{\norm{\mathbf{v}}}^{-1/d-1}\Lambda_{\mathbf{v}}),
\]
which is well defined as a function of $\mathbf{v}\in\zprim$ (see
\eqref{eq:definition of shape in X(R)} which extends the definition
of ``shape'' function to the moduli space of $(d-1)-$discrete subgroups
of $\R^{d}$).

The main result of \cite{AES3d} and \cite{AESgrids} was the joint
equidistribution of the normalized probability counting measures supported
on 
\[
\left\{ \left(\shape(\Lambda_{\mathbf{v}}),\frac{1}{\sqrt{T}}\mathbf{v}\right)\mid\mathbf{v}\in\H_{\text{prim},T}(\Z)\right\} \subseteq\mathcal{S}_{d-1}\times\S^{d-1},
\]
where $\S^{d-1}\subseteq\R^{d}$ denotes the unit sphere.

\subsubsection{Some historical context for \cite{AES3d} and \cite{AESgrids} and
subsequent works}

Statistics of shapes of subgroups of $\Z^{d}$ were studied by W.
Roelcke in \cite{Roelcke}, H. Maass in \cite{Maass:1959aa}, and
much later by W. Schmidt in \cite{Schmidt1998,Schmidt2015} who proved
more general results using elementary counting techniques. Schmidt's
theorem was given a dynamical approach in\textbf{ }\cite{Marklof:2010aa},
and T. Horesh and Y. Karasik recently in \cite{Karsik_horesh} extended
Schmidt's results to ``higher'' moduli spaces using the technique
of \cite{GorodnikNevo+2012+127+176}.

A considerably more refined problem concerning the shapes of subgroups
of $\Z^{d}$ lying in sparse subsets was first studied in \cite{einsiedler_mozes_shah_shapira_2016}
and then in \cite{AES3d} and \cite{AESgrids}. We note the recent
works \cite{Effective_aes_Ruhr,Kuga_sato_khayutin,Aka_eins_wiesser,Blomer_brumly,Aka_Musso_Wieser}
which extend and refine \cite{AES3d,AESgrids,einsiedler_mozes_shah_shapira_2016}
in a various directions.

In this paper we continue the preceding line of research and generalize
the results of \cite{AESgrids}. In a rough description, we will consider
tuples of the form $\left(\shape(\Lambda_{\mathbf{v}}),\mathbf{v},\mathbf{v}\text{ mod \ensuremath{q}}\right)$
for integral $\mathbf{v}\in\zprim\cap Q^{-1}(\{T\})$ where $Q$ is
a non-singular integral quadratic form which can be either positive
definite, or of signature $(1,d-1)$, and moreover, we will consider
``higher'' moduli spaces.

\subsubsection{AES type result in two sheeted hyperboloids}

We now give a special case of our results. We fix $d\geq4$, we let
$Q(\mathbf{x})=-(\sum_{i=1}^{d-1}x_{i}^{2})+x_{d}^{2}$ and we consider
the group $\SO_{Q}(\R)\leq\SL_{d}(\R)$ which preserves $Q$. For
$T\in\R$, we denote 
\[
\H_{T}(\R)\df\left\{ \mathbf{x}\in\R^{d}\mid Q(\mathbf{x})=T\right\} ,
\]
and we let

\[
\H_{T,\text{prim}}(\Z)\df\H_{T}(\R)\cap\zprim.
\]
In this paper we will concentrate on $T>0$ because the stabilizers
in $\SO_{Q}(\R)$ of vectors in $\H_{T}(\R)$ are compact, which is
important for the method that we use. We recall by Theorem 6.9 of
\cite{bhc} that $\H_{T,\text{prim}}(\Z)/\SO_{Q}(\Z)$ is finite,
and for $N\in\N$ we consider the following measure on $\mathcal{S}_{d-1}\times\H_{1}(\R)$
defined by 
\[
\nu_{N}\df\frac{1}{\left|\H_{N,\text{prim}}(\Z)/\SO_{Q}(\Z)\right|}\sum_{\mathbf{v}\in\H_{N,\text{prim}}(\Z)}\delta_{\left(\text{shape}(\Lambda_{\mathbf{v}}),\frac{1}{\sqrt{N}}\mathbf{v}\right)}.
\]
Note that $\mathcal{S}_{d-1}\times\H_{1}(\R)$ is a quotient of $\SL_{d-1}(\R)/\SL_{d-1}(\Z)\times\SO_{Q}(\R)$
by a compact group, and on the former space there is a choice of a
natural measure (for more details, see Section \ref{subsec:Measures-as measures on fibre bundles}),
which gives, by taking the pushforward under the natural projection,
a product measure on $\mathcal{S}_{d-1}\times\H_{1}(\R)$ which we
denote by $\mu_{\mathcal{S}_{d-1}}\otimes\mu_{\H_{1}}$.
\begin{thm}
\label{thm:main thm-particular case} For all $f\in C_{c}\left(\mathcal{S}_{d-1}\times\H_{1}(\R)\right)$
it holds that 
\[
\lim_{N\to\infty}\nu_{N}(f)=\mu_{\mathcal{S}_{d-1}}\otimes\mu_{\H_{1}}(f).
\]
\end{thm}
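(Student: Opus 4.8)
The plan is to recognise this theorem as the special case of our general moduli result in which $Q$ has signature $(1,d-1)$, and to deduce it from the Linnik-type equidistribution in $\SL_{d}$ (Theorem~\ref{thm:main thm for Z}) via the fibre-bundle formalism of Section~\ref{subsec:Measures-as measures on fibre bundles}. In outline: (i) choose a polynomial $P$ and a group $G$ fitting the framework of Section~\ref{subsec:Linnik-type-problem in sld} so that $(\shape(\Lambda_{\mathbf v}),\tfrac1{\sqrt N}\mathbf v)$ is recovered from a point of the reference level set $\V_{T_{0}}(\R)$; (ii) apply $\mu_{T}\to\mu_{\V}$; (iii) transfer the conclusion down to $\mathcal{S}_{d-1}\times\H_{1}(\R)$ and identify the limit.

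For the setup I would take $P\colon\SL_{d}(\R)\to\R$ to be the value of $Q$ on a fixed column, so that $\V_{T}(\R)=P^{-1}(\{T\})$ is the set of $\SL_{d}(\R)$-matrices one of whose columns lies on $\H_{T}(\R)$, and I would let $G\le\SL_{d}(\R)\times\SL_{d}(\R)$ be the product of $\SO_{Q}(\R)$, acting on the left, with the stabiliser $\SL_{d-1}(\R)\ltimes\R^{d-1}$ of that column, acting on the right. For $T>0$ the group $G$ acts transitively on $\V_{T}(\R)$: $\SO_{Q}(\R)$ acts transitively on $\H_{T}(\R)$ and the column-stabiliser sweeps out the remaining directions, while a point stabiliser is compact because the $\SO_{Q}(\R)$-stabiliser of a vector with $Q>0$ is isomorphic to $\SO_{d-1}(\R)$ --- this is exactly the compactness pointed out before the statement. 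The equivariant map $\pi_{T}\colon\V_{T}(\R)\to\V_{T_{0}}(\R)$ is the rescaling normalisation of the columns. Each $\mathbf v\in\H_{N,\text{prim}}(\Z)$ occurs as a column of a matrix in $\SL_{d}(\Z)$, hence gives points of $\V_{N}(\Z)$, and applying $\pi_{N}$ and then projecting along the natural bundle $\rho\colon\V_{T_{0}}(\R)\to\mathcal{S}_{d-1}\times\H_{1}(\R)$ recovers $(\shape(\Lambda_{\mathbf v}),\tfrac1{\sqrt N}\mathbf v)$ (with the extension of the shape function to the moduli space); conversely $\V_{N}(\Z)$ is swept out this way. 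With $c(N)=|\H_{N,\text{prim}}(\Z)/\SO_{Q}(\Z)|$, the formalism of Section~\ref{subsec:Measures-as measures on fibre bundles} then expresses $\mu_{N}$ in terms of $\nu_{N}$ along $\rho$, and the $G$-invariant limit $\mu_{\V}$ in terms of a $G$-invariant measure on $\mathcal{S}_{d-1}\times\H_{1}(\R)$.

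Next I would invoke Theorem~\ref{thm:main thm for Z} for this $(P,G,\pi_{T})$ along $T=N\to\infty$ in $\N$, giving $\mu_{N}\to\mu_{\V}$ weak-$*$. The point to check is the range condition, namely that the relevant counting functions grow: here $\H_{N,\text{prim}}(\Z)\neq\emptyset$ for every $N\in\N$, and indeed $\#\H_{N,\text{prim}}(\Z)\to\infty$, because an indefinite integral quadratic form in $d\ge4$ variables that is isotropic over $\Q$ --- hence over every $\Q_{p}$ --- represents every positive integer, with the number of primitive representations tending to infinity, while the number of $\SO_{Q}(\Z)$-orbits on $\H_{N,\text{prim}}(\Z)$ stays finite (Theorem~6.9 of \cite{bhc}, already used). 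Finally I would transfer: using the bundle $\rho$ from the setup, the convergence $\mu_{N}\to\mu_{\V}$ yields $\nu_{N}(f)\to(\text{pushforward of }\mu_{\V})(f)$ for every $f\in C_{c}(\mathcal{S}_{d-1}\times\H_{1}(\R))$, and the pushforward of $\mu_{\V}$ is invariant under $\SO_{Q}(\R)$ on the $\H_{1}(\R)$-factor and under $\SL_{d-1}(\R)$ on the $\mathcal{S}_{d-1}$-factor (the latter descending to the unique invariant probability measure on $\mathcal{S}_{d-1}$), so by uniqueness of the invariant measure on each homogeneous factor it equals $\mu_{\mathcal{S}_{d-1}}\otimes\mu_{\H_{1}}$ up to a scalar, the scalar being $1$ by the choice of $c(N)$.

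The genuinely deep input --- the equidistribution $\mu_{T}\to\mu_{\V}$, itself resting on Linnik's ergodic method, mixing, and measure rigidity --- is being taken for granted from Theorem~\ref{thm:main thm for Z}. Among the remaining steps, the hard part will be the transfer in (iii), from the $\mu_{T}$-equidistribution on $\V_{T_{0}}(\R)$ to the $\nu_{N}$-equidistribution on $\mathcal{S}_{d-1}\times\H_{1}(\R)$: this is not a formal pushforward, because $\V_{N}(\Z)$ maps many-to-one onto $\H_{N,\text{prim}}(\Z)$ and the normalising map $\pi_{N}$ interacts non-trivially with this fibration, so that $\nu_{N}$ and $\mu_{N}$ are related only through the fibre-bundle structure together with the matching of normalisations. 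Making this precise --- in particular ruling out loss of mass in the fibre direction when passing to the limit --- is exactly what the ``measures on fibre bundles'' formalism of Section~\ref{subsec:Measures-as measures on fibre bundles} is designed to handle, and is where most of the work of the deduction lies.
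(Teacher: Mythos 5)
There is a genuine gap: you never verify the actual hypothesis of Theorem \ref{thm:main thm for Z} (equivalently of Theorems \ref{thm:moduli main thm }--\ref{thm:moduli_main_thm_with_congruences}), which is \emph{not} a growth condition on counting functions but the $(Q,p_{0})$ co-isotropic property of Definition \ref{def:Isotropicity definition}: for a fixed odd prime $p_{0}$ and every $N$ in the sequence there must exist $\mathbf{v}\in\H_{N,\text{prim}}(\Z)$ whose stabilizer $\mathbf{H}_{\mathbf{v}}(\Q_{p_{0}})$ is non-compact. For $d\geq6$ this is automatic (remark after Definition \ref{def:Isotropicity definition}), but Theorem \ref{thm:main thm-particular case} includes $d=4,5$, where it genuinely needs an argument; the paper supplies one for $Q_{d}(\mathbf{x})=x_{d}^{2}-\sum_{i<d}x_{i}^{2}$ at $p_{0}=5$ by exhibiting a two-dimensional totally isotropic $\Q_{5}$-plane (using $\sqrt{-1}\in\Q_{5}$) which must meet $\mathbf{v}^{\perp(Q_{d})}\otimes\Q_{5}$ non-trivially for every rational $\mathbf{v}$. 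Your ``range condition'' paragraph addresses instead the non-emptiness and growth of $\H_{N,\text{prim}}(\Z)$, which is a different matter (and even there the assertion that an isotropic indefinite form in $d\geq4$ variables primitively represents every positive integer is left unjustified; the paper argues non-emptiness via the three-squares theorem). Without the co-isotropy check the theorem you want to invoke simply does not apply.

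Separately, your route is much longer than the paper's and leaves its hardest step as a gesture. The paper proves Theorem \ref{thm:main thm-particular case} by verifying the hypotheses of Theorem \ref{thm:moduli main thm } for $\mathcal{M}=\W$ (Standing Assumption, non-singularity mod $q$, co-isotropy at $p_{0}=5$, non-emptiness), all the bundle-transfer work being already packaged there. You instead start from Theorem \ref{thm:main thm for Z} and appeal to the ``measures on fibre bundles'' formalism for step (iii); but the real obstruction there is not merely that $\V_{N}(\Z)\to\H_{N,\text{prim}}(\Z)$ is many-to-one. It is that the geometrically natural rescaling $\pi_{\X_{T}}$/$\pi_{\Y_{T}}$ (which is what the shape map $\shape(\Lambda_{\mathbf{v}})$ requires) is \emph{not} the $\SO_{Q}$-equivariant projection $\pi_{\Y_{T}}^{Q}$ induced by $\pi_{\V_{T}}$; one must compare $S_{T,\mathbf{v}}$ with $S_{T,\mathbf{v}}^{Q}$, show the discrepancy converges uniformly to the unipotent $u_{\mathbf{v}}^{Q}$, check that $u^{Q}$ preserves $\mu_{\Y}$, and combine this with the unfolding identity $\mu_{\V}(f)=\mu_{\Y}(\bar{f})$ and a uniform counting bound to control supports (Sections \ref{subsec:Unwinding}--\ref{subsec:Concluding-the-proof that Y follows from Z}), before pushing forward by $\pi_{\cap}$ and $\shape\times\pi_{vec}^{\X}$. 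Asserting that the formalism ``is designed to handle'' this does not discharge it; either carry out that comparison or, more efficiently, cite Theorem \ref{thm:moduli main thm } for $\W$ and reduce your task to the hypothesis verification above.
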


\textbf{}
By adding congruence assumptions on $N\in\N$, we obtain the following
joint distribution of the radial projection into $\H_{1}(\R)$, the
shapes of orthogonal lattices and the residue classes of the vectors
in $\H_{N,\text{prim}}(\Z)$ as $N\to\infty$.

We choose $q\in\N$ and we define for $a\in\Z/(q)$
\[
\H_{a}(\Z/(q))\df\left\{ \mathbf{x}\in\left(\Z/(q)\right)^{d}\mid Q(\mathbf{x})=a\right\} .
\]

For $N\in\N$ and $q\in\N$ we consider the following measures on
$\mathcal{S}_{d-1}\times\H_{1}(\R)\times\H_{N(\text{mod}q)}(\Z/(q))$
defined by 
\[
\nu_{N}^{q}\df\frac{1}{\left|\H_{N,\text{prim}}(\Z)/\SO_{Q}(\Z)\right|}\sum_{\mathbf{v}\in\H_{N,\text{prim}}(\Z)}\delta_{\left(\text{shape}(\Lambda_{\mathbf{v}}),\frac{1}{\sqrt{N}}\mathbf{v},\mathbf{v}\text{ (mod }q)\right)}
\]

\begin{thm}
\label{thm:main_thm_particular case with congruences}Let $q\in2\N+1$
and let $a\in\left(\Z/(q)\right)^{\times}$ be an invertible residue
mod $q$. Assume that $\left\{ T_{n}\right\} _{n=1}^{\infty}\subseteq\N$
satisfy
\[
T_{n}\text{ (mod }q)=a\in\left(\Z/(q)\right)^{\times},\ \forall n\in\N.
\]
Then for all $f\in C_{c}\left(\mathcal{S}_{d-1}\times\H_{1}(\R)\times\H_{a}(\Z/(q))\right)$
it holds that 
\[
\lim_{n\to\infty}\nu_{T_{n}}^{q}(f)=\mu_{\mathcal{S}_{d-1}}\otimes\mu_{\H_{1}}\otimes\mu_{\H_{a}(\Z/(q))}(f),
\]
where $\mu_{\H_{a}(\Z/(q))}$ is the uniform probability measure on
$\H_{a}(\Z/(q))$.
\end{thm}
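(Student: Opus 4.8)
The plan is to deduce Theorem~\ref{thm:main_thm_particular case with congruences} from the general Linnik-type equidistribution result in $\SL_d$ (Theorem~\ref{thm:main_thm_with_congruences-forZ}), by encoding the triple $\left(\shape(\Lambda_{\mathbf v}),\tfrac{1}{\sqrt N}\mathbf v,\mathbf v\bmod q\right)$ as the image of a point on a level set $\V_T(\R)$ inside $\SL_d(\R)$ under an appropriate $G$-equivariant projection. First I would set up the bundle picture: a primitive vector $\mathbf v\in\H_{N,\mathrm{prim}}(\Z)$ together with a choice of basis for $\Lambda_{\mathbf v}$ assembles into an element of $\SL_d(\Z)$ (after the standard normalization making the determinant one), and the quadratic form $Q$ pulls back to an integral polynomial $P$ on $\SL_d(\R)$ whose level set $\V_T(\R)$ carries a transitive action of $G\le\SL_d(\R)\times\SL_d(\R)$ with $G$ built from $\SL_{d-1}(\R)$ (acting on the lattice coordinates) and $\SO_Q(\R)$ (acting on $\mathbf v$); the reference fiber $\V_{T_0}(\R)$ maps onto $\mathcal S_{d-1}\times\H_1(\R)$ after quotienting by the compact group $K\times\SO_Q(\Z\text{-stabilizer})$ as described in Section~\ref{subsec:Measures-as measures on fibre bundles}. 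The congruence datum $\mathbf v\bmod q$ is handled by passing to the relevant congruence subgroup / the finite set $\SL_d(\Z/(q))$-orbit, which is exactly the extra bookkeeping that Theorem~\ref{thm:main_thm_with_congruences-forZ} is designed to carry.

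Next I would verify the two hypotheses needed to apply the general theorem for the sequence $\{T_n\}$. The arithmetic hypothesis is that $T_n\equiv a\bmod q$ with $a\in(\Z/(q))^\times$ and $q$ odd; this guarantees (via Hensel's lemma and the fact that $Q$ has discriminant a unit mod odd $q$) that $\H_a(\Z/(q))\ne\emptyset$, that $\SO_Q(\Z/(q))$ acts transitively on it, and — crucially — that the local orbit-counting factors at $q$ are the uniform ones, so the limiting measure on the finite component is $\mu_{\H_a(\Z/(q))}$. The growth hypothesis is that $|\H_{N,\mathrm{prim}}(\Z)/\SO_Q(\Z)|\to\infty$ and that the counting measures $\mu_{T_n}$ fall within the admissible range of $T$ in Theorem~\ref{thm:main_thm_with_congruences-forZ}; for the indefinite form $Q$ of signature $(1,d-1)$ with $d\ge4$ and $T>0$ this is classical (the stabilizer $\SO_Q(\R)_{\mathbf v}$ is compact, and the class number tends to infinity), and I would cite Theorem~6.9 of~\cite{bhc} together with the relevant local-density / Siegel mass estimates.

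The remaining step is to identify the limit measure produced by the general theorem with the product $\mu_{\mathcal S_{d-1}}\otimes\mu_{\H_1}\otimes\mu_{\H_a(\Z/(q))}$. The general theorem yields $\mu_{\V}$, the measure on $\V_{T_0}(\R)$ induced by the $G$-action, pushed to the quotient; since $G$ factors through the direct product $\SL_{d-1}(\R)\times\SO_Q(\R)$ acting on the two geometric factors independently, and the congruence factor is an independent finite $\SL_d(\Z/(q))$-homogeneous piece, the induced measure splits as a product, and on $\mathcal S_{d-1}\times\H_1(\R)$ it is exactly $\mu_{\mathcal S_{d-1}}\otimes\mu_{\H_1}$ by the definition in Section~\ref{subsec:Measures-as measures on fibre bundles}. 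I expect the main obstacle to be not any single estimate but the bookkeeping of the equivariant projection $\pi_T$ and the compact quotients: one must check carefully that $\shape(\Lambda_{\mathbf v})$ as defined via $\rho_{\mathbf v}\in\SO_d(\R)$ genuinely coincides with the $K$-coset of the lattice-coordinate part of the corresponding $\SL_d(\Z)$ point on $\V_T(\R)$, so that the abstract measure $\mu_{\V}$ really does descend to the claimed concrete product measure — i.e.\ that the "shape" bookkeeping is compatible with the $G$-action used in Theorem~\ref{thm:main_thm_with_congruences-forZ}. Once that compatibility is in place, Theorems~\ref{thm:main thm-particular case} and~\ref{thm:main_thm_particular case with congruences} follow by specialization (the former being the case $q=1$).
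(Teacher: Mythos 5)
Your high-level route is the right one --- reduce to the general $\SL_d$ result (Theorem~\ref{thm:main_thm_with_congruences-forZ}) via the moduli-space machinery and then specialize to $Q_d(\mathbf x)=x_d^2-\sum_{i<d}x_i^2$ --- but two of the steps you wave at are where the actual content of the paper's proof lives, and one required hypothesis is not verified at all.

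The missing hypothesis is the $(Q,p_0)$ co-isotropic property. Theorem~\ref{thm:main_thm_with_congruences-forZ} (and its moduli descendants, Theorems~\ref{thm:moduli main thm } and~\ref{thm:moduli_main_thm_with_congruences}) requires that there exist a fixed odd prime $p_0$ such that every $T_n$ has the $(Q,p_0)$ co-isotropic property, i.e.\ for each $n$ there is some $\mathbf v\in\H_{T_n,\text{prim}}(\Z)$ whose stabilizer $\mathbf{H}_{\mathbf v}(\Q_{p_0})$ in $\SO_Q(\Q_{p_0})$ is non-compact. This is an input to the $S$-arithmetic equidistribution engine (Theorem~\ref{thm:AESgrids thm}) and is not automatic: for $d=4,5$ it is a genuine condition on $Q$ and $T_n$. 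Your proposal substitutes ``fall within the admissible range of $T$'' and references Siegel mass estimates, which address a different quantity and do not establish the non-compactness of a $p$-adic stabilizer. The paper verifies this hypothesis directly for $Q_d$ at $p_0=5$, by observing $\sqrt{-1}\in\Q_5$, exhibiting a two-dimensional $Q_d$-isotropic $\Q_5$-subspace $V$, and noting that $V$ must meet every orthogonal hyperplane $\mathbf v^{\perp(Q_d)}\otimes\Q_5$ non-trivially. The paper also separately checks $\H_{N,\text{prim}}(\Z)\ne\emptyset$ for all $N$ via the three-squares theorem, so that the normalization in $\nu_N^q$ makes sense; this too is absent from your write-up.

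The step you describe as ``bookkeeping of the equivariant projection $\pi_T$'' is in fact the central technical difficulty of Section~\ref{sec:The-results-forZ imply Y}. The map $\pi_{\Y_T}$ appearing in $\nu_N^q$ scales $\Lambda$ by the matrix $S_{T,\mathbf v}$ that is \emph{Euclidean}-orthogonal (fixes $\mathbf v^\perp$, scales $\R\mathbf v$), while the $\G$-equivariant projection $\pi_{\Y_T}^Q$ that naturally arises from Theorem~\ref{thm:main_thm_with_congruences-forZ} scales by $S_{T,\mathbf v}^Q$, which fixes $\mathbf v^\perp$ but scales the $Q^*$-orthogonal line $\R\mathbf v_Q$, and these two agree only when $\mathbf v_Q\in\R\mathbf v$. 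The paper handles the discrepancy through a unipotent twist $u^Q$ (Lemma~\ref{lem:uniform convergence to a unipotent}), proves $u^Q$ preserves $\mu_\Y$ (Lemma~\ref{lem:U^Q preserves mu_Y}), establishes uniform-on-compacta convergence of $\delta_T^Q=(u^Q)^{-1}\circ\pi_{\Y_T}^Q\circ\pi_{\Y_T}^{-1}$ to the identity (Corollary~\ref{cor:uniform conv of delta^Q_T_n}), and then uses a uniform bound on $\SO_Q(\Z)$-orbit intersections with compacta (via Lemma~\ref{lem:uniform bound intersection with cpct set}) to conclude that the two families of counting measures have the same limit (Corollary~\ref{cor:measure on equiv proj and geom proj are same..}). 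None of this is a mere compatibility check of the shape map with a $K$-coset; it is an asymptotic comparison of two genuinely different projections, and the argument would collapse without it. Until these two pieces are supplied, the proposal does not yield a proof.
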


\subsection{Structure of the paper}
\begin{itemize}
\item Section \ref{sec:Some-conventions,-standing} discusses some conventions,
standing assumptions and basic facts that will be used throughout
the paper.
\item Section \ref{sec:On-the-deity} discusses the manifolds $\V_{T}(\R)\subseteq\SL_{d}(\R)$
and presents our main ``Linnik type'' results, see Theorems \ref{thm:main thm for Z}
and \ref{thm:main_thm_with_congruences-forZ}.
\item Section \ref{sec:moduli-spaces} discusses moduli spaces of discrete
subgroups of $\Z^{d}$ and states our results concerning them refining
\cite{AESgrids}, see Theorems \ref{thm:moduli main thm } and \ref{thm:moduli_main_thm_with_congruences}.
We note that the latter results may also be interpreted conceptually
as a Linnik type result.
\item Section \ref{sec:The-results-forZ imply Y} proves Theorems \ref{thm:moduli main thm }
and \ref{thm:moduli_main_thm_with_congruences} of the moduli spaces
using Theorems \ref{thm:main thm for Z} and \ref{thm:main_thm_with_congruences-forZ}
of the $\SL_{d}(\R)$-submanifolds.
\item The rest of the paper is devoted to proving Theorems \ref{thm:main thm for Z}
and \ref{thm:main_thm_with_congruences-forZ}. The scheme is roughly
as follows:
\begin{itemize}
\item Section \ref{sec:A-revisit-to-s-arith-thm-aes} generalizes the proof
of \cite[Theorem 3.1]{AESgrids} concerning the equidistribution of
a sequence of compact orbits in an $S$-arithemetic space, which builds
on the results of \cite{GO}.
\item Sections \ref{sec:Equivalence-classes-of integral points}-\ref{sec:Proof-of-theorems}
exploit the equidistribution of orbits proved in Section \ref{sec:A-revisit-to-s-arith-thm-aes}
to prove Theorems \ref{thm:main thm for Z} and \ref{thm:main_thm_with_congruences-forZ}
by revisiting the method of \cite{AESgrids}. The preceding method
is outlined in Section \ref{sec:Equivalence-classes-of integral points}.
\end{itemize}
\end{itemize}

\subsubsection*{Acknowledgements}

We would like to thank Andreas Wieser and Yakov Karasik for helpful
discussions, and we would like to thank Daniel Goldberg for his comments
and suggestions on the manuscript.

\section{\label{sec:Some-conventions,-standing}Some conventions, standing
assumptions and basic facts}

We denote by $R$ a unital commutative ring, and for $d\in\N$ we
view $R^{d}$ as column vectors. We will denote for $1\leq i\leq d$
by $\mathbf{e}_{i}\in R^{d}$ the standard basis vectors, and for
$\mathbf{x}\in R^{d}$ we denote by $x_{1},...,x_{d}\in R$ the components
of $\mathbf{x}$, namely $\mathbf{x}=\sum_{i=1}^{d}x_{i}\mathbf{e}_{i}$,
where $x_{i}\in R$.

When $\mathcal{V}(\Z)\subseteq\text{\ensuremath{\Z}}^{d}$ is defined
by the solutions of a collection of polynomials with integer coefficients,
we denote by $\mathcal{V}(R)$ its solutions in $R^{d}$. For $q\in\N$
we denote by $\vartheta_{q}:\Z\to\Z/(q)$ the reduction map modulo
$q$, and we observe that it induces a map $\vartheta_{q}:\mathcal{V}(\Z)\to\mathcal{V}(\Z/(q))$.

Throughout the paper we will consider 
\[
\SL_{d}(R)\df\left\{ g\in M_{d}(R)\mid\det(g)=1\right\} ,
\]
\[
\ASL_{d-1}(R)\df\left\{ \left(\begin{array}{cc}
m & \mathbf{v}\\
\mathbf{0} & 1
\end{array}\right)\mid m\in\SL_{d-1}(R),\ \mathbf{v}\in R^{d-1}\right\} ,
\]
and for an integral symmetric matrix $M\in M_{d}(\Z)$ we let 
\[
\SO_{Q}(R)\df\left\{ g\in\SL_{d}(R)\mid g^{t}Mg=M\right\} ,
\]
where $Q$ is the quadratic form whose companion symmetric matrix
is $M$. We make the convention that a quadratic form $Q:\Z^{d}\to\Z$
is integral if $Q$ has an \emph{integral }companion matrix $M$,
and we say that $Q$ is non-degenerate if $\text{disc}(Q)\df\det(M)\neq0$.

We consider the right $\SO_{Q}(\R)$ linear action on $R^{d}$ given
by 
\begin{equation}
\mathbf{v}\cdot\rho\df\rho^{-1}\mathbf{v},\ \rho\in\SO_{Q}(R),\ \mathbf{v}\in R^{d},\label{eq:action of SO_Q on R^d}
\end{equation}
 and for $\mathbf{v}\in R^{d}$ we let 
\[
\mathbf{H}_{\mathbf{v}}(R)\df\left\{ g\in\SO_{Q}(R)\mid g\mathbf{v}=\mathbf{v}\right\} .
\]

\medskip{}

\subsection*{Standing Assumption\label{subsec:Standing-Assumption}}

\emph{Throughout the paper $Q$ denotes an integral, non-degenerate
quadratic form in $d\geq4$ variables such that $Q(\mathbf{e}_{d})>0$
and $\mathbf{H}_{\mathbf{e}_{d}}(\R)$ is compact.}

\medskip{}

\begin{defn}
\label{def:non-singularity modq}For $q\in\N$ we will say that $Q$
is non-singular modulo $q$ if $\text{disc}(Q)\text{(mod \ensuremath{q})}\in\left(\Z/(q)\right)^{\times}$.
\end{defn}

\subsubsection{Linear action of $\protect\SL_{d}$ by the Cartan involution}

Let $\o:\SL_{d}(R)\to\SL_{d}(R)$ be the involutive automorphism given
by 
\[
\o(g)\df\left(g^{t}\right)^{-1}.
\]
In the paper we will consider the left action of $\SL_{d}(R)$ on
$R^{d}$ given by 
\begin{equation}
g\cdot\mathbf{v}\df\theta(g)\mathbf{v},\ g\in\SL_{d}(R)\label{eq:sl_d action by involution}
\end{equation}
(where the right hand-side denotes matrix multiplication of $\mathbf{v}$
by $\theta(g)$) and we denote the translation map of $\mathbf{e}_{d}$
by
\begin{equation}
\ovec(g)\df\o(g)\mathbf{e}_{d},\ g\in\SL_{d}(R).\label{eq:def of tau}
\end{equation}
The main motivation that led us to consider the action above (and
not the usual left $\SL_{d}$ linear action) is that the vector $\ovec(g)\in R^{d}$
is orthogonal to the first $d-1$ columns of $g$ with respect to
the Euclidean inner product, as we now explain.

For $\mathbf{x},\mathbf{y}\in R^{d}$ we define the Euclidean bi-linear
form $\left\langle \mathbf{x},\mathbf{y}\right\rangle \df\sum_{i=1}^{d}x_{i}y_{i}$.
An important property of $\o$ is the invariance
\begin{equation}
\left\langle \o(g)\cdot\mathbf{x},g\cdot\mathbf{y}\right\rangle =\left\langle \mathbf{x},\mathbf{y}\right\rangle ,\ \forall g\in\SL_{d}(R),\label{eq:cartan ortho invariance}
\end{equation}
which in particular implies
\begin{equation}
\left\langle \ovec(g),g\mathbf{e}_{j}\right\rangle =\left\langle \mathbf{e}_{i},\mathbf{e}_{j}\right\rangle =\delta_{i,j}.\label{eq:orthogonality of theta}
\end{equation}

\subsubsection{Concerning the covolume and the left action of $\protect\SL_{d}(R)$
on $R^{d}$}

For a discrete subgroup $\Lambda\leq\R^{d}$ of rank $d-1$, we define
$\covol(\Lambda)$ to be the volume of a fundamental domain of $\Lambda$
in the hyperplane containing $\Lambda$, with respect to the volume
form obtained by the restriction of the Euclidean inner product to
this hyperplane.

For $g\in\SL_{d}(\R)$ we denote by $\hat{g}\in M_{d\times d-1}(\R)$
the matrix formed by the first $d-1$ columns of $g$, and we note
for $\Lambda=\hat{g}\Z^{d-1}$ (the discrete subgroup of rank $d-1$
having the columns of $\hat{g}$ as a $\Z$-basis) the formula 
\[
\covol(\Lambda)^{2}=\det\left(\hat{g}^{t}\hat{g}\right).
\]

\begin{lem}
\label{lem:the covolume of lambda_v and the map theta} We have for
\emph{$g\in\SL_{d}(\R)$} that \emph{
\[
\covol(\hat{g}\Z^{d-1})=\norm{\ovec(g)},
\]
}where $\left\Vert \cdot\right\Vert $ denotes the usual Euclidean
norm.
\end{lem}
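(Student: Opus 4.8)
The plan is to reduce the identity $\covol(\hat{g}\Z^{d-1})=\norm{\ovec(g)}$ to two facts already available in the excerpt: the formula $\covol(\hat{g}\Z^{d-1})^{2}=\det(\hat{g}^{t}\hat{g})$, and the orthogonality relations \eqref{eq:orthogonality of theta}, which say that $\ovec(g)=\o(g)\mathbf{e}_{d}$ is orthogonal (with respect to the Euclidean form) to each of the first $d-1$ columns of $g$, i.e. to every column of $\hat{g}$. The geometric content is that $\covol(\hat{g}\Z^{d-1})$ is the $(d-1)$-dimensional volume of the parallelepiped spanned by the columns of $\hat{g}$, and this equals the $d$-dimensional volume of the parallelepiped spanned by those same columns together with the \emph{unit} normal vector $\ovec(g)/\norm{\ovec(g)}$; on the other hand that $d$-dimensional volume is $|\det|$ of the $d\times d$ matrix with those columns.

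Concretely, first I would form the $d\times d$ matrix $A$ whose first $d-1$ columns are the columns of $\hat{g}$ and whose last column is $\ovec(g)$. By \eqref{eq:orthogonality of theta} the last column is orthogonal to all the others, so $A^{t}A$ is block diagonal:
\[
A^{t}A=\begin{pmatrix}\hat{g}^{t}\hat{g} & \mathbf{0}\\ \mathbf{0} & \norm{\ovec(g)}^{2}\end{pmatrix},
\]
hence $\det(A)^{2}=\det(A^{t}A)=\det(\hat{g}^{t}\hat{g})\cdot\norm{\ovec(g)}^{2}$. So it remains to show $\det(A)^{2}=\norm{\ovec(g)}^{4}$, equivalently $|\det(A)|=\norm{\ovec(g)}^{2}$. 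For this I would expand $\det(A)$ along the last column: $\det(A)=\sum_{i=1}^{d}(-1)^{i+d}\,\ovec(g)_{i}\,M_{i}$, where $M_{i}$ is the minor obtained by deleting row $i$ and the last column of $A$, i.e. a maximal minor of $\hat{g}$. The key observation is that the vector $\mathbf{w}$ with components $w_{i}=(-1)^{i+d}M_{i}$ is exactly (up to the scalar $\det(g)=1$, via cofactor expansion of $g$ itself) proportional to $\ovec(g)$: indeed $\mathbf{w}$ is Euclidean-orthogonal to every column of $\hat{g}$ (a standard cofactor identity, since appending any such column to $\hat{g}$ gives a matrix with a repeated column), and the orthogonal complement of the columns of $\hat{g}$ is one-dimensional and also contains $\ovec(g)$, so $\mathbf{w}=\lambda\,\ovec(g)$ for some scalar $\lambda$. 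Then $\det(A)=\langle \ovec(g),\mathbf{w}\rangle=\lambda\norm{\ovec(g)}^{2}$, and one pins down $\lambda=1$ by pairing $\mathbf{w}$ against $g\mathbf{e}_{d}$ (the $d$-th column of $g$): on one hand $\langle g\mathbf{e}_{d},\mathbf{w}\rangle=\det(g)=1$ by cofactor expansion of $\det(g)$ along its last column, and on the other hand $\langle g\mathbf{e}_{d},\lambda\ovec(g)\rangle=\lambda\langle \ovec(g),g\mathbf{e}_{d}\rangle=\lambda$ by \eqref{eq:orthogonality of theta}. Hence $\lambda=1$, so $|\det(A)|=\norm{\ovec(g)}^{2}$, and combining with the block-diagonal computation gives $\norm{\ovec(g)}^{4}=\det(\hat{g}^{t}\hat{g})\cdot\norm{\ovec(g)}^{2}$, i.e. $\covol(\hat{g}\Z^{d-1})^{2}=\det(\hat{g}^{t}\hat{g})=\norm{\ovec(g)}^{2}$, as claimed. (Note $\ovec(g)\neq\mathbf{0}$ since $\o(g)$ is invertible, so dividing is legitimate.)

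The only mildly delicate point is the identification $\mathbf{w}=\ovec(g)$ via the cofactor/orthogonality argument; everything else is the block-determinant computation and expansion along a column. An alternative, perhaps cleaner, route that avoids the cofactor bookkeeping is to argue directly with volumes: pick any $k\in\SO_{d}(\R)$ with $k\,\ovec(g)=\norm{\ovec(g)}\mathbf{e}_{d}$; then $k\hat{g}$ has its columns in $\mathbf{e}_{d}^{\perp}=\R^{d-1}\times\{0\}$ (again by \eqref{eq:orthogonality of theta}, since $k$ is orthogonal), so writing $k\hat{g}=\binom{B}{\mathbf{0}}$ with $B\in M_{(d-1)\times(d-1)}(\R)$ one gets $\covol(\hat{g}\Z^{d-1})=\covol(k\hat{g}\Z^{d-1})=|\det B|$; meanwhile $\det\!\big(k\,[\,\hat{g}\mid \ovec(g)\,]\big)=\pm\det(B)\cdot\norm{\ovec(g)}$, and the left-hand determinant has absolute value $\norm{\ovec(g)}^{2}$ by the block computation above, giving $|\det B|=\norm{\ovec(g)}$ directly. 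I would present whichever of these is shorter in the final writeup; the substantive input in both cases is just \eqref{eq:orthogonality of theta} together with $\covol(\hat{g}\Z^{d-1})^{2}=\det(\hat{g}^{t}\hat{g})$.
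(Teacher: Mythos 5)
Your main argument is correct, and it reaches the identity by a genuinely different route from the paper. You border $\hat{g}$ by the column $\ovec(g)$, use \eqref{eq:orthogonality of theta} to block-diagonalize the Gram matrix $A^{t}A$, and then identify $\ovec(g)$ with the vector of signed maximal minors of $\hat{g}$ (orthogonality to the columns of $\hat{g}$ plus the normalization $\left\langle \ovec(g),g\mathbf{e}_{d}\right\rangle =\det(g)=1$), which yields $\det A=\norm{\ovec(g)}^{2}$ and hence the claim after dividing by $\norm{\ovec(g)}^{2}\neq0$. The paper instead makes a two-line adjugate computation on the full Gram matrix: since $\det(g^{t}g)=1$ one has $(g^{t}g)^{-1}=\text{adj}(g^{t}g)$, whose $(d,d)$ entry is on one hand the cofactor $\det(\hat{g}^{t}\hat{g})=\covol(\hat{g}\Z^{d-1})^{2}$ and on the other hand $\left\langle \mathbf{e}_{d},g^{-1}\o(g)\mathbf{e}_{d}\right\rangle =\norm{\ovec(g)}^{2}$. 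The two proofs exploit the same underlying fact — for $g\in\SL_{d}$ the cofactor data of $g$ is encoded by $\o(g)$ — but the paper lets the adjugate identity do all the bookkeeping, while your cofactor-vector (cross-product) identification re-derives that fact by hand; your version is longer but makes the geometric picture (normal direction, volume of the box) explicit. One small caveat: in your alternative rotation argument, the assertion that $\left|\det\left(k\left[\hat{g}\mid\ovec(g)\right]\right)\right|=\norm{\ovec(g)}^{2}$ does not follow from the block computation alone (that only gives $\det(A)^{2}=\det(\hat{g}^{t}\hat{g})\norm{\ovec(g)}^{2}$, which is circular at that point); it needs the cofactor identification from your first route, or the direct computation $\det A=\det(g)\cdot\left\langle \mathbf{e}_{d},g^{-1}\ovec(g)\right\rangle =\norm{\ovec(g)}^{2}$, which is essentially the paper's step. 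As written, only the first route is self-contained, and it suffices.
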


\begin{proof}
We first note that 
\[
\left(g^{t}g\right)^{-1}=\text{adj}(g^{t}g),
\]
 where $\text{adj}\left(\cdot\right)$ denotes the matrix adjugate,
and we observe that the $d,d$ entry of the matrix $\text{adj}(g^{t}g)$
is $\det\left(\hat{g}^{t}\hat{g}\right)=\covol(\Lambda)^{2}$. In
particular, the $d,d$ entry of the matrix $\text{adj}(g^{t}g)$ can
be expressed by $\left\langle \mathbf{e}_{d},\left(g^{t}g\right)^{-1}\mathbf{e}_{d}\right\rangle $,
hence

\begin{align*}
\covol(\hat{g}\Z^{d-1})^{2}= & \left\langle \mathbf{e}_{d},\left(g^{t}g\right)^{-1}\mathbf{e}_{d}\right\rangle =\left\langle \mathbf{e}_{d},g^{-1}\o(g)\mathbf{e}_{d}\right\rangle \\
= & \left\langle \o(g)\mathbf{e}_{d},\o(g)\mathbf{e}_{d}\right\rangle =\norm{\o(g)\mathbf{e}_{d}}^{2}\\
= & \left\Vert \ovec(g)\right\Vert ^{2}.
\end{align*}
\end{proof}

\section{\label{sec:On-the-deity}Linnik type problem in $\protect\SL_{d}(\protect\R)$}

The structure of this section is as follows:
\begin{itemize}
\item Section \ref{subsec:Subvarieties-of SL_d} introduces the one-parameter
family of subvarieties $\V_{T}\subseteq\SL_{d}$ mentioned in Section
\ref{subsec:Linnik-type-problem in sld}, and discusses basic facts
concerning them.
\item Section \ref{subsec:The-equivariant-isomorphism} defines a natural
homeomorphism between a subvariety $\V_{T}(\R)$ to a reference subvariety
$\V_{Q(\mathbf{e}_{d})}(\R)$.
\item Section \ref{subsec:Statistics-of-Z} presents our main results (Theorems
\ref{thm:main thm for Z} and \ref{thm:main_thm_with_congruences-forZ}).
\end{itemize}

\subsection{\label{subsec:Subvarieties-of SL_d}Subvarieties of $\protect\SL_{d}$}

Let $Q$ be a quadratic form as in our \nameref{subsec:Standing-Assumption}
and recall that $R$ denotes a unital commutative ring. For $T\in R$,
we let
\[
\H_{T}(R)\df\{\mathbf{v}\in R^{d}\mid Q(\mathbf{v})=T\},
\]
and we consider 
\begin{equation}
\V_{T}(R)\df\tau^{-1}(\H_{T}(R)),\label{eq:def of z_T}
\end{equation}
(see \eqref{eq:def of tau} to recall $\ovec$) namely 
\[
\V_{T}(R)\df\{g\in\SL_{d}(R)\mid(Q\circ\tau)(g)=Q\left((g^{t})^{-1}\mathbf{e}_{d}\right)=T\}.
\]
Note that $Q\circ\ovec:\SL_{d}(\Z)\to\Z$ is an integral polynomial.

\subsubsection{Concerning the \emph{$(\protect\SO_{Q}\times\protect\ASL_{d-1})$}
action}

We recall the $\SL_{d}(R)$ action given in \eqref{eq:sl_d action by involution},
and we observe that the stabilizer subgroup of $\SL_{d}(R)$ stabilizing
$\mathbf{e}_{d}$ is $\ASL_{d-1}(R)$, which allows us to conclude
\begin{equation}
\SL_{d}(R)/\ASL_{d-1}(R)\cong\tau\left(\SL_{d}(R)\right).\label{eq:iden_R^d_with_SL/ASL}
\end{equation}
In light of \eqref{eq:iden_R^d_with_SL/ASL}, $\SL_{d}(R)$ can be
thought of as a union of fibers $\ovec^{-1}(\mathbf{v}),\ \mathbf{v}\in R^{d},$
where each fiber is an $\ASL_{d-1}(R)$-right coset, and according
to \eqref{eq:def of z_T}, $\V_{T}(R)$ is the union of those fibers
of vectors in $\tau\left(\SL_{d}(R)\right)\cap\H_{T}(R),$ which leads
to the identification
\begin{equation}
\V_{T}(R)/\ASL_{d-1}(R)\cong\tau\left(\SL_{d}(R)\right)\cap\H_{T}(R).\label{eq:space_of_ASL_orbits_in_V_x}
\end{equation}
We consider the following right action of $\SO_{Q}(R)$ on $\SL_{d}(R)/\ASL_{d-1}(R)$
defined by
\begin{equation}
\left(g\ASL_{d-1}(R)\right)\cdot\rho\df\o\left(\rho\right)^{-1}g\ASL_{d-1}(R),\label{eq:action of SO_Q on SL_d/ASL_d-1}
\end{equation}
and we observe that the above action is equivalent to the right $\SO_{Q}(R)$
action \eqref{eq:action of SO_Q on R^d} on the orbit $\tau\left(\SL_{d}(R)\right)\subseteq R^{d}$,
namely 
\begin{equation}
\ovec(g\ASL_{d-1}(R)\cdot\rho)=\ovec(g)\cdot\rho.\label{eq:equivariance of the so_Q action w.r.t. tau}
\end{equation}

In view of \eqref{eq:equivariance of the so_Q action w.r.t. tau},
it is natural to consider the $\left(\SO_{Q}\times\ASL_{d-1}\right)(R)$
action on $\V_{T}(R)$ from the right by
\begin{equation}
g\cdot(\rho,\eta)\df\o(\rho)^{-1}g\eta,\ g\in\V_{T}(R),\ (\eta,\rho)\in(\SO_{Q}\times\ASL_{d-1})(R),\label{eq:definion of action of =00005Cmatbb=00007BG=00007D on Z_t}
\end{equation}
and continuing with our description of $\V_{T}(R)$ as a union of
fibers of vectors in $\tau(\SL_{d}(R))\cap\H_{T}(R)$, we interpret
$g\eta$ as a ``move'' in the fiber of $\mathbf{v}\df\ovec(g)$
and by $\o(\rho)^{-1}g\eta$ as a ``transition'' of $g\eta$ into
the fiber of $\rho^{-1}\mathbf{v}$ (using \eqref{eq:equivariance of the so_Q action w.r.t. tau}),
which allows us to conclude (more formally, by \eqref{eq:space_of_ASL_orbits_in_V_x}
and \eqref{eq:equivariance of the so_Q action w.r.t. tau}) that

\begin{equation}
\V_{T}(R)/\left(\SO_{Q}\times\ASL_{d-1}\right)(R)\cong\left(\tau\left(\SL_{d}(R)\right)\cap\H_{T}(R)\right)/\SO_{Q}(R).\label{eq:space_of_ASLxSO_orbits_in_V_x}
\end{equation}

We have the following corollary from \eqref{eq:space_of_ASLxSO_orbits_in_V_x}.

\begin{cor}
\label{cor:transitivity of ASL_times_SO_d} The following hold:
\begin{enumerate}
\item \label{enu:Real_transitively}\emph{$(\SO_{Q}\times\ASL_{d-1})(\R)$
}acts transitively on $\V_{T}(\R)$ for all $T>0$.
\item \label{enu:mod_q_transitively} Let $q\in2\N+1$, and assume that
$Q$ is non-singular modulo $q$ \emph{(}Definition \ref{def:non-singularity modq}\emph{)}.
Then\emph{ $\left(\SO_{Q}\times\ASL_{d-1}\right)(\Z/(q))$ }acts transitively
on $\V_{a}(\Z/(q))$ for all $a\in\left(\Z/(q)\right)^{\times}$.
\item \label{enu:finitely many orbits in V_x(Z)} There are finitely many
\emph{$\left(\SO_{Q}\times\ASL_{d-1}\right)(\Z)$} orbits in $\V_{N}(\Z)$
for all $N\in\N$, and moreover \emph{
\[
\left|\V_{N}(\Z)/\left(\SO_{Q}\times\ASL_{d-1}\right)(\Z)\right|=\left|\H_{N,\text{prim}}(\Z)/\SO_{Q}(\Z)\right|
\]
}
\end{enumerate}
\end{cor}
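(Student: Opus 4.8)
The plan is to deduce all three assertions from the isomorphism \eqref{eq:space_of_ASLxSO_orbits_in_V_x}, which reduces every statement about $\left(\SO_{Q}\times\ASL_{d-1}\right)$-orbits on $\V_{T}(R)$ to a statement about $\SO_{Q}(R)$-orbits on $\tau\left(\SL_{d}(R)\right)\cap\H_{T}(R)$, together with a description of the image $\tau\left(\SL_{d}(R)\right)\subseteq R^{d}$. For part \eqref{enu:Real_transitively}, I would first observe that $\tau\left(\SL_{d}(\R)\right)=\R^{d}\setminus\{\mathbf{0}\}$: indeed $\tau$ is surjective onto $\SL_d(\R)/\ASL_{d-1}(\R)$ by \eqref{eq:iden_R^d_with_SL/ASL}, and the $\SL_d(\R)$-orbit of $\mathbf{e}_d$ under \eqref{eq:sl_d action by involution} is all of $\R^{d}\setminus\{\mathbf{0}\}$ since $\theta$ is an automorphism and the ordinary linear $\SL_d(\R)$-action is transitive on nonzero vectors. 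Hence for $T>0$ we need only show $\SO_{Q}(\R)$ acts transitively on $\H_{T}(\R)$ (which is nonempty as $Q(\mathbf{e}_d)=T$ for a suitable rescaling, or rather, since $Q$ is non-degenerate and isotropic enough; more precisely $\sqrt{T/Q(\mathbf{e}_d)}\,\mathbf{e}_d\in\H_T(\R)$). Transitivity of $\SO_Q(\R)$ on a single nonempty level set $\{Q=T\}$ with $T\ne 0$ is Witt's theorem over $\R$ for the real quadratic space $(\R^d,Q)$ — any two vectors of the same nonzero norm lie in isometric subspaces, and the isometry extends; one must only check it can be taken in the identity component $\SO_Q$ rather than $\mathrm{O}_Q$, which holds here because the standing assumption makes the stabilizer $\mathbf{H}_{\mathbf{e}_d}(\R)$ contain a reflection-rich compact group (the orthogonal group of the definite complement), so $\SO_Q(\R)\cdot\mathbf{e}_d=\mathrm{O}_Q(\R)\cdot\mathbf{e}_d$. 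Combining, $\left(\tau(\SL_d(\R))\cap\H_T(\R)\right)/\SO_Q(\R)$ is a single point, and by \eqref{eq:space_of_ASLxSO_orbits_in_V_x} so is $\V_T(\R)/\left(\SO_Q\times\ASL_{d-1}\right)(\R)$.

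For part \eqref{enu:mod_q_transitively}, the same scheme applies over $R=\Z/(q)$ with $q$ odd and $Q$ non-singular mod $q$. First, $\tau\left(\SL_d(\Z/(q))\right)$ equals the set of primitive vectors in $(\Z/(q))^d$: surjectivity of $\tau$ onto $\SL_d/\ASL_{d-1}$ holds over any ring by the same stabilizer computation \eqref{eq:iden_R^d_with_SL/ASL}, and a vector $\mathbf{v}\in(\Z/(q))^d$ lies in the $\SL_d$-orbit of $\mathbf{e}_d$ iff it can be completed to a basis, i.e. iff it is unimodular — and since $\theta$ is an automorphism this transfers to the twisted action. Next I need: $\SO_Q(\Z/(q))$ acts transitively on the primitive vectors of a given invertible norm $a$. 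By CRT it suffices to treat $q=p^k$ with $p$ odd; for $p\nmid\mathrm{disc}(Q)$ one first handles $\Z/(p)$ using Witt's theorem over the finite field $\mathbb{F}_p$ for a non-degenerate form in $d\ge 4\ge 3$ variables (so that the orthogonal complement of an anisotropic vector is non-degenerate of dimension $\ge 3$, hence isotropic, which both guarantees the relevant level sets are nonempty and lets one stay inside $\SO_Q$ rather than $\mathrm{O}_Q$), and then lifts from $\Z/(p)$ to $\Z/(p^k)$ by a Hensel/smoothness argument — the scheme $\SO_Q$ is smooth over $\Z_p$ when $p$ is odd and $p\nmid\mathrm{disc}(Q)$, so reduction $\SO_Q(\Z/p^{k})\twoheadrightarrow\SO_Q(\Z/p)$ is surjective, and one runs the usual successive-approximation argument to adjust a primitive solution mod $p^{j}$ to one mod $p^{j+1}$. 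Then \eqref{eq:space_of_ASLxSO_orbits_in_V_x} over $\Z/(q)$ gives transitivity on $\V_a(\Z/(q))$.

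For part \eqref{enu:finitely many orbits in V_x(Z)}, finiteness of $\V_N(\Z)/\left(\SO_Q\times\ASL_{d-1}\right)(\Z)$ and the exact count both come from \eqref{eq:space_of_ASLxSO_orbits_in_V_x} with $R=\Z$, which identifies this orbit set with $\left(\tau(\SL_d(\Z))\cap\H_N(\Z)\right)/\SO_Q(\Z)$. Here I would note that $\tau(\SL_d(\Z))=\zprim$: a vector in $\Z^d$ lies in the $\SL_d(\Z)$-orbit of $\mathbf{e}_d$ (under either the standard or the $\theta$-twisted action, as $\theta$ preserves $\SL_d(\Z)$) iff it extends to a $\Z$-basis of $\Z^d$, which is exactly primitivity. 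Therefore $\tau(\SL_d(\Z))\cap\H_N(\Z)=\H_{N,\mathrm{prim}}(\Z)$, and the orbit set is $\H_{N,\mathrm{prim}}(\Z)/\SO_Q(\Z)$, which is finite by Theorem 6.9 of \cite{bhc} as quoted in the introduction; this simultaneously yields the displayed equality of cardinalities. The main obstacle in this whole argument is the mod-$p^k$ step in part \eqref{enu:mod_q_transitively}: one must carefully verify nonemptiness of the relevant level sets over $\mathbb{F}_p$ (using $d\ge4$ so that any non-degenerate form in $\ge3$ variables is isotropic, which also forces the level sets of any invertible value to be nonempty and of the expected size), distinguish the $\SO_Q$ versus $\mathrm{O}_Q$ orbit issue, and make the Hensel lifting preserve primitivity — the oddness of $q$ and the non-singularity of $Q$ mod $q$ are precisely what make all of this go through.
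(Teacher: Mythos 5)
Your overall architecture is the same as the paper's: reduce all three parts through \eqref{eq:space_of_ASLxSO_orbits_in_V_x} to $\SO_{Q}(R)$-transitivity on the level sets $\H_{a}(R)$, handle $R=\R$ by Witt's theorem, and for part (3) use $\tau(\SL_{d}(\Z))=\zprim$ together with the finiteness of $\H_{N,\text{prim}}(\Z)/\SO_{Q}(\Z)$ from \cite{bhc}. The genuine difference is the mod $p^{k}$ step in part (2): the paper works directly over $\Z/(p^{k})$, sending $\mathbf v_{1}$ to $\mathbf v_{2}$ by a product of two generalized reflections $\ovec_{\mathbf u}$ (following Cassels) -- the composition of two reflections is also how it stays inside $\SO_{Q}$ rather than $\mathrm{O}_{Q}$ -- with the only lifting input being one application of multivariable Hensel to produce a unit-norm vector orthogonal to $\mathbf v_{1}$; you instead propose Witt over the residue field followed by a smoothness/successive-approximation lift from $\Z/(p)$ to $\Z/(p^{k})$. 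That route is viable (and the group-level surjectivity you invoke is in fact proved later in the paper, Lemma \ref{lem:properties of a form non-singular modulo q}), and it buys a more conceptual, less computational argument, while the paper's reflection computation is self-contained and shorter.

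Two points in your sketch need repair. First, the lifting step is under-specified as stated: surjectivity of $\SO_{Q}(\Z/(p^{k}))\to\SO_{Q}(\Z/(p))$ plus Hensel lifting of points of the quadric $\{Q=a\}$ does not by itself give transitivity mod $p^{k}$ -- after applying a lifted group element you are left with two vectors of the same unit norm that agree mod $p$, and you must still show that the congruence kernel moves one to the other (e.g.\ via smoothness of the transporter, a torsor under the stabilizer $\mathbf{H}_{\mathbf v_{1}}$, which is smooth because $Q(\mathbf v_{1})$ and $\mathrm{disc}(Q)$ are units and $p$ is odd; or simply note that in this situation $Q(\mathbf v_{1}+\mathbf v_{2})\equiv 4a$ is a unit, so the paper's reflection trick finishes at once). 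Second, for the determinant fix over $\Z/(p)$ you need an \emph{anisotropic} vector in $\mathbf v_{2}^{\perp(Q)}$, which exists simply because the restricted form is non-degenerate; isotropy of that complement is beside the point, since isotropic vectors do not define reflections.
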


\begin{proof}
To show \eqref{enu:Real_transitively} and \eqref{enu:mod_q_transitively},
we observe that using \eqref{eq:space_of_ASLxSO_orbits_in_V_x}, it
is sufficient to prove that $\SO_{Q}(R)$ acts transitively on $\H_{T}(R)$
when $R\in\{\R,\Z/(q)\}$ and $T\in R$ are as specified in \eqref{enu:Real_transitively}
and \eqref{enu:mod_q_transitively}

The claim for $R=\R$ follows from Witt's Theorem.

We now proceed to prove \eqref{enu:mod_q_transitively} by going along
the lines of the proof of \cite[Chapter 8, Lemma 3.3]{CASSELS_quad}.
Let $p$ be an odd prime and let $k\in\N$. We may consider the following
involution (a generalized reflection) 
\[
\ovec_{\mathbf{v}}:\left(\Z/(p^{k})\right)^{d}\to\left(\Z/(p^{k})\right)^{d},
\]
defined for $\mathbf{v}\in\left(\Z/(p^{k})\right)^{d}$ such that
$Q(\mathbf{v})\in\left(\Z/(p^{k})\right)^{\times}$, by
\[
\ovec_{\mathbf{v}}(\mathbf{x})\df\mathbf{x}-\frac{2Q(\mathbf{x},\mathbf{v})}{Q(\mathbf{v})}\mathbf{v},
\]
where $Q(\mathbf{x},\mathbf{y})\df\frac{1}{4}\left(Q(\mathbf{x}+\mathbf{y})-Q(\mathbf{x}-\mathbf{y})\right)$
for $\mathbf{x},\mathbf{y}\in\left(\Z/(q)\right)^{d}$ is the associated
bi-linear form of $Q$. By observing that $Q(\ovec_{\mathbf{v}}(\mathbf{x}))=Q(\mathbf{x})$
and $\det(\ovec_{\mathbf{v}})=-1$,  we deduce that $\ovec_{\mathbf{u}_{1}}\circ\ovec_{\mathbf{u}_{2}}\in\SO_{Q}(\Z/(p^{k}))$
for all $\mathbf{u}_{1},\mathbf{u}_{2}\in\left(\Z/(p^{k})\right)^{d}$
such that $Q(\mathbf{u}_{1}),Q(\mathbf{u}_{2})\in\left(\Z/(p^{k})\right)^{\times}$.

We now show that for all $\mathbf{v}_{1},\mathbf{v}_{2}\in\H_{a}(\Z/(p^{k}))$
with $a\in\left(\Z/(p^{k})\right)^{\times}$ there exist $\mathbf{u}_{1},\mathbf{u}_{2}\in\left(\Z/(p^{k})\right)^{d}$
such that $Q(\mathbf{u}_{1}),Q(\mathbf{u}_{2})\in\left(\Z/(p^{k})\right)^{\times}$
and $\ovec_{\mathbf{u}_{1}}\circ\ovec_{\mathbf{u}_{2}}(\mathbf{v}_{1})=\mathbf{v}_{2}$.
Let $\mathbf{v}_{1},\mathbf{v}_{2}\in\H_{a}(\Z/(p^{k}))$ with $a\in\left(\Z/(p^{k})\right)^{\times}$.
We observe that $Q(\mathbf{v}_{1}+\mathbf{v}_{2})+Q(\mathbf{v}_{1}-\mathbf{v}_{2})=4a\in\left(\Z/(p^{k})\right)^{\times}$,
which implies that either $Q(\mathbf{v}_{1}+\mathbf{v}_{2})\in\left(\Z/(p^{k})\right)^{\times}$
or $Q(\mathbf{v}_{1}-\mathbf{v}_{2})\in\left(\Z/(p^{k})\right)^{\times}.$
Assuming that $Q(\mathbf{v}_{1}-\mathbf{v}_{2})\in\left(\Z/(p^{k})\right)^{\times}$,
we may consider $\ovec_{\mathbf{v}_{1}-\mathbf{v}_{2}}$ and we observe
that $\ovec_{\mathbf{v}_{1}-\mathbf{v}_{2}}\mathbf{v}_{1}=\mathbf{v}_{2}$.
Assuming the existence of $\mathbf{u}\in\left(\Z/(p^{k})\right)^{d}$
such that $Q(\mathbf{u})\in\left(\Z/(p^{k})\right)^{\times}$ and
$Q(\mathbf{\mathbf{u}},\mathbf{v}_{1})=0$, we note that $\ovec_{\mathbf{\mathbf{u}}}(\mathbf{v}_{1})=\mathbf{v}_{1}$,
which implies in turn that $\tau_{\mathbf{v}_{1}-\mathbf{v}_{2}}\circ\tau_{\mathbf{\mathbf{u}}}(\mathbf{v}_{1})=\mathbf{v}_{2}$.
To prove the existence of the above $\mathbf{u}$, we note that $Q(\mathbf{v}_{1})\ \text{mod \ensuremath{p}}$
is non-zero, which implies that the restriction of the form $Q$ $(\text{mod \ensuremath{p})}$
to the vector space 
\[
V=\left\{ \mathbf{x}\in(\Z/(p))^{d}\mid Q(\mathbf{x},\mathbf{v}_{1})=0\text{ mod \ensuremath{p}}\right\} 
\]
gives a non-singular form, proving in turn that there exists $\mathbf{\tilde{u}}\in V$
such that $Q(\tilde{\mathbf{u}})$ is non-zero mod $p$. Using\textbf{
}\cite[Section 2, Theorem 1]{Serre_coure_in_arith} (Hensel's Lemma
for several variables) for the polynomial $f(\mathbf{x})=Q(\mathbf{x},\mathbf{v}_{1})$
(by lifting $\mathbf{v}_{1}$ to a $\Z_{p}^{d}$ vector) we deduce
that there exists $\mathbf{u}\in\left(\Z/(p^{k})\right)^{d}$ such
that $\mathbf{u}=\tilde{\mathbf{u}}\text{ mod }p$ and $Q(\mathbf{u},\mathbf{v}_{1})=0$,
and in particular, since $\mathbf{u}=\tilde{\mathbf{u}}\text{ mod }p$,
we get $Q(\mathbf{u})\in\left(\Z/(p^{k})\right)^{\times}$. If on
the other-hand it holds that $Q(\mathbf{v}_{1}+\mathbf{v}_{2})\in\left(\Z/(p^{k})\right)^{\times},$
then we have 
\[
\ovec_{\mathbf{v}_{2}}\circ\ovec_{\mathbf{v}_{1}+\mathbf{v}_{2}}\left(\mathbf{v}_{1}\right)=\mathbf{v}_{2}.
\]

With this we have proved \eqref{enu:mod_q_transitively} for $q$
being a power of an odd prime, and the result for a general $q\in2\N+1$
follows by the Chinese remainder theorem. 

Finally, to validate \eqref{enu:finitely many orbits in V_x(Z)},
note that for $T>0$ 
\[
\left(\tau\left(\SL_{d}(\Z)\right)\cap\H_{T}(\Z)\right)/\SO_{Q}(\Z)=\left(\zprim\cap\H_{T}(\Z)\right)/\SO_{Q}(\Z)=\H_{T,\text{prim}}(\Z)/\SO_{Q}(\Z).
\]
\end{proof}

\subsubsection{Stabilizers subgroups of \emph{$(\protect\SO_{Q}\times\protect\ASL_{d-1})(R)$}}

We now discuss some facts concerning\textbf{ }the stabilizer subgroup
of $(\SO_{Q}\times\ASL_{d-1})(R)$ stabilizing $g\in\SL_{d}(R)$ by
the right action \eqref{eq:definion of action of =00005Cmatbb=00007BG=00007D on Z_t}.
For the following recall that $\mathbf{\Hbold}_{\ovec(g)}(R)\leq\SO_{Q}(R)$
denotes the stabilizer of $\ovec(g)\in R^{d}$ by the $\SO_{Q}(\R)$
action on $R^{d}$ (to recall, see \eqref{eq:action of SO_Q on R^d}).
\begin{lem}
\label{lem:stabilizer lemma}Let \emph{$g\in\SL_{d}(R)$} and consider
the group
\begin{equation}
\Lbold_{g}(R)\overset{\text{def}}{=}\left\{ \left(w,g^{-1}\o(w)g\right)\mid w\in\mathbf{H}_{\ovec(g)}(R)\right\} .\label{eq:stab of L_g}
\end{equation}
Then \emph{$\mathbf{L}_{g}(R)\leq(\SO_{Q}\times\ASL_{d-1})(R)$} is
the stabilizer subgroup of $g$ by the action \emph{\eqref{eq:definion of action of =00005Cmatbb=00007BG=00007D on Z_t}}.
\end{lem}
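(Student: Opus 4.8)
The statement is a direct computation: I want to show that an element $(\rho,\eta)\in(\SO_Q\times\ASL_{d-1})(R)$ fixes $g\in\SL_d(R)$ under the action \eqref{eq:definion of action of =00005Cmatbb=00007BG=00007D on Z_t} precisely when $\eta = g^{-1}\o(\rho)g$ and $\rho$ lies in $\mathbf{H}_{\ovec(g)}(R)$. Since the action is $g\cdot(\rho,\eta) = \o(\rho)^{-1}g\eta$, the fixed-point condition is simply the matrix equation $\o(\rho)^{-1}g\eta = g$, equivalently $\eta = g^{-1}\o(\rho)g$. So the proof has two halves: first, characterize which $\rho\in\SO_Q(R)$ admit such an $\eta$ \emph{inside} $\ASL_{d-1}(R)$; second, verify that when they do, the resulting pair lies in $\mathbf{L}_g(R)$ and conversely every pair in $\mathbf{L}_g(R)$ is a genuine stabilizing element.

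First I would fix $g$ and a candidate $\rho\in\SO_Q(R)$, and ask when $\eta := g^{-1}\o(\rho)g$ lies in $\ASL_{d-1}(R)$. Recall from \eqref{eq:iden_R^d_with_SL/ASL} that $\ASL_{d-1}(R)$ is exactly the stabilizer of $\mathbf{e}_d$ under the action \eqref{eq:sl_d action by involution}, i.e. $h\in\ASL_{d-1}(R)$ iff $\o(h)\mathbf{e}_d = \mathbf{e}_d$. Applying $\o$ to $\eta = g^{-1}\o(\rho)g$ gives $\o(\eta) = \o(g^{-1})\rho^{-1}\,\o(g)$ (using that $\o$ is an anti-... — actually $\o$ is an automorphism, so $\o(g^{-1}\o(\rho)g) = \o(g)^{-1}\o(\o(\rho))\o(g) = \o(g)^{-1}\rho\,\o(g)$, being careful with the order). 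Then $\o(\eta)\mathbf{e}_d = \mathbf{e}_d$ becomes $\o(g)^{-1}\rho\,\o(g)\mathbf{e}_d = \mathbf{e}_d$, i.e. $\rho\,\ovec(g) = \ovec(g)$ after recognizing $\o(g)\mathbf{e}_d = \ovec(g)$ and left-multiplying by $\o(g)$. That is exactly the condition $\rho\in\mathbf{H}_{\ovec(g)}(R)$. So the set of admissible first coordinates is precisely $\mathbf{H}_{\ovec(g)}(R)$, and for each such $\rho = w$ the second coordinate is forced to be $g^{-1}\o(w)g$, which matches the description in \eqref{eq:stab of L_g}.

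Second, I would check the two set-theoretic containments cleanly. If $(\rho,\eta)$ stabilizes $g$ then $\eta = g^{-1}\o(\rho)g$ is forced (from $\o(\rho)^{-1}g\eta = g$) and $\eta\in\ASL_{d-1}(R)$ by hypothesis, so by the computation above $\rho\in\mathbf{H}_{\ovec(g)}(R)$, hence $(\rho,\eta)\in\mathbf{L}_g(R)$. Conversely, given $w\in\mathbf{H}_{\ovec(g)}(R)$, I must confirm that $\bigl(w,\,g^{-1}\o(w)g\bigr)$ is actually an element of $(\SO_Q\times\ASL_{d-1})(R)$ — the $\SO_Q$ part is automatic, and the $\ASL_{d-1}$ part is exactly the reversed implication of the computation above: $w\,\ovec(g) = \ovec(g)$ forces $\o(g^{-1}\o(w)g)\mathbf{e}_d = \mathbf{e}_d$, so $g^{-1}\o(w)g\in\ASL_{d-1}(R)$. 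Finally $g\cdot(w, g^{-1}\o(w)g) = \o(w)^{-1}g\cdot g^{-1}\o(w)g = g$, so it genuinely stabilizes. That it is a subgroup (closed under products and inverses) follows because it is the stabilizer of a point for a group action, or can be checked directly since $w\mapsto(w,g^{-1}\o(w)g)$ is a group homomorphism $\mathbf{H}_{\ovec(g)}(R)\to(\SO_Q\times\ASL_{d-1})(R)$ — here one uses that $\o$ is an automorphism so $g^{-1}\o(w_1 w_2)g = (g^{-1}\o(w_1)g)(g^{-1}\o(w_2)g)$.

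**Expected obstacle.** There is essentially no deep obstacle here; the one place to be careful is the bookkeeping with $\o$ — namely that $\o(g)\mathbf{e}_d = \ovec(g)$ by definition \eqref{eq:def of tau}, that $\o$ is an \emph{automorphism} (not anti-automorphism) of $\SL_d(R)$ so that $\o(g^{-1}\o(\rho)g) = \o(g)^{-1}\,\rho\,\o(g)$, and that the stabilizer of $\mathbf{e}_d$ under \eqref{eq:sl_d action by involution} is $\ASL_{d-1}(R)$ via \eqref{eq:iden_R^d_with_SL/ASL}. Chaining these three facts correctly is the entire content of the proof; once the conjugation identity is written down, both inclusions drop out immediately.
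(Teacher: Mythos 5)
Your proof is correct and follows essentially the same route as the paper: the fixed-point equation $\o(\rho)^{-1}g\eta=g$ forces $\eta=g^{-1}\o(\rho)g$, and the identity $\ovec\bigl(g^{-1}\o(w)g\bigr)=\o(g)^{-1}w\,\ovec(g)$ together with the fact that $\ASL_{d-1}(R)$ is the stabilizer of $\mathbf{e}_{d}$ shows that membership of the second coordinate in $\ASL_{d-1}(R)$ is equivalent to $\rho\in\mathbf{H}_{\ovec(g)}(R)$, exactly as in the paper's argument. Your bookkeeping with $\o$ (automorphism, involution, $\o(g)\mathbf{e}_{d}=\ovec(g)$) is accurate, so no gap remains.
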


\begin{proof}
To show that $\mathbf{L}_{g}(R)\leq(\SO_{Q}\times\ASL_{d-1})(R)$
we observe that for all $w\in\mathbf{\Hbold}_{\ovec(g)}(R)$ it holds
that 
\[
\ovec\left(g^{-1}\o(w)g\right)=\o(g^{-1})w\ovec(g)=\o(g^{-1})\ovec(g)=e_{d},
\]
which implies that $g^{-1}\o(w)g\in\ASL_{d-1}(R)$.

Next, as the reader should easily verify, all elements of $\mathbf{L}_{g}(R)$
stabilize $g$. For the other inclusion, let $\left(\rho,\eta\right)\in\left(\SO_{Q}\times\ASL_{d-1}\right)(R)$
be such that 
\begin{equation}
g=g\cdot\left(\rho,\eta\right)=\o(\rho^{-1})g\eta.\label{eq:stabilizing g}
\end{equation}
By rewriting \eqref{eq:stabilizing g}, we get
\begin{equation}
g^{-1}\o(\rho)g=\eta,\label{eq:eta is conjugate to theta rho}
\end{equation}
and we observe that to finish the proof, we need show that $\rho\in\mathbf{H}_{\ovec(g)}(R)$.
Indeed, we have 
\[
\rho^{-1}\ovec(g)=\ovec(\o(\rho^{-1})g)\underbrace{=}_{\text{\ensuremath{\ASL}}_{d-1}(R)\text{ invariance}}\ovec(\o(\rho^{-1})g\eta)\underbrace{=}_{\eqref{eq:stabilizing g}}\ovec(g).
\]
\end{proof}

\subsubsection{The form $Q^{*}$}

We will now go over some technical facts that we need about the groups
$\o(\SO_{Q}(\R))$ and $\o\left(\mathbf{H}_{\mathbf{v}}(\R)\right)$
for $\mathbf{v}\in R^{d}$ (which appears in the second factor of
$\mathbf{L}_{g}(\R)$). In a summary, we will show that $\o(\SO_{Q}(\R))$
is identified with $\SO_{Q^{*}}(\R)$ for a (rational) quadratic form
$Q^{*}$ defined below, and the subgroup $\o\left(\mathbf{H}_{\mathbf{v}}(\R)\right)$
is identified with the subgroup of $\SO_{Q^{*}}(\R)$ that preserves
the orthogonal hyperplane to $\mathbf{v}$ with respect to the Euclidean
inner product.

Let $M\in M_{d}(\Z)$ be the companion matrix of the form $Q$, namely
\[
Q(\mathbf{x})=\mathbf{x}^{t}M\mathbf{x}.
\]
We recall that $Q$ is a non-degenerate integral form, which implies
that $M\in\GL_{d}(\Q)$, and we define the rational form $Q^{*}$
by 
\begin{equation}
Q^{*}(\mathbf{x})\df\mathbf{x}^{t}M^{-1}\mathbf{x}.\label{eq:def of Q^*}
\end{equation}

\begin{rem*}
The form $Q^{*}$ can be defined more intrinsically as follows. Let
$Q(\cdot,\cdot)$ the bi-linear form associated to $Q$. Since $Q$
is non-degenerate, the map
\[
l^{Q}:\R^{d}\to\left(\R^{d}\right)^{*}
\]
where $\left(\R^{d}\right)^{*}$ denotes the dual space, defined by
$l^{Q}(\mathbf{x})\df Q(\cdot,\mathbf{x})$ is a linear isomorphism.
The form $Q^{*}$ can be identified as the form on $\left(\R^{d}\right)^{*}$
which is makes the map $l^{Q}$ an isometry.
\end{rem*}
\begin{lem}
\label{lem:F(SO_Q) and stabilizer of a vector}We have that \emph{$\o(\SO_{Q}(\R))=\SO_{Q^{*}}(\R).$}
Moreover, let \emph{$g\in\SL_{d}(\Z)$} such that $Q(\ovec(g))\neq0$,
then:
\begin{enumerate}
\item \label{enu:=00005Co(stab)=00003Dstab of M=00005Ctau(g)}\emph{We have
that $\o\left(\mathbf{H}_{\ovec(g)}(\R)\right)=\left\{ \rho\in\SO_{Q^{*}}(\R)\mid\rho(M\ovec(g))=M\ovec(g)\right\} $.}
\item \label{enu:orthogonal hyperplan to tau(g)}It holds that $\left(M\ovec(g)\right)^{\perp(Q^{*})}=\text{Span}_{\R}\left\{ \mathbf{g}_{1},...,\mathbf{g}_{d-1}\right\} ,$
where $\left(M\ovec(g)\right)^{\perp(Q^{*})}$ denotes the orthogonal
hyperplane to $M\ovec(g)$ with respect to $Q^{*}$ , and $\mathbf{g}_{i}$
is the $i'th$ column of $g$. Moreover 
\[
\R^{d}=\left(M\ovec(g)\right)^{\perp(Q^{*})}\oplus\text{Span}_{\R}\left\{ M\ovec(g)\right\} .
\]
\end{enumerate}
\end{lem}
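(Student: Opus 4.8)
The plan is to verify the three claims in order, each time exploiting the defining identity $g^t M g = M$ for $g \in \SO_Q(\R)$ together with the relation $\o(g) = (g^t)^{-1}$. For the first claim, $\o(\SO_Q(\R)) = \SO_{Q^*}(\R)$: starting from $g^t M g = M$, I would rearrange to isolate $(g^t)^{-1}$ and $g^{-1}$. Writing $h = \o(g) = (g^t)^{-1}$, so that $h^t = g^{-1}$, the identity $g^t M g = M$ is equivalent to $M = (h^t)^{-1} M h^{-1}$, i.e. $h^t M^{-1} h = M^{-1}$ after inverting both sides and using $(h^t)^{-1} = g^t$ etc. Concretely: from $g^t M g = M$ we get $g M^{-1} g^t = M^{-1}$ by taking inverses, and substituting $g = (h^t)^{-1} = \o(h)$ this reads $h^t M^{-1} h = M^{-1}$ — wait, I should be careful and just do the one-line computation honestly — in any case it is a routine matrix manipulation showing $g \in \SO_Q(\R) \iff \o(g) \in \SO_{Q^*}(\R)$, using that $\det$ is preserved by $\o$ so the $\SL_d$ condition is automatic. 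Since $\o$ is an involution, this gives the equality of the two sets.

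For claim (1), I would use the $\SO_Q$-action on $\R^d$ from \eqref{eq:action of SO_Q on R^d} and the orthogonality relation \eqref{eq:cartan ortho invariance}: for $w \in \SO_Q(\R)$, the condition $w \in \mathbf{H}_{\ovec(g)}(\R)$ means $w \ovec(g) = \ovec(g)$. I want to translate the stabilizer condition through $\o$. The key is that $Q^*(\mathbf{x}) = \mathbf{x}^t M^{-1}\mathbf{x}$ and the bilinear form of $Q^*$ pairs $M\mathbf{v}$ with $\mathbf{v}$ in a controlled way: indeed $Q^*(M\mathbf{v}, \mathbf{y}) = (M\mathbf{v})^t M^{-1} \mathbf{y} = \mathbf{v}^t \mathbf{y} = \langle \mathbf{v}, \mathbf{y}\rangle$. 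So the condition that $\rho = \o(w) \in \SO_{Q^*}(\R)$ fixes $M\ovec(g)$ should correspond, via this pairing and the orthogonality invariance \eqref{eq:cartan ortho invariance}, to $w$ fixing $\ovec(g)$. I would spell this out: $\rho(M\ovec(g)) = M\ovec(g)$ with $\rho = \o(w)$; since $\rho \in \SO_{Q^*}$ and $M\ovec(g) = l^Q(\ovec(g))$ in the notation of the remark, and $l^Q$ intertwines the $\SO_Q$ and $\SO_{Q^*}$ actions (this is exactly the content that $l^Q$ is an isometry $Q \to Q^*$), fixing $M\ovec(g)$ under $\o(w)$ is equivalent to fixing $\ovec(g)$ under $w$.

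For claim (2), I would identify $(M\ovec(g))^{\perp(Q^*)} = \{\mathbf{y} : Q^*(M\ovec(g), \mathbf{y}) = 0\} = \{\mathbf{y} : \langle \ovec(g), \mathbf{y}\rangle = 0\}$ using the pairing computation $Q^*(M\mathbf{v}, \mathbf{y}) = \langle \mathbf{v}, \mathbf{y}\rangle$ above. But by \eqref{eq:orthogonality of theta}, $\ovec(g)$ is Euclidean-orthogonal to the first $d-1$ columns $\mathbf{g}_1, \dots, \mathbf{g}_{d-1}$ of $g$, and since these columns together with $\ovec(g)$-related data span a hyperplane (the columns $\mathbf{g}_1,\dots,\mathbf{g}_{d-1}$ are linearly independent as $g \in \GL_d$), we get $(M\ovec(g))^{\perp(Q^*)} = \mathrm{Span}_\R\{\mathbf{g}_1, \dots, \mathbf{g}_{d-1}\}$ by a dimension count (both sides are $(d-1)$-dimensional, one contains the other). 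Finally, the direct sum decomposition $\R^d = (M\ovec(g))^{\perp(Q^*)} \oplus \mathrm{Span}_\R\{M\ovec(g)\}$ holds because $Q^*$ is non-degenerate and $Q^*(M\ovec(g), M\ovec(g)) = \langle \ovec(g), \ovec(g)\rangle \cdot$ — more precisely $= (M\ovec(g))^t M^{-1} (M\ovec(g)) = \ovec(g)^t M \ovec(g) = Q(\ovec(g)) \neq 0$ by hypothesis, so $M\ovec(g)$ is anisotropic for $Q^*$ and a non-degenerate form splits off an anisotropic line.

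The main obstacle I anticipate is purely bookkeeping: getting the conjugation-versus-inverse-transpose identities exactly right in claim (1), in particular making sure the intertwining property of $l^Q$ (equivalently $M$) with respect to the two group actions is stated with the correct variance, since the $\SO_Q$-action \eqref{eq:action of SO_Q on R^d} is a \emph{right} action defined via $\rho^{-1}$ while $\o$ reverses a conjugation. None of the individual steps is deep, but the signs and inverses must line up. I would organize the writeup so that the pairing identity $Q^*(M\mathbf{v}, \mathbf{y}) = \langle \mathbf{v}, \mathbf{y} \rangle$ is established once at the start and then reused for all three parts.
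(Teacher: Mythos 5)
Your proposal is correct and follows essentially the same route as the paper: applying $\o$ to the defining relation $\rho^{t}M\rho=M$ for the first claim, using the intertwining identity $\o(w)(M\mathbf{v})=M(w\mathbf{v})$ (your $l^{Q}$-equivariance, the paper's displayed equation) for the stabilizer, and pairing $M\ovec(g)$ against the columns of $g$ via $B_{Q^{*}}(M\mathbf{v},\mathbf{y})=\left\langle \mathbf{v},\mathbf{y}\right\rangle$ together with $Q^{*}(M\ovec(g))=Q(\ovec(g))\neq0$ for the hyperplane and the splitting. The only point to tighten in a writeup is to justify the intertwining by the one-line check $(w^{t})^{-1}M=Mw\iff w^{t}Mw=M$ (and then cancel the invertible $M$) rather than by appeal to ``$l^{Q}$ is an isometry''.
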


\begin{proof}
To show that $\o(\SO_{Q}(\R))$ is the group preserving the form $Q^{*}$,
we observe that
\[
\rho^{t}M\rho=M\iff
\]
\[
\o\left(\rho^{t}M\rho\right)=\o(M)\iff
\]
\begin{equation}
\o(\rho)^{t}M^{-1}\o(\rho)=M^{-1}.\label{eq:F(=00005Crho) stabilizes M^-1}
\end{equation}
Next, to prove that the subgroup $\o\left(\mathbf{H}_{\ovec(g)}(\R)\right)\leq\SO_{Q^{*}}(\R)$
is the stabilizer of $M\ovec(g)$, we observe by \eqref{eq:F(=00005Crho) stabilizes M^-1}
that
\[
\o(\rho)\left(M\ovec(g)\right)=M\rho\ovec(g),
\]
and since $M$ is invertible, we deduce that 
\[
M\rho\ovec(g)=M\ovec(g)\iff\rho\ovec(g)=\ovec(g),
\]
namely $\o(\rho)$ stabilizes $M\ovec(g)$ if and only if $\rho$
stabilizes $\ovec(g)$.

Next, to show \eqref{enu:orthogonal hyperplan to tau(g)}, we note
that 
\[
Q^{*}(M\ovec(g))=Q(\ovec(g))\neq0,
\]
which by \cite[Lemma 1.3]{CASSELS_quad} shows that 
\[
\R^{d}=\left(M\ovec(g)\right)^{\perp(Q^{*})}\oplus\text{Span}_{\R}\left\{ M\ovec(g)\right\} .
\]
Note that the bi-linear form $B_{Q^{*}}$ determined by $Q^{*}$ is
given by 
\[
B_{Q^{*}}(\mathbf{u}_{1},\mathbf{u}_{2})=\left\langle \mathbf{u}_{1},M^{-1}\mathbf{u}_{2}\right\rangle .
\]
Let $\mathbf{g}_{i}$ be the $i$'th column of $g$, then 
\[
B_{Q_{*}}(\mathbf{g}_{i},M\ovec(g))=\left\langle \mathbf{g}_{i},M^{-1}M\ovec(g)\right\rangle =\left\langle g\mathbf{e}_{i},\left(g^{t}\right)^{-1}\mathbf{e}_{d}\right\rangle =\delta_{i,d},
\]
which proves that $\left(M\ovec(g)\right)^{\perp(Q^{*})}=\text{Span}_{\R}\left\{ \mathbf{g}_{1},...,\mathbf{g}_{d-1}\right\} .$
\end{proof}

\subsection{\label{subsec:The-equivariant-isomorphism}The equivariant isomorphism}

Our goal now is to describe a one-parameter group $\left\{ \a{}\right\} _{T>0}\leq\SL_{d}(\R)$
such that $\a{}\in\V_{Q(\sqrt{T}\mathbf{e}_{d})}(\R)$ for all $T>0$,
and such that the stabilizer group $\mathbf{L}_{\a{}}(\R)\leq\SO_{Q}\times\ASL_{d-1}(\R)$
of $\a{}$ is independent of $T$. This will allow us to define a
$(\SO_{Q}\times\ASL_{d-1})(\R)$ equivariant map $\V_{T_{1}}(\R)\to\V_{T_{2}}(\R)$,
for $T_{i}>0$.

We note that $Q(\ovec(I_{d}))=Q(\mathbf{e}_{d})\neq0$, and by Lemma
\ref{lem:F(SO_Q) and stabilizer of a vector},\eqref{enu:orthogonal hyperplan to tau(g)}
we obtain
\[
\R^{d}=\text{Span}_{\R}\{\mathbf{e}_{1},..,\mathbf{e}_{d-1}\}\oplus\text{Span}_{\R}\left\{ M\mathbf{e}_{d}\right\} 
\]
 where $\text{Span}_{\R}\{\mathbf{e}_{1},..,\mathbf{e}_{d-1}\}$ and
$\text{Span}_{\R}\left\{ M\mathbf{e}_{d}\right\} $ are invariant
spaces under the ordinary left $\o\left(\mathbf{H}_{\mathbf{e}_{d}}(\R)\right)$-linear
action.
\begin{defn}
\label{def:def of a_T}For $T>0$ we define $\a{}\in\SL_{d}(\R)$
to be the unique matrix which acts on $P_{0}\df\text{Span}_{\R}\{\mathbf{e}_{1},..,\mathbf{e}_{d-1}\}$
by scalar multiplication of a factor of $T^{\frac{1}{2(d-1)}}$ and
on $P_{0}^{\perp(Q^{*})}\df\text{Span}_{\R}\{M\mathbf{e}_{d}\}$ by
scalar multiplication of a factor of $T^{-1/2}$.
\end{defn}

\begin{cor}
\label{cor:Special alligned base points}It holds that $\a{}\in\V_{Q(\sqrt{T}\mathbf{e}_{d})}(\R),\ \forall T>0$,
and $\mathbf{L}_{\a{}}(\R)=\mathbf{L}_{I_{d}}(\R).$
\end{cor}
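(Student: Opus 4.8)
The plan is to verify the two claims of Corollary \ref{cor:Special alligned base points} directly from Definition \ref{def:def of a_T} and the lemmas already established. For the first claim, $\a{}\in\V_{Q(\sqrt{T}\mathbf{e}_{d})}(\R)$, by \eqref{eq:def of z_T} it suffices to compute $Q(\ovec(\a{}))$ and check it equals $Q(\sqrt{T}\mathbf{e}_{d}) = T\,Q(\mathbf{e}_{d})$. First I would identify $\ovec(\a{}) = \o(\a{})\mathbf{e}_{d}$. Since $\a{}$ is defined by its action on the $Q^{*}$-orthogonal splitting $\R^{d} = P_{0}\oplus P_{0}^{\perp(Q^{*})}$ with $P_{0}^{\perp(Q^{*})} = \mathrm{Span}_{\R}\{M\mathbf{e}_{d}\}$, I need to translate this into a statement about $\o(\a{}) = (\a{}^{t})^{-1}$. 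The key observation is that $\SO_{Q^{*}}$-type data transfers under $\o$: by Lemma \ref{lem:F(SO_Q) and stabilizer of a vector} and \eqref{eq:F(=00005Crho) stabilizes M^-1}, conjugation patterns relating $M$ and $M^{-1}$ let me pin down how $\o(\a{})$ acts. Concretely, if $\a{}$ scales $M\mathbf{e}_{d}$ by $T^{-1/2}$, then $\o(\a{})$ scales $\mathbf{e}_{d}$ by $T^{1/2}$ — because $\langle \o(\a{})\mathbf{e}_{d}, \a{}(M\mathbf{e}_{d})\rangle = \langle \mathbf{e}_{d}, M\mathbf{e}_{d}\rangle$ by \eqref{eq:cartan ortho invariance}, and more precisely $\o(\a{})$ preserves the Euclidean-orthogonal complement structure dual to the $\a{}$-action. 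So $\ovec(\a{}) = T^{1/2}\mathbf{e}_{d}$, giving $Q(\ovec(\a{})) = T\,Q(\mathbf{e}_{d}) = Q(\sqrt{T}\mathbf{e}_{d})$, as required.

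For the second claim, $\mathbf{L}_{\a{}}(\R) = \mathbf{L}_{I_{d}}(\R)$, I would use the description in Lemma \ref{lem:stabilizer lemma}: $\mathbf{L}_{g}(\R) = \{(w, g^{-1}\o(w)g) \mid w\in\mathbf{H}_{\ovec(g)}(\R)\}$. Since $\ovec(\a{}) = T^{1/2}\mathbf{e}_{d}$ is a positive scalar multiple of $\mathbf{e}_{d} = \ovec(I_{d})$, the first factors agree: $\mathbf{H}_{\ovec(\a{})}(\R) = \mathbf{H}_{\mathbf{e}_{d}}(\R)$, because scaling doesn't change which $\SO_{Q}$-elements fix the vector. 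It then remains to show that for every $w \in \mathbf{H}_{\mathbf{e}_{d}}(\R)$ one has $\a{}^{-1}\o(w)\a{} = \o(w)$, i.e. that $\a{}$ commutes with $\o(w)$. Here is where Definition \ref{def:def of a_T} does the work: $\a{}$ acts as a scalar on each of the two subspaces $P_{0}$ and $P_{0}^{\perp(Q^{*})}$, and by the discussion preceding the definition these two subspaces are \emph{precisely} the invariant subspaces of the $\o(\mathbf{H}_{\mathbf{e}_{d}}(\R))$-action (this is exactly Lemma \ref{lem:F(SO_Q) and stabilizer of a vector}\eqref{enu:orthogonal hyperplan to tau(g)} applied to $g = I_{d}$, where $M\ovec(I_d) = M\mathbf{e}_{d}$). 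A linear map that is scalar on each invariant subspace of a group action commutes with that action, so $\a{}$ commutes with $\o(w)$ for all $w\in\mathbf{H}_{\mathbf{e}_{d}}(\R)$, and hence the second factors of $\mathbf{L}_{\a{}}(\R)$ and $\mathbf{L}_{I_{d}}(\R)$ agree as well.

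I would also record the small point that $\a{}$ is genuinely an element of $\SL_{d}(\R)$: it acts by $T^{1/2(d-1)}$ on the $(d-1)$-dimensional space $P_{0}$ and by $T^{-1/2}$ on the $1$-dimensional space $P_{0}^{\perp(Q^{*})}$, so $\det(\a{}) = (T^{1/2(d-1)})^{d-1}\cdot T^{-1/2} = T^{1/2}\cdot T^{-1/2} = 1$, which is why these particular exponents were chosen in Definition \ref{def:def of a_T}. The main obstacle, such as it is, is the bookkeeping in the first paragraph: carefully relating the $\a{}$-action on the $Q^{*}$-orthogonal decomposition to the $\o(\a{})$-action on $\mathbf{e}_{d}$, so that one correctly gets $\ovec(\a{}) = T^{1/2}\mathbf{e}_{d}$ (and not, say, $T^{-1/2}\mathbf{e}_{d}$, which would break the level-set identity). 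Everything else is a direct unwinding of the definitions and the two structural lemmas already proved.
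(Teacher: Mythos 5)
Your proposal is correct and follows essentially the same route as the paper: compute $\ovec(\a{})=\sqrt{T}\mathbf{e}_{d}$ from Definition \ref{def:def of a_T} (the paper does this via $\left\langle \mathbf{e}_{i},\ovec(a_{T})\right\rangle =\left\langle a_{T}^{-1}\mathbf{e}_{i},\mathbf{e}_{d}\right\rangle$, which is the same Cartan-invariance computation you sketch), and then use that $P_{0}$ and $P_{0}^{\perp(Q^{*})}$ are $\o\left(\mathbf{H}_{\mathbf{e}_{d}}(\R)\right)$-invariant while $\a{}$ is scalar on each, so $\a{}$ centralizes $\o\left(\mathbf{H}_{\mathbf{e}_{d}}(\R)\right)$ and $\mathbf{L}_{\a{}}(\R)=\mathbf{L}_{I_{d}}(\R)$. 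The only loose phrasing is the first-paragraph justification that $\o(\a{})\mathbf{e}_{d}$ has no $P_{0}$-component (you need \eqref{eq:cartan ortho invariance} with $\mathbf{y}=\mathbf{e}_{i}$, $i<d$, as well as with $\mathbf{y}=M\mathbf{e}_{d}$), but this is a one-line fix at the same level of detail the paper itself suppresses.
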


\begin{proof}
In order to validate that $\a{}\in\V_{Q(\sqrt{T}\mathbf{e}_{d})}(\R)$,
we show below that 
\begin{equation}
\tau(\a{})=\sqrt{T}\mathbf{e}_{d}.\label{eq:ovec a_T}
\end{equation}
We have
\[
\left\langle \mathbf{e}_{i},\ovec(a_{T})\right\rangle =\left\langle \mathbf{e}_{i},\left(a_{T}^{t}\right)^{-1}\mathbf{e}_{d}\right\rangle =
\]
\[
\left\langle a_{T}^{-1}\mathbf{e}_{i},\mathbf{e}_{d}\right\rangle \underbrace{=}_{\text{Definition \ref{def:def of a_T}}}T^{\frac{1}{2}}\delta_{i,d},
\]
which implies \eqref{eq:ovec a_T}. Next, since $P_{0}$ and $P_{0}^{\perp(Q^{*})}$
are invariant spaces under the left linear $\o\left(\mathbf{H}_{\mathbf{e}_{d}}(\Q)\right)$
action, and since $a_{T}$ acts by scalar multiplication on each of
these spaces, it follows that $\a{}$ is in the center of $\o\left(\mathbf{H}_{\mathbf{e}_{d}}(\R)\right)$.
Therefore
\[
\begin{aligned}\mathbf{L}_{\a{}}(\R)= & \left\{ \left(w,\left(\a{}\right)^{-1}\o(w)\a{}\right)\mid w\in\mathbf{H}_{\ovec(\a{})}(\R)\right\} \\
= & \left\{ \left(w,\o(w)\right)\mid w\in\mathbf{H}_{\tau(\a{})}(\R)\right\} .
\end{aligned}
\]

Now we have by \eqref{eq:ovec a_T} that $\mathbf{H}_{\tau(\a{})}(\R)=\mathbf{H}_{\sqrt{T}\mathbf{e}_{d}}(\R)=\mathbf{H}_{\mathbf{e}_{d}}(\R)=\mathbf{H}_{\ovec(I_{d})}(\R),$
which in turn implies that $\mathbf{L}_{\a{}}(\R)=\mathbf{L}_{I_{d}}(\R).$
\end{proof}
\emph{For the rest of the paper we denote }
\begin{equation}
H\df L_{I_{d}}(\R)=\left\{ \left(w,\o(w)\right)\mid w\in\mathbf{H}_{\mathbf{e}_{d}}(\R)\right\} .\label{eq:def_of_H}
\end{equation}
By Corollary \ref{cor:Special alligned base points} we have for all
$T>0$ the identification
\begin{equation}
\V_{Q(\sqrt{T}\mathbf{e}_{d})}(\R)\cong H\backslash(\SO_{Q}\times\ASL_{d-1})(\R)\label{eq:polar coordinates for z_T}
\end{equation}
 by the orbit map
\[
H\left(\rho,\eta\right)\mapsto\o(\rho^{-1})a_{T}\eta.
\]
We define 
\[
\pi_{\V_{Q(\sqrt{T}\mathbf{e}_{d})}}:\V_{Q(\sqrt{T}\mathbf{e}_{d})}(\R)\to\V_{Q(\mathbf{e}_{d})}(\R),
\]
by 
\begin{equation}
\pi_{\V_{Q(\sqrt{T}\mathbf{e}_{d})}}\left(\o(\rho^{-1})a_{T}\eta\right)\df\o(\rho^{-1})I_{d}\eta=\o(\rho^{-1})\eta,\label{eq:def of pi_Z}
\end{equation}
which is clearly equivariant with respect to the action of $(\SO_{Q}\times\ASL_{d-1})(\R)$
on $\V_{Q(\sqrt{T}\mathbf{e}_{d})}(\R)$ and $\V_{Q(\mathbf{e}_{d})}(\R)$
(since $a_{T}$ has the same stabilizer $\forall T>0$).

\subsubsection{The natural measure on $\protect\V_{Q(\mathbf{e}_{d})}(\protect\R)$}

We now define a $(\SO_{Q}\times\ASL_{d-1})(\R)$ invariant measure
on $\V_{Q(\mathbf{e}_{d})}(\R)$ using the identification \eqref{eq:polar coordinates for z_T}.
We choose Haar measures $m_{\SO_{Q}(\R)},\ m_{\ASL_{d-1}(\R)}$ on
$\SO_{Q}(\R)$ and $\ASL_{d-1}(\R)$ respectively with a normalization
we discuss in Section \ref{subsec:Measures-as measures on fibre bundles},
and we observe that $H$ is compact (by \eqref{eq:def_of_H}, we have
$H\cong\mathbf{H}_{\mathbf{e}_{d}}(\R)$, and recall that $\mathbf{H}_{\mathbf{e}_{d}}(\R)$
is compact under our \nameref{subsec:Standing-Assumption}). Then
on $\V_{Q(\mathbf{e}_{d})}(\R)$ we can define the following measure
\begin{equation}
\mu_{\V}\df\left(\pi_{H}\right)_{*}m_{\SO_{Q}(\R)}\otimes m_{\ASL_{d-1}(\R)},\label{eq:def of mu_Z}
\end{equation}
where $\pi_{H}:(\SO_{Q}\times\ASL_{d-1})(\R)\to H\backslash(\SO_{Q}\times\ASL_{d-1})(\R)$
is the natural quotient map.

\subsection{\label{subsec:Statistics-of-Z}Statistics of $\protect\V_{N}(\protect\Z)$
as $N\to\infty$}

We are now ready to discuss our main results. Let $N\in\N$ and consider
the following atomic measure on $\mathcal{\V}_{Q(\mathbf{e}_{d})}(\R)$
\begin{equation}
\nu_{N}^{\V}=\frac{1}{\left|\H_{N,\text{prim}}(\Z)/\SO_{Q}(\Z)\right|}\sum_{x\in\V_{N}(\Z)}\delta_{\pi_{\V_{N}}(x)}.\label{eq:counting mease on z}
\end{equation}

The following definition amounts to a congruence condition of the
range of $N\in\N$ for which we are able to obtain the asymptotics
of the measures $\nu_{N}$.
\begin{defn}
\label{def:Isotropicity definition}Given a prime $p$ and a rational
quadratic form $Q$, we say that $\mathbf{v}\in\Q^{d}$ is $\left(Q,p\right)$
co-isotropic if $\mathbf{H}_{\mathbf{v}}(\Q_{p})$ (the stabilizer
of \textbf{$\mathbf{v}$ }in the group\textbf{ }$\SO_{Q}(\Q_{p})$\textbf{)}
is non-compact. We say that $N\in\N$ has the $\left(Q,p\right)$
co-isotropic property if there exists $\mathbf{v}\in\H_{N,\text{prim}}(\Z)$
which is $\left(Q,p\right)$ co-isotropic.
\end{defn}

\begin{rem*}
For $\mathbf{v}\in\Q^{d}$ we have $\mathbf{H}_{\mathbf{v}}(\Q_{p})$
is non-compact if and only if $\exists\mathbf{u}\in\Q_{p}^{d}\otimes\mathbf{v}{}^{\perp(Q)}$
such that $Q(\mathbf{u})=0$, where $\mathbf{v}{}^{\perp(Q)}$ is
the orthogonal hyperplane \emph{with respect to $Q$. }We note that
if $Q$ is a rational quadratic form in $d\geq6$ variables, then
the form induced on $\Q_{p}^{d}\otimes\mathbf{v}{}^{\perp(Q)}$ is
in $d\geq5$ variables and by \cite{CASSELS_quad} (see \cite[Lemma 1.7]{CASSELS_quad}),
we obtain that any $\mathbf{v}\in\Q^{d}$ is $\left(Q,p\right)$ co-isotropic,
for any prime $p$.
\end{rem*}
Our main results are as follows.
\begin{thm}
\label{thm:main thm for Z} Assume that $\{T_{n}\}_{n=1}^{\infty}\subseteq\N$
is a sequence of integers satisfying the $\left(Q,p_{0}\right)$ co-isotropic
property for some fixed odd prime $p_{0}$, and $T_{n}\to\infty$.
Then for all $f\in C_{c}(\V_{Q(\mathbf{e}_{d})}(\R))$ we have that\emph{
\[
\lim_{n\to\infty}\nu_{T_{n}}^{\V}(f)=\mu_{\mathcal{\V}}(f).
\]
}
\end{thm}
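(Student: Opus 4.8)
The plan is to reduce Theorem~\ref{thm:main thm for Z} to an equidistribution statement for a sequence of compact $H$-orbits (or rather, orbits of a related group) in an $S$-arithmetic homogeneous space, and then unwind the identifications of Section~\ref{subsec:The-equivariant-isomorphism}. More precisely, using Corollary~\ref{cor:transitivity of ASL_times_SO_d}\eqref{enu:finitely many orbits in V_x(Z)}, the measure $\nu^{\V}_{N}$ is a sum over the finitely many $(\SO_{Q}\times\ASL_{d-1})(\Z)$-orbits in $\V_{N}(\Z)$; by Lemma~\ref{lem:stabilizer lemma} each such orbit, after applying $\pi_{\V_{N}}$ and the orbit map \eqref{eq:polar coordinates for z_T}, corresponds to a closed orbit of the stabilizer group $\mathbf{L}_{g}(\R)\cong\mathbf{H}_{\ovec(g)}(\R)$ inside $H\backslash(\SO_{Q}\times\ASL_{d-1})(\R)$ (quotiented on the right by the relevant arithmetic lattice). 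Thus the first step is the bookkeeping that turns $\nu^{\V}_{T_{n}}$ into a normalized sum of $\mathbf{H}_{\ovec(g)}$-orbit measures, one for each class in $\H_{T_{n},\text{prim}}(\Z)/\SO_{Q}(\Z)$.

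Next I would invoke the generalization of \cite[Theorem~3.1]{AESgrids} announced for Section~\ref{sec:A-revisit-to-s-arith-thm-aes}: a sequence of compact orbits of semisimple groups $\mathbf{H}_{\mathbf{v}_{n}}$ in an $S$-arithmetic space equidistributes toward the Haar measure, provided (a) the orbits have volume tending to infinity, and (b) there is no escape of mass, which is guaranteed by a suitable isotropy/maximality condition at a fixed place. This is precisely where the hypothesis that $\{T_n\}$ satisfies the $(Q,p_0)$ co-isotropic property for a fixed odd prime $p_0$ enters: it ensures that for each $n$ one may choose a representative $\mathbf{v}_{n}\in\H_{T_n,\text{prim}}(\Z)$ whose stabilizer $\mathbf{H}_{\mathbf{v}_n}(\Q_{p_0})$ is non-compact, so that the relevant $S$-arithmetic orbit (with $S=\{\infty,p_0\}$) is non-compact in the $p_0$-direction and the equidistribution theorem applies without loss of mass. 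The divergence of volumes should follow from a class-number/Siegel-type lower bound forcing $|\H_{N,\text{prim}}(\Z)/\SO_Q(\Z)|$ or the individual orbit volumes to grow as $N\to\infty$ (using $d\geq 4$ and non-degeneracy of $Q$).

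The third step is to push the $S$-arithmetic equidistribution back down to $\V_{Q(\mathbf{e}_d)}(\R)$. One projects from the $S$-arithmetic space to the real space $H\backslash(\SO_Q\times\ASL_{d-1})(\R)$, identifies this with $\V_{Q(\mathbf{e}_d)}(\R)$ via \eqref{eq:polar coordinates for z_T}, and checks that the pushforward of the Haar measure is exactly $\mu_{\V}$ as defined in \eqref{eq:def of mu_Z}; the equivariance of $\pi_{\V_N}$ noted after \eqref{eq:def of pi_Z} guarantees compatibility of the orbit maps for different $T_n$ with the limiting object. Testing against $f\in C_c(\V_{Q(\mathbf{e}_d)}(\R))$ then converts weak-$*$ convergence of the orbit measures into $\nu^{\V}_{T_n}(f)\to\mu_{\V}(f)$; the normalization constant $|\H_{N,\text{prim}}(\Z)/\SO_Q(\Z)|$ matches because it is exactly the number of orbits being averaged.

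The main obstacle I expect is step two, specifically controlling non-escape of mass uniformly along the sequence. One must verify that the $(Q,p_0)$ co-isotropic condition at a single fixed odd prime genuinely suffices to prevent mass from escaping to infinity in $H\backslash(\SO_Q\times\ASL_{d-1})(\R)$ — the $\ASL_{d-1}$ factor is not reductive, so its unipotent part introduces a genuine non-compactness in the ambient space that must be handled, presumably by a reduction-theory argument showing the orbits stay in a fixed compact part of that direction, or by working in the quotient where the $\ASL_{d-1}$-fibers are themselves finite-volume homogeneous spaces. A secondary technical point is ensuring the chosen representatives $\mathbf{v}_n$ and the corresponding lattices in $\SO_{Q^*}$-orbits (via Lemma~\ref{lem:F(SO_Q) and stabilizer of a vector}) are compatible so that a single ambient $S$-arithmetic space works for all $n$; this is the role of the form $Q^*$ and the orthogonal decomposition in Lemma~\ref{lem:F(SO_Q) and stabilizer of a vector}\eqref{enu:orthogonal hyperplan to tau(g)}.
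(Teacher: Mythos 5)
Your skeleton does match the paper's at the bird's-eye level (decompose $\V_{T_n}(\Z)$ into finitely many classes, relate each class to a translated compact $\mathbf{L}_{g}$-orbit in an $S$-arithmetic space containing the place $p_0$, apply the equidistribution theorem of Section~\ref{sec:A-revisit-to-s-arith-thm-aes}, push back down to $\V_{Q(\mathbf{e}_{d})}(\R)$), but two of your steps have genuine gaps. First, the correspondence you assert in step one -- one compact $\mathbf{H}_{\ovec(g)}$-orbit per class in $\H_{T_n,\text{prim}}(\Z)/\SO_{Q}(\Z)$, with the normalization matching ``because it is the number of orbits being averaged'' -- is at the wrong granularity. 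A single $\G(\Z)$-orbit in $\V_{T_n}(\Z)$ does not correspond to a full orbit of $\mathbf{L}_{g}(\R\times\Q_{S})$; it corresponds to only one piece $O_{g,S,h}$ of that orbit, indexed by a double coset in $\mathbf{L}_{g}(\Z_{S})\backslash\mathbf{L}_{g}(\Q_{S})/\mathbf{L}_{g}(\ZS)$, and the equidistribution input (Theorem~\ref{thm:AESgrids thm}) is only available for the whole orbit. What matches the whole orbit intersected with the principal genus $\U_{S}$ of \eqref{eq:def of U_S} is the coarser equivalence class $E_{g}=g\cdot\G(\ZS)\cap g\cdot\G(\Z_{S})$ of \eqref{eq:definition of E_g}, a union of $|E_{g}/\G(\Z)|$ orbits of $\G(\Z)$ (Corollary~\ref{cor:main correspondence of E_g and O_g,S}); one then needs the pieces to carry asymptotically equal weight, which fails exactly at points with nontrivial stabilizer and must be shown to be negligible (the stabilizer/orbifold-singular-set argument behind Lemmas~\ref{lem:equal weights}--\ref{lem:exeptional set of weights is rare} and the unfolding Lemma~\ref{lem:unwinding with negliglbe non-triv stab-1}), before the normalization you invoke is justified. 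After that, recovering the average over all of $\V_{T_n}(\Z)$ normalized by $|\H_{T_n,\text{prim}}(\Z)/\SO_{Q}(\Z)|$ uses $\sum_{j}|E_{g_{j,n}}/\G(\Z)|=|\H_{T_n,\text{prim}}(\Z)/\SO_{Q}(\Z)|$ together with an elementary limit lemma (Lemma~\ref{lem:elementary lemma}); and since every class must be treated, choosing ``a representative $\mathbf{v}_{n}$'' with noncompact $p_0$-stabilizer is not enough -- you need that co-isotropy at level $T_n$ propagates to all classes, which follows from Witt transitivity of $\SO_{Q}(\Q)$ on $\H_{T_n}(\Q)$.

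Second, your description of the dynamical input misidentifies the mechanism. The theorem you black-box is not of the form ``compact orbits whose volumes tend to infinity and with no escape of mass equidistribute'': no Siegel/class-number volume lower bound is used or needed, and those two conditions alone would not yield the conclusion, since the relevant result (Theorem~\ref{thm:go thm}) permits limits supported on translates of intermediate subgroup orbits. The $(Q,p_0)$ co-isotropy enters as the strong isotropy hypothesis at $p_0$ (every normal $\Q_{p_0}$-subgroup of the stabilizer is noncompact), which is what makes the limit algebraic; non-escape of mass comes instead from the $\Q$-anisotropy of the centralizer, itself a consequence of maximality of $\mathbf{H}_{\ovec(g_n)}$. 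The heart of the proof is ruling out intermediate limit groups, which requires arithmetic arguments using primitivity of $\ovec(g_n)$, the growth $Q(\ovec(g_n))\to\infty$, and discriminant/gcd bounds. Finally, the non-reductivity of $\ASL_{d-1}$ that you flag is handled neither by reduction theory nor by a fiberwise finite-volume argument, but by first projecting to the semisimple quotient $\G_{1}\times\SL_{d-1}$ (a projection with compact fibers, so no mass is lost), proving equidistribution there, and then upgrading inside $\G_{1}\times\SL_{d}$, where the remaining possible intermediate group $\G_{1}\times\SL_{d-1}^{\mathbf{t}}$ is excluded by another primitivity/boundedness argument. Without these ingredients your plan does not close, even granting the announced $S$-arithmetic theorem and Theorem~\ref{thm:refinement of the main theorem-1} as stated.
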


Next, for $N\in\N$ and $q\in\N$ we consider the following measure
on $\mathcal{\V}_{Q(\mathbf{e}_{d})}(\R)\times\V_{\vartheta_{q}(T)}(\Z/(q))$
given by
\begin{equation}
\nu_{N}^{\V,q}=\frac{1}{\left|\H_{N,\text{prim}}(\Z)/\SO_{Q}(\Z)\right|}\sum_{x\in\V_{N}(\Z)}\delta_{(\pi_{\V_{N}}(x),\vartheta_{q}(x))}.\label{eq:congruence counting measures on nu_Z_Q(E_d))}
\end{equation}

\begin{thm}
\label{thm:main_thm_with_congruences-forZ}Let $q\in2\N+1$. In addition
to our \nameref{subsec:Standing-Assumption} on the form $Q$, assume
that $Q$ is non-singular modulo $q$ \emph{(}see Definition \ref{def:non-singularity modq}\emph{)}.
Let $\{T_{n}\}_{n=1}^{\infty}\subseteq\N$ be a sequence of integers
satisfying the $\left(Q,p_{0}\right)$ co-isotropic property for some
odd prime $p_{0}$, and assume that there is a fixed $a\in\left(\Z/(q)\right)^{\times}$
such that for all $n\in\N$ it holds $\red_{q}\left(T_{n}\right)=a$.
Then, for all $f\in C_{c}(\V_{Q(\mathbf{e}_{d})}(\R)\times\V_{a}(\Z/(q)))$
we have that\emph{
\[
\lim_{n\to\infty}\nu_{T_{n}}^{\V,q}(f)=\mu_{\mathcal{\V}}\otimes\mu_{\V_{a}(\Z/(q))}(f),
\]
}where\emph{ $\mu_{\V_{a}(\Z/(q))}$ is the uniform probability measure
on $\V_{a}(\Z/(q))$.}
\end{thm}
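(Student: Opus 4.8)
The plan is to deduce Theorem \ref{thm:main_thm_with_congruences-forZ} from Theorem \ref{thm:main thm for Z} by enlarging the ambient space with a finite ($p$-adic / modular) component and then re-running the orbit-equidistribution machinery in the $S$-arithmetic setting, exactly as in the passage from the Archimedean to the congruence statement in \cite{AESgrids}. Concretely, I would fix $q\in 2\N+1$ with $Q$ non-singular mod $q$, and observe via Corollary \ref{cor:transitivity of ASL_times_SO_d}(\ref{enu:mod_q_transitively}) that $(\SO_Q\times\ASL_{d-1})(\Z/(q))$ acts transitively on $\V_a(\Z/(q))$ for every $a\in(\Z/(q))^\times$. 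Hence $\V_a(\Z/(q))\cong H(\Z/(q))\backslash(\SO_Q\times\ASL_{d-1})(\Z/(q))$ for the appropriate finite stabilizer, and the uniform probability measure $\mu_{\V_a(\Z/(q))}$ is precisely the pushforward of normalized counting (= finite Haar) measure on $(\SO_Q\times\ASL_{d-1})(\Z/(q))$.

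Next I would set up the joint object. For each $x\in\V_{T_n}(\Z)$, write $x = \o(\rho^{-1}) a_{T_n}\eta$ with $(\rho,\eta)\in(\SO_Q\times\ASL_{d-1})(\R)$ (using the transitivity in Corollary \ref{cor:transitivity of ASL_times_SO_d}(\ref{enu:Real_transitively})), so that $\pi_{\V_{T_n}}(x)=\o(\rho^{-1})\eta$. The key point is that the pair $\bigl(\pi_{\V_{T_n}}(x),\,\vartheta_q(x)\bigr)$ is the image of a single element of the $S$-arithmetic group $(\SO_Q\times\ASL_{d-1})(\R)\times(\SO_Q\times\ASL_{d-1})(\Q_p)$ (for $p\mid q$) acting on a fixed base point: the real coordinate records the orbit in $\V_{Q(\mathbf e_d)}(\R)$ and the $p$-adic coordinate, reduced mod $q$, records $\vartheta_q(x)$ because $\vartheta_q$ factors through the $\Z_p$-points. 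Then $\nu_{T_n}^{\V,q}$ is exactly the pushforward, under $\pi_{\V_{T_n}}\times\vartheta_q$, of the normalized counting measure on the finite set $\V_{T_n}(\Z)/(\SO_Q\times\ASL_{d-1})(\Z)$ realized as a packet of compact $H$-orbits in the $S$-arithmetic homogeneous space $(\SO_Q\times\ASL_{d-1})(\mathbb{A}_S)/(\SO_Q\times\ASL_{d-1})(\Z[S^{-1}])$, where $S=\{p_0\}\cup\{p\mid q\}$. The $(Q,p_0)$ co-isotropic hypothesis guarantees, via Lemma \ref{lem:stabilizer lemma} and the structure of $\mathbf H_{\ovec(g)}$, that the acting group has a non-compact factor at $p_0$, which is what feeds the $S$-arithmetic equidistribution theorem (the generalization of \cite[Theorem 3.1]{AESgrids} promised in Section \ref{sec:A-revisit-to-s-arith-thm-aes}, building on \cite{GO}).

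The heart of the argument is then: apply that $S$-arithmetic equidistribution theorem to conclude that the relevant sequence of compact orbits equidistributes towards the Haar measure on the full homogeneous space (or the appropriate finite-index piece selected by the congruence class $a$), and push this forward. Because the congruence $\vartheta_q(T_n)=a$ is held fixed, the orbits live in a single $(\SO_Q\times\ASL_{d-1})(\Z_p)$-coset structure whose image in $\V_a(\Z/(q))$ is all of $\V_a(\Z/(q))$ (transitivity again); equidistribution in the $S$-arithmetic space then projects, via the real place, to $\mu_{\V}$ on $\V_{Q(\mathbf e_d)}(\R)$ and, via the finite places, to the uniform measure on $\V_a(\Z/(q))$, and because the limit is Haar the two coordinates decouple into the product $\mu_{\V}\otimes\mu_{\V_a(\Z/(q))}$. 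The passage from the orbit-equidistribution statement to the statement about the atomic measures $\nu_{T_n}^{\V,q}$ tested against $f\in C_c$ is the standard unfolding-of-a-packet argument, and I would cite the outline in Section \ref{sec:Equivalence-classes-of integral points} for this; a minor technical check is that the normalizing cardinality $|\H_{N,\text{prim}}(\Z)/\SO_Q(\Z)|$ matches $|\V_N(\Z)/(\SO_Q\times\ASL_{d-1})(\Z)|$, which is Corollary \ref{cor:transitivity of ASL_times_SO_d}(\ref{enu:finitely many orbits in V_x(Z)}).

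The main obstacle I anticipate is verifying the hypotheses of the $S$-arithmetic equidistribution input uniformly along the sequence: one must ensure the orbits are genuinely ``growing'' (volumes $\to\infty$, no escape of mass) and that there is no intermediate subgroup obstruction — i.e. that the only proper subgroup that could support a limit measure is ruled out, which is where the $(Q,p_0)$ co-isotropic condition and the precise description of $H=L_{I_d}(\R)$ and its $p_0$-adic analogue (non-compact stabilizer) are essential. In the congruence refinement there is the additional bookkeeping that the $p$-adic components for $p\mid q$ are compact, so they contribute no dynamics but do partition the space; one has to check that the partition is exactly indexed by $\V_a(\Z/(q))$ with equal weights, which is precisely the content of the transitivity statement Corollary \ref{cor:transitivity of ASL_times_SO_d}(\ref{enu:mod_q_transitively}) together with the stabilizer computation. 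Once the orbit equidistribution is in hand, the remaining steps are formal.
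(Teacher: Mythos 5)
Your high-level route --- feed the $(Q,p_0)$ co-isotropy into the $S$-arithmetic orbit equidistribution of Section \ref{sec:A-revisit-to-s-arith-thm-aes} with $S=S_q\cup\{p_0\}$ and push the Haar limit through the real and finite places --- is indeed the paper's route, but your proposal has genuine gaps at the two places where the actual work lies. The first is the claim that $\nu_{T_n}^{\V,q}$ is \emph{exactly} the pushforward of the normalized counting measure on $\V_{T_n}(\Z)/\G(\Z)$ viewed as one packet of compact orbits in the $S$-arithmetic space. A single compact orbit $O_{g,S}$, intersected with the principal genus $\U_{S}=\G(\R\times\Z_{S})\G(\ZS)$, only sees the equivalence class $E_{g}=g\cdot\G(\ZS)\cap g\cdot\G(\Z_{S})$, not all of $\V_{T_n}(\Z)$; in general $\V_{T_n}(\Z)$ splits into several such classes, whose number varies with $n$. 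This is precisely why the paper proves the limit theorem class-by-class, for an arbitrary sequence of representatives whose reduction mod $q$ is held fixed along subsequences (Theorem \ref{thm:refinement of the main theorem-1}, resting on the duality of Corollary \ref{cor:main correspondence of E_g and O_g,S}), and only afterwards recombines the classes with the elementary averaging Lemma \ref{lem:elementary lemma}. Without this decomposition you cannot even express $\nu_{T_n}^{\V,q}$ in terms of orbit measures, so this step is not ``formal''.

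The second gap is the weights. Transitivity of $\G(\Z/(q))$ on $\V_{a}(\Z/(q))$ (Corollary \ref{cor:transitivity of ASL_times_SO_d}) identifies the \emph{support} of the finite-place factor but says nothing about weights: the restricted orbit measure $\mu_{g_{n},S}\mid_{\U_{S}}$, pushed to the finite set $\mathcal{R}_{g_{n},q}$ in the double-coset space of Section \ref{sec:Statistics-of-the-equivelence-classes}, is not uniform, because the mass of an atom is divided by the order of a stabilizer which is non-trivial exactly when a conjugate of $\mathbf{H}_{\mathbf{e}_{d}}(\R)$ meets $\G_{1}(\Z)$ non-trivially. Showing that such atoms are asymptotically negligible is the orbifold singular-set argument of Lemma \ref{lem:exeptional set of weights is rare}, and the same control is required to run the unfolding Lemma \ref{lem:unwinding with negliglbe non-triv stab-1}, which is what converts equidistribution on the finite quotient into a statement about the infinite, locally finite measures $\nu_{T_n}^{\V,q}$; your remark that equal weights follow from transitivity plus a stabilizer computation misdiagnoses this. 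Finally, two hypotheses you never verify are exactly what non-singularity of $Q$ mod $q$ supplies via Lemma \ref{lem:properties of a form non-singular modulo q}: that $Q$ is isotropic over $\Q_{p}$ for every $p\mid q$ (needed to invoke Theorem \ref{thm:AESgrids thm} with $S=S_q\cup\{p_0\}$), and that the reduction $\G(\Z_{S})\to\G(\Z/(q))$ is onto (needed so that the finite-place pushforward of the Haar limit is the uniform measure on $\V_{a}(\Z/(q))$).
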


\section{\label{sec:moduli-spaces}moduli spaces - refinements of \cite{AESgrids}}

This section discusses our results which generalize \cite{AESgrids}.
We note that these results are also conceptually similar to the Linnik
type results that we discussed in Section \ref{subsec:Linnik-type-problems},
and are roughly described as follows. We will introduce moduli spaces
$\Y(\R)$ and $\X(\R)$ which are fiber bundles over $\R^{d}\smallsetminus\mathbf{0}$
with fibers that are isomorphic to $Y_{d-1}=\ASL_{d-1}(\R)/\ASL_{d-1}(\Z)$
and $X_{d-1}=\SL_{d-1}(\R)/\SL_{d-1}(\Z)$. Taking the preimage of
a quadratic variety $\H_{T}(\R)\subseteq\R^{d}\smallsetminus\mathbf{0}$
by the projection map to $\R^{d}\smallsetminus\mathbf{0}$, we obtain
for $\mathcal{M}\in\{\Y,\X\}$ a one parameter family of subbundles
$\mathcal{M}_{T}(\R)\subseteq\mathcal{M}(\R)$ over $\H_{T}(\R)$,
which are all isomorphic. We will define a geometrically motivated
homeomorphism $\pi_{\mathcal{M}_{T}}:\mathcal{M}_{T}(\R)\to\mathcal{M}_{Q(\mathbf{e}_{d})}(\R)$,
and our main results, Theorems \ref{thm:moduli main thm }-\ref{thm:moduli_main_thm_with_congruences},
will be about the distribution of $\pi_{\mathcal{M}_{T}}\left(\mathcal{M}_{T}(\Z)\right)$
in $\mathcal{M}_{Q(\mathbf{e}_{d})}(\R)$, where $\mathcal{M}_{T}(\Z)=\mathcal{M}(\Z)\cap\mathcal{M}_{T}(\R)$.

The structure of this section is as follows:
\begin{itemize}
\item Sections \ref{subsec:The-moduli-space X(R)}-\ref{subsec:The-space-of directed folliations}
discuss $\X(\R)$ and $\Y(\R)$.
\item Section \ref{subsec:Moduli-level-sets,} discusses the subbundles
$\mathcal{M}_{T}(\R)\subseteq\mathcal{M}(\R),\ T>0$, the homeomorphisms
$\pi_{\mathcal{M}_{T}}$, and some natural measures on these subbundles.
\item Section \ref{subsec:Statistics-in-moduli} states Theorems \ref{thm:moduli main thm }-\ref{thm:moduli_main_thm_with_congruences}.
\item Section \ref{subsec:Proof-of-example from intro} relying on Theorems
\ref{thm:moduli main thm }-\ref{thm:moduli_main_thm_with_congruences}
proves Theorems \ref{thm:main thm-particular case} and \ref{thm:main_thm_particular case with congruences}
from the introduction.
\end{itemize}

\subsection{\label{subsec:The-moduli-space X(R)}The moduli space of oriented
rank $d-1$ discrete subgroups of $\protect\R^{d}$}

Instead of considering the shapes of orthogonal lattices to integral
vectors (which we introduced in Section \ref{subsec:Overview-and-motivation}),
we may consider the orthogonal lattices ``as is'' by 
\[
\X(\Z)\df\left\{ \left(\Lambda_{\mathbf{v}},\mathbf{v}\right)\mid\mathbf{v}\in\zprim,\ \Lambda_{\mathbf{v}}=\Z^{d}\cap\mathbf{v}^{\perp}\right\} .
\]
We will now describe a homogeneous space $\X(\R)$, which can be thought
of as a natural ambient space that contains $\X(\Z)$.

We let $X_{d-1,d}$ be the space of rank $(d-1)$-discrete subgroups
of $\R^{d}$, and we define $\X(\R)\subseteq X_{d-1,d}\times\R^{d}\smallsetminus\mathbf{0}$
by
\[
\X(\R)\df\left\{ \left(\Lambda,\mathbf{v}\right)\in X_{d-1,d}\times\R^{d}\smallsetminus\mathbf{0}\mid\mathbf{v}\perp\Lambda,\ \text{covol}(\Lambda)=\norm{\mathbf{v}}\right\} ,
\]
and as we now show, $\X(\R)$ is a homogeneous space. We consider
the left action of $\SL_{d}(\R)$ on $X_{d-1,d}\times\R^{d}$ given
by
\begin{equation}
g\cdot\left(\Lambda,\mathbf{v}\right)\df\left(g\Lambda,\o(g)\mathbf{v}\right),\ g\in\SL_{d}(\R).\label{eq:action of sl_d(R) on X(R)}
\end{equation}

\begin{lem}
\label{lem:description of X(=00005CR)}It holds that \emph{
\[
\X(\R)=\SL_{d}(\R)\cdot\left(\text{Span}_{\Z}\{\mathbf{e}_{1},..,\mathbf{e}_{d-1}\},\mathbf{e}_{d}\right).
\]
}
\end{lem}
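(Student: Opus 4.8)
The plan is to show the two inclusions separately, using that $\SL_d(\R)$ acts transitively on pairs of a hyperplane-lattice and its normal vector with matched covolume/norm.

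For the inclusion ``$\supseteq$'', I would first check that the base point $p_0 \df (\mathrm{Span}_{\Z}\{\mathbf{e}_1,\dots,\mathbf{e}_{d-1}\},\mathbf{e}_d)$ lies in $\X(\R)$: indeed $\mathbf{e}_d \perp \mathbf{e}_i$ for $i<d$, and $\covol(\mathrm{Span}_{\Z}\{\mathbf{e}_1,\dots,\mathbf{e}_{d-1}\}) = 1 = \norm{\mathbf{e}_d}$. Then I must verify that $\X(\R)$ is invariant under the action \eqref{eq:action of sl_d(R) on X(R)}. The covolume condition is preserved by Lemma \ref{lem:the covolume of lambda_v and the map theta} applied with $\hat g$ equal to a basis matrix of $\Lambda$: if $\Lambda = \hat h \Z^{d-1}$ then $g\Lambda = \widehat{gh}\,\Z^{d-1}$ (after completing $h$ to an element of $\GL_d$, or more carefully by a direct determinant computation $\covol(g\Lambda)^2 = \det((gB)^t(gB))$ where $B$ is a $d\times(d-1)$ basis matrix), and the orthogonality condition $\mathbf{v}\perp\Lambda$ is preserved because of the invariance \eqref{eq:cartan ortho invariance}: $\langle \o(g)\mathbf{v}, g\mathbf{b}\rangle = \langle \mathbf{v},\mathbf{b}\rangle = 0$ for every $\mathbf{b}\in\Lambda$. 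It remains to match $\norm{\o(g)\mathbf{v}}$ with $\covol(g\Lambda)$; since both the pair $(\Lambda,\mathbf{v})$ satisfies $\covol(\Lambda)=\norm{\mathbf{v}}$ and $\o(g)\mathbf{v}$ is (up to sign/scaling) the unique normal direction to $g\Lambda$, the desired equality follows from Lemma \ref{lem:the covolume of lambda_v and the map theta} once one knows $g$ maps a matrix whose $\ovec$ image is $\mathbf{v}$ to one whose $\ovec$ image is $\o(g)\mathbf{v}$ — which is exactly \eqref{eq:def of tau} together with $\o$ being an automorphism. This gives $\SL_d(\R)\cdot p_0 \subseteq \X(\R)$.

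For the inclusion ``$\subseteq$'', take an arbitrary $(\Lambda,\mathbf{v})\in\X(\R)$. Choose a $\Z$-basis $\mathbf{b}_1,\dots,\mathbf{b}_{d-1}$ of $\Lambda$ and let $B$ be the $d\times(d-1)$ matrix with these columns; since $\mathbf{v}\perp\Lambda$ and $\mathbf{v}\ne 0$, the vectors $\mathbf{b}_1,\dots,\mathbf{b}_{d-1},\mathbf{v}$ form a basis of $\R^d$, so the matrix $g_0$ with these columns lies in $\GL_d(\R)$. Its determinant has absolute value $\covol(\Lambda)\cdot\norm{\mathbf{v}}\cdot(\text{sign factor})$ — more precisely $|\det g_0| = \covol(\Lambda)\norm{\mathbf{v}}$ because $\mathbf{v}$ is orthogonal to the span of the $\mathbf{b}_i$'s; using $\covol(\Lambda)=\norm{\mathbf{v}}$ this equals $\norm{\mathbf{v}}^2$. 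I would then rescale: replace $\mathbf{b}_{d-1}$ by $\pm\mathbf{b}_{d-1}$ to fix orientation and divide all columns appropriately, or more cleanly, find $g\in\SL_d(\R)$ with $g\mathbf{e}_i = \mathbf{b}_i$ for $i<d$ (after a suitable normalization of the basis so that $\covol=1$ is achieved — but $\covol(\Lambda)$ need not be $1$). The clean fix is: set $\hat g$ to have columns $\mathbf{b}_1,\dots,\mathbf{b}_{d-1}$ and complete it to $g\in\SL_d(\R)$ by choosing a $d$-th column $\mathbf{c}$ with $\det g = 1$; this is possible since $\hat g$ has rank $d-1$. Then $g\cdot\mathbf{e}_i = \o(g)\mathbf{e}_i$ need not be $\mathbf{b}_i$ — here is where I must be careful about which action is used. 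The action \eqref{eq:action of sl_d(R) on X(R)} sends $(\mathrm{Span}_{\Z}\{\mathbf{e}_i\}, \mathbf{e}_d)$ to $(g\,\mathrm{Span}_{\Z}\{\mathbf{e}_i\}, \o(g)\mathbf{e}_d) = (\hat g\Z^{d-1}, \ovec(g))$. So I want $\hat g\Z^{d-1}=\Lambda$, which holds by construction, and $\ovec(g)=\mathbf{v}$. By \eqref{eq:orthogonality of theta}, $\ovec(g)$ is orthogonal to all columns of $\hat g$, hence parallel to $\mathbf{v}$; by Lemma \ref{lem:the covolume of lambda_v and the map theta}, $\norm{\ovec(g)} = \covol(\hat g\Z^{d-1}) = \covol(\Lambda) = \norm{\mathbf{v}}$, so $\ovec(g) = \pm\mathbf{v}$. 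If the sign is $-1$, I adjust $g$ by composing with a reflection in $\SL_d$ fixing $\Lambda$ and negating the normal direction (e.g. replace two of the $\mathbf{b}_i$ by their negatives, or replace $g$ by $g\cdot\mathrm{diag}(-1,1,\dots,1,-1)$, which lies in $\SL_d$, fixes $\hat g\Z^{d-1}$ as a set up to reindexing, and flips $\ovec$). After this adjustment $g\cdot p_0 = (\Lambda,\mathbf{v})$, completing the proof.

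The main obstacle I anticipate is the bookkeeping around orientation and the precise normalization in the surjectivity step: ensuring one can choose $g\in\SL_d(\R)$ (determinant exactly $1$, not merely nonzero) realizing a given pair, and handling the sign ambiguity in $\ovec(g)=\pm\mathbf{v}$. Everything else — invariance of the covolume and orthogonality conditions — reduces cleanly to Lemma \ref{lem:the covolume of lambda_v and the map theta} and the invariance \eqref{eq:cartan ortho invariance} already established, so those steps should be short.
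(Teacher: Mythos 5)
Your proof is correct and follows essentially the same route as the paper: transitivity of $\SL_{d}(\R)$ on rank-$(d-1)$ discrete subgroups together with the orthogonality relation \eqref{eq:orthogonality of theta} and Lemma \ref{lem:the covolume of lambda_v and the map theta}, just written out with the normalization and the sign ambiguity $\ovec(g)=\pm\mathbf{v}$ made explicit. One minor remark: of your two suggested sign fixes, negating two of the $\mathbf{b}_{i}$ leaves $\ovec(g)$ unchanged (the corresponding diagonal matrix fixes $\mathbf{e}_{d}$), but your other fix $g\mapsto g\cdot\mathrm{diag}(-1,1,\dots,1,-1)$ does flip $\ovec(g)$ while preserving the lattice and the determinant, so the argument goes through.
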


\begin{proof}
It is straightforward to verify that $\SL_{d}(\R)$ acts transitively
on $X_{d-1,d}$. The rest follows by \eqref{eq:orthogonality of theta}
and Lemma \ref{lem:the covolume of lambda_v and the map theta}.
\end{proof}
By noting that the stabilizer of $\left(\text{Span}_{\Z}\{\mathbf{e}_{1},..,\mathbf{e}_{d-1}\},\mathbf{e}_{d}\right)$
is the subgroup $\ASL_{d-1}(\Z)U\cong\SL_{d-1}(\Z)\ltimes\R^{d-1}$,
where 
\[
U=\left\{ \left(\begin{array}{cc}
I_{d-1} & \mathbf{v}\\
\mathbf{0} & 1
\end{array}\right)\mid\mathbf{v}\in\R^{d-1}\right\} ,
\]
we deduce the identification 
\[
\X(\R)=\SL_{d}(\R)/\left(\ASL_{d-1}(\Z)U\right).
\]

By restricting the above $\SL_{d}(\R)$ action on $\X(\R)$ to $\SL_{d}(\Z)$,
we obtain the following observation.
\begin{lem}
It holds that \emph{
\[
\X(\Z)=\SL_{d}(\Z)\cdot\left(\text{Span}_{\Z}\{\mathbf{e}_{1},..,\mathbf{e}_{d-1}\},\mathbf{e}_{d}\right).
\]
}
\end{lem}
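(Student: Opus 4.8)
The plan is to show both inclusions. The inclusion $\SL_{d}(\Z)\cdot\bigl(\mathrm{Span}_{\Z}\{\mathbf{e}_{1},\dots,\mathbf{e}_{d-1}\},\mathbf{e}_{d}\bigr)\subseteq\X(\Z)$ is essentially formal: for $g\in\SL_{d}(\Z)$ the lattice $g\,\mathrm{Span}_{\Z}\{\mathbf{e}_{1},\dots,\mathbf{e}_{d-1}\}$ is exactly $\hat g\Z^{d-1}$, the rank $(d-1)$ subgroup spanned by the first $d-1$ columns of $g$; by \eqref{eq:orthogonality of theta} the vector $\tau(g)=\o(g)\mathbf{e}_{d}$ is orthogonal to these columns, and by Lemma \ref{lem:the covolume of lambda_v and the map theta} we have $\covol(\hat g\Z^{d-1})=\norm{\tau(g)}$, so the pair $(\hat g\Z^{d-1},\tau(g))$ lies in $\X(\R)$. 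It remains to see that this pair has the required arithmetic form, i.e. that $\tau(g)\in\zprim$ and $\hat g\Z^{d-1}=\Z^{d}\cap\tau(g)^{\perp}$; I address this as part of the reverse inclusion argument below, since it is the same content.

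For the reverse inclusion, let $(\Lambda_{\mathbf{v}},\mathbf{v})\in\X(\Z)$ with $\mathbf{v}\in\zprim$ and $\Lambda_{\mathbf{v}}=\Z^{d}\cap\mathbf{v}^{\perp}$. Because $\mathbf{v}$ is primitive, $\Lambda_{\mathbf{v}}$ is a rank $(d-1)$ \emph{primitive} sublattice of $\Z^{d}$, so one can choose a $\Z$-basis $\mathbf{w}_{1},\dots,\mathbf{w}_{d-1}$ of $\Lambda_{\mathbf{v}}$ which extends to a $\Z$-basis $\mathbf{w}_{1},\dots,\mathbf{w}_{d-1},\mathbf{w}_{d}$ of $\Z^{d}$ (this is the standard fact that a primitive sublattice is a direct summand). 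Let $g_{0}\in\GL_{d}(\Z)$ be the matrix with these columns; replacing $\mathbf{w}_{d}$ by $-\mathbf{w}_{d}$ if necessary we may assume $\det(g_{0})=1$, so $g_{0}\in\SL_{d}(\Z)$. Then $g_{0}\cdot\bigl(\mathrm{Span}_{\Z}\{\mathbf{e}_{1},\dots,\mathbf{e}_{d-1}\},\mathbf{e}_{d}\bigr)=(\Lambda_{\mathbf{v}},\tau(g_{0}))$, and by the first paragraph this pair lies in $\X(\R)$, so $\tau(g_{0})\perp\Lambda_{\mathbf{v}}$ and $\norm{\tau(g_{0})}=\covol(\Lambda_{\mathbf{v}})=\norm{\mathbf{v}}$. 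Since $\mathbf{v}^{\perp}$ is a line (the orthogonal complement of the hyperplane $\mathrm{Span}_{\R}\Lambda_{\mathbf{v}}$), both $\tau(g_{0})$ and $\mathbf{v}$ lie on it and have equal norm, hence $\tau(g_{0})=\pm\mathbf{v}$.

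To finish I must arrange the sign. If $\tau(g_{0})=\mathbf{v}$ we are done with $g=g_{0}$. If $\tau(g_{0})=-\mathbf{v}$, I replace $g_{0}$ by $g=g_{0}s$ where $s\in\SL_{d}(\Z)$ is chosen to fix $\mathrm{Span}_{\Z}\{\mathbf{e}_{1},\dots,\mathbf{e}_{d-1}\}$ and act by $-1$ on $\tau$; concretely one may take $s=\mathrm{diag}(-1,1,\dots,1,-1)$, or more simply note that the full stabilizer of $\mathrm{Span}_{\Z}\{\mathbf{e}_{1},\dots,\mathbf{e}_{d-1}\}$ in $\SL_{d}(\Z)$ contains elements realizing $\tau\mapsto-\tau$ (for $d\ge 2$), and any such $s$ satisfies $\tau(g_{0}s)=\tau(g_{0})\cdot$(sign) $=\mathbf{v}$ while still sending $\mathrm{Span}_{\Z}\{\mathbf{e}_{1},\dots,\mathbf{e}_{d-1}\}$ to $\Lambda_{\mathbf{v}}$. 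Thus $(\Lambda_{\mathbf{v}},\mathbf{v})=g\cdot\bigl(\mathrm{Span}_{\Z}\{\mathbf{e}_{1},\dots,\mathbf{e}_{d-1}\},\mathbf{e}_{d}\bigr)$ with $g\in\SL_{d}(\Z)$, completing the reverse inclusion. The one genuinely non-formal ingredient is the fact that $\Lambda_{\mathbf{v}}=\Z^{d}\cap\mathbf{v}^{\perp}$ is a primitive sublattice when $\mathbf{v}$ is primitive, together with its splitting off as a $\Z$-direct summand; everything else is bookkeeping with the Cartan involution $\o$ and the two already-proved lemmas, so I expect the sign adjustment to be the only point requiring a moment's care.
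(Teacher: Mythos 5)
Your reverse inclusion ($\X(\Z)\subseteq\SL_{d}(\Z)\cdot(\mathrm{Span}_{\Z}\{\mathbf{e}_{1},\dots,\mathbf{e}_{d-1}\},\mathbf{e}_{d})$) is correct: splitting the primitive sublattice $\Lambda_{\mathbf{v}}$ off as a direct summand, concluding $\ovec(g_{0})=\pm\mathbf{v}$ from orthogonality to $\Lambda_{\mathbf{v}}$ together with $\norm{\ovec(g_{0})}=\covol(\Lambda_{\mathbf{v}})=\norm{\mathbf{v}}$, and the sign correction by $s=\mathrm{diag}(-1,1,\dots,1,-1)$ (which indeed satisfies $\o(s)=s$, preserves $\mathrm{Span}_{\Z}\{\mathbf{e}_{1},\dots,\mathbf{e}_{d-1}\}$ and flips $\ovec$) all work; only a small slip of phrasing: it is $(\mathrm{Span}_{\R}\Lambda_{\mathbf{v}})^{\perp}$, not $\mathbf{v}^{\perp}$, that is a line. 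This is also a more constructive route to surjectivity than the paper's, which simply invokes $\ovec(\SL_{d}(\Z))=\zprim$.

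The gap is in the forward inclusion. You correctly isolate what must be shown, namely that for \emph{every} $g\in\SL_{d}(\Z)$ one has $\ovec(g)\in\zprim$ and $\hat{g}\Z^{d-1}=\Z^{d}\cap\ovec(g)^{\perp}$, and then defer it with the remark that it is ``the same content'' as the reverse inclusion. It is not: the reverse inclusion only produces, for each given $(\Lambda_{\mathbf{v}},\mathbf{v})\in\X(\Z)$, one particular $g_{0}$ hitting it, and says nothing about an arbitrary $g\in\SL_{d}(\Z)$; so the inclusion of the orbit into $\X(\Z)$ is never actually established in your write-up. The missing step is short and is essentially the paper's whole proof: $\ovec(g)$ is the last column of $\o(g)=(g^{t})^{-1}\in\SL_{d}(\Z)$, hence a primitive integral vector; and since $g\mathbf{e}_{1},\dots,g\mathbf{e}_{d}$ is a $\Z$-basis of $\Z^{d}$ with $\langle\ovec(g),g\mathbf{e}_{j}\rangle=\delta_{d,j}$ by \eqref{eq:orthogonality of theta}, an integral vector is orthogonal to $\ovec(g)$ if and only if its $g\mathbf{e}_{d}$-coefficient vanishes, i.e. $\Z^{d}\cap\ovec(g)^{\perp}=\hat{g}\Z^{d-1}=g\,\mathrm{Span}_{\Z}\{\mathbf{e}_{1},\dots,\mathbf{e}_{d-1}\}$, so the orbit point lies in $\X(\Z)$. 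With that paragraph inserted, your proof is complete.
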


\begin{proof}
Since the columns of $g\in\SL_{d}(\Z)$ form a $\Z$-basis for $\Z^{d}$,
and since $\ovec(g)$ is orthogonal to the first $d-1$ columns of
$g$ (see \eqref{eq:orthogonality of theta}), we have
\begin{align*}
\Lambda_{\ovec(g)}= & \text{Span}_{\Z}\{g\mathbf{e}_{1},..,g\mathbf{e}_{d-1}\}\\
= & g\cdot\text{Span}_{\Z}\{\mathbf{e}_{1},..,\mathbf{e}_{d-1}\}.
\end{align*}

Finally, we note that $\ovec\left(\SL_{d}(\Z)\right)=\zprim$ (to
recall $\ovec$, see \eqref{eq:def of tau})
\end{proof}
We now observe that the map $\pi_{vec}^{\X}:\X(\R)\to\R^{d}\smallsetminus\mathbf{0}$
defined by 
\begin{equation}
\pi_{vec}^{\X}\left((\Lambda,\mathbf{v})\right)\df\mathbf{v},\label{eq:def of pi^X_vec}
\end{equation}
gives $\X(\R)$ the structure of a fiber bundle with fibers isomorphic
to $X_{d-1}$. Indeed 
\[
\begin{aligned}(\pi_{vec}^{\X})^{-1}(\mathbf{v}_{0})= & \left\{ (\Lambda,\mathbf{v}_{0})\in X_{d-1,d}\times\{\mathbf{v}_{0}\}\mid\Lambda\perp\mathbf{v}_{0},\ \text{covol}(\Lambda)=\norm{\mathbf{v}_{0}}\right\} \\
\cong & \left\{ \Lambda\in X_{d-1,d}\mid\mathbf{v}_{0}\perp\Lambda,\ \text{covol}(\Lambda)=\norm{\mathbf{v}_{0}}\right\} \\
\cong & X_{d-1}.
\end{aligned}
\]

\subsubsection{\label{subsec:The-extension-of shape to X(R)}The extension of the
\textquotedblleft shape\textquotedblright{} map to $\protect\X(\protect\R)$}

We now reconsider the map $\shape:\zprim\to\mathcal{S}_{d-1}$ from
Section \ref{subsec:Overview-and-motivation} and extend it to $\X(\R)$.

We note that $\SO_{d}(\R)$ acts on $\X(\R)$ by 
\[
\rho\cdot\left(\Lambda,\mathbf{v}\right)\df\left(\rho\Lambda,\rho\mathbf{v}\right),\ \ \rho\in\SO_{d}(\R),\ (\Lambda,\mathbf{v})\in\X(\R),
\]
which is the restriction of \eqref{eq:action of sl_d(R) on X(R)}
to $\SO_{d}(\R)$, and we let $K\df\SO_{d}(\R)\cap\ASL_{d-1}(\R)$
be the stabilizer of $\mathbf{e}_{d}$ by the ordinary $\SO_{d}(\R)$
left linear action on $\R^{d}$. Since $(\pi_{vec}^{\X})^{-1}(\mathbf{e}_{d})$
is identified with the space of of full rank lattices in $\R^{d-1}$,
and since $K$ acts on $(\pi_{vec}^{\X})^{-1}(\mathbf{e}_{d})$ by
Euclidean rotations in the plane $\mathbf{e}_{d}^{\perp}$, we obtain
that $\mathcal{S}_{d-1}$ identifies naturally with the space of $K$-orbits
in $(\pi_{vec}^{\X})^{-1}(\mathbf{e}_{d})$. Since $(\pi_{vec}^{\X})^{-1}(\mathbf{e}_{d})$
is the $\ASL_{d-1}(\R)$ orbit passing through $\left(\text{Span}_{\Z}\{\mathbf{e}_{1},..,\mathbf{e}_{d-1}\},\mathbf{e}_{d}\right)$,
we get that $(\pi_{vec}^{\X})^{-1}(\mathbf{e}_{d})\cong\ASL_{d-1}(\R)/\ASL_{d-1}(\Z)U$,
and we conclude that
\begin{equation}
\mathcal{S}_{d-1}\cong K\backslash\ASL_{d-1}(\R)/\ASL_{d-1}(\Z)U.\label{eq:def of shapes in terms of ASL}
\end{equation}

Next, for $\mathbf{v}\in\R^{d}\smallsetminus\mathbf{0}$ we choose
a $\rho_{\mathbf{v}}\in\SO_{d}(\R)$ such that $\rho_{\mathbf{v}}\mathbf{v}=\norm{\mathbf{v}}\mathbf{e}_{d}$,
and for $t>0$ we define $d_{t}\in\SL_{d}(\R)$ by 
\[
d_{t}\df\left(\begin{array}{cc}
t^{-1/(d-1)}I_{d-1}\\
 & t
\end{array}\right).
\]

Then
\[
d_{\norm{\mathbf{v}}}\rho_{\mathbf{v}}\cdot(\Lambda,\mathbf{v})=((d_{\norm{\mathbf{v}}}\rho_{\mathbf{v}})\Lambda,\mathbf{e}_{d})\in(\pi_{vec}^{\X})^{-1}(\mathbf{e}_{d}),
\]
and we note that $(d_{\norm{\mathbf{v}}}\rho_{\mathbf{v}})\Lambda=\rho_{\mathbf{v}}(\norm{\mathbf{v}}{}^{-1/(d-1)}\Lambda)$.
We observe that the $K$ orbit $K(d_{\norm{\mathbf{v}}}\rho_{\mathbf{v}})\Lambda\subseteq(\pi_{vec}^{\X})^{-1}(\mathbf{e}_{d})$
is independent of the choice of $\rho_{\mathbf{v}}$, and we define
$\shape:\X(\R)\to\mathcal{S}_{d-1}$ by
\begin{equation}
\shape(\Lambda,\mathbf{v})\df K(d_{\norm{\mathbf{v}}}\rho_{\mathbf{v}})\Lambda,\ (\Lambda,\mathbf{v})\in\X(\R).\label{eq:definition of shape in X(R)}
\end{equation}

\subsection{\label{subsec:The-space-of directed folliations} The space of unimodular
lattices with a marked rational hyperplane}

As in \cite{AESgrids}, we describe an object that extracts more information
from a primitive vector $\mathbf{v}$ than we get from $\Lambda_{\mathbf{v}}$
by telling us how $\Lambda_{\mathbf{v}}$ is completed to $\Z^{d}$.
Namely, for $\mathbf{v}\in\zprim$, we let $\mathbf{w}\in\zprim$
such that 
\[
\Lambda_{\mathbf{v}}\oplus\mathbf{w}\Z=\Z^{d}.
\]
We say that $\mathbf{w}$ completes $\Lambda_{\mathbf{v}}$ in a positive
direction if $\left\langle \mathbf{v},\mathbf{w}\right\rangle >0$.
This data is concisely recorded by the triple $(\Z^{d},\mathbf{v}^{\perp},\mathbf{v})$
in a natural way, and motivates us to consider
\[
\Y(\Z)\df\left\{ (\Z^{d},\mathbf{v}^{\perp},\mathbf{v})\mid\mathbf{v}\in\zprim\right\} .
\]
As for $\X(\R)$ and $\X(\Z)$, we will now describe $\Y(\R)$ as
a homogeneous space that can be thought of as a natural ambient space
containing $\Y(\Z)$.

We let $X_{d}$ be the space of unimodular lattices in $\R^{d}$ and
we denote by $\gr$ the space of hyperplanes in $\R^{d}$. For $L\in X_{d}$
we define $\gr_{L}$ to be the space of $L$-rational hyperplanes,
namely
\[
\gr_{L}\df\left\{ P\in\text{Gr}(d-1,d)\mid P\cap L\text{ is a rank \ensuremath{(d-1)}-discrete group of \ensuremath{\R^{d}}}\right\} ,
\]
and we define 
\begin{equation}
\Y(\R)\df\left\{ \left(L,P,\mathbf{v}\right)\in X_{d}\times\gr\times\R^{d}\mid P\in\gr_{L},\ P\perp\mathbf{v},\ \norm{\mathbf{v}}=\covol(L\cap P)\right\} .\label{eq:Y(R) as space of triples}
\end{equation}
We define a left action of $\SL_{d}(\R)$ on $X_{d}\times\gr\times\R^{d}$
by
\begin{equation}
g\cdot\left(L,P,\mathbf{v}\right)\df\left(gL,gP,\o(g)\mathbf{v}\right),\ g\in\SL_{d}(\R).\label{eq:SLd(R) action on Y(R)}
\end{equation}

\begin{lem}
\label{lemtransitive action on Y(R)}It holds that \emph{$\Y(\R)=\SL_{d}(\R)\cdot\left(\Z^{d},\text{Span}_{\R}\{\mathbf{e}_{1},..,\mathbf{e}_{d-1}\},\mathbf{e}_{d}\right).$}
\end{lem}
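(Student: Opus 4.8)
The plan is to mimic the proof of Lemma~\ref{lem:description of X(=00005CR)} for $\X(\R)$, now carrying along the extra marked-hyperplane coordinate $P$. First I would show that the base point $p_0 \df \left(\Z^{d},\text{Span}_{\R}\{\mathbf{e}_{1},..,\mathbf{e}_{d-1}\},\mathbf{e}_{d}\right)$ actually lies in $\Y(\R)$: the hyperplane $\text{Span}_{\R}\{\mathbf{e}_{1},..,\mathbf{e}_{d-1}\}$ is $\Z^{d}$-rational (it meets $\Z^{d}$ in $\text{Span}_{\Z}\{\mathbf{e}_{1},..,\mathbf{e}_{d-1}\}$, a rank $(d-1)$ discrete group), it is Euclidean-orthogonal to $\mathbf{e}_{d}$, and $\covol(\text{Span}_{\Z}\{\mathbf{e}_{1},..,\mathbf{e}_{d-1}\})=1=\norm{\mathbf{e}_{d}}$. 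So $p_0\in\Y(\R)$, and since $\Y(\R)$ is visibly $\SL_{d}(\R)$-invariant under \eqref{eq:SLd(R) action on Y(R)} (one checks $gL$ is unimodular, $gP$ is $gL$-rational, $gP\perp\o(g)\mathbf{v}$ by \eqref{eq:cartan ortho invariance}, and $\covol(gL\cap gP)=\covol(g(L\cap P))=\norm{\ovec(g)\cdot\text{stuff}}$ via the same determinant computation underlying Lemma~\ref{lem:the covolume of lambda_v and the map theta}), the orbit $\SL_{d}(\R)\cdot p_0$ is contained in $\Y(\R)$.

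For the reverse inclusion I would take an arbitrary $\left(L,P,\mathbf{v}\right)\in\Y(\R)$ and build a $g\in\SL_{d}(\R)$ moving $p_0$ to it. The key point is transitivity on the lattice-with-marked-rational-hyperplane part: given $L\in X_d$ and $P\in\gr_L$, choose a $\Z$-basis $\mathbf{b}_1,\dots,\mathbf{b}_{d-1}$ of $L\cap P$ and complete it with $\mathbf{b}_d$ to a $\Z$-basis of $L$ (possible since $L\cap P$ is a primitive sublattice of $L$); then the matrix $h$ with columns $\mathbf{b}_1,\dots,\mathbf{b}_d$ lies in $\GL_d(\R)$ with $h\Z^d=L$ and $h\cdot\text{Span}_{\R}\{\mathbf{e}_{1},..,\mathbf{e}_{d-1}\}=P$, and after rescaling the last column we may assume $\det h=1$. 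Now $h\cdot p_0 = (L,P,\ovec(h))$, and $\ovec(h)$ is Euclidean-orthogonal to $P=\text{Span}_{\R}\{\mathbf{b}_1,\dots,\mathbf{b}_{d-1}\}$ by \eqref{eq:orthogonality of theta}, with $\norm{\ovec(h)}=\covol(L\cap P)$ by Lemma~\ref{lem:the covolume of lambda_v and the map theta}. Since both $\mathbf{v}$ and $\ovec(h)$ are vectors orthogonal to $P$ of the same Euclidean length $\covol(L\cap P)$, they differ at most by sign; absorbing that sign into $h$ (e.g. by conjugating with a fixed determinant-one diagonal matrix that fixes $P$ and negates its orthogonal complement, using $d\geq 2$ so the ambient determinant stays $1$) we obtain $g\in\SL_d(\R)$ with $g\cdot p_0=(L,P,\mathbf{v})$.

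The main obstacle I anticipate is the bookkeeping around the last-column rescaling and the sign correction while keeping the determinant equal to $1$: one must verify that after scaling $\mathbf{b}_d$ by a positive real to fix the determinant, the resulting $\ovec$-vector still has the correct norm (it does, because $\covol(L\cap P)$ depends only on the first $d-1$ columns, and the formula $\covol(\hat g\Z^{d-1})^2=\det(\hat g^t \hat g)$ is insensitive to the last column), and that the sign flip can be realized inside $\SL_d(\R)$ rather than merely $\GL_d(\R)$. The rest — verifying $\SL_d(\R)$-invariance of $\Y(\R)$ and the orthogonality and covolume identities — is a direct application of \eqref{eq:cartan ortho invariance}, \eqref{eq:orthogonality of theta}, and Lemma~\ref{lem:the covolume of lambda_v and the map theta}, exactly as in Lemma~\ref{lem:description of X(=00005CR)}.
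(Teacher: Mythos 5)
Your route is essentially the paper's own argument made explicit: the paper disposes of the lemma by citing that $\SL_{d}(\R)$ acts transitively on $X_{d}$ and that the stabilizer of a lattice $L$ acts transitively on $\gr_{L}$, and then appeals to \eqref{eq:orthogonality of theta} and Lemma \ref{lem:the covolume of lambda_v and the map theta}; your basis-completion construction is a hands-on proof of exactly those transitivity statements, and your containment direction is the same computation the paper has in mind. So the approach is right; the only trouble is in the determinant/sign bookkeeping, which is precisely the step you single out as the main obstacle, and there your fixes do not work as literally stated.

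Two concrete points. First, no rescaling of $\mathbf{b}_{d}$ is available or needed: since $L$ is unimodular and $\mathbf{b}_{1},\dots,\mathbf{b}_{d}$ is a $\Z$-basis of $L$, you automatically have $\det h=\pm1$; moreover scaling a column by a positive $\lambda\neq1$ would destroy $h\Z^{d}=L$, so your justification (insensitivity of $\covol$ to the last column) addresses the wrong issue. Second, the sign cannot be absorbed by ``conjugating with a diagonal matrix that fixes $P$'': $P$ is not a coordinate hyperplane, conjugation (or any left multiplication not stabilizing $L$) would move the lattice, and a map fixing $P$ pointwise and negating $P^{\perp}$ has determinant $-1$. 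The correct adjustment is on the right, i.e.\ a change of the chosen $\Z$-basis: replacing $\mathbf{b}_{d-1}$ by $-\mathbf{b}_{d-1}$ flips $\det h$ while leaving $\ovec(h)$ unchanged (the orthogonality constraints and the pairing $\left\langle \ovec(h),\mathbf{b}_{d}\right\rangle =1$ are unaffected), whereas replacing $\mathbf{b}_{d}$ by $-\mathbf{b}_{d}$ flips both $\det h$ and $\ovec(h)$; both operations preserve $h\Z^{d}=L$ and the span $P$ of the first $d-1$ columns. Since your norm-and-orthogonality argument gives $\ovec(h)=\pm\mathbf{v}$, each of the four cases $(\det h,\ovec(h))=(\pm1,\pm\mathbf{v})$ can be corrected to $(+1,\mathbf{v})$ by an appropriate combination of these two sign changes (here $d\geq2$ is used so that $\mathbf{b}_{d-1}$ exists). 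With this replacement of your last-column rescaling and ``conjugation'' step, the proof is complete and coincides in substance with the paper's.
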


\begin{proof}
It is well known that $\SL_{d}(\R)$ acts transitively on $X_{d}$
and that the stabilizer in $\SL_{d}(\R)$ of a lattice $L$ acts transitively
on $\gr_{L}$. The rest follows by \eqref{eq:orthogonality of theta}
and Lemma \ref{lem:the covolume of lambda_v and the map theta}.
\end{proof}
We observe that the stabilizer of $\left(\Z^{d},\text{Span}_{\R}\{\mathbf{e}_{1},..,\mathbf{e}_{d-1}\},\mathbf{e}_{d}\right)$
is $\ASL_{d-1}(\Z)$, hence 
\[
\Y(\R)=\SL_{d}(\R)/\ASL_{d-1}(\Z).
\]
By restricting the action of $\SL_{d}(\R)$ to $\SL_{d}(\Z)$, we
obtain the following observation which we leave the reader to verify.
\begin{lem}
We have \emph{$\Y(\Z)=\SL_{d}(\Z)\cdot\left(\Z^{d},\text{Span}_{\Z}\{\mathbf{e}_{1},..,\mathbf{e}_{d-1}\},\mathbf{e}_{d}\right).$}
\end{lem}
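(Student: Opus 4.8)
The plan is to establish the two inclusions, exactly paralleling the proof of the corresponding statement for $\X(\Z)$. For the inclusion $\supseteq$, I would take $g\in\SL_{d}(\Z)$ and compute its action \eqref{eq:SLd(R) action on Y(R)} on the reference triple: since $g$ is an integral matrix with integral inverse, $g\Z^{d}=\Z^{d}$; the hyperplane $\text{Span}_{\R}\{\mathbf{e}_{1},\dots,\mathbf{e}_{d-1}\}$ is sent to $\text{Span}_{\R}\{\mathbf{g}_{1},\dots,\mathbf{g}_{d-1}\}$, where $\mathbf{g}_{i}$ denotes the $i$-th column of $g$, and this subspace equals $\ovec(g)^{\perp}$ because by \eqref{eq:orthogonality of theta} each of the first $d-1$ columns of $g$ is Euclidean-orthogonal to $\ovec(g)$; and the vector $\mathbf{e}_{d}$ is sent to $\o(g)\mathbf{e}_{d}=\ovec(g)$, which lies in $\zprim$ since $\ovec(\SL_{d}(\Z))=\zprim$ (as recorded in the proof of the previous lemma). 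Hence $g\cdot(\Z^{d},\text{Span}_{\R}\{\mathbf{e}_{1},\dots,\mathbf{e}_{d-1}\},\mathbf{e}_{d})=(\Z^{d},\ovec(g)^{\perp},\ovec(g))\in\Y(\Z)$.

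For the inclusion $\subseteq$, I would fix $(\Z^{d},\mathbf{v}^{\perp},\mathbf{v})\in\Y(\Z)$ with $\mathbf{v}\in\zprim$ and construct a suitable $g\in\SL_{d}(\Z)$. The key point is that, since $\mathbf{v}$ is primitive, the linear functional $\mathbf{x}\mapsto\left\langle \mathbf{v},\mathbf{x}\right\rangle$ maps $\Z^{d}$ onto $\Z$; choosing $\mathbf{w}\in\Z^{d}$ with $\left\langle \mathbf{v},\mathbf{w}\right\rangle =1$ yields a splitting $\Z^{d}=\Lambda_{\mathbf{v}}\oplus\Z\mathbf{w}$ with $\Lambda_{\mathbf{v}}=\Z^{d}\cap\mathbf{v}^{\perp}$ the kernel, a free $\Z$-module of rank $d-1$. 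Picking any $\Z$-basis $\mathbf{b}_{1},\dots,\mathbf{b}_{d-1}$ of $\Lambda_{\mathbf{v}}$, let $g$ be the matrix whose columns are $\mathbf{b}_{1},\dots,\mathbf{b}_{d-1},\mathbf{w}$; then $g\in\GL_{d}(\Z)$, and after replacing $\mathbf{b}_{1}$ by $-\mathbf{b}_{1}$ if necessary we may assume $\det g=1$, so $g\in\SL_{d}(\Z)$. I would then check, using \eqref{eq:SLd(R) action on Y(R)}, that $g$ moves the reference triple to $(\Z^{d},\mathbf{v}^{\perp},\mathbf{v})$: indeed $g\Z^{d}=\Z^{d}$; the space $\text{Span}_{\R}\{\mathbf{b}_{1},\dots,\mathbf{b}_{d-1}\}$ is a $(d-1)$-dimensional subspace of $\mathbf{v}^{\perp}$, hence equals $\mathbf{v}^{\perp}$; and $\ovec(g)=\mathbf{v}$ because \eqref{eq:orthogonality of theta} characterizes $\ovec(g)$ as the unique vector satisfying $\left\langle \ovec(g),\mathbf{g}_{j}\right\rangle =\delta_{d,j}$ for all $j$, while $\mathbf{v}$ satisfies $\left\langle \mathbf{v},\mathbf{b}_{i}\right\rangle =0$ for $i\leq d-1$ and $\left\langle \mathbf{v},\mathbf{w}\right\rangle =1$. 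The covolume constraint built into \eqref{eq:Y(R) as space of triples} plays no role here, as it is automatically satisfied by all elements of $\Y(\R)$.

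The only step that is not pure bookkeeping is the splitting $\Z^{d}=\Lambda_{\mathbf{v}}\oplus\Z\mathbf{w}$ — the standard fact that a primitive integral vector admits an integral ``dual'' vector, equivalently that $\Z^{d}/\Lambda_{\mathbf{v}}$ is torsion-free — and I do not expect it to present any real difficulty. One could alternatively deduce the lemma from the already-established description of $\X(\Z)$ via the $\SL_{d}$-equivariant map $(L,P,\mathbf{v})\mapsto(L\cap P,\mathbf{v})$ from $\Y(\R)$ to $\X(\R)$, but the direct verification above seems the shortest route.
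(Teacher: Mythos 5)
Your proof is correct and is exactly the direct verification the paper intends: the lemma is left to the reader, and the paper's proof of the analogous statement for $\X(\Z)$ proceeds the same way, namely the orbit computation via \eqref{eq:orthogonality of theta} together with the fact that $\ovec(\SL_{d}(\Z))=\zprim$. Your explicit construction of $g\in\SL_{d}(\Z)$ from a primitive $\mathbf{v}$ --- a $\Z$-basis of $\Lambda_{\mathbf{v}}$ completed by $\mathbf{w}\in\Z^{d}$ with $\left\langle \mathbf{v},\mathbf{w}\right\rangle =1$, adjusting a sign to get determinant one --- is precisely the standard proof of that surjectivity, so nothing is missing.
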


\subsubsection{The projection to $\protect\X(\protect\R)$}

A natural connection between $\Y(\R)$ and $\X(\R)$ is given by the
projection $\pi_{\cap}:\Y(\R)\to\X(\R)$ defined by 
\begin{equation}
\pi_{\cap}\left(\left(L,P,\mathbf{w}\right)\right)\df\left(L\cap P,\mathbf{w}\right).\label{eq:def of pi_cap}
\end{equation}
We observe that for $\left(\Lambda,\mathbf{v}\right)\in\X(\R)$, the
fiber $\pi_{\cap}^{-1}\left(\left(\Lambda,\mathbf{v}\right)\right)$
consists of the triples of the form
\[
\left(\Lambda+\left(\mathbf{u}+\frac{1}{\covol(\Lambda)}\mathbf{v}\right)\Z,\Lambda\otimes\R,\mathbf{v}\right),
\]
where $\mathbf{u}\in\Lambda\otimes\R$.\textbf{ }Namely, the fiber
$\pi_{\cap}^{-1}\left(\left(\Lambda,\mathbf{v}\right)\right)$ can
be identified with $\left(\Lambda\otimes\R\right)/\Lambda\cong\R^{d-1}/\Z^{d-1}$.
In terms of coset spaces, we have 
\begin{equation}
\pi_{\cap}\left(g\ASL_{d-1}(\Z)\right)=g\left(\ASL_{d-1}(\Z)U\right),\label{eq:def of pi_cap in coset space}
\end{equation}
which implies that
\[
\pi_{\cap}^{-1}\left(g(\ASL_{d-1}(\Z)U)\right)=g\ASL_{d-1}(\Z)U/\ASL_{d-1}(\Z)\cong\R^{d-1}/\Z^{d-1}.
\]
In particular, $\pi_{\cap}$ has compact fibers.
\begin{rem*}
We note that the analogue space to $\Y(\R)$ for dimensions $3\leq k<d-1$
in $d$-space was recently considered in \cite{Aka_Musso_Wieser}
which studies a problem similar to the one addressed in the current
paper.
\end{rem*}

\subsubsection{\label{subsec:-Y(R) as the space of ortiented grids}$\protect\Y(\protect\R)$
as the space of oriented $\left(d-1\right)$-grids in $\protect\R^{d}$}

We will now present another description of $\Y(\R)$ that, in our
opinion, is more geometrically transparent. This description more
clearly connects $\Y(\R)$ to the notion of grid shapes considered
in \cite{AESgrids} (which actually motivated us to consider the space
$\Y(\R)$).

We recall the space of unimodular grids in $\R^{d-1}$ (translates
of unimodular lattices) 
\[
Y_{d-1}\df\left\{ \Lambda+\mathbf{u}\mid\Lambda\in X_{d-1},\ \mathbf{u}\in\R^{d-1}\right\} .
\]
For a triple $\left(L,P,\mathbf{v}\right)\in\Y(\R)$ we let $\mathbf{w}\in L$
such that $(L\cap P)\oplus\mathbf{w}\Z=L$ and $\left\langle \mathbf{w},\mathbf{v}\right\rangle >0$.
We denote by $\pi_{P}^{\perp}:\R^{d}\to P$ the orthogonal projection.
Then 
\begin{equation}
\pi_{P}^{\perp}\left((L\cap P)+\mathbf{w}\right)=(L\cap P)+\pi_{P}^{\perp}(\mathbf{w}),\label{eq:identification of triples with grids}
\end{equation}
which can be viewed as a grid that sits in the hyperplane $P$, is
independent of the choice of $\mathbf{w}$. By defining $f(L,P,\mathbf{v})\df((L\cap P)+\pi_{P}^{\perp}(\mathbf{w}),\mathbf{v})$,
obtain an identification of $\Y(\R)$ with 
\[
\left\{ (\Lambda+\mathbf{u},\mathbf{v})\mid\Lambda\in X_{d-1,d},\ \mathbf{u}\in\Lambda\otimes\R,\ \mathbf{v\perp\text{\ensuremath{\Lambda}}},\ \norm{\mathbf{v}}=\covol(\Lambda)\right\} .
\]
Using the above description of $\Y(\R)$, we see that the projection
$\pi_{vec}^{\Y}:\Y(\R)\to\R^{d}\smallsetminus\mathbf{0}$ defined
by 
\begin{equation}
\pi_{vec}^{\Y}\left((\Lambda+\mathbf{u},\mathbf{v})\right)\df\mathbf{v},\label{eq:def of pi^y_vec}
\end{equation}
endows $\Y(\R)$ with the structure of a fiber bundle with fibers
isomorphic to $Y_{d-1}$.

\subsubsection*{A quick summary - hierarchy of moduli spaces}

We summarize the discussion concerning the moduli spaces by the following
commuting diagram 
\begin{equation}
\xymatrix{\Y(\R)\ar[r]^{\pi_{\cap}}\ar@/_{2pc}/[rr]^{\pi_{vec}^{\Y}} & \X(\R)\ar[r]^{\pi_{vec}^{\X}} & \R^{d}\smallsetminus\mathbf{0}}
\label{eq:diagram of real spaces}
\end{equation}
and we note that in terms of coset spaces, the following diagram is
equivalent to \eqref{eq:diagram of real spaces} \\
\\
\begin{equation}
\xymatrix{\SL_{d}(\R)/\ASL_{d-1}(\Z)\ar[r]\ar@/_{2pc}/[rr] & \SL_{d}(\R)/\ASL_{d-1}(\Z)U\ar[r] & \SL_{d}(\R)/\ASL_{d-1}(\R)}
\label{eq:diagram of coset spaces}
\end{equation}
\\

where all the maps are the natural projections.

\subsection{\label{subsec:Moduli-level-sets,}Moduli level sets, their measures
and their isomorphisms}

Let $Q$ be as in our \nameref{subsec:Standing-Assumption}. For $T>0$
we define 
\begin{equation}
\Y_{T}(\R)\df\left(\pi_{vec}^{\Y}\right)^{-1}\left(\H_{T}(\R)\right),\ \ \text{\ensuremath{\X}}_{T}(\R)\df\left(\pi_{vec}^{\X}\right)^{-1}\left(\H_{T}(\R)\right),\label{eq:def of moduli level sets}
\end{equation}
namely 
\[
\X_{T}(\R)\df\left\{ \left(\Lambda,\mathbf{v}\right)\in\X(\R)\mid Q(\mathbf{v})=T\right\} ,
\]
and 
\[
\Y_{T}(\R)\df\left\{ \left(L,P,\mathbf{v}\right)\in\Y(\R)\mid Q(\mathbf{v})=T\right\} .
\]

We note the following commuting diagram (which follows from \eqref{eq:diagram of real spaces})
that describes the hierarchy between the above moduli level sets
\begin{equation}
\xymatrix{\Y_{T}(\R)\ar[r]^{\pi_{\cap}}\ar@/_{2pc}/[rr]^{\pi_{vec}^{\Y}} & \X_{T}(\R)\ar[r]^{\pi_{vec}^{\X}} & \H_{T}(\R)}
\label{eq:diagram of level sets}
\end{equation}

Next, we define the integral points lying on the moduli level sets.
We consider for $N\in\N$ 
\[
\H_{N,\text{prim}}(\Z)\df\left\{ \mathbf{x}\in\zprim\mid Q(\mathbf{x})=N\right\} ,
\]
and we define 
\[
\X_{N}(\Z)\df\X(\Z)\cap\X_{N}(\R)=\left\{ (\Lambda_{\mathbf{v}},\mathbf{v})\mid\mathbf{v}\in\H_{N,\text{prim}}(\Z)\right\} ,
\]
and 
\[
\Y_{N}(\Z)\df\Y(\Z)\cap\Y_{N}(\R)=\left\{ (\Z^{d},\mathbf{v}^{\perp},\mathbf{v})\mid\mathbf{v}\in\H_{N,\text{prim}}(\Z)\right\} .
\]
We also note the following commuting diagram
\begin{equation}
\xymatrix{\Y_{N}(\Z)\ar@{<->}[r]^{\pi_{\cap}}\ar@{<->}@/_{2pc}/[rr]^{\pi_{vec}^{\Y}} & \X_{N}(\Z)\ar@{<->}[r]^{\pi_{vec}^{\X}\ \ } & \H_{N,\text{prim}}(\Z)}
\label{eq:diagram of integral level sets}
\end{equation}
where $\longleftrightarrow$ denotes bijection.

\subsubsection{\label{subsec:Maps-between-level}Maps between level sets}

We now define the homeomorphisms $\pi_{\Y_{Q(\sqrt{T}\mathbf{e}_{d})}}:\Y_{Q(\sqrt{T}\mathbf{e}_{d})}(\R)\to\Y_{Q(\mathbf{e}_{d})}(\R)$
and $\pi_{\X_{Q(\sqrt{T}\mathbf{e}_{d})}}:\X_{Q(\sqrt{T}\mathbf{e}_{d})}(\R)\to\X_{Q(\mathbf{e}_{d})}(\R)$,
by using a geometrically natural scaling transformation.

We define $\pi_{\X_{Q(\sqrt{T}\mathbf{e}_{d})}}:\X_{Q(\sqrt{T}\mathbf{e}_{d})}(\R)\to\X_{Q(\mathbf{e}_{d})}(\R)$
by 
\begin{equation}
\pi_{\X_{Q(\sqrt{T}\mathbf{e}_{d})}}(\Lambda,\mathbf{v})\df\left(\frac{1}{T^{1/2(d-1)}}\Lambda,\frac{1}{\sqrt{T}}\mathbf{v}\right),\ (\Lambda,\mathbf{v})\in\X_{Q(\sqrt{T}\mathbf{e}_{d})}(\R).\label{eq:def of pi_X(T)}
\end{equation}
We now give an alternative description of \eqref{eq:def of pi_X(T)}
using the $\SL_{d}(\R)$ action on $\X(\R)$. For $\mathbf{v}\in\H_{Q(\sqrt{T}\mathbf{e}_{d})}(\R)$
we define the unique matrix $S_{T,\mathbf{v}}\in\SL_{d}(\R)$ that
acts by scalar multiplication of a factor $T^{-\frac{1}{2(d-1)}}$
on $P=\mathbf{v}^{\perp}$ and that acts by scalar multiplication
of a factor $T^{1/2}$ on the line $\R\mathbf{v}$. Then, it follows
for $(\Lambda,\mathbf{v})\in\X_{Q(\sqrt{T}\mathbf{e}_{d})}(\R)$ that
\[
\pi_{\X_{Q(\sqrt{T}\mathbf{e}_{d})}}(\Lambda,\mathbf{v})=S_{T,\mathbf{v}}\cdot\left(\Lambda,\mathbf{v}\right)\underbrace{=}_{\text{recalling \eqref{eq:action of sl_d(R) on X(R)}}}\left(S_{T,\mathbf{v}}\Lambda,\o(S_{T,\mathbf{v}})\mathbf{v}\right).
\]

Next, using the matrices $S_{T,\mathbf{v}}$, $\mathbf{v}\in\R^{d},\ T>0$
which were defined above, we define $\pi_{\Y_{Q(\sqrt{T}\mathbf{e}_{d})}}:\Y_{Q(\sqrt{T}\mathbf{e}_{d})}(\R)\to\Y_{Q(\mathbf{e}_{d})}(\R)$
by
\begin{equation}
\pi_{\Y_{Q(\sqrt{T}\mathbf{e}_{d})}}(L,P,\mathbf{v})\df S_{T,\mathbf{v}}\cdot(L,P,\mathbf{v}),\ (L,P,\mathbf{v})\in\Y_{Q(\sqrt{T}\mathbf{e}_{d})}(\R),\label{eq:def of pi_Y_T}
\end{equation}
where $S_{T,\mathbf{v}}\cdot(L,P,\mathbf{v})\underbrace{=}_{\text{recalling \eqref{eq:SLd(R) action on Y(R)}}}(S_{T,\mathbf{v}}L,P,\o(S_{T,\mathbf{v}})\mathbf{v})=(S_{T,\mathbf{v}}L,P,\frac{1}{\sqrt{T}}\mathbf{v})$.
\begin{rem*}
By identifying $\Y(\R)$ as in Section \ref{subsec:-Y(R) as the space of ortiented grids},
we observe that $\pi_{\Y_{Q(\sqrt{T}\mathbf{e}_{d})}}:\Y_{Q(\sqrt{T}\mathbf{e}_{d})}(\R)\to\Y_{Q(\mathbf{e}_{d})}(\R)$
takes the form
\begin{equation}
\pi_{\Y_{Q(\sqrt{T}\mathbf{e}_{d})}}(\Lambda+\mathbf{u},\mathbf{v})\df\left(\frac{1}{T^{1/2(d-1)}}\left(\Lambda+\mathbf{u}\right),\frac{1}{T^{1/2}}\mathbf{v}\right).\label{eq:def of geometric pi_Y_T}
\end{equation}
\end{rem*}
It follows that $\pi_{\X_{T}}$ and $\pi_{\Y_{T}}$ are homeomorphisms
for all $T>0$, and we conclude the following commuting diagram 
\begin{equation}
\xymatrix{\Y_{T}(\R)\ar[r]^{\pi_{\cap}}\ar[d]_{\pi_{\Y_{T}}} & \X_{T}(\R)\ar[d]_{\pi_{\X_{T}}}\\
\Y_{Q(\mathbf{e}_{d})}(\R)\ar[r]^{\pi_{\cap}} & \X_{Q(\mathbf{e}_{d})}(\R)
}
\label{eq:projections diagram}
\end{equation}

\subsubsection{\label{subsec:Measures-as measures on fibre bundles} Measures on
moduli level sets}

As $\Y(\R)$ and $\X(\R)$ are fiber bundles over $\R^{d}\smallsetminus\mathbf{0},$
it follows (by \eqref{eq:def of moduli level sets}) that $\Y_{Q(\mathbf{e}_{d})}(\R)$
and $\X_{Q(\mathbf{e}_{d})}(\R)$ are fiber bundles over the base
space $\H_{Q(\mathbf{e}_{d})}(\R)$. We will now define certain measures
on $\Y_{Q(\mathbf{e}_{d})}(\R)$ and $\X_{Q(\mathbf{e}_{d})}(\R)$
by integrating the natural measures on the fibers of the maps $\left(\pi_{vec}^{\Y}\right)^{-1}(\mathbf{v})$
and $\left(\pi_{vec}^{\X}\right)^{-1}(\mathbf{v})$, with respect
to the measure on the base space $\H_{Q(\mathbf{e}_{d})}(\R)$.

For $\mathbf{v}\in\H_{Q(\mathbf{e}_{d})}(\R)$ we denote by $g_{\mathbf{v}}\in\SL_{d}(\R)$
a matrix satisfying 
\[
\ovec(g_{\mathbf{v}})\underbrace{=}_{\text{recalling \eqref{eq:def of tau}}}\o(g_{\mathbf{v}})\mathbf{e}_{d}=\mathbf{v}.
\]
Then, with the help of diagram \eqref{eq:diagram of coset spaces},
we observe that
\[
\left(\pi_{vec}^{\Y}\right)^{-1}(\mathbf{v})=g_{\mathbf{v}}\ASL_{d-1}(\R)/\ASL_{d-1}(\Z),
\]
and 
\[
\left(\pi_{vec}^{\X}\right)^{-1}(\mathbf{v})=g_{\mathbf{v}}\ASL_{d-1}(\R)/\ASL_{d-1}(\Z)U,
\]
which shows us explicitly the identification of the fibers of $\pi_{vec}^{\Y}$
with
\[
Y_{d-1}\df\ASL_{d-1}(\R)/\ASL_{d-1}(\Z)=\left(\pi_{vec}^{\X}\right)^{-1}(\mathbf{e}_{d}),
\]
and the identification of the fibers of $\pi_{vec}^{\X}$ with 
\begin{equation}
X_{d-1}\df\ASL_{d-1}(\R)/\ASL_{d-1}(\Z)U=\left(\pi_{vec}^{\X}\right)^{-1}(\mathbf{e}_{d}).\label{eq:def of X_d-1}
\end{equation}

We need the following technical definition which describes the normalization
of the Haar measures we will be using.
\begin{defn}
\label{def:weil normalization}Let $G$ be a locally compact second
countable group and let $\Gamma\leq G$ be a lattice. Let $m_{G}$
be a left Haar measure on $G$ and let $m_{G/\Gamma}$ be the unique
left $G$-invariant probability measure on $G/\Gamma$. We say that
$m_{G}$ and $m_{G/\Gamma}$ are \emph{Weil normalized }if for all
$f\in C_{c}(G)$ 
\[
\int_{G}f(x)dm_{G}(x)=\int_{G/\Gamma}\left(\sum_{\gamma\in\Gamma}f(x\gamma)\right)dm_{G/\Gamma}(x\Gamma).
\]
\end{defn}

To define a measure on $\H_{Q(\mathbf{e}_{d})}(\R)$, we recall that
$\SO_{Q}(\R)$ acts transitively on $\H_{Q(\mathbf{e}_{d})}(\R)$
(by Witt's theorem, since we assume $Q(\mathbf{e}_{d})\neq0$) via
the right action \eqref{eq:action of SO_Q on R^d}, which in turns
implies the identification
\[
\H_{Q(\mathbf{e}_{d})}(\R)\cong\mathbf{H}_{\mathbf{e}_{d}}(\R)\backslash\SO_{Q}(\R),
\]
where $\mathbf{H}_{\mathbf{e}_{d}}(\R)\leq\SO_{Q}(\R)$ denotes the
stabilizer of $\mathbf{e}_{d}$. We let $m_{\SO_{Q}(\R)}$ and $m_{\SO_{Q}(\R)/\SO_{Q}(\Z)}$
be Weil normalized, and we define the measure $\mu_{\H_{Q(\mathbf{e}_{d})}(\R)}$
on $\H_{Q(\mathbf{e}_{d})}(\R)$ by 
\[
\mu_{\H_{Q(\mathbf{e}_{d})}(\R)}\df\left(\pi_{\mathbf{H}_{e_{d}}(\R)}\right)_{*}m_{\SO_{Q}(\R)},
\]
where $\pi_{\mathbf{H}_{e_{d}}(\R)}:\SO_{Q}(\R)\to\mathbf{H}_{e_{d}}(\R)\backslash\SO_{Q}(\R)$
is the natural quotient map ($\mu_{\H_{Q(\mathbf{e}_{d})}(\R)}$ is
well defined since we assume that $\mathbf{H}_{\mathbf{e}_{d}}(\R)$
is compact).

We now proceed to define the measures on the fibers $\left(\pi_{vec}^{\Y}\right)^{-1}(\mathbf{v})$
and $\left(\pi_{vec}^{\X}\right)^{-1}(\mathbf{v})$ for $\mathbf{v}\in\H_{Q(\mathbf{e}_{d})}(\R)$.
We let $m_{\ASL_{d-1}(\R)}$ and $m_{Y_{d-1}}$ be Weil normalized,
and we let $m_{X_{d-1}}$ the unique $\ASL_{d-1}(\R)$ invariant measure
on $X_{d-1}$. We define for $\mathbf{v}\in\H_{Q(\mathbf{e}_{d})}(\R)$
the measure $\mu_{\left(\pi_{vec}^{\Y}\right)^{-1}(\mathbf{v})}$
on $\left(\pi_{vec}^{\Y}\right)^{-1}(\mathbf{v})$ by 
\[
\mu_{\left(\pi_{vec}^{\Y}\right)^{-1}(\mathbf{v})}(f)\df\int f(g_{\mathbf{v}}x)dm_{Y_{d-1}}(x),\ \forall f\in C_{c}(\left(\pi_{vec}^{\Y}\right)^{-1}(\mathbf{v}))
\]
and similarly, the measure $\mu_{\left(\pi_{vec}^{\X}\right)^{-1}(\mathbf{v})}$
on $\left(\pi_{vec}^{\X}\right)^{-1}(\mathbf{v})$ by 
\[
\mu_{\left(\pi_{vec}^{\X}\right)^{-1}(\mathbf{v})}(f)\df\int f(g_{\mathbf{v}}x)dm_{X_{d-1}}(x),\ \forall f\in C_{c}(\left(\pi_{vec}^{\X}\right)^{-1}(\mathbf{v})).
\]

We show now that $\mu_{\left(\pi_{vec}^{\Y}\right)^{-1}(\mathbf{v})}$
and $\mu_{\left(\pi_{vec}^{\X}\right)^{-1}(\mathbf{v})}$ are independent
of the choice of $g_{\mathbf{v}}.$ Indeed, if one chooses another
$\tilde{g}_{\mathbf{v}}\in\SL_{d}(\R)$ such that $\ovec(\tilde{g}_{\mathbf{v}})=\mathbf{v}$,
then $\ovec(g_{\mathbf{v}}^{-1}\tilde{g}_{\mathbf{v}})=\mathbf{e}_{d}$,
so that there exists $h\in\ASL_{d-1}(\R)$, such that $\tilde{g}_{\mathbf{v}}=g_{\mathbf{v}}h$.
Therefore we conclude for $\mathcal{M}\in\{X_{d-1},Y_{d-1}\}$ that
\[
\begin{aligned}\int f(\tilde{g}_{\mathbf{v}}x)dm_{\mathcal{M}}(x)= & \int f(g_{\mathbf{v}}hx)dm_{\mathcal{M}}(x)\\
\underbrace{=}_{\text{ \ensuremath{m_{\mathcal{M}}} is \ensuremath{\ASL_{d-1}}(\ensuremath{\R}) invariant}} & \int f(g_{\mathbf{v}}x)dm_{\mathcal{M}}(x).
\end{aligned}
\]

Finally, using the above, we define the following measures on the
spaces $\Y_{Q(\mathbf{e}_{d})}(\R)$ and $\X_{Q(\mathbf{e}_{d})}(\R)$
by
\begin{equation}
\mu_{\Y}\df\int\mu_{\left(\pi_{vec}^{\Y}\right)^{-1}(\mathbf{v})}\ d\mu_{\H_{Q(\mathbf{e}_{d})}(\R)}(\mathbf{v}),\ \text{and }\mu_{\X}\df\int\mu_{\left(\pi_{vec}^{\X}\right)^{-1}(\mathbf{v})}\ d\mu_{\H_{Q(\mathbf{e}_{d})}(\R)}(\mathbf{v}).\label{eq:definition of measure double integral}
\end{equation}

\subsubsection{\label{subsec:Pushforwards}Pushforwards}

We now turn to explain the relation between the measures $\mu_{\Y}$
and $\mu_{\X}$, as well as the connection between $\mu_{\X}$ and
the natural measure on the space of shapes.

Recall that the map $\pi_{\cap}:\Y_{Q(\mathbf{e}_{d})}(\R)\to\X_{Q(\mathbf{e}_{d})}(\R)$
defined in \eqref{eq:def of pi_cap} has compact fibers, hence $\left(\pi_{\cap}\right)_{*}\mu_{\Y}$
is a well defined measure on $\X_{Q(\mathbf{e}_{d})}(\R)$.
\begin{lem}
\label{lem:push of pi_cap}It holds that $\left(\pi_{\cap}\right)_{*}\mu_{\Y}=\mu_{\X}$.
\end{lem}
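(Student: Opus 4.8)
The plan is to verify the identity against test functions by unwinding both sides in terms of the fiber-bundle structure over $\H_{Q(\mathbf{e}_d)}(\R)$ and then reducing to a statement purely about the fibers, namely that pushing forward the Weil-normalized measure $m_{Y_{d-1}}$ on $\ASL_{d-1}(\R)/\ASL_{d-1}(\Z)$ under the natural projection $\pi:Y_{d-1}\to X_{d-1}=\ASL_{d-1}(\R)/\big(\ASL_{d-1}(\Z)U\big)$ yields $m_{X_{d-1}}$. First I would observe that by the definition \eqref{eq:def of pi_cap in coset space} of $\pi_\cap$ in coset form, the map $\pi_\cap$ is fiber-preserving over $\H_{Q(\mathbf{e}_d)}(\R)$: it commutes with $\pi_{vec}^{\Y}$ and $\pi_{vec}^{\X}$, and on the fiber $(\pi_{vec}^{\Y})^{-1}(\mathbf{v})=g_{\mathbf{v}}\ASL_{d-1}(\R)/\ASL_{d-1}(\Z)$ it is exactly $g_{\mathbf{v}}\pi g_{\mathbf{v}}^{-1}$ transported via the chosen $g_{\mathbf{v}}$.

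Next I would use the definitions \eqref{eq:definition of measure double integral} of $\mu_{\Y}$ and $\mu_{\X}$ as fibered integrals over the common base measure $\mu_{\H_{Q(\mathbf{e}_d)}(\R)}$. For $f\in C_c(\X_{Q(\mathbf{e}_d)}(\R))$ we have
\[
\left(\pi_\cap\right)_*\mu_{\Y}(f)=\mu_{\Y}(f\circ\pi_\cap)=\int \mu_{(\pi_{vec}^{\Y})^{-1}(\mathbf{v})}(f\circ\pi_\cap)\,d\mu_{\H_{Q(\mathbf{e}_d)}(\R)}(\mathbf{v}),
\]
and since $\pi_\cap$ carries $(\pi_{vec}^{\Y})^{-1}(\mathbf{v})$ into $(\pi_{vec}^{\X})^{-1}(\mathbf{v})$, it suffices to check the fiberwise identity $(\pi_\cap)_*\mu_{(\pi_{vec}^{\Y})^{-1}(\mathbf{v})}=\mu_{(\pi_{vec}^{\X})^{-1}(\mathbf{v})}$ for each $\mathbf{v}$. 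Unwinding the definitions of these fiber measures as pushforwards of $m_{Y_{d-1}}$ and $m_{X_{d-1}}$ via $g_{\mathbf{v}}$, and using that $\pi_\cap(g_{\mathbf{v}}x)=g_{\mathbf{v}}\pi(x)$, this reduces to $\pi_*m_{Y_{d-1}}=m_{X_{d-1}}$ on $X_{d-1}$.

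Finally I would establish $\pi_*m_{Y_{d-1}}=m_{X_{d-1}}$. The measure $\pi_*m_{Y_{d-1}}$ is a finite $\ASL_{d-1}(\R)$-invariant measure on $X_{d-1}=\ASL_{d-1}(\R)/\big(\ASL_{d-1}(\Z)U\big)$ — invariance is immediate since $\pi$ is $\ASL_{d-1}(\R)$-equivariant and $m_{Y_{d-1}}$ is invariant — and $m_{X_{d-1}}$ is the unique such invariant probability measure (note $\ASL_{d-1}(\Z)U$ is a lattice in $\ASL_{d-1}(\R)$, as its covolume equals that of $\ASL_{d-1}(\Z)$, because $U/\,(U\cap\ASL_{d-1}(\Z))\cong\R^{d-1}/\Z^{d-1}$ is compact of volume one). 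So by uniqueness it remains only to confirm that $\pi_*m_{Y_{d-1}}$ is a probability measure, i.e.\ has total mass one; this follows from the Weil normalization, since $m_{Y_{d-1}}$ has mass one and $\pi$ is measure-class-preserving onto its image (alternatively, from Weil's integration formula comparing the normalizations of $m_{Y_{d-1}}$, $m_{X_{d-1}}$ and the Haar probability measure on the compact fiber $U/(U\cap\ASL_{d-1}(\Z))$). I expect the only mildly delicate point to be this bookkeeping of Weil normalizations to pin down the total mass; the equivariance and uniqueness arguments are routine.
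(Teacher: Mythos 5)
Your proposal is correct and follows essentially the same route as the paper: reduce via the fibered-integral definitions of $\mu_{\Y}$ and $\mu_{\X}$ to the fiberwise identity $(\pi_{\cap})_{*}\mu_{(\pi_{vec}^{\Y})^{-1}(\mathbf{v})}=\mu_{(\pi_{vec}^{\X})^{-1}(\mathbf{v})}$, and then to the statement that the natural projection $Y_{d-1}\to X_{d-1}$ pushes $m_{Y_{d-1}}$ to $m_{X_{d-1}}$. The only difference is that the paper simply asserts this last fact, whereas you justify it by equivariance, uniqueness of the invariant probability measure, and the (immediate) preservation of total mass; the extra fussing over Weil normalization there is unnecessary but harmless.
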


\begin{proof}
We notice that for all $\mathbf{v}\in\H_{Q(\mathbf{e}_{d})}(\R)$
it holds that $\pi_{\cap}(\left(\pi_{vec}^{\Y}\right)^{-1}(\mathbf{v}))=\left(\pi_{vec}^{\X}\right)^{-1}(\mathbf{v})$,
which shows that for all $\mathbf{v}\in\H_{Q(\mathbf{e}_{d})}(\R)$
the measure $(\pi_{\cap})_{*}\mu_{\left(\pi_{vec}^{\Y}\right)^{-1}(\mathbf{v})}$
is supported on $\left(\pi_{vec}^{\X}\right)^{-1}(\mathbf{v})$. Using
\eqref{eq:definition of measure double integral}, we conclude that
it is sufficient to show 
\begin{equation}
(\pi_{\cap})_{*}\mu_{\left(\pi_{vec}^{\Y}\right)^{-1}(\mathbf{v})}=\mu_{\left(\pi_{vec}^{\X}\right)^{-1}(\mathbf{v})},\ \forall\mathbf{v}\in\H_{Q(\mathbf{e}_{d})}(\R).\label{eq:measure along Y fibers pushed correctly}
\end{equation}
in order to prove $\left(\pi_{\cap}\right)_{*}\mu_{\Y}=\mu_{\X}$.

We let $\mathbf{v}\in\H_{Q(\mathbf{e}_{d})}(\R)$, and we observe
that in terms of cosets, the restriction of $\pi_{\cap}$ to a fiber
$\left(\pi_{vec}^{\Y}\right)^{-1}(\mathbf{v})=g_{\mathbf{v}}\ASL_{d-1}(\R)/\ASL_{d-1}(\Z)$
takes the form 
\[
\pi_{\cap}(g_{\mathbf{v}}\eta\ASL_{d-1}(\Z))=g_{\mathbf{v}}\eta\ASL_{d-1}(\Z)U,\ \ \eta\in\ASL_{d-1}(\R),
\]
(see \eqref{eq:def of pi_cap in coset space}). Since the natural
projection $\ASL_{d-1}(\R)/\ASL_{d-1}(\Z)\to\ASL_{d-1}(\R)/\ASL_{d-1}(\Z)U$
pushes $m_{Y_{d-1}}$ to $m_{X_{d-1}}$, we can deduce \eqref{eq:measure along Y fibers pushed correctly}.
\end{proof}
Next, we recall the space of shapes $\mathcal{S}_{d-1}\df K\backslash X_{d-1}$
(see Section \ref{subsec:The-extension-of shape to X(R)}), and we
consider the product space 
\[
\W\df\mathcal{S}_{d-1}\times\H_{Q(\mathbf{e}_{d})}(\R).
\]
We define the product measure $\mu_{\W}\df\mu_{\mathcal{S}_{d-1}}\otimes\mu_{\H_{Q(\mathbf{e}_{d})}(\R)},$
where $\mu_{\mathcal{S}_{d-1}}$ is the push-forward of $m_{X_{d-1}}$
by a quotient from the left by $K$.

We define the map $(\shape\times\pi_{vec}^{\X}):\X_{Q(\mathbf{e}_{d})}(\R)\to\W$
by 
\begin{equation}
\left(\shape\times\pi_{vec}^{\X}\right)\left(\Lambda,\mathbf{v}\right)\df\left(\text{shape}(\Lambda,\mathbf{v}),\mathbf{v}\right),\label{eq:def of pi^chi_vec=00005Ctimesshape}
\end{equation}
where $\text{shape}(\Lambda,\mathbf{v})$ was defined by \eqref{eq:definition of shape in X(R)}.
As above, the map $\left(\shape\times\pi_{vec}^{\X}\right)$ has compact
fibers.
\begin{lem}
\label{lem:push of shape}We have $\left(\shape\times\pi_{vec}^{\X}\right)_{*}\mu_{\X}=\mu_{\W}$.
\end{lem}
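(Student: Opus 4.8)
The plan is to reduce the identity $\left(\shape\times\pi_{vec}^{\X}\right)_{*}\mu_{\X}=\mu_{\W}$ to a fiberwise statement, exactly as in the proof of Lemma \ref{lem:push of pi_cap}. Since $\mu_{\X}=\int\mu_{\left(\pi_{vec}^{\X}\right)^{-1}(\mathbf{v})}\,d\mu_{\H_{Q(\mathbf{e}_{d})}(\R)}(\mathbf{v})$ and $\mu_{\W}=\mu_{\mathcal{S}_{d-1}}\otimes\mu_{\H_{Q(\mathbf{e}_{d})}(\R)}$, and since for each $\mathbf{v}\in\H_{Q(\mathbf{e}_{d})}(\R)$ the map $\left(\shape\times\pi_{vec}^{\X}\right)$ sends the fiber $\left(\pi_{vec}^{\X}\right)^{-1}(\mathbf{v})$ into $\mathcal{S}_{d-1}\times\{\mathbf{v}\}$, it suffices to show that for every such $\mathbf{v}$ one has
\[
\left(\shape\times\pi_{vec}^{\X}\right)_{*}\mu_{\left(\pi_{vec}^{\X}\right)^{-1}(\mathbf{v})}=\mu_{\mathcal{S}_{d-1}}\otimes\delta_{\mathbf{v}},
\]
i.e. the pushforward of $\mu_{\left(\pi_{vec}^{\X}\right)^{-1}(\mathbf{v})}$ under $\shape$ alone equals $\mu_{\mathcal{S}_{d-1}}$. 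Integrating this over $\mathbf{v}$ against $\mu_{\H_{Q(\mathbf{e}_{d})}(\R)}$ then gives the result.

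First I would fix $\mathbf{v}\in\H_{Q(\mathbf{e}_{d})}(\R)$ and recall that $\mu_{\left(\pi_{vec}^{\X}\right)^{-1}(\mathbf{v})}$ is by definition the pushforward of $m_{X_{d-1}}$ under the orbit map $x\mapsto g_{\mathbf{v}}x$, where $g_{\mathbf{v}}\in\SL_d(\R)$ satisfies $\ovec(g_{\mathbf{v}})=\mathbf{v}$; by the independence of $g_{\mathbf{v}}$ established in Section \ref{subsec:Measures-as measures on fibre bundles}, I am free to make a convenient choice. Writing $\mathbf{v}=\o(g_{\mathbf v})\mathbf e_d$ and recalling $d_{\norm{\mathbf v}}\rho_{\mathbf v}\in\SL_d(\R)$ from Section \ref{subsec:The-extension-of shape to X(R)} which sends $\mathbf v$ to $\mathbf e_d$ under the $\o$-twisted action, the composition $\shape\circ(x\mapsto g_{\mathbf{v}}x)$ on the fiber $\left(\pi_{vec}^{\X}\right)^{-1}(\mathbf e_d)=X_{d-1}=\ASL_{d-1}(\R)/\ASL_{d-1}(\Z)U$ unwinds, via \eqref{eq:definition of shape in X(R)} and \eqref{eq:def of shapes in terms of ASL}, to the natural quotient map $X_{d-1}\to K\backslash X_{d-1}=\mathcal{S}_{d-1}$ — the scaling and rotation factors $d_{\norm{\mathbf v}}\rho_{\mathbf v}$ that appear in the definition of $\shape$ are precisely what is needed to carry the $g_{\mathbf v}$-translated fiber back to the standard fiber over $\mathbf e_d$ up to a left $K$-translation, which is then quotiented away. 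Hence $\shape_{*}\mu_{\left(\pi_{vec}^{\X}\right)^{-1}(\mathbf{v})}$ equals the pushforward of $m_{X_{d-1}}$ under $X_{d-1}\to K\backslash X_{d-1}$, which is exactly $\mu_{\mathcal{S}_{d-1}}$ by its definition in Section \ref{subsec:Pushforwards}.

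I expect the main obstacle to be the bookkeeping in the previous paragraph: namely verifying carefully that $\shape(g_{\mathbf v}x,\mathbf v)$, as $x$ ranges over $X_{d-1}$ embedded as the fiber over $\mathbf e_d$, really does coincide with the $K$-orbit of $x$ and is independent of all auxiliary choices ($\rho_{\mathbf v}$, $g_{\mathbf v}$, the representative of the $\SO_d(\R)$-coset). This is where one must combine the definition \eqref{eq:definition of shape in X(R)} with the identity $\norm{\mathbf v}=\covol(\Lambda)$ for $(\Lambda,\mathbf v)\in\X(\R)$ (so that the dilation $d_{\norm{\mathbf v}}$ matches the scaling built into $\shape$) and with the fact that $K=\SO_d(\R)\cap\ASL_{d-1}(\R)$ acts on the fiber over $\mathbf e_d$ precisely by rotations fixing $\mathbf e_d$. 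Once this fiberwise computation is pinned down, the assembly into the full statement is routine: it is the same disintegration argument as in Lemma \ref{lem:push of pi_cap}, using that $\left(\shape\times\pi_{vec}^{\X}\right)$ has compact fibers so the pushforward is well defined, and that a measure on $\mathcal S_{d-1}\times\H_{Q(\mathbf e_d)}(\R)$ whose disintegration over the second coordinate is constantly $\mu_{\mathcal S_{d-1}}$ is the product measure $\mu_{\W}$.
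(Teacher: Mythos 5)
Your proposal is correct and follows essentially the same route as the paper: reduce to the fiberwise identity $(\shape)_{*}\mu_{\left(\pi_{vec}^{\X}\right)^{-1}(\mathbf{v})}=\mu_{\mathcal{S}_{d-1}}$ as in Lemma \ref{lem:push of pi_cap}, then verify it via a convenient choice of $g_{\mathbf{v}}$. The ``bookkeeping'' you flag is exactly the paper's one-line computation with $g_{\mathbf{v}}=\rho_{\mathbf{v}}^{-1}d_{\norm{\mathbf{v}}}^{-1}$, which gives $\shape(g_{\mathbf{v}}\Lambda,\mathbf{v})=K d_{\norm{\mathbf{v}}}\rho_{\mathbf{v}}g_{\mathbf{v}}\Lambda=K\Lambda$, so the restriction of $\shape$ to the fiber is the $K$-quotient map, as you claimed.
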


\begin{proof}
Similarly to the proof of Lemma \ref{lem:push of pi_cap}, we observe
that it suffices to show that
\begin{equation}
(\shape)_{*}\mu_{\left(\pi_{vec}^{\X}\right)^{-1}(\mathbf{v})}=\mu_{\mathcal{S}_{d-1}},\ \forall\mathbf{v}\in\H_{Q(\mathbf{e}_{d})}(\R).\label{eq:correct pushforward by shape}
\end{equation}

We now describe $\left(\pi_{vec}^{\X}\right)^{-1}(\mathbf{v})$ in
a more convenient way, which makes the description of $\shape\mid_{\left(\pi_{vec}^{\X}\right)^{-1}(\mathbf{v})}$
more transparent. Fix $\mathbf{v}\in\H_{Q(\mathbf{e}_{d})}(\R)$.
We recall the diagonal matrix (see Section \ref{subsec:The-extension-of shape to X(R)})
\[
d_{\mathbf{\norm v}}\df\left(\begin{array}{cc}
\norm{\mathbf{v}}^{-1/(d-1)}I_{d-1} & \mathbf{0}\\
\mathbf{0} & \norm{\mathbf{v}}
\end{array}\right),
\]
and we let $\rho_{\mathbf{v}}\in\SO_{d}(\R)$ such that $\rho_{\mathbf{v}}^{-1}\mathbf{e}_{d}=\frac{1}{\norm{\mathbf{v}}}\mathbf{v}$.
We denote 
\[
g_{\mathbf{v}}\df\rho_{\mathbf{v}}^{-1}d_{\mathbf{\mathbf{\norm v}}}^{-1},
\]
and we observe that 
\[
\ovec(g_{\mathbf{v}})=\rho_{\mathbf{v}}^{-1}d_{\mathbf{\mathbf{\norm v}}}\mathbf{e}_{d}=\rho_{\mathbf{v}}^{-1}\mathbf{\norm v}\mathbf{e}_{d}=\mathbf{v}.
\]
By using the action of $\SL_{d}(\R)$ on $\X(\R)$ (see \eqref{eq:action of sl_d(R) on X(R)}),
we get $\left(\pi_{vec}^{\X}\right)^{-1}(\mathbf{v})=g_{\mathbf{v}}\cdot\left(\pi_{vec}^{\X}\right)^{-1}(\mathbf{e}_{d})$,
and by recalling \eqref{eq:definition of shape in X(R)}, we see that
$\shape\mid_{\left(\pi_{vec}^{\X}\right)^{-1}(\mathbf{v})}$ takes
the form 
\begin{equation}
\shape(g_{\mathbf{v}}\Lambda,\mathbf{v})\underbrace{=}_{\eqref{eq:definition of shape in X(R)}}Kg_{\mathbf{v}}^{-1}g_{\mathbf{v}}\Lambda=K\Lambda,\ \ \text{ }\forall\Lambda\perp\mathbf{e}_{d}\text{ such that }\covol(\Lambda)=1.\label{eq:the map shape on the fiber}
\end{equation}
Finally, by using \eqref{eq:the map shape on the fiber}, we see that
the function $f_{\mathbf{v}}:X_{d-1}\to\R$, defined by 
\[
f_{\mathbf{v}}(x)\df f\circ\shape(g_{\mathbf{v}}x),\ x\in X_{d-1}=\left(\pi_{vec}^{\X}\right)^{-1}(\mathbf{e}_{d}),
\]
is right $K$ invariant, and by noting that 
\[
\begin{aligned}(\shape)_{*}\mu_{\left(\pi_{vec}^{\X}\right)^{-1}(\mathbf{v})}(f)= & \int f_{\mathbf{v}}(x)dm_{X_{d-1}}(x),\end{aligned}
\]
we obtain \eqref{eq:correct pushforward by shape}.
\end{proof}

\subsection{\label{subsec:Statistics-in-moduli}Statistics in moduli spaces}

We are now able to state our main results for the moduli spaces.

For $N\in\N$ and for $\mathcal{M}\in\left\{ \Y,\X\right\} $, we
define the following measures on $\mathcal{M}_{Q(\mathbf{e}_{d})}(\R)$
by
\[
\nu_{N}^{\mathcal{M}}\df\frac{1}{\left|\H_{N,\text{prim}}(\Z)/\SO_{Q}(\Z)\right|}\sum_{x\in\mathcal{M}_{N}(\Z)}\delta_{\pi_{\mathcal{M}_{N}}(x)},
\]
(to recall $\pi_{\mathcal{M}_{T}}$ see \eqref{eq:def of pi_X(T)}
and \eqref{eq:def of pi_Y_T}), and we define a measure on $\W$ by
\[
\nu_{N}^{\mathcal{\W}}\df\frac{1}{\left|\H_{N,\text{prim}}(\Z)/\SO_{Q}(\Z)\right|}\sum_{\mathbf{v}\in\H_{N,\text{prim}}(\Z)}\delta_{(\text{shape}(\Lambda_{\mathbf{v}},\mathbf{v}),\frac{1}{\sqrt{N}}\mathbf{v})}.
\]

Our first main theorem is as follows.
\begin{thm}
\label{thm:moduli main thm } Assume that $\{T_{n}\}_{n=1}^{\infty}\subseteq\N$
such that $T_{n}\to\infty$ and such that for some fixed odd prime
$p_{0}$, the $\left(Q,p_{0}\right)$ co-isotropic property \emph{(}to
recall see Definition \ref{def:Isotropicity definition}\emph{)} holds.
Then\emph{
\[
\lim_{n\to\infty}\nu_{T_{n}}^{\mathcal{M}}(f)=\mu_{\mathcal{M}}(f),
\]
}where $\mathcal{M}\in\left\{ \Y,\X\right\} ,$ and $f\in C_{c}(\mathcal{M}_{Q(\mathbf{e}_{d})}(\R))$,
or for $\mathcal{M}=\W$ and $f\in C_{c}(\W)$.
\end{thm}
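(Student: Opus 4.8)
The plan is to deduce all three moduli statements from Theorem~\ref{thm:main thm for Z} by an unfolding argument over the lattice $\ASL_{d-1}(\Z)$, and to propagate the result down the hierarchy \eqref{eq:diagram of level sets}. I would first prove $\nu_N^{\Y}\to\mu_{\Y}$, and then obtain the cases $\mathcal{M}=\X$ and $\mathcal{M}=\W$ for free: the maps $\pi_{\cap}\colon\Y_{Q(\mathbf{e}_d)}(\R)\to\X_{Q(\mathbf{e}_d)}(\R)$ and $\shape\times\pi_{vec}^{\X}\colon\X_{Q(\mathbf{e}_d)}(\R)\to\W$ are proper (compact fibres, by the discussion after \eqref{eq:def of pi_cap in coset space} and before Lemma~\ref{lem:push of shape}), so pushforward under them is weak-$*$ continuous on $C_c$; diagrams \eqref{eq:diagram of integral level sets} and \eqref{eq:projections diagram} give $(\pi_{\cap})_{*}\nu_N^{\Y}=\nu_N^{\X}$ and $(\shape\times\pi_{vec}^{\X})_{*}\nu_N^{\X}=\nu_N^{\W}$, while Lemmas~\ref{lem:push of pi_cap} and~\ref{lem:push of shape} give $(\pi_{\cap})_{*}\mu_{\Y}=\mu_{\X}$ and $(\shape\times\pi_{vec}^{\X})_{*}\mu_{\X}=\mu_{\W}$. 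Hence everything comes down to $\mathcal{M}=\Y$.

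\textbf{The unfolding argument for $\Y$.} Since $\V_{Q(\mathbf{e}_d)}(\R)\subseteq\SL_d(\R)$ is right $\ASL_{d-1}(\R)$-invariant and $\tau$ is right $\ASL_{d-1}(\R)$-invariant, the natural quotient $q\colon\V_{Q(\mathbf{e}_d)}(\R)\to\Y_{Q(\mathbf{e}_d)}(\R)$, $g\mapsto g\cdot(\Z^d,\mathrm{Span}_\R\{\mathbf{e}_1,\dots,\mathbf{e}_{d-1}\},\mathbf{e}_d)$, identifies $\Y_{Q(\mathbf{e}_d)}(\R)$ with $\V_{Q(\mathbf{e}_d)}(\R)/\ASL_{d-1}(\Z)$, and the right $\ASL_{d-1}(\Z)$-action is properly discontinuous (recall $\V_{Q(\mathbf{e}_d)}(\R)\cong H\backslash(\SO_Q\times\ASL_{d-1})(\R)$ with $H$ compact, and $\ASL_{d-1}(\Z)$ a lattice in $\ASL_{d-1}(\R)$). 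One checks that $\mu_{\V}$ of \eqref{eq:def of mu_Z} unfolds over $\ASL_{d-1}(\Z)$ to exactly $\mu_{\Y}$ of \eqref{eq:definition of measure double integral}: this is precisely what the Weil normalization (Definition~\ref{def:weil normalization}) chosen in Section~\ref{subsec:Measures-as measures on fibre bundles} was set up for, since under $q$ the bundle $\V_{Q(\mathbf{e}_d)}(\R)\to\H_{Q(\mathbf{e}_d)}(\R)$ with fibre $\ASL_{d-1}(\R)$ and measure $\mu_{\H_{Q(\mathbf{e}_d)}(\R)}\otimes m_{\ASL_{d-1}(\R)}$ descends to the bundle with fibre $Y_{d-1}$ and measure $\mu_{\H_{Q(\mathbf{e}_d)}(\R)}\otimes m_{Y_{d-1}}$. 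Consequently, given $f\in C_c(\Y_{Q(\mathbf{e}_d)}(\R))$ one picks, via a partition of unity adapted to the properly discontinuous action, an $F\in C_c(\V_{Q(\mathbf{e}_d)}(\R))$ with $\sum_{\gamma\in\ASL_{d-1}(\Z)}F(x\gamma)=f(q(x))$ and $\mu_{\V}(F)=\mu_{\Y}(f)$; using $\V_N(\Z)=\bigsqcup_{\mathbf{v}\in\H_{N,\mathrm{prim}}(\Z)}g_{\mathbf{v}}\ASL_{d-1}(\Z)$ (Corollary~\ref{cor:transitivity of ASL_times_SO_d}) the counting sum collapses fibrewise, and Theorem~\ref{thm:main thm for Z} yields the convergence provided we can match the two ``straightening'' identifications below.

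\textbf{Matching the two normalizations.} The $\SL_d$-statement straightens a level set $\V_N(\R)$ onto $\V_{Q(\mathbf{e}_d)}(\R)$ by the $Q^*$-adapted torus $\a{}$ of Definition~\ref{def:def of a_T} (giving $\pi_{\V_N}$), whereas the moduli maps of Section~\ref{subsec:Maps-between-level} straighten by the geometric dilations $S_{T,\mathbf{v}}$ (giving $\pi_{\mathcal{M}_N}$). Writing $\sigma_N(g):=S_{T,\tau(g)}\,g$ for the geometric straightening on $\V_N(\R)$, one verifies $q\circ\sigma_N=\pi_{\Y_N}\circ q_N$ (with $q_N\colon\V_N(\R)\to\Y_N(\R)$ the natural projection). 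The discrepancy between $\sigma_N$ and $\pi_{\V_N}$ is right multiplication by a fibrewise element of the unipotent radical $U$: indeed, writing $g=\o(\rho^{-1})\a{}\eta$ one computes $\bigl[\o(\rho)S_{T,\rho^{-1}\mathbf{e}_d}\o(\rho^{-1})\bigr]\a{}\in U$ (it fixes $\mathbf{e}_1,\dots,\mathbf{e}_{d-1}$ because $\o(\rho)$ carries $(\rho^{-1}\mathbf{e}_d)^{\perp}$ to $P_0$, on which this product acts trivially), whence $\sigma_N(g)=\pi_{\V_N}(g)\cdot u_g'$ with $u_g'\in U$. For $\mathcal{M}=\X$ this term is killed in the quotient $\X_{Q(\mathbf{e}_d)}(\R)=\SL_d(\R)/\ASL_{d-1}(\Z)U$, so there $q_{\X}\circ\pi_{\V_N}=\pi_{\X_N}\circ q_N^{\X}$ holds on the nose and the $\X$ (hence $\W$) case would go through even without routing through $\Y$. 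For $\mathcal{M}=\Y$ the term persists, and I would handle it by showing that, on any fixed compact set, $u_g'$ is bounded and as $N\to\infty$ converges, uniformly, to $R(g)$ for a fixed continuous $U$-valued function $R$ depending only on the $\SO_Q$-coordinate of $g$; then $g\mapsto g\cdot R(g)$ is a fibre-preserving shear of $\V_{Q(\mathbf{e}_d)}(\R)$ which, $\ASL_{d-1}(\R)$ being unimodular and the shear acting by a translation inside the normal abelian factor $U$, preserves $\mu_{\V}$. Hence the empirical measures built from $\{\sigma_N(x):x\in\V_N(\Z)\}$ have the same weak-$*$ limit $\mu_{\V}$ as those built from $\{\pi_{\V_N}(x)\}$, which is exactly what the unfolding step requires, and we conclude $\nu_N^{\Y}(f)=\nu_N^{\V,\sigma}(F)\to\mu_{\V}(F)=\mu_{\Y}(f)$.

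\textbf{Main obstacle.} I expect the genuine technical work to be this normalization matching, i.e.\ the boundedness-and-convergence analysis of $\sigma_N$ versus $\pi_{\V_N}$ on compacta and the identification of the limiting shear, together with the verification that $\mu_{\V}$ unfolds to $\mu_{\Y}$ for the specific Haar normalizations fixed in Section~\ref{subsec:Measures-as measures on fibre bundles}; once these are in place the rest is bookkeeping with the diagrams \eqref{eq:diagram of integral level sets}, \eqref{eq:projections diagram} and the proper pushforwards $\pi_{\cap}$ and $\shape\times\pi_{vec}^{\X}$.
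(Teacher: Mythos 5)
Your proposal is correct and follows essentially the same route as the paper: the reduction of $\X$ and $\W$ to $\Y$ via the proper pushforwards $\pi_{\cap}$ and $\shape\times\pi_{vec}^{\X}$ (Lemmas \ref{lem:push of pi_cap}, \ref{lem:push of shape}), the unfolding of $\mu_{\V}$ to $\mu_{\Y}$ over $\ASL_{d-1}(\Z)$ (Lemmas \ref{lem:star bar map is onto}, \ref{lem:integral unwinding of z w.r.t y}), and the comparison of the geometric straightening with the $Q$-equivariant one $\pi_{\V_N}$ by a fibrewise unipotent discrepancy that converges uniformly on compacta to a fixed measure-preserving shear is exactly the paper's Section \ref{subsec:The-difference-of geom proj on Y and equiv proj on Y} (your right-$U$ description upstairs is the conjugate of the paper's left multiplication by $u^{Q}_{\mathbf{v}}$, cf.\ Lemma \ref{lem:U^Q preserves mu_Y}). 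The only step stated a bit too quickly is the final ``same weak-$*$ limit'' transfer between the two families of counting measures: since these are locally finite measures normalized by $\left|\H_{N,\text{prim}}(\Z)/\SO_{Q}(\Z)\right|$ rather than probability measures, uniform closeness of the test-function values must be combined with a uniform bound on the number of $\SO_{Q}(\Z)$-translates of each orbit representative meeting a fixed compact set, as in the paper's Corollary \ref{cor:measure on equiv proj and geom proj are same..} via Lemma \ref{lem:uniform bound intersection with cpct set}.
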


Let $q\in\N$ and recall that $\vartheta_{q}$ denotes the natural
reduction modulo $q$. For $N\in\N$ and for $\mathcal{M}\in\left\{ \Y,\X\right\} $
we define measures on $\mathcal{M}_{Q(\mathbf{e}_{d})}(\R)\times\H_{\vartheta_{q}(T)}(\Z/(q))$
by 
\[
\nu_{N}^{\mathcal{M},q}=\frac{1}{\left|\H_{N,\text{prim}}(\Z)/\SO_{Q}(\Z)\right|}\sum_{x\in\mathcal{M}_{N}(\Z)}\delta_{(\pi_{\mathcal{M}_{N}}(x),\vartheta_{q}(\pi_{vec}^{\mathcal{M}}(x)),}
\]
and similarly a measure on $\mathcal{W}\times\H_{\vartheta_{q}(T)}(\Z/(q))$
by
\[
\nu_{N}^{\mathcal{\W},q}\df\frac{1}{\left|\H_{N,\text{prim}}(\Z)/\SO_{Q}(\Z)\right|}\sum_{\mathbf{v}\in\H_{N,\text{prim}}(\Z)}\delta_{(\text{shape}(\Lambda_{\mathbf{v}},\mathbf{v}),\frac{1}{\sqrt{N}}\mathbf{v},\vartheta_{q}(\mathbf{v}))}.
\]
 By adding some further assumptions on the sequence $\{T_{n}\}_{n=1}^{\infty}$
appearing in Theorem \ref{thm:moduli main thm }, we are able to obtain
the following.
\begin{thm}
\label{thm:moduli_main_thm_with_congruences}Let $q\in2\N+1$\emph{.}
In addition to our \nameref{subsec:Standing-Assumption} on the form
$Q$ assume that $Q$ is non-singular modulo $q$ \emph{(}see Definition
\ref{def:non-singularity modq}\emph{)}. Let $\{T_{n}\}_{n=1}^{\infty}\subseteq\N$
be a sequence of integers satisfying the $\left(Q,p_{0}\right)$ for
some odd prime $p_{0}$ and assume that there is a fixed $a\in\left(\Z/(q)\right)^{\times}$
such that for all $n\in\N$ it holds $\red_{q}\left(T_{n}\right)=a$.
Then\emph{
\[
\lim_{n\to\infty}\nu_{T_{n}}^{\mathcal{M},q}(f)=\mu_{\mathcal{M}}\otimes\mu_{\H_{a}(\Z/(q)}(f),
\]
}where $\mathcal{M}\in\left\{ \Y,\X\right\} $, and $f\in C_{c}(\mathcal{M}_{Q(\mathbf{e}_{d})}(\R)\times\H_{a}(\Z/(q)))$,
or for $\mathcal{M}=\W$ and $f\in C_{c}(\W\times\H_{a}(\Z/(q)))$.
\end{thm}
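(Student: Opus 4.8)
The plan is to deduce Theorem~\ref{thm:moduli_main_thm_with_congruences} from Theorem~\ref{thm:main_thm_with_congruences-forZ} by the same mechanism that Section~\ref{sec:The-results-forZ imply Y} uses to deduce Theorem~\ref{thm:moduli main thm } from Theorem~\ref{thm:main thm for Z}, carrying the extra congruence coordinate along for free. First I would treat the case $\mathcal{M}=\Y$. The key point is that the map $g\mapsto g\ASL_{d-1}(\Z)$ identifies $\V_{Q(\mathbf{e}_{d})}(\R)/\text{(a discrete group)}$-type data with $\Y_{Q(\mathbf{e}_{d})}(\R)$; more precisely, there is a natural surjection $p_{\Y}:\V_{T}(\R)\to\Y_{T}(\R)$, namely $g\mapsto (g\cdot\ASL_{d-1}(\Z))$ read through Lemma~\ref{lemtransitive action on Y(R)}, which is equivariant for the $(\SO_{Q}\times\ASL_{d-1})(\R)$-action on the source and the $\SL_{d}(\R)$-action on the target (via the second factor $\ASL_{d-1}(\R)\hookrightarrow\SL_{d}(\R)$), and which intertwines $\pi_{\V_{T}}$ with $\pi_{\Y_{T}}$ because both are built from the same scaling matrix $a_{T}$, respectively $S_{T,\mathbf{v}}$ (this compatibility is exactly the content of the commuting diagram \eqref{eq:projections diagram} together with \eqref{eq:def of pi_Z} and \eqref{eq:def of pi_Y_T}). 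Moreover $p_{\Y}$ sends $\V_{N}(\Z)$ onto $\Y_{N}(\Z)$, with fibers that are single $\ASL_{d-1}(\Z)$-orbits, and these orbits are precisely the $(\SO_{Q}\times\ASL_{d-1})(\Z)$-orbit structure modulo the $\SO_{Q}(\Z)$-part; the count $|\H_{N,\text{prim}}(\Z)/\SO_{Q}(\Z)|$ appearing in both normalizations matches by Corollary~\ref{cor:transitivity of ASL_times_SO_d}\eqref{enu:finitely many orbits in V_x(Z)}. Consequently $(p_{\Y})_{*}\nu_{T_{n}}^{\V,q}=\nu_{T_{n}}^{\Y,q}$ once one checks that $\vartheta_{q}(\pi_{vec}^{\Y}(p_{\Y}(x)))=\vartheta_{q}(\ovec(x))$, which holds because $p_{\Y}$ is built to respect the vector coordinate and $\ovec$ commutes with reduction mod $q$.

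Second, I would verify that $p_{\Y}$ pushes the natural measures correctly: $(p_{\Y})_{*}\mu_{\V}=\mu_{\Y}$. Unwinding \eqref{eq:def of mu_Z} and \eqref{eq:definition of measure double integral}, both measures are obtained by disintegrating over the base $\H_{Q(\mathbf{e}_{d})}(\R)\cong\mathbf{H}_{\mathbf{e}_{d}}(\R)\backslash\SO_{Q}(\R)$ with the same measure $\mu_{\H_{Q(\mathbf{e}_{d})}(\R)}$, and along each fiber $\mu_{\V}$ is (the pushforward of) the Weil-normalized Haar measure $m_{\ASL_{d-1}(\R)}$, while $\mu_{\Y}$ uses $m_{Y_{d-1}}$ on $\ASL_{d-1}(\R)/\ASL_{d-1}(\Z)$; since $m_{\ASL_{d-1}(\R)}$ and $m_{Y_{d-1}}$ are Weil normalized (Definition~\ref{def:weil normalization}), the quotient map pushes one to the other. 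This is the same bookkeeping as Lemma~\ref{lem:push of pi_cap}. Combining this with Theorem~\ref{thm:main_thm_with_congruences-forZ} and the continuity of the (proper) pushforward on compactly supported functions, namely $\nu_{T_{n}}^{\Y,q}(f)=\nu_{T_{n}}^{\V,q}(f\circ p_{\Y})$ and $f\circ p_{\Y}\in C_{c}(\V_{Q(\mathbf{e}_{d})}(\R)\times\V_{a}(\Z/(q)))$ after identifying $\V_{a}(\Z/(q))$ with $\H_{a}(\Z/(q))$ via $\ovec$ — wait, more carefully one needs $\vartheta_q$ applied to the vector coordinate, so one identifies $\H_{a}(\Z/(q))$ as a quotient of $\V_{a}(\Z/(q))$ and notes $f$ pulls back — one gets $\lim_n\nu_{T_{n}}^{\Y,q}(f)=\mu_{\V}\otimes\mu_{\V_{a}(\Z/(q))}(f\circ p_{\Y})=\mu_{\Y}\otimes\mu_{\H_{a}(\Z/(q))}(f)$, which is the assertion for $\mathcal{M}=\Y$.

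Third, the cases $\mathcal{M}=\X$ and $\mathcal{M}=\W$ follow from the $\Y$ case by further pushforward: $\nu_{T_n}^{\X,q}=(\pi_\cap\times\mathrm{id})_*\nu_{T_n}^{\Y,q}$ and $\nu_{T_n}^{\W,q}=((\shape\times\pi_{vec}^{\X})\times\mathrm{id})_*\nu_{T_n}^{\X,q}$ on the nose (these maps are realized on integral points by the bijections in diagram \eqref{eq:diagram of integral level sets} and commute with $\vartheta_q$ on the vector coordinate), both maps have compact fibers and commute with the rescalings $\pi_{\mathcal{M}_T}$ by diagram \eqref{eq:projections diagram}, and by Lemmas~\ref{lem:push of pi_cap} and \ref{lem:push of shape} they send $\mu_{\Y}\mapsto\mu_{\X}\mapsto\mu_{\W}$. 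Hence the limit for $\Y$ transports to $\X$ and $\W$, tensored with the unchanged $\mu_{\H_a(\Z/(q))}$.

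The main obstacle, and the step deserving the most care, is constructing the equivariant surjection $p_{\Y}:\V_{T}(\R)\to\Y_{T}(\R)$ and checking it intertwines $\pi_{\V_{T}}$ with $\pi_{\Y_{T}}$ — equivalently, that the abstract rescaling by $a_{T}$ (Definition~\ref{def:def of a_T}, acting on the $Q^{*}$-orthogonal decomposition) corresponds under $p_\Y$ to the geometric rescaling by $S_{T,\mathbf{v}}$ (which scales the Euclidean-orthogonal hyperplane $\mathbf{v}^{\perp}$ and the line $\R\mathbf{v}$). The subtlety is that $a_T$ is defined via $Q^*$ while $S_{T,\mathbf v}$ is defined via the Euclidean structure; the reconciliation uses Lemma~\ref{lem:F(SO_Q) and stabilizer of a vector}\eqref{enu:orthogonal hyperplan to tau(g)}, which says exactly that under $\theta$ the $Q^*$-orthogonal complement of $M\ovec(g)$ becomes the Euclidean span of the first $d-1$ columns of $g$, so that the two rescalings agree once pushed to $\Y_T(\R)$. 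I expect this to be essentially a matter of composing the identifications already set up in Sections~\ref{subsec:The-equivariant-isomorphism} and~\ref{subsec:Maps-between-level}, but it is where an error would most plausibly hide, so I would write it out via an explicit $g_{\mathbf v}$ as in the proof of Lemma~\ref{lem:push of shape}. Everything else — matching normalizations of counting measures, compatibility with $\vartheta_q$, properness of the pushforwards — is routine given the results quoted above.
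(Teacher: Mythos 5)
There is a genuine gap, and it sits exactly at the point you flag as the main obstacle. The map $g\mapsto g\ASL_{d-1}(\Z)$ does intertwine $\pi_{\V_{T}}$ with a projection on $\Y_{T}(\R)$, but that projection is $\pi_{\Y_{T}}^{Q}$ of \eqref{eq:def of pi_T^Q}, built from the matrix $S_{T,\mathbf{v}}^{Q}$ of Definition \ref{def:of S_T,v^Q}, \emph{not} the geometric $\pi_{\Y_{T}}$ of \eqref{eq:def of pi_Y_T} used in $\nu_{T}^{\Y,q}$. Your reconciliation via Lemma \ref{lem:F(SO_Q) and stabilizer of a vector}\eqref{enu:orthogonal hyperplan to tau(g)} only shows that $S_{T,\mathbf{v}}$ and $S_{T,\mathbf{v}}^{Q}$ act identically on the hyperplane $\mathbf{v}^{\perp}$; they scale transversally along different lines ($\R\mathbf{v}$ versus $\R\mathbf{v}_{Q}=\R M\mathbf{v}$), and since the marked lattice $L$ in a point $(L,P,\mathbf{v})\in\Y_{T}(\R)$ is full rank and not contained in $\mathbf{v}^{\perp}$, the two rescalings do \emph{not} agree on $\Y_{T}(\R)$. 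Worse, the discrepancy $u_{T,\mathbf{v}}^{Q}=S_{T,\mathbf{v}}^{Q}S_{T,\mathbf{v}}^{-1}$ does not tend to the identity as $T\to\infty$: it converges (uniformly on compacta, Lemma \ref{lem:uniform convergence to a unipotent}) to the nontrivial unipotent $u_{\mathbf{v}}^{Q}$ sending $\mathbf{v}\mapsto\mathbf{v}_{Q}$. So your claimed identity $\nu_{T_{n}}^{\Y,q}(f)=\nu_{T_{n}}^{\V,q}(f\circ p_{\Y})$ transports the $\V$-result to the wrong family of counting measures, and the statement for $\mathcal{M}=\Y$ does not follow without an extra argument. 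Closing the gap requires precisely the content of Lemma \ref{lem:U^Q preserves mu_Y} (the limiting shear $u^{Q}$ preserves $\mu_{\Y}$) and Corollary \ref{cor:measure on equiv proj and geom proj are same..} (the $\pi_{\Y_{T}}^{Q}$-based and $\pi_{\Y_{T}}$-based counting measures have the same limit, which itself needs the uniform multiplicity bound of Lemma \ref{lem:uniform bound intersection with cpct set} over $\SO_{Q}(\Z)$-orbits); this is where most of the work in Section \ref{subsec:Proof-of-Theorems for Y} actually lies.

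A secondary, fixable issue: the relation between $\nu_{T}^{\V,q}$ and $\nu_{T}^{\Y,q}$ cannot be a pushforward, nor can you test $\nu_{T}^{\V,q}$ against $f\circ p_{\Y}$, because $f\circ p_{\Y}$ is $\ASL_{d-1}(\Z)$-invariant and hence not compactly supported on $\V_{Q(\mathbf{e}_{d})}(\R)$, while pushing the locally finite measure $\nu_{T}^{\V,q}$ along the infinite discrete fibers produces atoms of infinite mass. The correct mechanism is the unfolding one: given $f$ on $\Y_{Q(\mathbf{e}_{d})}(\R)\times\H_{a}(\Z/(q))$, use the surjectivity of $\ef\mapsto\bar{\ef}$ (Lemma \ref{lem:star bar map is onto}) to find a compactly supported $\ef_{f}$ on $\V_{Q(\mathbf{e}_{d})}(\R)\times\V_{a}(\Z/(q))$ whose unfolding is $f\circ(u^{Q})^{-1}$, apply Theorem \ref{thm:main_thm_with_congruences-forZ} to $\ef_{f}$, and match measures by Lemma \ref{lem:integral unwinding of z w.r.t y}. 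Your measure-matching heuristic ($\mu_{\V}$ versus $\mu_{\Y}$ via Weil normalization) and your reduction of $\X$ and $\W$ to $\Y$ via $\pi_{\cap}$ and $\shape\times\pi_{vec}^{\X}$ are in line with the paper and are fine as stated.
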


\subsection{\label{subsec:Proof-of-example from intro}Proof of Theorems \ref{thm:main thm-particular case}
and \ref{thm:main_thm_particular case with congruences}}

We now prove Theorems \ref{thm:main thm-particular case} and \ref{thm:main_thm_particular case with congruences}
by validating the assumptions of Theorems \ref{thm:moduli main thm }
and \ref{thm:moduli_main_thm_with_congruences} for the form $Q_{d}(\mathbf{x})\df x_{d}^{2}-\sum_{i=1}^{d-1}x_{i}^{2}$,
for $d\geq4$. 

Fix $d\geq4$. We observe that the form $Q_{d}$ satisfies our \nameref{subsec:Standing-Assumption},
since $Q_{d}$ is clearly non-degenerate, since $Q_{d}(\mathbf{e}_{d})=1\neq0$
and since $\mathbf{H}_{\mathbf{e}_{d}}(\R)\cong\SO_{d-1}(\R)$ which
is compact.

Since the determinant of $Q_{d}$'s companion matrix is $\pm1$, the
form $Q_{d}$ is non-singular modulo $p$ for any prime $p$.

We now claim that the sequence $\N$ has the $(Q_{d},5)$ co-isotropic
property. Let $\Q_{5}$ be the field of 5-adic numbers. We note that
$\sqrt{-1}\in\Q_{5}$ (by Hensel's lemma, since $2^{2}=-1$ mod $5$)
and we observe that the plane
\[
V\df\text{Span}_{\Q_{5}}\left\{ \sqrt{-1}\mathbf{e}_{2}+\mathbf{e}_{3},\mathbf{e}_{1}+\mathbf{e}_{d}\right\} \subseteq(\Q_{5})^{d},
\]
consists of $Q_{d}$-isotropic vectors. For $N\in\N$ and for $\mathbf{v}\in\H_{N}(\Q)$,
we let $\mathbf{v}^{\perp(Q_{d})}$ be the orthogonal space to $\mathbf{v}$
with respect to $Q_{d}$. Since $\mathbf{v}^{\perp(Q_{d})}\otimes\Q_{5}$
is a $(d-1)$-dimensional subspace of $(\Q_{5})^{d}$, we deduce that
$V\cap\left(\mathbf{v}^{\perp(Q_{d})}\otimes\Q_{5}\right)\neq\{\mathbf{0}\}$.
By the remark below Definition \ref{def:Isotropicity definition}
we deduce that the sequence $\N$ has the $(Q_{d},5)$ co-isotropic
property.

We now verify that $\H_{N,\text{prim}}(\Z)\neq\emptyset$ for all
$N\in\N$. We recall that there exists $\mathbf{u}\in\Z_{\text{prim}}^{3}$
such that 
\begin{equation}
u_{1}^{2}+u_{2}^{2}+u_{3}^{2}=m,\label{eq:sum of threee squares}
\end{equation}
for all positive integers $m\neq0,4,7$ modulo $8$ (see e.g. \cite{Grosswald_sum_of_three_squres}).
Since a square modulo $8$ attains the residues $0,1,4$, for we deduce
that all $N\in\N$ there exists $x_{4}\in\Z$ such that $x_{4}^{2}-N>0$
and such that $x_{4}^{2}-N\neq0,4,7$, which implies by \eqref{eq:sum of threee squares}
that there exists $\mathbf{x}\in\Z_{\text{prim}}^{4}\subseteq\zprim$
such that 
\[
x_{4}^{2}-x_{1}^{2}-x_{2}^{2}-x_{3}^{2}=N.
\]

\section{\label{sec:The-results-forZ imply Y}The results for $\protect\V$
imply the results for $\protect\Y$}

Our goal in this section is to use Theorems \ref{thm:main thm for Z}
- \ref{thm:main_thm_with_congruences-forZ} to deduce Theorems \ref{thm:moduli main thm }
- \ref{thm:moduli_main_thm_with_congruences}. We divide this section
into two parts as follows.
\begin{itemize}
\item Section \ref{subsec:Proof-of-Theorems for Y} proves Theorems \ref{thm:moduli main thm }
- \ref{thm:moduli_main_thm_with_congruences} for $\Y$. This is the
main difficulty in proving Theorems \ref{thm:moduli main thm } -
\ref{thm:moduli_main_thm_with_congruences}.
\item Section \ref{subsec:The-results-for Y imply X,W} gives the proof
for Theorems \ref{thm:moduli main thm } - \ref{thm:moduli_main_thm_with_congruences}
for $\X$ and $\W$, which relies on Section \ref{subsec:Pushforwards}
and Theorems \ref{thm:moduli main thm } - \ref{thm:moduli_main_thm_with_congruences}
for $\mathcal{M}=\Y$.
\end{itemize}

\subsection{\label{subsec:Proof-of-Theorems for Y}Proof of Theorems \ref{thm:moduli main thm }
- \ref{thm:moduli_main_thm_with_congruences} for $\protect\Y$}

We now outline our method for proving Theorems \ref{thm:moduli main thm }
- \ref{thm:moduli_main_thm_with_congruences} for $\mathcal{M}=\Y$
which is based on the result of Theorems \ref{thm:main thm for Z}
- \ref{thm:main_thm_with_congruences-forZ}.

We claim that for all $T>0$ it holds that
\begin{equation}
\Y_{T}(\R)\cong\V_{T}(\R)/\ASL_{d-1}(\Z),\label{eq:Y_T(R) is G orbits in Z_T(R)}
\end{equation}
Indeed, we recall that $\SL_{d}(\R)/\ASL_{d-1}(\Z)$ identifies with
$\Y(\R)$ by the orbit map 
\begin{equation}
\ovec_{\Y}(g\ASL_{d-1}(\Z))\df(g\Z^{d},\text{Span}_{\R}\{g\mathbf{e}_{1},..,g\mathbf{e}_{d-1}\},\ovec(g)),\ g\in\SL_{d}(\R),\label{eq:orbit map tau_Y}
\end{equation}
(see Section \ref{subsec:The-space-of directed folliations}), and
we observe that
\begin{equation}
\begin{aligned}\ovec_{\Y}^{-1}(\Y_{T}(\R))\underbrace{=}_{\text{recalling \eqref{eq:def of moduli level sets}}} & \left\{ g\ASL_{d-1}(\Z)\in\SL_{d}(\R)/\ASL_{d-1}(\Z)\mid\ovec(g)\in\H_{T}(\R)\right\} \\
\underbrace{=}_{\text{recalling \eqref{eq:def of moduli level sets}}} & \V_{T}(\R)/\ASL_{d-1}(\Z).
\end{aligned}
\label{eq:Z_T(R) is  tau_Y^-1(Y_T(R)}
\end{equation}
Similarly, we obtain for all $N\in\N$ that
\begin{equation}
\Y_{N}(\Z)\cong\V_{N}(\Z)/\ASL_{d-1}(\Z).\label{eq:Y_T(Z) is G(Z) orbits in Z_T(Z)}
\end{equation}

Using \eqref{eq:Y_T(R) is G orbits in Z_T(R)}, we can relate the
measure $\mu_{\Y}$ on $\Y_{Q(\mathbf{e}_{d})}(\R)$ to the measure
$\mu_{\V}$ on $\V_{Q(\mathbf{e}_{d})}(\R)$ by using ``unfolding'',
as we will now explain. For $f\in C_{c}(\V_{Q(\mathbf{e}_{d})}(\R))$
we obtain $\bar{f}\in C_{c}(\Y_{Q(\mathbf{e}_{d})}(\R))$ by defining
\begin{equation}
\bar{f}(g\ASL_{d-1}(\Z))\df\sum_{\gamma\in\ASL_{d-1}(\Z)}f(g\gamma),\label{eq:f bar as unfolding of f in Z}
\end{equation}
and, as we show in Section \ref{subsec:Unwinding}, it holds that
the map $f\mapsto\bar{f}$ is onto $C_{c}(\Y_{Q(\mathbf{e}_{d})}(\R))$
and that $\mu_{\V}(f)=\mu_{\Y}(\bar{f})$ for all $f\in C_{c}(\V_{Q(\mathbf{e}_{d})}(\R))$.

Next, recall that $\pi_{\V_{T}}:\V_{T}(\R)\to\V_{Q(\mathbf{e}_{d})}(\R)$
(defined in \eqref{eq:def of pi_Z}), is right $\ASL_{d-1}(\R)$ equivariant,
namely 
\begin{equation}
\pi_{\V_{T}}(g\eta)=\pi_{\V_{T}}(g)\eta,\ \forall g\in\V_{T}(\R),\ \eta\in\ASL_{d-1}(\R).\label{eq:asl_d-1 equivariant of pi_Z_T}
\end{equation}
Using the equivariance of $\pi_{\V_{T}}$ and using \eqref{eq:Y_T(R) is G orbits in Z_T(R)}
we define $\pi_{\Y_{T}}^{Q}:\Y_{T}(\R)\to\Y_{Q(\mathbf{e}_{d})}(\R)$
by 
\begin{equation}
\pi_{\Y_{T}}^{Q}(z\ASL_{d-1}(\Z))\df\pi_{\V_{T}}(z)\ASL_{d-1}(\Z).\label{eq:def of pi_T^Q}
\end{equation}

The main reason for introducing $\pi_{\Y_{T}}^{Q}$ is that by assuming
the asymptotics of the form 
\[
\sum_{g\in\V_{N}(\Z)}f(\pi_{\V_{N}}(g))\sim c(T)\mu_{\V}(f),\ \text{as \ensuremath{N\to\infty},}
\]
we are able to obtain the asymptotics 
\[
\sum_{y\in\Y_{N}(\Z)}\bar{f}(\pi_{\Y_{N}}^{Q}(y))\sim c(T)\mu_{\Y}(\bar{f}),\ \text{as \ensuremath{N\to\infty},}
\]
by observing that 
\[
\begin{aligned}\sum_{g\in\V_{N}(\Z)}f(\pi_{\V_{N}}(g))= & \sum_{g\ASL_{d-1}(\Z)\in\V_{N}(\Z)/\ASL_{d-1}(\Z)}\ \sum_{\gamma\in\ASL_{d-1}(\Z)}f(\pi_{\V_{N}}(g\gamma))\\
\underbrace{=}_{\eqref{eq:asl_d-1 equivariant of pi_Z_T}} & \sum_{g\ASL_{d-1}(\Z)\in\V_{N}(\Z)/\ASL_{d-1}(\Z)}\ \sum_{\gamma\in\ASL_{d-1}(\Z)}f(\pi_{\V_{N}}(g)\gamma)\\
\underbrace{=}_{\eqref{eq:Y_T(Z) is G(Z) orbits in Z_T(Z)}} & \sum_{y\in\Y_{N}(\Z)}\bar{f}(\pi_{\Y_{N}}^{Q}(y)),
\end{aligned}
\]
and by using that $\mu_{\V}(f)=\mu_{\Y}(\bar{f})$.

However, we are interested in proving Theorems \ref{thm:moduli main thm }
- \ref{thm:moduli_main_thm_with_congruences} for $\mathcal{M}=\Y$
which concern the asymptotics of averages of the form
\[
\sum_{y\in\Y_{N}(\Z)}\bar{f}(\pi_{\Y_{N}}(y)),\ \text{as \ensuremath{N\to\infty},}
\]
where $\pi_{\Y_{T}}:\Y_{T}(\R)\to\Y_{Q(\mathbf{e}_{d})}(\R)$ was
defined in \eqref{eq:def of geometric pi_Y_T}. Fortunately, it turns
out that $\pi_{\Y_{T}}^{Q}$ and $\pi_{\Y_{T}}$ differ asymptotically
uniformly by a fixed map that preserves the measure $\mu_{\Y}$, allowing
us to prove Theorems \ref{thm:moduli main thm } - \ref{thm:moduli_main_thm_with_congruences}.
\begin{rem*}
Observe that the right $\SO_{Q}(\R)$-actions on $\Y_{Q(\mathbf{e}_{d})}(\R)$
and on $\Y_{T}(\R)$ given by
\[
(L,P,\mathbf{v})\cdot\rho\df\left(\o(\rho^{-1})L,\o(\rho^{-1})P,\rho^{-1}\mathbf{v}\right),\ (L,P,\mathbf{v})\in\Y_{s}(\R),\ \mathbf{\rho\in\SO}_{Q}(\R),
\]
are equivariant with respect to the map $\pi_{T}^{Q}$. Yet, as we
will see\textbf{ }in Section \ref{subsec:The-difference-of geom proj on Y and equiv proj on Y},
this statement is wrong in general for $\pi_{\Y_{T}}.$
\end{rem*}
The structure of the rest of the section is as follows:
\begin{itemize}
\item Section \ref{subsec:Unwinding} relates the measure $\mu_{\Y}$ and
$\mu_{\V}$ by ``unfolding''.
\item Section \ref{subsec:The-difference-of geom proj on Y and equiv proj on Y}
compares $\pi_{\Y_{T}}^{Q}$ and $\pi_{\Y_{T}}$.
\item Section \ref{subsec:Concluding-the-proof that Y follows from Z} proves
Theorems \ref{thm:moduli main thm } - \ref{thm:moduli_main_thm_with_congruences}
for $\mathcal{M}=\Y$.
\end{itemize}

\subsubsection{\label{subsec:Unwinding}Unfolding the measure on $\text{\ensuremath{\protect\Y}}_{Q(\mathbf{e}_{d})}(\protect\R)$}

To relate the measure $\mu_{\V}$ on $\V_{Q(\mathbf{e}_{d})}(\R)$
(defined in Section \ref{subsec:Statistics-of-Z}) with $\mu_{\Y}$
(defined in Section \ref{subsec:Measures-as measures on fibre bundles}),
we now give $\mu_{\V}$ a different description, which is conceptually
similar to the definition of $\mu_{\Y}$. We observe that $\ovec:\V_{Q(\mathbf{e}_{d})}(\R)\to\H_{Q(\mathbf{e}_{d})}(\R)$
endows $\V_{Q(\mathbf{e}_{d})}(\R)$ with a fiber bundle structure
over $\H_{Q(\mathbf{e}_{d})}(\R)$ with fibers being right $\ASL_{d-1}(\R)$
cosets (to recall $\ovec$, see \eqref{eq:def of tau}). As for $\mu_{\Y},$
we define for each $\mathbf{v}\in\H_{Q(\mathbf{e}_{d})}(\R)$ a measure
on the fiber $\ovec^{-1}(\mathbf{v})$ by
\[
\mu_{\ovec^{-1}(\mathbf{v})}(f)\df\int f(g_{\mathbf{v}}x)dm_{\ASL_{d-1}(\R)}(x),\ \mathbf{v}\in\H_{Q(\mathbf{e}_{d})}(\R),\ f\in C_{c}(\ovec^{-1}(\mathbf{v})),
\]
where $g_{\mathbf{v}}\in\SL_{d}(\R)$ is chosen such that $\ovec(g_{\mathbf{v}})=\mathbf{v}$.
By integrating the measures on the fibers we define the measure $\nu_{\V}$
on $\V_{Q(\mathbf{e}_{d})}(\R)$ by
\begin{equation}
\nu_{\V}\df\int\mu_{\ovec^{-1}(\mathbf{v})}\ d\mu_{\H_{Q(\mathbf{e}_{d})}(\R)}(\mathbf{v}).\label{eq:definiton of measure on Z as integral on fibers}
\end{equation}

We obtain the lemma below which we leave the reader to verify.
\begin{lem}
\label{lem:pushforward of polar meas}It holds that $\nu_{\mathcal{\V}}=\mu_{\V}$,
where $\mu_{\V}$ was defined in \eqref{eq:def of mu_Z}.
\end{lem}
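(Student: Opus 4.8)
The plan is to establish the identity by the standard ``unfolding'' computation, rewriting both $\mu_{\V}$ and $\nu_{\V}$ as one and the same iterated integral over $\SO_{Q}(\R)$ and $\ASL_{d-1}(\R)$.

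First I would make $\mu_{\V}$ explicit. Since $a_{1}=I_{d}$ by Definition~\ref{def:def of a_T}, the identification \eqref{eq:polar coordinates for z_T} taken at $T=1$ (together with Corollary~\ref{cor:Special alligned base points}) presents $\V_{Q(\mathbf{e}_{d})}(\R)$ as $H\backslash(\SO_{Q}\times\ASL_{d-1})(\R)$ via the orbit map $H(\rho,\eta)\mapsto\o(\rho^{-1})\eta$. Since $H$ is compact and $\SO_{Q}(\R)$, $\ASL_{d-1}(\R)$ are unimodular, the pushforward in \eqref{eq:def of mu_Z} unwinds, for $f\in C_{c}(\V_{Q(\mathbf{e}_{d})}(\R))$, to
\[
\mu_{\V}(f)=\int_{\SO_{Q}(\R)}\int_{\ASL_{d-1}(\R)}f\bigl(\o(\rho^{-1})\eta\bigr)\,dm_{\ASL_{d-1}(\R)}(\eta)\,dm_{\SO_{Q}(\R)}(\rho),
\]
the order of integration being immaterial by Fubini.

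Next I would unfold $\nu_{\V}$ from \eqref{eq:definiton of measure on Z as integral on fibers}. By its construction in Section~\ref{subsec:Measures-as measures on fibre bundles}, $\mu_{\H_{Q(\mathbf{e}_{d})}(\R)}$ is the pushforward of $m_{\SO_{Q}(\R)}$ under the quotient map $\SO_{Q}(\R)\to\mathbf{H}_{\mathbf{e}_{d}}(\R)\backslash\SO_{Q}(\R)\cong\H_{Q(\mathbf{e}_{d})}(\R)$, the last identification sending $\mathbf{H}_{\mathbf{e}_{d}}(\R)\rho$ to $\rho^{-1}\mathbf{e}_{d}$ (via the right action \eqref{eq:action of SO_Q on R^d}). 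Writing $F(\mathbf{v})\df\mu_{\ovec^{-1}(\mathbf{v})}(f)$ --- a bona fide function of $\mathbf{v}$, since $\mu_{\ovec^{-1}(\mathbf{v})}$ is independent of the auxiliary matrix $g_{\mathbf{v}}$ by right-invariance of $m_{\ASL_{d-1}(\R)}$, exactly as argued in Section~\ref{subsec:Measures-as measures on fibre bundles} --- this gives $\nu_{\V}(f)=\int_{\SO_{Q}(\R)}F(\rho^{-1}\mathbf{e}_{d})\,dm_{\SO_{Q}(\R)}(\rho)$. For each fixed $\rho$ one checks $\ovec(\o(\rho^{-1}))=\o(\o(\rho^{-1}))\mathbf{e}_{d}=\rho^{-1}\mathbf{e}_{d}$, so $\o(\rho^{-1})$ is an admissible choice of $g_{\rho^{-1}\mathbf{e}_{d}}$, whence $F(\rho^{-1}\mathbf{e}_{d})=\int_{\ASL_{d-1}(\R)}f(\o(\rho^{-1})\eta)\,dm_{\ASL_{d-1}(\R)}(\eta)$. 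Substituting this reproduces exactly the double integral displayed above for $\mu_{\V}(f)$, so $\nu_{\V}=\mu_{\V}$.

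I do not expect a genuine obstacle here; the only thing needing a little care --- and the authors rightly call it routine --- is the bookkeeping of the various identifications and the directions of the pushforwards (the right $\SO_{Q}$-action forces the inverses), together with the remark that the Haar measures $m_{\SO_{Q}(\R)}$ and $m_{\ASL_{d-1}(\R)}$ entering the two definitions are literally the same Weil-normalized measures (Definition~\ref{def:weil normalization}), so that no spurious proportionality constant can appear. As a cross-check one may instead argue abstractly: both $\mu_{\V}$ and $\nu_{\V}$ are $(\SO_{Q}\times\ASL_{d-1})(\R)$-invariant --- the action is transitive by Corollary~\ref{cor:transitivity of ASL_times_SO_d} and the stabilizer $H$ is compact --- hence proportional, and the constant is $1$ by the computation above.
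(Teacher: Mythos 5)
Your proof is correct and is exactly the verification the paper has in mind when it says the lemma is ``left for the reader'': identify $\V_{Q(\mathbf{e}_d)}(\R)$ with $H\backslash(\SO_Q\times\ASL_{d-1})(\R)$ via the orbit map at $T=1$, unwind both definitions to the iterated integral $\int_{\SO_Q(\R)}\int_{\ASL_{d-1}(\R)}f(\o(\rho^{-1})\eta)\,dm_{\ASL_{d-1}(\R)}(\eta)\,dm_{\SO_Q(\R)}(\rho)$, using the key observation that $\ovec(\o(\rho^{-1}))=\rho^{-1}\mathbf{e}_d$ so that $\o(\rho^{-1})$ is an admissible $g_{\mathbf{v}}$. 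One harmless slip: the well-definedness of $\mu_{\ovec^{-1}(\mathbf{v})}$ under a change $g_{\mathbf{v}}\mapsto g_{\mathbf{v}}h$ with $h\in\ASL_{d-1}(\R)$ uses \emph{left}-invariance of $m_{\ASL_{d-1}(\R)}$, not right-invariance, though this is immaterial since $\ASL_{d-1}(\R)$ is unimodular.
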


The unfolding relation between $\mu_{\Y}$ and $\mu_{\V}$ is given
by the following lemma.
\begin{lem}
\label{lem:integral unwinding of z w.r.t y}For all $f\in C_{c}(\V_{Q(\mathbf{e}_{d})}(\R))$
it holds that $\mu_{\V}(f)=\mu_{\Y}(\bar{f}),$ where $\bar{f}$ is
given by \eqref{eq:f bar as unfolding of f in Z}.
\end{lem}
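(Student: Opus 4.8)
The plan is to reduce the identity $\mu_{\V}(f)=\mu_{\Y}(\bar f)$ to a fiberwise statement over $\H_{Q(\mathbf{e}_{d})}(\R)$ and then to match the two fiber measures by the Weil (``unfolding'') normalization. First I would use Lemma \ref{lem:pushforward of polar meas} to replace $\mu_{\V}$ by $\nu_{\V}$, which by \eqref{eq:definiton of measure on Z as integral on fibers} equals $\int\mu_{\ovec^{-1}(\mathbf{v})}\,d\mu_{\H_{Q(\mathbf{e}_{d})}(\R)}(\mathbf{v})$, and compare this with $\mu_{\Y}=\int\mu_{\left(\pi_{vec}^{\Y}\right)^{-1}(\mathbf{v})}\,d\mu_{\H_{Q(\mathbf{e}_{d})}(\R)}(\mathbf{v})$ from \eqref{eq:definition of measure double integral}. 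Since both are integrals against the same base measure on $\H_{Q(\mathbf{e}_{d})}(\R)$, it suffices to prove the fiberwise identity $\mu_{\ovec^{-1}(\mathbf{v})}(f)=\mu_{\left(\pi_{vec}^{\Y}\right)^{-1}(\mathbf{v})}(\bar f)$ for every $\mathbf{v}\in\H_{Q(\mathbf{e}_{d})}(\R)$ and then integrate. For the integration step I will also record that $\bar f\in C_{c}(\Y_{Q(\mathbf{e}_{d})}(\R))$: the right $\ASL_{d-1}(\Z)$-action on the closed set $\V_{Q(\mathbf{e}_{d})}(\R)\subseteq\SL_{d}(\R)$ is properly discontinuous, so the sum in \eqref{eq:f bar as unfolding of f in Z} is locally finite, $\bar f$ is continuous, and $\text{supp}(\bar f)$ is contained in the image of the compact set $\text{supp}(f)$, hence compact; this is what makes \eqref{eq:definition of measure double integral} applicable at $\bar f$.

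For the fiberwise identity, fix $\mathbf{v}$ together with a single $g_{\mathbf{v}}\in\V_{Q(\mathbf{e}_{d})}(\R)$ satisfying $\ovec(g_{\mathbf{v}})=\mathbf{v}$, used \emph{simultaneously} in the definitions of both fiber measures (each measure being independent of this choice, as noted for $\mu_{\left(\pi_{vec}^{\X}\right)^{-1}(\mathbf{v})}$ in Section \ref{subsec:Measures-as measures on fibre bundles} and by the same argument for $\mu_{\ovec^{-1}(\mathbf{v})}$). The map $\eta\mapsto g_{\mathbf{v}}\eta$ is a homeomorphism of $\ASL_{d-1}(\R)$ onto the fiber $\ovec^{-1}(\mathbf{v})$, which is closed in $\V_{Q(\mathbf{e}_{d})}(\R)$ since $\ovec$ is continuous; hence $h(\eta)\df f(g_{\mathbf{v}}\eta)$ lies in $C_{c}(\ASL_{d-1}(\R))$. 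Recalling that $m_{\ASL_{d-1}(\R)}$ and $m_{Y_{d-1}}$ are Weil normalized (Definition \ref{def:weil normalization}), I would write
\[
\mu_{\ovec^{-1}(\mathbf{v})}(f)=\int_{\ASL_{d-1}(\R)}h\,dm_{\ASL_{d-1}(\R)}=\int_{Y_{d-1}}\Big(\sum_{\gamma\in\ASL_{d-1}(\Z)}h(x\gamma)\Big)\,dm_{Y_{d-1}}(x),
\]
and then observe, directly from \eqref{eq:f bar as unfolding of f in Z}, that the inner sum $\sum_{\gamma}f(g_{\mathbf{v}}x\gamma)$ equals $\bar f(g_{\mathbf{v}}x)$, where $g_{\mathbf{v}}$ acts on $Y_{d-1}=\left(\pi_{vec}^{\Y}\right)^{-1}(\mathbf{e}_{d})\subseteq\Y(\R)$ via \eqref{eq:SLd(R) action on Y(R)}. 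The right-hand side is therefore $\int_{Y_{d-1}}\bar f(g_{\mathbf{v}}x)\,dm_{Y_{d-1}}(x)=\mu_{\left(\pi_{vec}^{\Y}\right)^{-1}(\mathbf{v})}(\bar f)$, which is the claimed fiberwise identity. Integrating over $\mathbf{v}$ against $\mu_{\H_{Q(\mathbf{e}_{d})}(\R)}$ then gives $\mu_{\V}(f)=\nu_{\V}(f)=\mu_{\Y}(\bar f)$.

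I do not expect a genuine obstacle here: the argument is a routine fold/unfold once the measures are expressed as iterated integrals over the base $\H_{Q(\mathbf{e}_{d})}(\R)$. The only points that need care are bookkeeping ones: (i) using one and the same $g_{\mathbf{v}}$ in both fiber-measure definitions, which is legitimate precisely because each fiber measure is choice-independent; (ii) verifying that restricting $f$ to the fiber and pulling it back along $\eta\mapsto g_{\mathbf{v}}\eta$ produces a function that is compactly supported \emph{on $\ASL_{d-1}(\R)$} — this uses that the fibers of $\ovec$ are closed in $\V_{Q(\mathbf{e}_{d})}(\R)$, not merely that $f\in C_{c}(\V_{Q(\mathbf{e}_{d})}(\R))$; and (iii) confirming $\bar f\in C_{c}(\Y_{Q(\mathbf{e}_{d})}(\R))$ so that the defining formula \eqref{eq:definition of measure double integral} for $\mu_{\Y}$ may legitimately be evaluated at $\bar f$.
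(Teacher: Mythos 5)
Your argument is correct and is essentially the paper's own proof: both reduce, via Lemma \ref{lem:pushforward of polar meas} and the fiberwise definitions \eqref{eq:definiton of measure on Z as integral on fibers} and \eqref{eq:definition of measure double integral}, to the identity $\mu_{\ovec^{-1}(\mathbf{v})}(f)=\mu_{\left(\pi_{vec}^{\Y}\right)^{-1}(\mathbf{v})}(\bar{f})$, which is then obtained from the Weil normalization of $m_{\ASL_{d-1}(\R)}$ and $m_{Y_{d-1}}$ by folding/unfolding over $\ASL_{d-1}(\Z)$. The extra bookkeeping you include (choice-independence of $g_{\mathbf{v}}$, compact support of the restricted function and of $\bar{f}$) is sound and only makes explicit what the paper leaves implicit.
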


\begin{proof}
Using Lemma \ref{lem:pushforward of polar meas} and by recalling
the definition of $\mu_{\Y}$ in \eqref{eq:definition of measure double integral},
we see that it is sufficient to prove that $\mu_{\ovec^{-1}(\mathbf{v})}(f)=\mu_{\left(\pi_{vec}^{\Y}\right)^{-1}(\mathbf{v})}(\bar{f})$
for all $\mathbf{v}\in\H_{Q(\mathbf{e}_{d})}(\R)$. Let $g_{\mathbf{v}}\in\SL_{d}(\R)$
such that $\ovec(g_{\mathbf{v}})=\mathbf{v}$, and recall that $m_{\mathcal{\ASL}_{d-1}(\R)}$
and $m_{Y_{d-1}}$ are Weil normalized (see Definition \ref{def:weil normalization}).
Then,

\begin{align*}
\mu_{\left(\pi_{vec}^{\Y}\right)^{-1}(\mathbf{v})}(\bar{f})= & \int\left(\sum_{\gamma\in\ASL_{d-1}(\Z)}f(g_{\mathbf{v}}x\gamma)\right)dm_{Y_{d-1}}(x\ASL_{d-1}(\Z))\\
= & \int f(g_{\mathbf{v}}x)dm_{\ASL_{d-1}(\R)}(x)\\
= & \mu_{\ovec^{-1}(\mathbf{v})}(f).
\end{align*}
\end{proof}
We now turn to show that for all $T>0$ the map $\bar{*}:C_{c}(\V_{T}(\R))\to C_{c}(\Y_{T}(\R))$
defined by $f\mapsto\bar{f}$ is onto (to recall $\bar{f}$ see \eqref{eq:f bar as unfolding of f in Z}).
To prove the latter, we note the following general lemma.
\begin{lem}
\label{lem:star bar map is onto}Let $G$ be a locally compact, second
countable group, $K\leq G$ be compact, and $\Gamma\leq G$ be discrete.
Then the map 
\[
\bar{*}:C_{c}(K\backslash G)\to C_{c}(K\backslash G/\Gamma)
\]
defined by $\bar{f}(Kg\Gamma)\df\sum_{\gamma\in\Gamma}f(Kg\gamma)$
is onto.
\end{lem}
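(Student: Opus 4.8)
The plan is to work entirely inside the locally compact Hausdorff space $K\backslash G$ and to produce a preimage of a given $h\in C_c(K\backslash G/\Gamma)$ by a cut-off-and-normalize construction. First I would record the structural facts I will use. Since $K$ is compact, $K\backslash G$ is a locally compact, second countable Hausdorff space, the natural map $\pi\colon G\to K\backslash G$ is proper, and the quotient map $p\colon K\backslash G\to K\backslash G/\Gamma$ is open and continuous. Moreover, since $\Gamma$ is discrete and $K$ is compact, the right translation action of $\Gamma$ on $K\backslash G$ is proper: for compact $L\subseteq K\backslash G$ one has $\{\gamma\in\Gamma:L\gamma\cap L\neq\emptyset\}\subseteq\Gamma\cap(\pi^{-1}(L))^{-1}\pi^{-1}(L)$, a finite set. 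Consequently, for any $\psi\in C_c(K\backslash G)$ the sum $\bar\psi(Kg):=\sum_{\gamma\in\Gamma}\psi(Kg\gamma)$ is a locally finite sum of continuous functions, hence continuous, and it is right-$\Gamma$-invariant, so it descends to a continuous function on $K\backslash G/\Gamma$; the same remark shows that $\bar{*}$ indeed maps into $C_c(K\backslash G/\Gamma)$.

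Fix $h\in C_c(K\backslash G/\Gamma)$ and put $C:=\operatorname{supp}(h)$, which is compact. I would next build an auxiliary $\psi\in C_c(K\backslash G)$ with $\psi\geq 0$ and $\bar\psi>0$ on an open neighbourhood of $C$. For each $x\in C$ pick $\tilde x\in p^{-1}(x)$ and a relatively compact open neighbourhood $\tilde U_x\ni\tilde x$ in $K\backslash G$; by openness of $p$ the sets $p(\tilde U_x)$ form an open cover of $C$, so finitely many $p(\tilde U_{x_1}),\dots,p(\tilde U_{x_n})$ cover $C$. Let $W:=\bigcup_{i=1}^n\tilde U_{x_i}$, a relatively compact open set with $p(W)\supseteq C$, and use Urysohn's lemma on $K\backslash G$ to choose $\psi\in C_c(K\backslash G)$ with $0\leq\psi\leq 1$ and $\psi\equiv 1$ on $\overline W$. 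Then for $x\in C$ there is $w\in W$ with $p(w)=x$, whence $\bar\psi(x)\geq\psi(w)=1$; since $\bar\psi$ is continuous, $V:=\{\bar\psi>0\}$ is an open neighbourhood of $C$.

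Finally I would set $f:=\psi\cdot(h\circ p)/(\bar\psi\circ p)$ on the open set $p^{-1}(V)$ and extend $f$ by $0$ on its complement. On $p^{-1}(V)$ this is a quotient of continuous functions with non-vanishing denominator, hence continuous, and its support there lies in the closed set $p^{-1}(C)\subseteq p^{-1}(V)$; since moreover $h\circ p$ vanishes on $p^{-1}(V)\setminus p^{-1}(C)$, the two definitions agree on the overlap of the open cover $\{\,p^{-1}(V),\,K\backslash G\setminus p^{-1}(C)\,\}$ and glue to a continuous function on $K\backslash G$ with $\operatorname{supp}(f)\subseteq\operatorname{supp}(\psi)$ compact, so $f\in C_c(K\backslash G)$. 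To check $\bar f=h$: for $Kg\Gamma\in V$, using that $h\circ p$ and $\bar\psi\circ p$ are right-$\Gamma$-invariant,
\[
\bar f(Kg\Gamma)=\sum_{\gamma\in\Gamma}\psi(Kg\gamma)\,\frac{h(Kg\Gamma)}{\bar\psi(Kg\Gamma)}=\frac{h(Kg\Gamma)}{\bar\psi(Kg\Gamma)}\,\bar\psi(Kg\Gamma)=h(Kg\Gamma),
\]
while for $Kg\Gamma\notin V$ every translate $Kg\gamma$ lies outside $p^{-1}(C)$, so $\bar f(Kg\Gamma)=0=h(Kg\Gamma)$. Hence $\bar f=h$ and $\bar{*}$ is onto.

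The step I expect to be the only real obstacle is the continuity and compact support of $f$ — that is, ensuring that the division by $\bar\psi$ is harmless — which is precisely why $\psi$ must be chosen so that $\bar\psi$ is strictly positive on a \emph{full open neighbourhood} of $\operatorname{supp}(h)$, not merely on $\operatorname{supp}(h)$ itself; the properness of the $\Gamma$-action, which yields continuity of $\bar\psi$, is what makes this arrangement possible. I note that a ``global'' normalizing function with $\sum_{\gamma\in\Gamma}\beta(g\gamma)\equiv 1$ on all of $G$ is not available at this level of generality — for instance if $\Gamma$ is trivial and $G$ noncompact — so the construction genuinely has to be localized to the compact set $C=\operatorname{supp}(h)$.
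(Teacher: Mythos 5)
Your proof is correct, and it takes a different route from the paper's. The paper does not construct the preimage directly on $K\backslash G$: it pulls $\varphi\in C_{c}(K\backslash G/\Gamma)$ back to $C_{c}(G/\Gamma)$ via the (compact-fibered) projection $\pi_{K}$, invokes the known surjectivity of $C_{c}(G)\to C_{c}(G/\Gamma)$, $\tilde{f}\mapsto\sum_{\gamma\in\Gamma}\tilde{f}(\cdot\,\gamma)$ (Folland, Proposition 2.50), and then forces left $K$-invariance by averaging, setting $f(Kg)=\int_{K}\tilde{f}(kg)\,dm_{K}(k)$; verifying $\bar{f}=\varphi$ then requires interchanging the sum over $\Gamma$ with the integral over $K$, which the paper justifies by the uniform finiteness bound of Lemma \ref{lem:uniform bound intersection with cpct set}. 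You instead re-prove the relevant surjectivity directly at the level of $K\backslash G$: using that the right $\Gamma$-action on $K\backslash G$ is proper (compactness of $K$ makes $G\to K\backslash G$ proper, so $\Gamma\cap(\pi^{-1}(L))^{-1}\pi^{-1}(L)$ is finite), you build a cut-off $\psi$ whose $\Gamma$-periodization is bounded below on a neighborhood of $\operatorname{supp}(h)$ and normalize, which is essentially the standard proof of the Folland proposition transplanted to the proper $\Gamma$-action on $K\backslash G$. Your version is self-contained, produces an explicit preimage, and avoids both the citation and the sum--integral interchange; the paper's version is shorter because it delegates the hard step to the cited result and handles $K$-invariance by the clean averaging trick. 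Your closing observation that the normalization must be arranged on a full open neighborhood of $\operatorname{supp}(h)$ (rather than globally) is exactly the right point of care, and your gluing argument for continuity of $f$ handles it correctly.
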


\begin{proof}
We let $\pi_{K}:G/\Gamma\to K\backslash G/\Gamma$ be the natural
map. Since $K$ is compact, for $\ef\in C_{c}(K\backslash G/\Gamma)$
it holds that $\ef\circ\pi_{K}\in C_{c}(G/\Gamma)$. We recall that
\cite[Proposition 2.50]{Folland_harmonic} tells us there exists $\tilde{f}\in C_{c}(G)$
such that
\[
\ef\circ\pi_{K}(g\Gamma)=\sum_{\gamma\in\Gamma}\tilde{f}(g\gamma).
\]
We let $m_{K}$ be the Haar probability measure on $K$ and we observe
that 
\[
\begin{aligned}\ef\circ\pi_{K}(g\Gamma)= & \int\ef\circ\pi_{K}(kg\Gamma)dm_{K}(k)\\
= & \int\left(\sum_{\gamma\in\Gamma}\tilde{f}(kg\gamma)\right)dm_{K}(k)\\
= & \sum_{\gamma\in\Gamma}\int\tilde{f}(kg\gamma)dm_{K}(k).
\end{aligned}
\]
where in the last line we used that for all $g\in G$, the sum $\sum_{\gamma\in\Gamma}\tilde{f}(kg\gamma)$
is a finite sum, where the number of summands is bounded uniformly
in $k\in K$ (this follows by Lemma \ref{lem:uniform bound intersection with cpct set}).
The proof is complete by denoting $f(Kg)\df\int\tilde{f}(kg)dm_{K}(k)$
and by observing that $f\in C_{c}(K\backslash G)$.
\end{proof}
Let $G\df(\SO_{Q}\times\ASL_{d-1})(\R)$, $K\df H$ which was defined
in \eqref{eq:def_of_H}, and $\Gamma\df\{\mathbf{e}\}\times\ASL_{d-1}(\Z)\leq(\SO_{Q}\times\ASL_{d-1})(\R)$.
Lemma \ref{lem:double coset structure for Y} below shows that $\Y_{T}(\R)\cong K\backslash G/\Gamma$.
Since $\V_{T}(\R)\cong K\backslash G$, the proof that $\bar{*}:C_{c}(\V_{T}(\R))\to C_{c}(\Y_{T}(\R))$
is onto will be done by Lemma \ref{lem:star bar map is onto} and
Lemma \ref{lem:double coset structure for Y}.
\begin{lem}
\label{lem:double coset structure for Y}For all $T>0$, \emph{$H\backslash(\SO_{Q}\times\ASL_{d-1})(\R)/\ASL_{d-1}(\Z)$}
is homeomorphic to $\Y_{Q(\sqrt{T}\mathbf{e}_{d})}(\R)$, by the map
\emph{
\[
\tilde{\Phi}(H(\rho,\eta)\ASL_{d-1}(\Z))=\ovec_{\Y}\left(\o(\rho^{-1})a_{T}\eta\ASL_{d-1}(\Z)\right),
\]
}where $\ovec_{\Y}$ is given by \eqref{eq:orbit map tau_Y} and \emph{$a_{T}\in\SL_{d}(\R)$}
is given by Definition \ref{def:def of a_T}.
\end{lem}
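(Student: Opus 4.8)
The plan is to realise $\tilde{\Phi}$ as the composite of three homeomorphisms, each of which is (essentially) already available in the text, and then to read off the displayed formula.

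First, I would invoke Corollary~\ref{cor:Special alligned base points}: it gives $a_{T}\in\V_{Q(\sqrt{T}\mathbf{e}_{d})}(\R)$ together with the fact that the stabiliser of $a_{T}$ inside $(\SO_{Q}\times\ASL_{d-1})(\R)$, for the right action $g\cdot(\rho,\eta)=\o(\rho)^{-1}g\eta$, is precisely $H=\mathbf{L}_{I_{d}}(\R)$. Since $Q(\sqrt{T}\mathbf{e}_{d})=T\,Q(\mathbf{e}_{d})>0$, Corollary~\ref{cor:transitivity of ASL_times_SO_d}\,\eqref{enu:Real_transitively} provides transitivity of $(\SO_{Q}\times\ASL_{d-1})(\R)$ on $\V_{Q(\sqrt{T}\mathbf{e}_{d})}(\R)$, so the orbit map $(\rho,\eta)\mapsto\o(\rho^{-1})a_{T}\eta$ descends to the homeomorphism
\[
\Phi_{1}\colon\ H\backslash(\SO_{Q}\times\ASL_{d-1})(\R)\ \xrightarrow{\ \sim\ }\ \V_{Q(\sqrt{T}\mathbf{e}_{d})}(\R)
\]
recorded in \eqref{eq:polar coordinates for z_T}.

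Next I would quotient on the right by $\Gamma=\{\mathbf{e}\}\times\ASL_{d-1}(\Z)$. On $H\backslash(\SO_{Q}\times\ASL_{d-1})(\R)$ the group $\Gamma$ acts by $H(\rho,\eta)\cdot(\mathbf{e},\xi)=H(\rho,\eta\xi)$, and since $\ovec(g\xi)=\ovec(g)$ for $\xi\in\ASL_{d-1}(\R)$, the level set $\V_{Q(\sqrt{T}\mathbf{e}_{d})}(\R)=\ovec^{-1}(\H_{Q(\sqrt{T}\mathbf{e}_{d})}(\R))$ is stable under right multiplication by $\ASL_{d-1}(\R)$. One then checks $\Phi_{1}(H(\rho,\eta)\cdot(\mathbf{e},\xi))=\o(\rho^{-1})a_{T}\eta\xi=\Phi_{1}(H(\rho,\eta))\,\xi$, so that $\Phi_{1}$ is $\Gamma$-equivariant for the right multiplication action of $\ASL_{d-1}(\Z)$ on the target; moreover, as $\V_{Q(\sqrt{T}\mathbf{e}_{d})}(\R)$ is a closed subset of $\SL_{d}(\R)$ saturated for the open quotient map $\SL_{d}(\R)\to\SL_{d}(\R)/\ASL_{d-1}(\Z)$, the quotient topology on $\V_{Q(\sqrt{T}\mathbf{e}_{d})}(\R)/\ASL_{d-1}(\Z)$ coincides with its subspace topology inside $\SL_{d}(\R)/\ASL_{d-1}(\Z)$. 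Hence $\Phi_{1}$ descends to a homeomorphism $\Phi_{2}$ from $H\backslash(\SO_{Q}\times\ASL_{d-1})(\R)/\ASL_{d-1}(\Z)$ onto $\V_{Q(\sqrt{T}\mathbf{e}_{d})}(\R)/\ASL_{d-1}(\Z)$. Finally, I would use that the orbit map $\ovec_{\Y}\colon\SL_{d}(\R)/\ASL_{d-1}(\Z)\xrightarrow{\sim}\Y(\R)$ of \eqref{eq:orbit map tau_Y} restricts, via \eqref{eq:Z_T(R) is  tau_Y^-1(Y_T(R)}, to a homeomorphism $\Phi_{3}$ from $\V_{Q(\sqrt{T}\mathbf{e}_{d})}(\R)/\ASL_{d-1}(\Z)$ onto $\Y_{Q(\sqrt{T}\mathbf{e}_{d})}(\R)$ (this being the instance of \eqref{eq:Y_T(R) is G orbits in Z_T(R)} at the parameter $Q(\sqrt{T}\mathbf{e}_{d})>0$). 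Setting $\tilde{\Phi}=\Phi_{3}\circ\Phi_{2}$ yields a homeomorphism, and unwinding the three steps gives
\[
\tilde{\Phi}\bigl(H(\rho,\eta)\ASL_{d-1}(\Z)\bigr)=\ovec_{\Y}\bigl(\o(\rho^{-1})a_{T}\eta\,\ASL_{d-1}(\Z)\bigr),
\]
which is the claimed formula.

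I do not expect a serious obstacle: the lemma is essentially bookkeeping of identifications already set up. The one input with genuine content — that $H$ equals, rather than merely is contained in, the stabiliser of $a_{T}$ — is Corollary~\ref{cor:Special alligned base points}, which rests on $a_{T}$ lying in the centre of $\o(\mathbf{H}_{\mathbf{e}_{d}}(\R))$. The only point in the present argument that needs a word of care is the topological matching in the second step, i.e.\ that quotienting by $\ASL_{d-1}(\Z)$ is compatible with the subspace topology on $\V_{Q(\sqrt{T}\mathbf{e}_{d})}(\R)$; this follows from the standard fact that open quotient maps restrict to quotient maps on saturated subsets.
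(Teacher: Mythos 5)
Your proposal is correct and follows essentially the same route as the paper: identify $\V_{Q(\sqrt{T}\mathbf{e}_{d})}(\R)$ with $H\backslash(\SO_{Q}\times\ASL_{d-1})(\R)$ via \eqref{eq:polar coordinates for z_T}, pass to the quotient by $\ASL_{d-1}(\Z)$, and then use the identification $\Y_{T}(\R)\cong\V_{T}(\R)/\ASL_{d-1}(\Z)$ through $\ovec_{\Y}$. The extra care you take with the $\ASL_{d-1}(\Z)$-equivariance and the saturation/quotient-topology point is a welcome but inessential refinement of the paper's argument.
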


\begin{proof}
We recall that $\V_{Q(\sqrt{T}\mathbf{e}_{d})}(\R)$ is identified
with $H\backslash(\SO_{Q}\times\ASL_{d-1})(\R)$ by the map 
\[
\Phi(H(\rho,\eta))=\o(\rho^{-1})a_{T}\eta,
\]
(to recall, see \eqref{eq:def_of_H} defining $H$, and see below
\eqref{eq:polar coordinates for z_T}) which shows that 
\[
\V_{Q(\sqrt{T}\mathbf{e}_{d})}(\R)/\ASL_{d-1}(\Z)\cong H\backslash(\SO_{Q}\times\ASL_{d-1})(\R)/\ASL_{d-1}(\Z)
\]
by the map 
\[
\tilde{\Phi}(H(\rho,\eta)\ASL_{d-1}(\Z))=\o(\rho^{-1})a_{T}\eta\ASL_{d-1}(\Z).
\]
Because $\Y_{T}(\R)$ is identified with $\V_{T}(\R)/\ASL_{d-1}(\Z)$
for all $T>0$ via $\ovec_{\Y}$ (see below \eqref{eq:Y_T(R) is G orbits in Z_T(R)}),
the proof is complete.
\end{proof}

\subsubsection{\label{subsec:The-difference-of geom proj on Y and equiv proj on Y}Comparing
of $\pi_{\protect\Y_{T}}$ and $\pi_{\protect\Y_{T}}^{Q}$}

We will now discuss the difference between $\pi_{\Y_{T}}$ and $\pi_{\Y_{T}}^{Q}$
with the goal of showing that it converges as $T\to\infty$ in a certain
uniform way to a fixed map that preserves the measure on $\Y_{Q(\mathbf{e}_{d})}(\R)$.

We recall that for $T>0$ and $(L,P,\mathbf{v})\in\Y_{Q(\sqrt{T}\mathbf{e}_{d})}(\R)$,
\begin{equation}
\pi_{\Y_{Q(\sqrt{T}\mathbf{e}_{d})}}(L,P,\mathbf{v})=(S_{T,\mathbf{v}}L,P,\frac{1}{\sqrt{T}}\mathbf{v}),\ \text{}\label{eq:geometric projection in terms of matricre}
\end{equation}
where $S_{T,\mathbf{v}}\in\SL_{d}(\R)$ acts by scalar multiplication
of a factor $T^{-\frac{1}{2(d-1)}}$ on $P=\mathbf{v}^{\perp}$ and
acts on the line $\R\mathbf{v}$ by scalar multiplication by a factor
$T^{1/2}$ (see Section \ref{subsec:Maps-between-level}).

Next, we describe $\pi_{\Y_{T}}^{Q}$ in a manner similar to \eqref{eq:geometric projection in terms of matricre}.
\begin{defn}
\label{def:of S_T,v^Q}Recall the form $Q^{*}$ defined in \eqref{eq:def of Q^*}.
For $\mathbf{v}\in\R^{d}\smallsetminus\mathbf{0}$ such that $Q(\mathbf{v})>0,$
we denote by $\mathbf{v}_{Q}\in\R^{d}\smallsetminus\mathbf{0}$ the
unique vector orthogonal with respect to the form $Q^{*}$ to the
hyperplane $\mathbf{v}^{\perp}$, having the normalization
\end{defn}

\[
\mathbf{v}_{Q}=\mathbf{v}+\hat{\mathbf{v}}_{Q},
\]
where $\hat{\mathbf{v}}_{Q}\in\mathbf{v}^{\perp}$. We define $S_{T,\mathbf{v}}^{Q}\in\SL_{d}(\R)$
which acts by scalar multiplication of a factor $T^{-\frac{1}{2(d-1)}}$
on the hyperplane $\mathbf{v}^{\perp}$ and which acts on $\R\mathbf{v}_{Q}$
(the orthogonal line to the hyperplane $\mathbf{v}^{\perp}$ with
respect to the form $Q^{*}$) by scalar multiplication of a factor
$T^{1/2}$.
\begin{rem*}
We observe that $\mathbf{v}_{Q}=\frac{1}{Q(\mathbf{v})}M\mathbf{v},$
where $M$ is the companion matrix of the form $Q$. This implies
that the map \textbf{$\mathbf{v}\mapsto\mathbf{v}_{Q}$ }is continuous.
\end{rem*}
\begin{lem}
For all $T>0$ it holds that 
\begin{equation}
\pi_{\Y_{Q(\sqrt{T}\mathbf{e}_{d})}}^{Q}(L,P,\mathbf{v})\df(S_{T,\mathbf{v}}^{Q}L,P,\frac{1}{\sqrt{T}}\mathbf{v}),\ \forall(L,P,\mathbf{v})\in\Y_{Q(\sqrt{T}\mathbf{e}_{d})}(\R).\label{eq:equiv projection in terms of matrices}
\end{equation}
\end{lem}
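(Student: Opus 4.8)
The plan is to reduce the identity, via the orbit-map identification $\Y_{Q(\sqrt{T}\mathbf{e}_{d})}(\R)\cong\V_{Q(\sqrt{T}\mathbf{e}_{d})}(\R)/\ASL_{d-1}(\Z)$ coming from \eqref{eq:orbit map tau_Y}, to the statement that $\pi_{\V_{Q(\sqrt{T}\mathbf{e}_{d})}}$ from \eqref{eq:def of pi_Z} is implemented on $\V_{Q(\sqrt{T}\mathbf{e}_{d})}(\R)$ by left multiplication by the matrix $S_{T,\ovec(z)}^{Q}$. First I would fix $(L,P,\mathbf{v})\in\Y_{Q(\sqrt{T}\mathbf{e}_{d})}(\R)$ and pick $z\in\V_{Q(\sqrt{T}\mathbf{e}_{d})}(\R)$ with $\ovec_{\Y}(z\ASL_{d-1}(\Z))=(L,P,\mathbf{v})$; thus $z\Z^{d}=L$, $\mathrm{Span}_{\R}\{z\mathbf{e}_{1},\dots,z\mathbf{e}_{d-1}\}=P$, $\ovec(z)=\mathbf{v}$, and $P=\mathbf{v}^{\perp}$ since $(L,P,\mathbf{v})\in\Y(\R)$. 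Writing $z=\o(\rho^{-1})a_{T}\eta$ with $\rho\in\SO_{Q}(\R),\ \eta\in\ASL_{d-1}(\R)$ (possible by \eqref{eq:polar coordinates for z_T}), formula \eqref{eq:def of pi_Z} reads $\pi_{\V_{Q(\sqrt{T}\mathbf{e}_{d})}}(z)=\o(\rho^{-1})\eta=g\,z$ with $g\df\o(\rho^{-1})a_{T}^{-1}\o(\rho)$, and $g$ is independent of the chosen decomposition because $\pi_{\V_{Q(\sqrt{T}\mathbf{e}_{d})}}$ and $z$ are. By the definition \eqref{eq:def of pi_T^Q} of $\pi_{\Y_{Q(\sqrt{T}\mathbf{e}_{d})}}^{Q}$ and the $\SL_{d}(\R)$-equivariance of $\ovec_{\Y}$ with respect to the action \eqref{eq:SLd(R) action on Y(R)} (immediate from $\ovec(hg)=\o(h)\ovec(g)$), it then remains to prove $g=S_{T,\mathbf{v}}^{Q}$ and to evaluate $S_{T,\mathbf{v}}^{Q}\cdot(L,P,\mathbf{v})$.

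For the identification of $g$, I would transport the reference splitting $\R^{d}=P_{0}\oplus\mathrm{Span}_{\R}\{M\mathbf{e}_{d}\}$, with $P_{0}=\mathrm{Span}_{\R}\{\mathbf{e}_{1},\dots,\mathbf{e}_{d-1}\}$ (valid since $Q(\ovec(I_{d}))=Q(\mathbf{e}_{d})\neq 0$, by Lemma \ref{lem:F(SO_Q) and stabilizer of a vector}), through $\o(\rho^{-1})$, and track the scalings of $g=\o(\rho^{-1})a_{T}^{-1}\o(\rho)$. Since $a_{T}$ scales $P_{0}$ and every $\eta\in\ASL_{d-1}(\R)$ preserves $P_{0}$, one gets $\o(\rho^{-1})P_{0}=zP_{0}=P=\mathbf{v}^{\perp}$; and from $\ovec(z)=\rho^{-1}\ovec(a_{T})=\sqrt{T}\,\rho^{-1}\mathbf{e}_{d}$ (using $\o(\eta)\mathbf{e}_{d}=\mathbf{e}_{d}$ and \eqref{eq:ovec a_T}), the relation $\o(\rho^{-1})M=M\rho^{-1}$ (as derived in the proof of Lemma \ref{lem:F(SO_Q) and stabilizer of a vector}) yields $\o(\rho^{-1})(M\mathbf{e}_{d})=\frac{1}{\sqrt{T}}M\mathbf{v}$, so $\o(\rho^{-1})\mathrm{Span}_{\R}\{M\mathbf{e}_{d}\}=\R M\mathbf{v}$; the latter line is the $Q^{*}$-orthogonal complement of $\mathbf{v}^{\perp}$ (the bilinear form of $Q^{*}$ pairs $M\mathbf{v}$ with $\mathbf{w}$ as $\langle\mathbf{v},\mathbf{w}\rangle$, which vanishes on $\mathbf{v}^{\perp}$) and hence equals $\R\mathbf{v}_{Q}$. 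Since $a_{T}^{-1}$ scales $P_{0}$ by $T^{-1/2(d-1)}$ and $\mathrm{Span}_{\R}\{M\mathbf{e}_{d}\}$ by $T^{1/2}$ (Definition \ref{def:def of a_T}), $g$ scales $\mathbf{v}^{\perp}$ by $T^{-1/2(d-1)}$ and $\R\mathbf{v}_{Q}$ by $T^{1/2}$; as $\R^{d}=\mathbf{v}^{\perp}\oplus\R\mathbf{v}_{Q}$ (because $\langle\mathbf{v}_{Q},\mathbf{v}\rangle\neq 0$), these two scalings characterize $S_{T,\mathbf{v}}^{Q}$ as in Definition \ref{def:of S_T,v^Q}, so $g=S_{T,\mathbf{v}}^{Q}$.

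Finally I would compute $S_{T,\mathbf{v}}^{Q}\cdot(L,P,\mathbf{v})=(S_{T,\mathbf{v}}^{Q}L,\ S_{T,\mathbf{v}}^{Q}P,\ \o(S_{T,\mathbf{v}}^{Q})\mathbf{v})$: here $S_{T,\mathbf{v}}^{Q}P=P$ since $P=\mathbf{v}^{\perp}$ is an eigenspace, while $\mathbf{v}^{\perp}$ being $S_{T,\mathbf{v}}^{Q}$-invariant forces $(S_{T,\mathbf{v}}^{Q})^{t}$ to preserve $\R\mathbf{v}$, and pairing $(S_{T,\mathbf{v}}^{Q})^{t}\mathbf{v}$ with $\mathbf{v}_{Q}$ pins the eigenvalue to $T^{1/2}$, giving $\o(S_{T,\mathbf{v}}^{Q})\mathbf{v}=\frac{1}{\sqrt{T}}\mathbf{v}$. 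Combining the paragraphs,
\[
\pi_{\Y_{Q(\sqrt{T}\mathbf{e}_{d})}}^{Q}(L,P,\mathbf{v})=\ovec_{\Y}\!\bigl(S_{T,\mathbf{v}}^{Q}z\,\ASL_{d-1}(\Z)\bigr)=S_{T,\mathbf{v}}^{Q}\cdot(L,P,\mathbf{v})=\bigl(S_{T,\mathbf{v}}^{Q}L,\ P,\ \tfrac{1}{\sqrt{T}}\mathbf{v}\bigr),
\]
which is the assertion. The main obstacle will be the middle step: one must first argue that the non-canonical decomposition $z=\o(\rho^{-1})a_{T}\eta$ nevertheless produces a well-defined matrix $g$, and then match $g$'s invariant-subspace data to Definition \ref{def:of S_T,v^Q}; this hinges on correctly transporting the $Q^{*}$-orthogonal splitting at the base point by $\o(\rho^{-1})$ using the identity $\o(\rho)M=M\rho$. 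The remaining bookkeeping with $\ovec_{\Y}$ is routine.
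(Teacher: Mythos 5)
Your proposal is correct and follows essentially the same route as the paper: write the point as $\ovec_{\Y}(z\ASL_{d-1}(\Z))$ with $z=\o(\rho^{-1})a_{T}\eta$, observe $\pi_{\V}(z)=\o(\rho^{-1})a_{T}^{-1}\o(\rho)\,z$, identify $\o(\rho^{-1})a_{T}^{-1}\o(\rho)=S_{T,\mathbf{v}}^{Q}$ using $\o(\rho^{-1})\in\SO_{Q^{*}}(\R)$ and Definition \ref{def:def of a_T}, and then act on $(L,P,\mathbf{v})$. You merely spell out in more detail the transport of the splitting $P_{0}\oplus\R M\mathbf{e}_{d}$ to $\mathbf{v}^{\perp}\oplus\R\mathbf{v}_{Q}$ and the computation $\o(S_{T,\mathbf{v}}^{Q})\mathbf{v}=\tfrac{1}{\sqrt{T}}\mathbf{v}$, steps the paper leaves terse.
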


\begin{proof}
Let $(L,\mathbf{v}^{\perp},\mathbf{v})\in\Y_{Q(\sqrt{T}\mathbf{e}_{d})}(\R)$.
By using the identification \eqref{eq:Y_T(R) is G orbits in Z_T(R)},
we take $g\in\V_{T}(\R)$ such that $(L,\mathbf{v}^{\perp},\mathbf{v})=\ovec_{\Y}(g\ASL_{d-1}(\Z))$.

Using \eqref{eq:polar coordinates for z_T}, we take $(\rho,\eta)\in(\SO_{Q}\times\ASL_{d-1})(\R)$
such that 
\[
g=\o(\rho^{-1})a_{T}\eta,
\]
and we observe that 
\begin{equation}
\begin{aligned}\pi_{\Y_{Q(\sqrt{T}\mathbf{e}_{d})}}^{Q}(L,\mathbf{v}^{\perp},\mathbf{v})\underbrace{=}_{\text{recalling \eqref{eq:def of pi_T^Q}}} & \ovec_{\Y}(\pi_{\V_{Q(\sqrt{T}\mathbf{e}_{d})}}(g)\ASL_{d-1}(\Z))\\
\underbrace{=}_{\text{recalling \eqref{eq:def of pi_Z}}} & \ovec_{\Y}(\o(\rho^{-1})a_{T}^{-1}\o(\rho)g\ASL_{d-1}(\Z)).
\end{aligned}
\label{eq:rewriting pi_Y_^Q}
\end{equation}
By recalling that $\o(\rho^{-1})\in\SO_{Q^{*}}(\R)$ (see Lemma \ref{lem:F(SO_Q) and stabilizer of a vector})
and by recalling the definition of $a_{T}$ (see Definition \ref{def:def of a_T}),
we deduce that 
\[
\o(\rho^{-1})a_{T}^{-1}\o(\rho)=S_{T,\mathbf{v}}^{Q},
\]
where $S_{T,\mathbf{v}}^{Q}$ was given in Definition \ref{def:of S_T,v^Q}.
Then by \eqref{eq:rewriting pi_Y_^Q}, 
\[
\begin{aligned}\pi_{\Y_{Q(\sqrt{T}\mathbf{e}_{d})}}^{Q}(L,\mathbf{v}^{\perp},\mathbf{v})\underbrace{=}_{\text{recalling \eqref{eq:orbit map tau_Y}}} & (S_{T,\mathbf{v}}^{Q}L,S_{T,\mathbf{v}}^{Q}\mathbf{v}^{\perp},\o(S_{T,\mathbf{v}}^{Q})\mathbf{v})\\
= & (S_{T,\mathbf{v}}^{Q}L,\mathbf{v}^{\perp},\frac{1}{\sqrt{T}}\mathbf{v}).
\end{aligned}
\]
\end{proof}
\begin{lem}
\label{lem:uniform convergence to a unipotent}Let $(L,P,\mathbf{v})\in\Y_{Q(\mathbf{e}_{d})}(\R)$,
and consider the unipotent matrix $u_{\mathbf{v}}^{Q}$ which satisfies
that $u_{\mathbf{v}}^{Q}\mathbf{v}=\mathbf{v}_{Q}$ and acts as identity
on $\mathbf{v}^{\perp}$. Let 
\begin{equation}
u_{T,\mathbf{v}}^{Q}\df\left(S_{T,\mathbf{v}}^{Q}\right)S_{T,\mathbf{v}}^{-1},\label{eq:u_T,Q,v}
\end{equation}
then
\[
\lim_{T\to\infty}\left(u_{\mathbf{v}}^{Q}\right)^{-1}u_{T,\mathbf{v}}^{Q}=I_{d},
\]
and the convergence is uniform when $\mathbf{v}$ is restricted to
a compact subset of $\R^{d}\smallsetminus\mathbf{0}.$
\end{lem}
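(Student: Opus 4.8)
The plan is to realize $S_{T,\mathbf{v}}^{Q}$ as a fixed ($T$-independent) conjugate of $S_{T,\mathbf{v}}$, after which the lemma collapses to a one-line nilpotent computation. First I would fix $\mathbf{v}$ with $Q(\mathbf{v})>0$, put $\hat{\mathbf{v}}_{Q}\df\mathbf{v}_{Q}-\mathbf{v}\in\mathbf{v}^{\perp}$ as in Definition \ref{def:of S_T,v^Q}, and recall that $u_{\mathbf{v}}^{Q}$ is the identity on $\mathbf{v}^{\perp}$ and maps $\mathbf{v}$ to $\mathbf{v}_{Q}$; in particular $u_{\mathbf{v}}^{Q}$ carries the line $\R\mathbf{v}$ onto $\R\mathbf{v}_{Q}$ while fixing $\mathbf{v}^{\perp}$ pointwise (and $\R^{d}=\mathbf{v}^{\perp}\oplus\R\mathbf{v}_{Q}$ since $\mathbf{v}_{Q}$ has nonzero $\mathbf{v}$‑component). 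Since $S_{T,\mathbf{v}}$ acts as the scalar $T^{-1/2(d-1)}$ on $\mathbf{v}^{\perp}$ and $T^{1/2}$ on $\R\mathbf{v}$, and $S_{T,\mathbf{v}}^{Q}$ acts as $T^{-1/2(d-1)}$ on $\mathbf{v}^{\perp}$ and $T^{1/2}$ on $\R\mathbf{v}_{Q}$, conjugating $S_{T,\mathbf{v}}$ by $u_{\mathbf{v}}^{Q}$ leaves $\mathbf{v}^{\perp}$ untouched and turns the $\R\mathbf{v}$-eigenline into the $\R\mathbf{v}_{Q}$-eigenline, so that
\[
S_{T,\mathbf{v}}^{Q}=u_{\mathbf{v}}^{Q}\,S_{T,\mathbf{v}}\,(u_{\mathbf{v}}^{Q})^{-1}.
\]
Checking this identity (i.e.\ evaluating both sides on $\mathbf{v}^{\perp}$ and on $\mathbf{v}_{Q}$) is the only real content; everything else is formal.

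Next I would feed this into \eqref{eq:u_T,Q,v}: the $u_{\mathbf{v}}^{Q}$ on the left cancels, giving
\[
(u_{\mathbf{v}}^{Q})^{-1}u_{T,\mathbf{v}}^{Q}=(u_{\mathbf{v}}^{Q})^{-1}S_{T,\mathbf{v}}^{Q}S_{T,\mathbf{v}}^{-1}=S_{T,\mathbf{v}}\,(u_{\mathbf{v}}^{Q})^{-1}\,S_{T,\mathbf{v}}^{-1}.
\]
Write $u_{\mathbf{v}}^{Q}=I_{d}+N_{\mathbf{v}}$, where $N_{\mathbf{v}}$ annihilates $\mathbf{v}^{\perp}$ and sends $\mathbf{v}$ to $\hat{\mathbf{v}}_{Q}\in\mathbf{v}^{\perp}$; then $N_{\mathbf{v}}^{2}=0$, so $(u_{\mathbf{v}}^{Q})^{-1}=I_{d}-N_{\mathbf{v}}$. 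Because $N_{\mathbf{v}}$ sends $\R\mathbf{v}$ into $\mathbf{v}^{\perp}$ and kills $\mathbf{v}^{\perp}$, conjugating it by the block-scalar $S_{T,\mathbf{v}}$ just multiplies it by the ratio of the two scalars, $S_{T,\mathbf{v}}N_{\mathbf{v}}S_{T,\mathbf{v}}^{-1}=T^{-\alpha}N_{\mathbf{v}}$ with $\alpha\df\tfrac12+\tfrac1{2(d-1)}>0$. Hence $(u_{\mathbf{v}}^{Q})^{-1}u_{T,\mathbf{v}}^{Q}=S_{T,\mathbf{v}}(I_{d}-N_{\mathbf{v}})S_{T,\mathbf{v}}^{-1}=I_{d}-T^{-\alpha}N_{\mathbf{v}}$.

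Finally I would read off the statement. Since $\alpha>0$, $\norm{(u_{\mathbf{v}}^{Q})^{-1}u_{T,\mathbf{v}}^{Q}-I_{d}}=T^{-\alpha}\norm{N_{\mathbf{v}}}\to0$ as $T\to\infty$. For uniformity, note $\norm{N_{\mathbf{v}}}=\norm{\hat{\mathbf{v}}_{Q}}/\norm{\mathbf{v}}$, which is continuous in $\mathbf{v}$ on $\{\mathbf{v}:Q(\mathbf{v})>0\}$ (the map $\mathbf{v}\mapsto\mathbf{v}_{Q}$ being continuous, as observed after Definition \ref{def:of S_T,v^Q}), hence bounded on any compact subset of $\R^{d}\smallsetminus\mathbf{0}$ lying in that region — and all $\mathbf{v}$ relevant here satisfy $Q(\mathbf{v})=Q(\mathbf{e}_{d})>0$; so $T^{-\alpha}\norm{N_{\mathbf{v}}}\to0$ uniformly over such a set. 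I do not anticipate a genuine obstacle: the only point that takes a moment is seeing clearly that $u_{\mathbf{v}}^{Q}$ intertwines the two block-scalar maps $S_{T,\mathbf{v}}$ and $S_{T,\mathbf{v}}^{Q}$.
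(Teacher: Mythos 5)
Your proof is correct and takes essentially the same route as the paper: the paper also works with the splitting $\R^{d}=\mathbf{v}^{\perp}\oplus\R\mathbf{v}$, notes that $S_{T,\mathbf{v}}^{Q}S_{T,\mathbf{v}}^{-1}$ and $u_{\mathbf{v}}^{Q}$ agree on $\mathbf{v}^{\perp}$ and computes their difference on $\mathbf{v}$, which is exactly your formula $(u_{\mathbf{v}}^{Q})^{-1}u_{T,\mathbf{v}}^{Q}=I_{d}-T^{-d/(2(d-1))}N_{\mathbf{v}}$ (note $\tfrac12+\tfrac{1}{2(d-1)}=\tfrac{d}{2(d-1)}$), with uniformity obtained, as you do, from the continuity of $\mathbf{v}\mapsto\mathbf{v}_{Q}$ on the relevant set; your conjugation identity $S_{T,\mathbf{v}}^{Q}=u_{\mathbf{v}}^{Q}S_{T,\mathbf{v}}(u_{\mathbf{v}}^{Q})^{-1}$ is simply a tidier packaging of that same computation. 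Incidentally, your sign is the right one: in the paper's displayed equation the vector $\bigl(u_{\mathbf{v}}^{Q}-S_{T,\mathbf{v}}^{Q}S_{T,\mathbf{v}}^{-1}\bigr)\mathbf{v}$ equals $+T^{-\frac{d}{2(d-1)}}\hat{\mathbf{v}}_{Q}$, a harmless sign slip that does not affect the conclusion.
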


\begin{proof}
It is easy to verify that $\left(S_{T,\mathbf{v}}^{Q}\right)S_{T,\mathbf{v}}^{-1}$
acts as identity on $\mathbf{v}^{\perp}$, namely $\left(S_{T,\mathbf{v}}^{Q}\right)S_{T,\mathbf{v}}^{-1}$
and $u_{\mathbf{v}}^{Q}$ agree on $\mathbf{v}^{\perp}.$ Next,

\begin{align*}
\left(S_{T,\mathbf{v}}^{Q}\right)S_{T,\mathbf{v}}^{-1}\mathbf{v}= & S_{T,\mathbf{v}}^{Q}\left(\frac{1}{\sqrt{T}}\mathbf{v}\right)\\
= & \frac{1}{\sqrt{T}}S_{T,\mathbf{v}}^{Q}\left(\mathbf{v}_{Q}-\hat{\mathbf{v}}_{Q}\right)\\
= & \mathbf{v}_{Q}-T^{-\frac{d}{2(d-1)}}\hat{\mathbf{v}}_{Q},
\end{align*}
namely 
\[
\left(u_{\mathbf{v}}^{Q}-\left(S_{T,\mathbf{v}}^{Q}\right)S_{T,\mathbf{v}}^{-1}\right)\mathbf{v}=-T^{-\frac{d}{2(d-1)}}\hat{\mathbf{v}}_{Q}.
\]
By multiplying both sides of the preceding equality by $\left(u_{\mathbf{v}}^{Q}\right)^{-1}$,
and by recalling \eqref{def:def of a_T}, we get
\[
\left(I_{d}-\left(u_{\mathbf{v}}^{Q}\right)^{-1}u_{T,\mathbf{v}}^{Q}\right)\mathbf{v}=-T^{-\frac{d}{2(d-1)}}\hat{\mathbf{v}}_{Q}.
\]
Since the map $\mathbf{v}\mapsto\mathbf{v}_{Q}$ is continuous (see
remark below Definition \ref{def:of S_T,v^Q}), we deduce that $\lim_{T\to\infty}\left(u_{\mathbf{v}}^{Q}\right)^{-1}u_{T,\mathbf{v}}^{Q}=I_{d}$
converges uniformly when $\mathbf{v}$ varies in a compact set of
$\R^{d}\smallsetminus\mathbf{0}$.
\end{proof}
Now let $u^{Q}:\Y_{Q(\mathbf{e}_{d})}(\R)\to\Y_{Q(\mathbf{e}_{d})}(\R)$
be defined by $u^{Q}(L,P,\mathbf{v})\df(u_{\mathbf{v}}^{Q}L,P,\mathbf{v}).$
\begin{lem}
\label{lem:U^Q preserves mu_Y}The map $u^{Q}$ preserves the measure
$\mu_{\Y}$ on $\Y_{Q(\mathbf{e}_{d})}(\R)$.
\end{lem}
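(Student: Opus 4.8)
The plan is to exhibit $u^Q$ as a manifestation of the right $\SO_Q(\R)$-action on $\Y_{Q(\mathbf{e}_d)}(\R)$ fibered over $\H_{Q(\mathbf{e}_d)}(\R)$, and to deduce invariance of $\mu_\Y$ from the fact that $\mu_\Y$ is built fiberwise from Weil-normalized Haar measures which are themselves translation-invariant. Concretely, recall from \eqref{eq:definition of measure double integral} that $\mu_\Y=\int\mu_{(\pi_{vec}^\Y)^{-1}(\mathbf{v})}\,d\mu_{\H_{Q(\mathbf{e}_d)}(\R)}(\mathbf{v})$. Since $u^Q(L,P,\mathbf{v})=(u_{\mathbf{v}}^Q L,P,\mathbf{v})$ leaves the vector coordinate $\mathbf{v}$ unchanged, it maps each fiber $(\pi_{vec}^\Y)^{-1}(\mathbf{v})$ to itself. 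Hence it suffices to prove the fiberwise statement
\[
(u^Q)_*\mu_{(\pi_{vec}^\Y)^{-1}(\mathbf{v})}=\mu_{(\pi_{vec}^\Y)^{-1}(\mathbf{v})},\quad\forall\mathbf{v}\in\H_{Q(\mathbf{e}_d)}(\R),
\]
and then integrate over the base.

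First I would fix $\mathbf{v}\in\H_{Q(\mathbf{e}_d)}(\R)$ and choose $g_{\mathbf{v}}\in\SL_d(\R)$ with $\ovec(g_{\mathbf{v}})=\mathbf{v}$, so that $(\pi_{vec}^\Y)^{-1}(\mathbf{v})=g_{\mathbf{v}}\ASL_{d-1}(\R)/\ASL_{d-1}(\Z)$ and $\mu_{(\pi_{vec}^\Y)^{-1}(\mathbf{v})}(f)=\int f(g_{\mathbf{v}}x)\,dm_{Y_{d-1}}(x)$. The key algebraic identity I would establish is that left multiplication by $u_{\mathbf{v}}^Q$ corresponds, under this coset description, to left multiplication of $g_{\mathbf{v}}$ by an element that can be absorbed: more precisely, $u_{\mathbf{v}}^Q g_{\mathbf{v}}$ is again a valid choice of base point for the fiber over some vector, and in fact over $\mathbf{v}$ itself. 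To see this, observe that $u_{\mathbf{v}}^Q$ acts as identity on $\mathbf{v}^\perp=\mathrm{Span}_\R\{\mathbf{g}_1,\dots,\mathbf{g}_{d-1}\}$ (by Lemma \ref{lem:F(SO_Q) and stabilizer of a vector}(\ref{enu:orthogonal hyperplan to tau(g)}), since $P=\mathbf{v}^\perp$ is exactly the span of the first $d-1$ columns of $g_{\mathbf{v}}$), so $u_{\mathbf{v}}^Q\mathbf{g}_i=\mathbf{g}_i$ for $i<d$; consequently $u_{\mathbf{v}}^Q g_{\mathbf{v}}=g_{\mathbf{v}}h$ for some $h\in\ASL_{d-1}(\R)$ (the first $d-1$ columns are unchanged, hence $\ovec(u_{\mathbf{v}}^Q g_{\mathbf{v}})$ is still orthogonal to them and has the same norm; using \eqref{eq:orthogonality of theta} and Lemma \ref{lem:the covolume of lambda_v and the map theta} one checks $\ovec(u_{\mathbf{v}}^Q g_{\mathbf{v}})=\mathbf{v}$ up to sign, and a short computation via $\o(u_{\mathbf{v}}^Q)$ pins down the sign). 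Then $u^Q$ restricted to the fiber sends $g_{\mathbf{v}}x\ASL_{d-1}(\Z)\mapsto u_{\mathbf{v}}^Q g_{\mathbf{v}}x\ASL_{d-1}(\Z)=g_{\mathbf{v}}hx\ASL_{d-1}(\Z)$, i.e.\ it is left translation by $h\in\ASL_{d-1}(\R)$ on $\ASL_{d-1}(\R)/\ASL_{d-1}(\Z)$.

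Given this, invariance is immediate: $m_{Y_{d-1}}$ is the unique $\ASL_{d-1}(\R)$-invariant probability measure on $\ASL_{d-1}(\R)/\ASL_{d-1}(\Z)$, so
\[
(u^Q)_*\mu_{(\pi_{vec}^\Y)^{-1}(\mathbf{v})}(f)=\int f(g_{\mathbf{v}}hx)\,dm_{Y_{d-1}}(x)=\int f(g_{\mathbf{v}}x)\,dm_{Y_{d-1}}(x)=\mu_{(\pi_{vec}^\Y)^{-1}(\mathbf{v})}(f).
\]
Integrating against $\mu_{\H_{Q(\mathbf{e}_d)}(\R)}$ over $\mathbf{v}$ and using \eqref{eq:definition of measure double integral} yields $(u^Q)_*\mu_\Y=\mu_\Y$. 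The main obstacle, and the one place genuine care is needed, is verifying that $u_{\mathbf{v}}^Q g_{\mathbf{v}}$ indeed lies in $g_{\mathbf{v}}\ASL_{d-1}(\R)$ — equivalently that $\ovec(u_{\mathbf{v}}^Q g_{\mathbf{v}})=\mathbf{e}_d$ after left-translating by $g_{\mathbf{v}}^{-1}$; this is where the remark $\mathbf{v}_Q=\tfrac{1}{Q(\mathbf{v})}M\mathbf{v}$ after Definition \ref{def:of S_T,v^Q}, together with the orthogonality relation $B_{Q^*}(\mathbf{g}_i,M\ovec(g))=\delta_{i,d}$ from Lemma \ref{lem:F(SO_Q) and stabilizer of a vector}, must be combined to control how $\o(u_{\mathbf{v}}^Q)$ moves $\mathbf{e}_d$. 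Everything else is a routine consequence of the fibered construction of $\mu_\Y$ and uniqueness of invariant measures.
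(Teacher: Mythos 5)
Your proof is correct and is essentially the paper's argument: reduce to the fibers of $\pi_{vec}^{\Y}$ via \eqref{eq:definition of measure double integral}, observe that on the fiber $g_{\mathbf{v}}\ASL_{d-1}(\R)/\ASL_{d-1}(\Z)$ the map $u^{Q}$ becomes left translation by $g_{\mathbf{v}}^{-1}u_{\mathbf{v}}^{Q}g_{\mathbf{v}}\in\ASL_{d-1}(\R)$, and invoke the left $\ASL_{d-1}(\R)$-invariance of $m_{Y_{d-1}}$. The only difference is that you also spell out the membership $g_{\mathbf{v}}^{-1}u_{\mathbf{v}}^{Q}g_{\mathbf{v}}\in\ASL_{d-1}(\R)$, which the paper leaves to the reader (and which follows even more directly by noting that this matrix fixes $\mathbf{e}_{1},\dots,\mathbf{e}_{d-1}$, since $u_{\mathbf{v}}^{Q}$ fixes the first $d-1$ columns of $g_{\mathbf{v}}$ by \eqref{eq:orthogonality of theta}, and has determinant $1$).
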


\begin{proof}
Let $f\in C_{c}(\Y_{Q(\mathbf{e}_{d})}(\R))$. By recalling the definition
of $\mu_{\Y}$ in \eqref{eq:definition of measure double integral},
it is sufficient to prove that $\mu_{\left(\pi_{vec}^{\Y}\right)^{-1}(\mathbf{v})}(f\circ u^{Q})=\mu_{\left(\pi_{vec}^{\Y}\right)^{-1}(\mathbf{v})}(f)$
for all $\mathbf{v}\in\H_{Q(\mathbf{e}_{d})}(\R)$. Let $\mathbf{v}\in\H_{Q(\mathbf{e}_{d})}(\R)$
and let $g_{\mathbf{v}}\in\SL_{d}(\R)$ such that $\ovec(g_{\mathbf{v}})=\mathbf{v}$.
Then 
\[
\begin{aligned}\mu_{\left(\pi_{vec}^{\Y}\right)^{-1}(\mathbf{v})}(f\circ u^{Q})= & \int f(u_{\mathbf{v}}^{Q}g_{\mathbf{v}}x)dm_{Y_{d-1}}(x)\\
= & \int f(g_{\mathbf{v}}(g_{\mathbf{v}}^{-1}u_{\mathbf{v}}^{Q}g_{\mathbf{v}})x)dm_{Y_{d-1}}(x).
\end{aligned}
\]
As the reader may verify, it follows that $g_{\mathbf{v}}^{-1}u_{\mathbf{v}}^{Q}g_{\mathbf{v}}\in\ASL_{d-1}(\R)$,
and by recalling that $m_{Y_{d-1}}$ is left $\ASL_{d-1}(\R)$ invariant,
the proof is done.
\end{proof}
Consider $\delta_{T}^{Q}:\Y_{Q(\mathbf{e}_{d})}(\R)\to\Y_{Q(\mathbf{e}_{d})}(\R)$
defined by $\delta_{T}^{Q}\df\left(u^{Q}\right)^{-1}\circ\pi_{\Y_{T}}^{Q}\circ\pi_{\Y_{T}}^{-1}$.
Using Lemma \ref{lem:uniform convergence to a unipotent}, we obtain
that $\delta_{T}^{Q}$ converges to the identity transformation on
$\Y_{Q(\mathbf{e}_{d})}(\R)$ as $T\to\infty$ in the following uniform
manner.
\begin{cor}
\label{cor:uniform conv of delta^Q_T_n}Assume that $y_{n}\to y_{0}$
in $\Y_{Q(\mathbf{e}_{d})}(\R)$ and let $\{T_{n}\}_{n=1}^{\infty}\subseteq\R_{>0}$
such that $T_{n}\to\infty$. Then $\delta_{T_{n}}^{Q}(y_{n})\to y_{0}$
and $\left(\delta_{T_{n}}^{Q}\right)^{-1}(y_{n})\to y_{0}$.
\end{cor}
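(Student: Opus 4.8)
The plan is to make $\delta_{T}^{Q}$ completely explicit on $\Y_{Q(\mathbf{e}_{d})}(\R)$ and then reduce the statement to Lemma \ref{lem:uniform convergence to a unipotent}. By \eqref{eq:geometric projection in terms of matricre} the homeomorphism $\pi_{\Y_{T}}$ sends $(L,P,\mathbf{v})$ to $(S_{T,\mathbf{v}}L,P,\tfrac{1}{\sqrt{T}}\mathbf{v})$; since the matrix $S_{T,\mathbf{v}}$ depends on $\mathbf{v}$ only through the hyperplane $\mathbf{v}^{\perp}$ and the line $\R\mathbf{v}$ — both unchanged when $\mathbf{v}$ is rescaled — its inverse is $\pi_{\Y_{T}}^{-1}(L,P,\mathbf{v})=(S_{T,\mathbf{v}}^{-1}L,P,\sqrt{T}\mathbf{v})$. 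Likewise, by the remark after Definition \ref{def:of S_T,v^Q} the data $\mathbf{v}^{\perp}$ and $\R\mathbf{v}_{Q}=\R M\mathbf{v}$ defining $S_{T,\mathbf{v}}^{Q}$ are invariant under $\mathbf{v}\mapsto c\mathbf{v}$, so \eqref{eq:equiv projection in terms of matrices} gives $\pi_{\Y_{T}}^{Q}\circ\pi_{\Y_{T}}^{-1}(L,P,\mathbf{v})=(S_{T,\mathbf{v}}^{Q}S_{T,\mathbf{v}}^{-1}L,P,\mathbf{v})=(u_{T,\mathbf{v}}^{Q}L,P,\mathbf{v})$ with $u_{T,\mathbf{v}}^{Q}$ as in \eqref{eq:u_T,Q,v}. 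Applying $(u^{Q})^{-1}$, which acts by $(L,P,\mathbf{v})\mapsto((u_{\mathbf{v}}^{Q})^{-1}L,P,\mathbf{v})$, I obtain
\[
\delta_{T}^{Q}(L,P,\mathbf{v})=\big(\varepsilon_{T,\mathbf{v}}\,L,\,P,\,\mathbf{v}\big),\qquad \varepsilon_{T,\mathbf{v}}\df(u_{\mathbf{v}}^{Q})^{-1}u_{T,\mathbf{v}}^{Q}\in\SL_{d}(\R),
\]
so $\delta_{T}^{Q}$ fixes the hyperplane and vector coordinates and moves only the lattice coordinate, by left multiplication by $\varepsilon_{T,\mathbf{v}}$.

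Next I would invoke Lemma \ref{lem:uniform convergence to a unipotent}. Writing $y_{n}=(L_{n},P_{n},\mathbf{v}_{n})\to y_{0}=(L_{0},P_{0},\mathbf{v}_{0})$ means $L_{n}\to L_{0}$, $P_{n}\to P_{0}$ and $\mathbf{v}_{n}\to\mathbf{v}_{0}$ in the respective factors; since $Q(\mathbf{v}_{0})=Q(\mathbf{e}_{d})>0$ we have $\mathbf{v}_{0}\neq\mathbf{0}$, and as each $\mathbf{v}_{n}$ satisfies $Q(\mathbf{v}_{n})=Q(\mathbf{e}_{d})\neq0$ the set $C\df\{\mathbf{v}_{n}:n\geq1\}\cup\{\mathbf{v}_{0}\}$ is a compact subset of $\R^{d}\smallsetminus\mathbf{0}$. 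Lemma \ref{lem:uniform convergence to a unipotent} gives $(u_{\mathbf{v}}^{Q})^{-1}u_{T,\mathbf{v}}^{Q}\to I_{d}$ as $T\to\infty$, uniformly for $\mathbf{v}\in C$; combined with $T_{n}\to\infty$ this yields $\varepsilon_{T_{n},\mathbf{v}_{n}}\to I_{d}$ in $\SL_{d}(\R)$ (given $\eta>0$, take $T_{0}$ from the uniform statement and use $T_{n}>T_{0}$ for large $n$). By continuity of inversion in $\SL_{d}(\R)$ we also get $\varepsilon_{T_{n},\mathbf{v}_{n}}^{-1}\to I_{d}$.

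Finally, $\delta_{T_{n}}^{Q}(y_{n})=(\varepsilon_{T_{n},\mathbf{v}_{n}}L_{n},P_{n},\mathbf{v}_{n})$, and since the left $\SL_{d}(\R)$-action on the space of unimodular lattices is continuous, $\varepsilon_{T_{n},\mathbf{v}_{n}}\to I_{d}$ together with $L_{n}\to L_{0}$ forces $\varepsilon_{T_{n},\mathbf{v}_{n}}L_{n}\to L_{0}$; hence $\delta_{T_{n}}^{Q}(y_{n})\to(L_{0},P_{0},\mathbf{v}_{0})=y_{0}$ in $\Y_{Q(\mathbf{e}_{d})}(\R)$. The same argument applied to $(\delta_{T}^{Q})^{-1}(L,P,\mathbf{v})=(\varepsilon_{T,\mathbf{v}}^{-1}L,P,\mathbf{v})$, using $\varepsilon_{T_{n},\mathbf{v}_{n}}^{-1}\to I_{d}$, gives $(\delta_{T_{n}}^{Q})^{-1}(y_{n})\to y_{0}$. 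The only point needing care is the first paragraph — checking the scaling invariance of $S_{T,\mathbf{v}}$ and $S_{T,\mathbf{v}}^{Q}$ and assembling the composition into the displayed formula for $\delta_{T}^{Q}$ — after which the conclusion is immediate from Lemma \ref{lem:uniform convergence to a unipotent} and continuity of the group action; there is no substantial obstacle.
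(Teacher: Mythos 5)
Your proof is correct and follows essentially the same route as the paper: write $\delta_{T}^{Q}(L,P,\mathbf{v})=\bigl((u_{\mathbf{v}}^{Q})^{-1}u_{T,\mathbf{v}}^{Q}L,P,\mathbf{v}\bigr)$ and conclude via the uniform convergence of Lemma \ref{lem:uniform convergence to a unipotent} on the compact set $\{\mathbf{v}_{n}\}\cup\{\mathbf{v}_{0}\}$ together with continuity of the $\SL_{d}(\R)$-action on lattices. The only difference is cosmetic: you verify the scale-invariance of $S_{T,\mathbf{v}}$ and $S_{T,\mathbf{v}}^{Q}$ explicitly (which the paper takes for granted) and handle $(\delta_{T_{n}}^{Q})^{-1}$ by continuity of inversion rather than by citing uniform convergence of the inverses, which is an equally valid reading of the same lemma.
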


\begin{proof}
We write $y_{n}=\left(L_{n},P_{n},\mathbf{v}_{n}\right)$ and $y_{0}=(L_{0},P_{0},\mathbf{v}_{0})$,
and we observe that $y_{n}\to y_{0}$ implies that $L_{n}\to L_{0}$
and $\mathbf{v}_{n}\to\mathbf{v}_{0}$ in the usual topology of $X_{d}$
and $\R^{d}$ correspondingly.

Let for $T_{n}>0$ and $\mathbf{v}_{n}\in\R^{d}\smallsetminus\mathbf{0},$
let $I_{T_{n},\mathbf{v}_{n}}\in\SL_{d}(\R)$ be defined by
\[
I_{T_{n},\mathbf{v}_{n}}\df\left(u_{\mathbf{v}_{n}}^{Q}\right)^{-1}\left(S_{T,\mathbf{v}_{n}}^{Q}\right)S_{T,\mathbf{v}_{n}}^{-1},
\]
and observe that 
\[
\delta_{T_{n}}^{Q}(L_{n},P_{n},\mathbf{v}_{n})=(I_{T_{n},\mathbf{v}_{n}}L_{n},P_{n},\mathbf{v}_{n}).
\]
Since $L_{n}\to L_{0}$, since $\mathbf{v}_{n}\to\mathbf{v}_{0}$
and since $I_{T_{n},\mathbf{v}_{n}}\to I_{d}$ uniformly when $\mathbf{v}$
is restricted to a compact subset of $\R^{d}\smallsetminus\mathbf{0}$
(by Lemma \ref{lem:uniform convergence to a unipotent}), we conclude
that $I_{T_{n},\mathbf{v}_{n}}L_{n}\to L_{0}$, which shows $\delta_{T_{n}}^{Q}(y_{n})\to y_{0}$.

Similarly, we have that 
\[
\left(\delta_{T_{n}}^{Q}\right)^{-1}(L_{n},P_{n},\mathbf{v}_{n})=(I_{T_{n},\mathbf{v}_{n}}^{-1}L_{n},P_{n},\mathbf{v}_{n}),
\]
and since $I_{T_{n},\mathbf{v}_{n}}^{-1}\to I_{d}$ converges uniformly
when $\mathbf{v}$ is restricted to a compact subset of $\R^{d}\smallsetminus\mathbf{0}$
(which follows by Lemma \ref{lem:uniform convergence to a unipotent}),
we also obtain that $\left(\delta_{T_{n}}^{Q}\right)^{-1}(y_{n})\to y_{0}.$
\end{proof}
\begin{lem}
\label{lem:uniform convergence in manifold}Let $X$ be a manifold
and assume that $\left\{ \ef_{T}\right\} _{T\in\R_{>0}}$ is a family
of bijections $\ef_{T}:X\to X$ such that for any sequence $\left\{ x_{n}\right\} \subseteq X$
with $\lim_{n\to\infty}x_{n}=x_{0}$ and any $\{T_{n}\}_{n=1}^{\infty}\subseteq\R_{>0}$
such that $T_{n}\to\infty$ it holds that $\lim_{n\to\infty}\ef_{T_{n}}(x_{n})=x_{0}$
and $\lim_{n\to\infty}\ef_{T_{n}}^{-1}(x_{n})=x_{0}$ . Then for all
$f\in C_{c}(X)$, $f\circ\ef_{T}$ converges to $f$ uniformly. Namely,
for all $f\in C_{c}(X)$ and all $\epsilon>0$ there is $T_{0}>0$
such that 
\[
\left|f\circ\ef_{T}(x)-f(x)\right|<\epsilon,\ \forall T>T_{0},\ \forall x\in X.
\]
\end{lem}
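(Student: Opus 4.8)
The plan is to argue by contradiction, using the compact support of $f$ together with a subsequence-extraction argument to reduce to the hypothesis. So suppose the conclusion fails: there exist $f\in C_{c}(X)$, some $\epsilon>0$, a sequence $T_{n}\to\infty$, and points $x_{n}\in X$ with $\left|f(\ef_{T_{n}}(x_{n}))-f(x_{n})\right|\geq\epsilon$ for all $n$. Set $K\df\operatorname{supp}(f)$, which is compact. Since the difference is at least $\epsilon>0$, for each $n$ at least one of the two points $x_{n}$ and $\ef_{T_{n}}(x_{n})$ lies in $K$ (otherwise $f$ vanishes at both). Passing to a subsequence, we may assume that one of the following two situations holds for every $n$.

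\emph{Case 1: $x_{n}\in K$ for all $n$.} By compactness of $K$, pass to a further subsequence so that $x_{n}\to x_{0}\in K$. Applying the hypothesis to this sequence gives $\ef_{T_{n}}(x_{n})\to x_{0}$. By continuity of $f$ we then have $f(x_{n})\to f(x_{0})$ and $f(\ef_{T_{n}}(x_{n}))\to f(x_{0})$, so $\left|f(\ef_{T_{n}}(x_{n}))-f(x_{n})\right|\to0$, contradicting the lower bound.

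\emph{Case 2: $\ef_{T_{n}}(x_{n})\in K$ for all $n$.} Write $y_{n}\df\ef_{T_{n}}(x_{n})\in K$ and pass to a subsequence with $y_{n}\to y_{0}\in K$. Then $x_{n}=\ef_{T_{n}}^{-1}(y_{n})$, and the second half of the hypothesis, applied to the sequence $y_{n}\to y_{0}$, yields $x_{n}=\ef_{T_{n}}^{-1}(y_{n})\to y_{0}$. Again by continuity of $f$, both $f(y_{n})$ and $f(x_{n})$ converge to $f(y_{0})$, so $\left|f(y_{n})-f(x_{n})\right|\to0$, a contradiction.

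In both cases we reach a contradiction, which proves the claim. The only point requiring care is that the points $x_{n}$ need not \emph{a priori} stay in a compact set; this is exactly why the hypothesis is imposed symmetrically on $\ef_{T_{n}}$ and on $\ef_{T_{n}}^{-1}$, and Case 2 is where that symmetry is used. Everything else is a routine application of continuity of $f$ and sequential compactness of $K$.
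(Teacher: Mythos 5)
Your proof is correct and follows essentially the same route as the paper: argue by contradiction, note that the compact support forces $x_{n}$ or $\ef_{T_{n}}(x_{n})$ into $K=\operatorname{supp}(f)$ infinitely often, extract a convergent subsequence, and use the hypothesis on $\ef_{T_{n}}$ (resp.\ on $\ef_{T_{n}}^{-1}$, exactly as the paper does in the case $\ef_{T_{n}}(x_{n})\in K$) together with continuity of $f$ to contradict the lower bound.
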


\begin{proof}
Let $f\in C_{c}(X)$ and assume for contradiction that $f\circ\ef_{T}$
doesn't converge uniformly to $f$. Then there exists a $\delta>0$,
a sequence $T_{n}\to\infty$ and a sequence $\left\{ x_{n}\right\} \subseteq X$
such that $\left|f\circ\ef_{T_{n}}(x_{n})-f(x_{n})\right|>\delta$
for all $n\in\N.$ Let $K\df\text{supp}(f)$ and observe by the preceding
inequality that either $\ef_{T_{n}}(x_{n})\in K$ infinitely often
or $x_{n}\in K$ infinitely often. Assume that $\ef_{T_{n}}(x_{n})\in K$
infinitely often. By sequential compactness we may assume that $\ef_{T_{n}}(x_{n})\to x_{0}$
which implies by assumption on $\ef_{T}^{-1}$ that $x_{n}=\ef_{T_{n}}^{-1}(\ef_{T_{n}}(x_{n}))\to x_{0}$.
We reach a contradiction since 
\[
\left|f\circ\ef_{T_{n}}(x_{n})-f(x_{n})\right|\leq\left|f\circ\ef_{T_{n}}(x_{n})-f(x_{0})\right|+\left|f(x_{0})-f(x_{n})\right|,
\]
and since the continuity of $f$ implies $\left|f\circ\ef_{T_{n}}(x_{n})-f(x_{0})\right|\to0$
and $\left|f(x_{0})-f(x_{n})\right|\to0$.

In a manner similar to the preceding, we obtain a contradiction when
assuming that $x_{n}\in K$ infinitely often.
\end{proof}
\begin{cor}
\label{cor:uniform left u^Q continuity for cont functions}Let $f\in C_{c}(\mathcal{\Y}_{Q(\mathbf{e}_{d})}(\R)\times\H_{a}(\Z/(q)))$,
let $\epsilon>0$ and let $\mathcal{K}\supsetneq\text{Supp}(f)$ be
an open precompact set. Then, there exists $T_{0}>0$ such that for
all $T>T_{0}$ the following hold
\begin{enumerate}
\item \label{enu:uniformity of difference of  values of function composed with projections}$\left|f((u^{Q})^{-1}\circ\pi_{\Y_{T}}^{Q}(y),\mathbf{v})-f(\pi_{\Y_{T}}(y),\mathbf{v})\right|<\epsilon,\ \forall\left(y,\mathbf{v}\right)\in\Y_{T}(\R)\times\H_{a}(\Z/(q)).$
\item \label{enu:out K implies out supp}if $((u^{Q})^{-1}\circ\pi_{\Y_{T}}^{Q}(y),\mathbf{v})\notin\mathcal{K}$,
then $(\pi_{\Y_{T}}(y),\mathbf{v})\notin\text{Supp}(f)$.
\end{enumerate}
\end{cor}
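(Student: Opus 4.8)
The plan is to reduce assertions~\eqref{enu:uniformity of difference of  values of function composed with projections} and~\eqref{enu:out K implies out supp} to Lemma~\ref{lem:uniform convergence in manifold}, applied on the \emph{fixed} space $\Y_{Q(\mathbf{e}_{d})}(\R)\times\H_{a}(\Z/(q))$ with the family of self-maps induced by $\delta_{T}^{Q}$. First I would record the identity
\[
(u^{Q})^{-1}\circ\pi_{\Y_{T}}^{Q}=\delta_{T}^{Q}\circ\pi_{\Y_{T}},
\]
which is merely the definition $\delta_{T}^{Q}=(u^{Q})^{-1}\circ\pi_{\Y_{T}}^{Q}\circ\pi_{\Y_{T}}^{-1}$ rearranged, using that $\pi_{\Y_{T}}\colon\Y_{T}(\R)\to\Y_{Q(\mathbf{e}_{d})}(\R)$ is a homeomorphism (hence a bijection). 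As $y$ runs over $\Y_{T}(\R)$, the point $x\df\pi_{\Y_{T}}(y)$ runs over all of $\Y_{Q(\mathbf{e}_{d})}(\R)$, so~\eqref{enu:uniformity of difference of  values of function composed with projections} is equivalent to
\[
\bigl|f(\delta_{T}^{Q}(x),\mathbf{v})-f(x,\mathbf{v})\bigr|<\epsilon\qquad\text{for all }(x,\mathbf{v})\in\Y_{Q(\mathbf{e}_{d})}(\R)\times\H_{a}(\Z/(q)),
\]
and~\eqref{enu:out K implies out supp} is equivalent to the implication $(\delta_{T}^{Q}(x),\mathbf{v})\notin\mathcal{K}\Rightarrow(x,\mathbf{v})\notin\text{Supp}(f)$. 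Now all of the $T$-dependence sits inside $\delta_{T}^{Q}$, which acts on a space not depending on $T$.

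Since $\H_{a}(\Z/(q))$ is finite, hence discrete, the set $X\df\Y_{Q(\mathbf{e}_{d})}(\R)\times\H_{a}(\Z/(q))$ is a manifold (a finite disjoint union of copies of $\Y_{Q(\mathbf{e}_{d})}(\R)$), and the maps $\tilde{\delta}_{T}^{Q}\colon X\to X$ given by $\tilde{\delta}_{T}^{Q}(x,\mathbf{v})\df(\delta_{T}^{Q}(x),\mathbf{v})$ form a family of bijections of $X$. Because the $\H_{a}(\Z/(q))$-coordinate is locally constant, Corollary~\ref{cor:uniform conv of delta^Q_T_n} shows that for every $(x_{n},\mathbf{v}_{n})\to(x_{0},\mathbf{v}_{0})$ in $X$ and every $\{T_{n}\}\subseteq\R_{>0}$ with $T_{n}\to\infty$ one has $\tilde{\delta}_{T_{n}}^{Q}(x_{n},\mathbf{v}_{n})\to(x_{0},\mathbf{v}_{0})$ and $(\tilde{\delta}_{T_{n}}^{Q})^{-1}(x_{n},\mathbf{v}_{n})\to(x_{0},\mathbf{v}_{0})$; that is, $\{\tilde{\delta}_{T}^{Q}\}$ satisfies the hypotheses of Lemma~\ref{lem:uniform convergence in manifold}. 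Applying that lemma to $f\in C_{c}(X)$ yields assertion~\eqref{enu:uniformity of difference of  values of function composed with projections} with some threshold $T_{1}>0$.

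For~\eqref{enu:out K implies out supp} I would prove the contrapositive, which after the reduction above is the statement $\tilde{\delta}_{T}^{Q}(\text{Supp}(f))\subseteq\mathcal{K}$ for all $T$ large enough. If this failed, there would be $T_{n}\to\infty$ and $z_{n}\in\text{Supp}(f)$ with $\tilde{\delta}_{T_{n}}^{Q}(z_{n})\notin\mathcal{K}$; by compactness of $\text{Supp}(f)$ pass to a subsequence with $z_{n}\to z_{0}\in\text{Supp}(f)\subseteq\mathcal{K}$, and then $\tilde{\delta}_{T_{n}}^{Q}(z_{n})\to z_{0}$ by the convergence just recorded, forcing $\tilde{\delta}_{T_{n}}^{Q}(z_{n})\in\mathcal{K}$ for large $n$ since $\mathcal{K}$ is open --- a contradiction. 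This gives a threshold $T_{2}>0$, and $T_{0}\df\max\{T_{1},T_{2}\}$ works for both assertions. (Alternatively,~\eqref{enu:out K implies out supp} follows from~\eqref{enu:uniformity of difference of  values of function composed with projections} applied with $\epsilon=1/2$ to a Urysohn function $\chi\in C_{c}(X)$ with $\chi\equiv1$ on $\text{Supp}(f)$ and $\text{Supp}(\chi)\subseteq\mathcal{K}$, using that $\mathcal{K}$ is precompact.)

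There is no substantive obstacle; the one point needing care is that the corollary is phrased over the $T$-dependent space $\Y_{T}(\R)$ while the tools (Corollary~\ref{cor:uniform conv of delta^Q_T_n} and Lemma~\ref{lem:uniform convergence in manifold}) live on the fixed space $\Y_{Q(\mathbf{e}_{d})}(\R)$ --- precomposing with the homeomorphism $\pi_{\Y_{T}}$, together with the identity $(u^{Q})^{-1}\circ\pi_{\Y_{T}}^{Q}=\delta_{T}^{Q}\circ\pi_{\Y_{T}}$, is exactly what bridges the two.
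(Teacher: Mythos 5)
Your proposal is correct and follows essentially the paper's own route: part (1) is exactly the paper's argument (substitute $x=\pi_{\Y_{T}}(y)$, use that $\pi_{\Y_{T}}$ is a homeomorphism, and apply Corollary \ref{cor:uniform conv of delta^Q_T_n} together with Lemma \ref{lem:uniform convergence in manifold} on the fixed space $\Y_{Q(\mathbf{e}_{d})}(\R)\times\H_{a}(\Z/(q))$, the finite factor causing no trouble). For part (2) your primary argument is a direct sequential-compactness argument showing $\tilde{\delta}_{T}^{Q}(\text{Supp}(f))\subseteq\mathcal{K}$ for large $T$, a harmless variant of the paper's proof, which instead applies the part (1) estimate to a Urysohn function supported in $\mathcal{K}$ --- the alternative you yourself note in the parenthetical --- so both steps are sound and no gap remains.
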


\begin{proof}
Let $f\in C_{c}(\mathcal{\Y}_{Q(\mathbf{e}_{d})}(\R)\times\H_{a}(\Z/(q)))$
and let $\epsilon\in(0,1)$. Using Corollary \ref{cor:uniform conv of delta^Q_T_n}
and Lemma \ref{lem:uniform convergence in manifold} with the fact
that $\H_{a}(\Z/(q))$ is a finite set, we obtain $T_{1}>0$ such
that for all $T>T_{1}$ it holds 
\[
\left|f\left(\delta_{T}^{Q}(y'),\mathbf{v})\right)-f\left(y',\mathbf{v})\right)\right|<\epsilon,\ \forall\left(y',\mathbf{v}\right)\in\Y_{Q(\mathbf{e}_{d})}(\R)\times\H_{a}(\Z/(q)).
\]
Then, by substituting $y'=\pi_{\Y_{T}}(y)$, we obtain for all $T>T_{1}$
that 
\[
\left|f\left(((u^{Q})^{-1}\circ\pi_{\Y_{T}}^{Q}(y),\mathbf{v})\right)-f\left((\pi_{\Y_{T}}(y),\mathbf{v})\right)\right|<\epsilon,\ \forall\left(y,\mathbf{v}\right)\in\Y_{T}(\R)\times\H_{a}(\Z/(q)).
\]

Let $\mathcal{K}\supsetneq\text{Supp}(f)$ be an open precompact set.
By Urysohn's lemma there exists $\ef:C_{c}(\mathcal{\Y}_{Q(\mathbf{e}_{d})}(\R)\times\H_{a}(\Z/(q)))\to[0,1]$
such that 
\[
\ef(y,\mathbf{v})\df\begin{cases}
0 & (y,\mathbf{v})\notin\mathcal{K}\\
1 & (y,\mathbf{v})\in\text{Supp}(f).
\end{cases}
\]
As above, there exists $T_{2}>0$ such that for all $T>T_{2}$
\begin{equation}
\left|\ef((u^{Q})^{-1}\circ\pi_{\Y_{T}}^{Q}(y),\mathbf{v})-\ef(\pi_{\Y_{T}}(y),\mathbf{v})\right|<\epsilon,\ \forall(y,\mathbf{v})\in\Y_{T}(\R)\times\H_{a}(\Z/(q)).\label{eq:difference of urysohn function projections}
\end{equation}
Assuming $((u^{Q})^{-1}\circ\pi_{\Y_{T}}^{Q}(y),\mathbf{v})\notin\mathcal{K}$,
we see by \eqref{eq:difference of urysohn function projections} and
by the definition of $\ef$ that $\ef(\pi_{\Y_{T}}(y),\mathbf{v})=0,$
which implies that $(\pi_{\Y_{T}}(y),\mathbf{v})\notin\text{Supp}(f)$.

By defining $T_{0}\df\max\{T_{1},T_{2}\}$ the proof of the statements
of Corollary \ref{cor:uniform left u^Q continuity for cont functions}
is done.
\end{proof}
Fix $q\in\N$ and let $\left\{ T_{n}\right\} _{n=1}^{\infty}\subseteq\N$
be an unbounded sequence such that $\vartheta_{q}(T_{n})=a$, where
$a\in\Z/(q)$ is fixed. We consider the following measure on $\mathcal{\Y}_{Q(\mathbf{e}_{d})}(\R)\times\H_{a}(\Z/(q))$
defined by 
\begin{equation}
\nu_{T_{n}}^{\mathcal{\Y},Q,q}\df\frac{1}{\left|\H_{T_{n},\text{prim}}(\Z)/\SO_{Q}(\Z)\right|}\sum_{y\in\Y_{T_{n}}(\Z)}\delta{}_{(\pi_{\Y_{T_{n}}}^{Q}(y),\vartheta_{q}(\pi_{vec}^{\Y}(y)))}\label{eq:def of nu^Y,Q,q}
\end{equation}

\begin{cor}
\label{cor:measure on equiv proj and geom proj are same..}For all
$f\in C_{c}(\mathcal{\Y}_{Q(\mathbf{e}_{d})}(\R)\times\H_{a}(\Z/(q)))$
it holds that 
\begin{equation}
\lim_{n\to\infty}\nu_{T_{n}}^{\mathcal{\Y},Q,q}(f\circ\left(u^{Q}\right)^{-1})-\nu_{T_{n}}^{\mathcal{\Y},q}(f)=0,\label{eq:measure on equiv proj and geom proj are same..}
\end{equation}
where we recall that\emph{
\[
\nu_{T}^{\mathcal{Y},q}=\frac{1}{\left|\H_{T,\text{prim}}(\Z)/\SO_{Q}(\Z)\right|}\sum_{y\in\Y_{T}(\Z)}\delta_{(\pi_{\mathcal{Y}_{T}}(y),\vartheta_{q}(\pi_{vec}^{\mathcal{Y}}(y))}.
\]
}
\end{cor}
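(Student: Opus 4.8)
The plan is to compare the two measures atom-by-atom. Both $\nu_{T_n}^{\mathcal{Y},Q,q}(f\circ(u^Q)^{-1})$ and $\nu_{T_n}^{\mathcal{Y},q}(f)$ are normalized sums over the same index set $\Y_{T_n}(\Z)$ with the same normalizing constant $|\H_{T_n,\text{prim}}(\Z)/\SO_Q(\Z)|$, and the second coordinate of each atom is the same, namely $\vartheta_q(\pi_{vec}^{\Y}(y))$. So the difference is
\[
\nu_{T_n}^{\mathcal{Y},Q,q}(f\circ(u^Q)^{-1})-\nu_{T_n}^{\mathcal{Y},q}(f)=\frac{1}{|\H_{T_n,\text{prim}}(\Z)/\SO_Q(\Z)|}\sum_{y\in\Y_{T_n}(\Z)}\Big(f(u^Q)^{-1}\pi_{\Y_{T_n}}^{Q}(y),\vartheta_q(\pi_{vec}^{\Y}(y)))-f(\pi_{\Y_{T_n}}(y),\vartheta_q(\pi_{vec}^{\Y}(y)))\Big).
\]
So I want to bound the sum of the pointwise discrepancies $|f((u^Q)^{-1}\pi_{\Y_{T_n}}^Q(y),\mathbf v)-f(\pi_{\Y_{T_n}}(y),\mathbf v)|$ for $\mathbf v=\vartheta_q(\pi_{vec}^{\Y}(y))$.

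First I would fix $f$, choose $\epsilon>0$, and fix an open precompact $\mathcal{K}\supsetneq\operatorname{Supp}(f)$, then invoke Corollary \ref{cor:uniform left u^Q continuity for cont functions}: there is $T_0$ so that for all $T>T_0$ each summand has absolute value $<\epsilon$ (part \eqref{enu:uniformity of difference of  values of function composed with projections}), and moreover whenever the discrepancy is nonzero the atom $((u^Q)^{-1}\pi_{\Y_T}^Q(y),\mathbf v)$ must lie in $\mathcal{K}$ --- indeed part \eqref{enu:out K implies out supp} says if $((u^Q)^{-1}\pi_{\Y_T}^Q(y),\mathbf v)\notin\mathcal{K}$ then $(\pi_{\Y_T}(y),\mathbf v)\notin\operatorname{Supp}(f)$, and then the summand contributes $0$ only if also the first term vanishes; to be safe I would apply the same reasoning symmetrically (enlarging $\mathcal K$ once more, or using that both terms vanish outside their respective sets) so that a nonzero summand forces the atom $((u^Q)^{-1}\pi_{\Y_{T_n}}^Q(y),\mathbf v)$ into $\mathcal{K}$. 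Hence for $T_n>T_0$,
\[
\Big|\nu_{T_n}^{\mathcal{Y},Q,q}(f\circ(u^Q)^{-1})-\nu_{T_n}^{\mathcal{Y},q}(f)\Big|\le \epsilon\cdot \nu_{T_n}^{\mathcal{Y},Q,q}\big((u^Q)^{-1}\text{-pullback of }\mathbf{1}_{\mathcal{K}}\big),
\]
i.e.\ $\epsilon$ times the $\nu_{T_n}^{\mathcal{Y},Q,q}$-mass assigned to the $((u^Q)^{-1})$-preimage of $\mathcal{K}$.

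The remaining point is therefore that this mass stays bounded as $n\to\infty$. This is where I would invoke the already-proved asymptotics: since $\pi_{\Y_{T}}^Q$ is defined via $\pi_{\V_T}$ (see \eqref{eq:def of pi_T^Q}), the measure $\nu_{T_n}^{\mathcal{Y},Q,q}$ is the image under the unfolding/orbit correspondence of the $\V$-side measure $\nu_{T_n}^{\V,q}$ of \eqref{eq:congruence counting measures on nu_Z_Q(E_d))}; more precisely, for $g\in C_c(\V_{Q(\mathbf e_d)}(\R)\times\V_a(\Z/(q)))$ with unfolding $\bar g$ (Lemma \ref{lem:integral unwinding of z w.r.t y}, Lemma \ref{lem:star bar map is onto}) one has $\nu_{T_n}^{\mathcal{Y},Q,q}(\bar g)=\nu_{T_n}^{\V,q}(g)$ by the summation computation displayed in Section \ref{subsec:Proof-of-Theorems for Y}. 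Taking a compactly supported $\psi\in C_c(\mathcal{Y}_{Q(\mathbf e_d)}(\R))$ with $\psi\ge \mathbf 1_{(u^Q)^{-1}\mathcal{K}}$ (possible since $(u^Q)^{-1}\mathcal{K}$ is precompact, $u^Q$ being a homeomorphism), surjectivity of unfolding gives $\psi=\bar g$ for some $g$, hence $\nu_{T_n}^{\mathcal{Y},Q,q}(\psi)=\nu_{T_n}^{\V,q}(g)\to \mu_{\V}\otimes\mu_{\V_a(\Z/(q))}(g)$ by Theorem \ref{thm:main_thm_with_congruences-forZ}, and in particular this sequence is bounded, say by $C_f$. (If one only wants the unconditional statement Theorem \ref{thm:moduli main thm } one uses Theorem \ref{thm:main thm for Z} with $q=1$ the same way.) Putting it together, $\limsup_n|\nu_{T_n}^{\mathcal{Y},Q,q}(f\circ(u^Q)^{-1})-\nu_{T_n}^{\mathcal{Y},q}(f)|\le \epsilon C_f$, and letting $\epsilon\to0$ yields \eqref{eq:measure on equiv proj and geom proj are same..}.

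The main obstacle is the bookkeeping around the support/precompactness issue: one must make sure that the discrepancy sum is genuinely supported (in $y$) on a set whose $\nu_{T_n}^{\mathcal{Y},Q,q}$-mass is controlled uniformly in $n$, and that this controlling mass can be expressed through a fixed compactly supported test function so that the equidistribution input from Section \ref{subsec:Statistics-of-Z} (via Theorems \ref{thm:main thm for Z}--\ref{thm:main_thm_with_congruences-forZ}) applies --- the interplay of $u^Q$ being a fixed homeomorphism (so it preserves precompactness) with the fact that $\pi_{\Y_{T_n}}^Q,\pi_{\Y_{T_n}}$ are only asymptotically close, handled by Corollary \ref{cor:uniform left u^Q continuity for cont functions}, is the delicate part. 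The actual inequality manipulations are routine once these are in place.
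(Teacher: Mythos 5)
Your first half coincides with the paper's: the atom-by-atom reduction and the use of Corollary \ref{cor:uniform left u^Q continuity for cont functions} (both parts) are exactly how the paper begins, and your worry about ``enlarging $\mathcal{K}$ once more'' is unnecessary, since $\mathcal{K}\supseteq\operatorname{Supp}(f)$ already forces both terms of the discrepancy to vanish when $((u^{Q})^{-1}\circ\pi_{\Y_{T}}^{Q}(y),\mathbf{v})\notin\mathcal{K}$. Where you genuinely diverge is the uniform bound on the number of contributing atoms. You bound it by dominating $\mathbf{1}_{u^{Q}(\mathcal{K})}$ (note: the pullback of $\mathbf{1}_{\mathcal{K}}$ under $(u^{Q})^{-1}$ is the indicator of $u^{Q}(\mathcal{K})$, not of $(u^{Q})^{-1}(\mathcal{K})$, though both are precompact) with a $C_{c}$ function, unfolding it to the $\V$-side, and invoking Theorem \ref{thm:main_thm_with_congruences-forZ} to get boundedness of $\nu_{T_{n}}^{\Y,Q,q}$ on a fixed compact set. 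The paper instead does this step elementarily: it unfolds the sum over $\SO_{Q}(\Z)$-orbits in $\Y_{T}(\Z)$, uses the $\SO_{Q}(\Z)$-equivariance of $\pi_{\Y_{T}}^{Q}$ together with Lemma \ref{lem:uniform bound intersection with cpct set} (uniform discreteness) to bound, uniformly in $T$ and in the orbit, the number of $\gamma\in\SO_{Q}(\Z)$ with $y\cdot\gamma\in\operatorname{Supp}(\phi_{T})\subseteq u^{Q}(\mathcal{K})$ by a constant $c$, and then uses that $\pi_{vec}^{\Y}$ is an $\SO_{Q}(\Z)$-equivariant bijection $\Y_{T}(\Z)\to\H_{T,\text{prim}}(\Z)$, so the number of orbits equals the normalizing constant and the total error is at most $c\epsilon$.

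The cost of your route is that it imports the hypotheses of Theorems \ref{thm:main thm for Z}--\ref{thm:main_thm_with_congruences-forZ} ($q$ odd, $Q$ non-singular modulo $q$, $a\in(\Z/(q))^{\times}$, and the $(Q,p_{0})$ co-isotropic property of $\{T_{n}\}$), none of which appear in the corollary as stated: the corollary is an unconditional, purely geometric comparison of $\pi_{\Y_{T}}^{Q}$ and $\pi_{\Y_{T}}$, valid for any $q\in\N$, any residue $a$, and any unbounded sequence with $\vartheta_{q}(T_{n})=a$. So strictly speaking you prove a conditional version. Since the corollary is only invoked in Section \ref{subsec:Concluding-the-proof that Y follows from Z}, where those hypotheses are anyway in force and Theorem \ref{thm:main_thm_with_congruences-forZ} is assumed, your argument would still let the deduction of Theorem \ref{thm:moduli_main_thm_with_congruences} go through without circularity; but the paper's counting argument is both more elementary and strictly more general, and it keeps the corollary independent of the equidistribution machinery. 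If you want your version to match the stated corollary, replace the appeal to Theorem \ref{thm:main_thm_with_congruences-forZ} by the per-orbit bound from Lemma \ref{lem:uniform bound intersection with cpct set} as above.
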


\begin{proof}
We let $f\in C_{c}(\mathcal{\Y}_{Q(\mathbf{e}_{d})}(\R)\times\H_{a}(\Z/(q)))$
and we denote 
\[
\phi_{T}(y)\df f((u^{Q})^{-1}\circ\pi_{\Y_{T}}^{Q}(y),\vartheta_{q}(\pi_{vec}^{\mathcal{Y}}(y))-f(\pi_{\Y_{T}}(y),\vartheta_{q}(\pi_{vec}^{\mathcal{Y}}(y)),\ y\in\Y_{T}(\Z).
\]
Then

\begin{align}
\nu_{T_{n}}^{\mathcal{\Y},Q,q}\left(f\circ\left(u^{Q}\right)^{-1}\right)-\nu_{T_{n}}^{\mathcal{\Y},q}(f) & =\frac{1}{\left|\H_{T,\text{prim}}(\Z)/\SO_{Q}(\Z)\right|}\sum_{y\in\Y_{T}(\Z)}\phi_{T}(y)\nonumber \\
= & \frac{1}{\left|\H_{T,\text{prim}}(\Z)/\SO_{Q}(\Z)\right|}\sum_{y\SO_{Q}(\Z)\in\Y_{T}(\Z)/\SO_{Q}(\Z)}\ \sum_{\gamma\in\SO_{Q}(\Z)}\phi_{T}(y\cdot\gamma)\label{eq:unwinding phi_T}
\end{align}
Let $\epsilon>0$ and let $\mathcal{K}\supsetneq\text{Supp}(f)$ be
an open precompact set. We fix $T_{0}>0$ such that Corollary \ref{cor:uniform left u^Q continuity for cont functions}
holds.\textbf{ }By Corollary \ref{cor:uniform left u^Q continuity for cont functions},\eqref{enu:uniformity of difference of  values of function composed with projections}
it holds for all $T>T_{0}$ 
\begin{equation}
\left|\phi_{T}(y)\right|\leq\epsilon,\ \forall y\in\Y_{T}(\Z).\label{eq:phi_T boundd by epsilon}
\end{equation}
We now claim that there exists a constant $c=c(f)>0$ such that for
all $y\SO_{Q}(\Z)\in\Y_{T}(\Z)/\SO_{Q}(\Z)$ it holds that 
\begin{equation}
\left|\left\{ \gamma\in\SO_{Q}(\Z)\mid y\cdot\gamma\in\text{Supp}(\phi_{T})\right\} \right|\leq c.\label{eq:number of gammas in support of phi_T}
\end{equation}
By Lemma \ref{lem:uniform bound intersection with cpct set} (for
$G=\left(\SO_{Q}\times\ASL_{d-1}\right)(\R)$, $K=H$, $\Gamma=(\SO_{Q}\times\ASL_{d-1})(\Z)$
and $\tilde{\Gamma}=\{e\}\times\ASL_{d-1}(\Z)$), we obtain that for
any precompact set $\mathcal{C}\subseteq\Y_{Q(\mathbf{e}_{d})}(\R)$
there exists a uniform constant $c>0$ such that for all $y_{0}\in\Y_{Q(\mathbf{e}_{d})}(\R)$
\begin{equation}
\left|\left\{ \gamma\in\SO_{Q}(\Z)\mid y_{0}\cdot\gamma\in\mathcal{C}\right\} \right|\leq c.\label{eq:bound on so_Q(Z)  orbit in Y_T}
\end{equation}
We recall that $\pi_{\Y_{T}}^{Q}$ is $\SO_{Q}(\Z)$ equivariant,
so that 
\[
(u^{Q})^{-1}\circ\pi_{\Y_{T}}^{Q}(y\cdot\gamma)=(u^{Q})^{-1}(\pi_{\Y_{T}}^{Q}(y)\cdot\gamma).
\]
By Corollary \ref{cor:uniform left u^Q continuity for cont functions},\eqref{enu:out K implies out supp},
for all $T>T_{0}$ and for $\gamma\in\SO_{Q}(\Z)$ such that 
\[
(\pi_{\Y_{T}}^{Q}(y)\cdot\gamma,\vartheta_{q}(\pi_{vec}^{\mathcal{Y}}(y\cdot\gamma)))\notin u^{Q}(\mathcal{K}),
\]
we have $\left|\phi_{T}(y\cdot\gamma)\right|=0$, namely $\text{Supp}(\phi_{T})\subseteq u^{Q}(\mathcal{K})$,
which shows
\[
\left|\left\{ \gamma\in\SO_{Q}(\Z)\mid y\cdot\gamma\in\text{Supp}(\phi_{T})\right\} \right|\leq\left|\left\{ \gamma\in\SO_{Q}(\Z)\mid(\pi_{\Y_{T}}^{Q}(y)\cdot\gamma,\vartheta_{q}(\pi_{vec}^{\mathcal{Y}}(y\cdot\gamma)))\in u^{Q}(\mathcal{K})\right\} \right|
\]
 Consider the natural map $\pi_{\infty}:\mathcal{\Y}_{Q(\mathbf{e}_{d})}(\R)\times\H_{a}(\Z/(q))\to\mathcal{\Y}_{Q(\mathbf{e}_{d})}(\R)$.
Since $u^{Q}$ is a homeomorphism, and as $\mathcal{K}$ is precompact,
by \eqref{eq:bound on so_Q(Z)  orbit in Y_T} there is a constant
$c>0$ such that for all $y\in\Y_{T}(\Z)$ 
\[
\left|\left\{ \gamma\in\SO_{Q}(\Z)\mid\pi_{\Y_{T}}^{Q}(y)\cdot\gamma\in\pi_{\infty}(u^{Q}(\mathcal{K}))\right\} \right|\leq c,
\]
which shows \eqref{eq:number of gammas in support of phi_T}. Finally,
by \eqref{eq:unwinding phi_T}, \eqref{eq:phi_T boundd by epsilon}
and \eqref{eq:number of gammas in support of phi_T} we obtain for
all $T>T_{0}$
\[
\left|\nu_{T_{n}}^{\mathcal{\Y},Q,q}\left(f\circ\left(u^{Q}\right)^{-1}\right)-\nu_{T_{n}}^{\mathcal{\Y},q}(f)\right|\leq\frac{\left|\Y_{T}(\Z)/\SO_{Q}(\Z)\right|}{\left|\H_{T,\text{prim}}(\Z)/\SO_{Q}(\Z)\right|}\epsilon c.
\]
Now the map $\pi_{vec}^{\Y}:\Y_{T}(\Z)\to\H_{T,\text{prim}}(\Z)$
is a bijection which is equivariant with respect to the right $\SO_{Q}(\Z)$
action, which shows that 
\[
\frac{\left|\Y_{T}(\Z)/\SO_{Q}(\Z)\right|}{\left|\H_{T,\text{prim}}(\Z)/\SO_{Q}(\Z)\right|}=1,
\]
and completes our proof.
\end{proof}

\subsubsection{\label{subsec:Concluding-the-proof that Y follows from Z}Concluding
the proof that the results for $\text{\ensuremath{\protect\V}}$ imply
the results for $\protect\Y$}

We now give a detailed proof that Theorem \ref{thm:main_thm_with_congruences-forZ}
implies Theorem \ref{thm:moduli_main_thm_with_congruences} for $\mathcal{M}=\Y$.
The proof that Theorem \ref{thm:main thm for Z} implies Theorem \ref{thm:moduli main thm }
follows along the same lines, and is left for the reader.

In the following we fix $q\in2\N+1$ and we let $a\in\left(\Z/(q)\right)^{\times}$.

Let $f\in C_{c}(\mathcal{\Y}_{Q(\mathbf{e}_{d})}(\R)\times\H_{a}(\Z/(q)))$,
and consider $\bar{\ef}_{f}\in C_{c}(\mathcal{\Y}_{Q(\mathbf{e}_{d})}(\R)\times\H_{a}(\Z/(q)))$
given by 
\[
\bar{\ef}_{f}\df f\circ(u^{Q})^{-1},
\]
where we abuse notations with $f\circ(u^{Q})^{-1}(y,\mathbf{v})=f\left((u^{Q})^{-1}(y),\mathbf{v}\right)$.

By Lemma \ref{lem:star bar map is onto}, there exists $\ef_{f}\in C_{c}(\V_{Q(\mathbf{e}_{d})}(\R)\times\H_{a}(\Z/(q))$
such that 
\[
\bar{\ef}_{f}(z\ASL_{d-1}(\Z),\mathbf{v})=\sum_{\gamma\in\ASL_{d-1}(\Z)}\ef_{f}(z\gamma,\mathbf{v}).
\]
Let $\ef_{f}^{\tau}\in C_{c}(\V_{Q(\mathbf{e}_{d})}(\R)\times\V_{a}(\Z/(q))$
be defined by $\ef_{f}^{\tau}(z,g)\df\ef_{f}(z,\ovec(g))$ (where
$\ovec$ defined in \eqref{eq:def of tau}) We claim that 
\[
\nu_{T}^{\mathcal{\V},q}(\ef_{f}^{\tau})=\nu_{T}^{\mathcal{\Y},Q,q}(\bar{\ef}_{f}),
\]
where $\nu_{T}^{\mathcal{\V},q}$ defined in \eqref{eq:congruence counting measures on nu_Z_Q(E_d))}
and $\nu_{T}^{\mathcal{\Y},Q,q}$ defined in \eqref{eq:def of nu^Y,Q,q}.
We have

\begin{align*}
\nu_{T}^{\mathcal{\V},q}(\ef_{f}^{\tau})= & \frac{1}{\left|\H_{T,\text{prim}}(\Z)/\SO_{Q}(\Z)\right|}\sum_{z\in\V_{T}(\Z)}\ef_{f}^{\tau}(\pi_{\V_{T}}(z),\vartheta_{q}(z))\\
= & \frac{1}{\left|\H_{T,\text{prim}}(\Z)/\SO_{Q}(\Z)\right|}\sum_{z\in\V_{T}(\Z)}\ef_{f}(\pi_{\V_{T}}(z),\vartheta_{q}(\ovec(z)))\\
= & \frac{1}{\left|\H_{T,\text{prim}}(\Z)/\SO_{Q}(\Z)\right|}\sum_{z\ASL_{d-1}(\Z)\in\V_{T}(\Z)/\ASL_{d-1}(\Z)}\ \sum_{\gamma\in\ASL_{d-1}(\Z)}\ef_{f}(\pi_{\V_{T}}(z\gamma),\vartheta_{q}(\ovec(z\gamma)))\\
= & \frac{1}{\left|\H_{T,\text{prim}}(\Z)/\SO_{Q}(\Z)\right|}\sum_{z\ASL_{d-1}(\Z)\in\V_{T}(\Z)/\ASL_{d-1}(\Z)}\ \sum_{\gamma\in\ASL_{d-1}(\Z)}\ef_{f}(\pi_{\V_{T}}(z)\gamma,\vartheta_{q}(\ovec(z)))\\
= & \frac{1}{\left|\H_{T,\text{prim}}(\Z)/\SO_{Q}(\Z)\right|}\sum_{y\in\Y_{T}(\Z)}\bar{\ef}_{f}(\pi_{\Y_{T}}^{Q}(y),\vartheta_{q}(\pi_{vec}^{\Y}(y))\\
= & \nu_{T}^{\mathcal{\Y},Q,q}(\bar{\ef}_{f}).
\end{align*}
Assume that $Q$ is non-singular modulo $q\in2\N+1$. Let $\{T_{n}\}_{n=1}^{\infty}\subseteq\N$
be an unbounded sequence of integers satisfying the $\left(Q,p_{0}\right)$
co-isotropic property for some $p_{0}$ and assume that $\red_{q}\left(T_{n}\right)=a,\ \forall n\in\N$.
Then by assuming Theorem \ref{thm:main_thm_with_congruences-forZ},
we get
\[
\lim_{n\to\infty}\nu_{T_{n}}^{\mathcal{\Y},Q,q}(\bar{\ef}_{f})=\lim_{n\to\infty}\nu_{T_{n}}^{\mathcal{\V},q}(\ef_{f}^{\ovec})=\mu_{\mathcal{\V}}\otimes\mu_{\V_{a}(\Z/(q)}(\ef_{f}^{\ovec}).
\]
We recall by the proof of Corollary \ref{cor:transitivity of ASL_times_SO_d}
that $\tau(\V_{a}(\Z/(q))=\H_{a}(\Z/(q))$ and we observe that
\[
\mu_{\mathcal{\V}}\otimes\mu_{\V_{a}(\Z/(q))}(\ef_{f}^{\tau})=\mu_{\V}\otimes\tau_{*}\mu_{\V_{a}(\Z/(q))}(\ef_{f})=\mu_{\V}\otimes\mu_{\H_{a}(\Z/(q))}(\ef_{f}).
\]
By Lemma \ref{lem:integral unwinding of z w.r.t y} 
\[
\mu_{\V}\otimes\mu_{\H_{a}(\Z/(q)}(\ef_{f})=\mu_{\Y}\otimes\mu_{\H_{a}(\Z/(q)}(\bar{\ef}_{f}),
\]
which implies in turn that \emph{
\begin{equation}
\lim_{n\to\infty}\nu_{T_{n}}^{\mathcal{\Y},Q,q}(\bar{\ef}_{f})=\mu_{\Y}\otimes\mu_{\H_{a}(\Z/(q)}(\bar{\ef}_{f}).\label{eq:assumption of thm with cong}
\end{equation}
}Our goal now is to show that \eqref{eq:assumption of thm with cong}
implies 
\begin{equation}
\lim_{n\to\infty}\nu_{T_{n}}^{\mathcal{\Y},q}(f)=\mu_{\Y}\otimes\mu_{\H_{a}(\Z/(q)}(f),\label{eq:statement of thm with cong}
\end{equation}
which is the statement of Theorem \ref{thm:moduli_main_thm_with_congruences}.

We have by definition of $\bar{\ef}_{f}$
\begin{equation}
\begin{aligned}\lim_{n\to\infty}\nu_{T_{n}}^{\mathcal{\Y},Q,q}(f\circ(u^{Q})^{-1})= & \lim_{n\to\infty}\nu_{T_{n}}^{\mathcal{\Y},Q,q}(\bar{\ef}_{f})\\
= & \lim_{n\to\infty}\mu_{\Y}\otimes\mu_{\H_{a}(\Z/(q)}(\bar{\ef}_{f})\\
= & \lim_{n\to\infty}\mu_{\Y}\otimes\mu_{\H_{a}(\Z/(q)}(f\circ(u^{Q})^{-1}).
\end{aligned}
\label{eq:asuming limit on z yields limit on equiv proj}
\end{equation}
By Corollary \ref{cor:measure on equiv proj and geom proj are same..}\textbf{
}and by \eqref{eq:asuming limit on z yields limit on equiv proj}
we obtain that 
\[
\lim_{n\to\infty}\nu_{T_{n}}^{\mathcal{\Y},q}(f)=\mu_{\Y}\otimes\mu_{\H_{a}(\Z/(q)}(f\circ(u^{Q})^{-1}),
\]
and finally, since $u^{Q}$ preserves $\mu_{\Y}$ (see Lemma \ref{lem:U^Q preserves mu_Y})
we obtain \eqref{eq:statement of thm with cong}.

\subsection{\label{subsec:The-results-for Y imply X,W}The results for $\protect\Y$
imply the results for $\protect\X$ and $\protect\W$}

In the following we show that Theorems \ref{thm:moduli main thm }
- \ref{thm:moduli_main_thm_with_congruences} for $\mathcal{M}=\Y$
imply Theorems \ref{thm:moduli main thm } - \ref{thm:moduli_main_thm_with_congruences}
for $\mathcal{M}\in\left\{ \X,\W\right\} .$ It may be helpful for
the reader to recall Section \ref{subsec:Pushforwards}.

We fix $T\in\N$ and we note the following commuting diagram (which
follows from \eqref{eq:projections diagram}),
\[
\xymatrix{\Y_{T}(\Z)\ar@{^{(}->}[d]_{\pi_{\Y_{T}}}\ar@{<->}[r]^{\pi_{\cap}}\ar@{<->}@/^{2pc}/[rr]^{\pi_{vec}^{\Y}} & \X_{T}(\Z)\ar@{^{(}->}[d]^{\pi_{\X_{T}}}\ar@{<->}[r]^{\pi_{vec}^{\X}} & \H_{T,\text{prim}}(\Z)\\
\Y_{Q(\mathbf{e}_{d})}(\R)\ar[r]^{\pi_{\cap}} & \X_{Q(\mathbf{e}_{d})}(\R)
}
\]
which shows that

\begin{align*}
\nu_{T}^{\mathcal{\X},q}= & \frac{1}{\left|\H_{T,\text{prim}}(\Z)/\SO_{Q}(\Z)\right|}\sum_{x\in\mathcal{\X}_{T}(\Z)}\delta_{(\pi_{\X_{T}}(x),\vartheta_{q}(\pi_{vec}^{\mathcal{\X}}(x))}\\
= & \frac{1}{\left|\H_{T,\text{prim}}(\Z)/\SO_{Q}(\Z)\right|}\sum_{y\in\mathcal{\Y}_{T}(\Z)}\delta_{(\pi_{\cap}\circ\pi_{\Y_{T}}(y),\vartheta_{q}(\pi_{vec}^{\Y}(y))}\\
= & \left(\pi_{\cap}\times id\right)_{*}\nu_{T}^{\Y,q}.
\end{align*}

By Lemma \ref{lem:push of pi_cap} we have $\left(\pi_{\cap}\right)_{*}\mu_{\Y}=\mu_{\X}$,
hence we obtain the limits for $\X$ from the limits of $\Y$.

Next, we observe that

\begin{align*}
\nu_{T}^{\W,q}= & \frac{1}{\left|\H_{T,\text{prim}}(\Z)/\SO_{Q}(\Z)\right|}\sum_{\mathbf{v}\in\H_{T,\text{prim}}(\Z)}\delta_{\left(\text{shape}(\Lambda_{\mathbf{v}}),\frac{1}{\sqrt{T}}\mathbf{v},\vartheta_{q}(\mathbf{v})\right)}\\
= & \frac{1}{\left|\H_{T,\text{prim}}(\Z)/\SO_{Q}(\Z)\right|}\sum_{x\in\mathcal{\X}_{T}(\Z)}\delta_{\left((\shape\times\pi_{vec}^{\X})(\pi_{\X_{T}}(x)),\vartheta_{q}(\pi_{vec}^{\mathcal{\X}}(x))\right)}\\
= & \left((\shape\times\pi_{vec}^{\X})\times id\right)_{*}\nu_{T}^{\X,q},
\end{align*}
and by Lemma \ref{lem:push of shape}, we have $\left(\pi_{vec}^{\X}\times\shape\right)_{*}\mu_{\X}=\mu_{\W}$,
which shows that the limits for $\W$ follow from the limits of $\X$.

\section{\label{sec:Some-facts-on}Some technicalities}

This section discusses several technical facts about quadratic forms
that will be used in the rest of the paper (mainly in Section \ref{sec:A-revisit-to-s-arith-thm-aes}).

For a prime $p$ we denote by $\Z_{p}$ the ring of $p$-adic integers
and by $\Q_{p}$ the field of $p$-adic numbers.
\begin{lem}
\label{lem:properties of a form non-singular modulo q}Let $Q$ be
an integral form which is non-singular modulo $q$ \emph{(}see Definition
\ref{def:non-singularity modq}\emph{)} for $q\in2\N+1$ and let $S_{q}$
be the set of primes appearing in the prime decomposition of $q$.
Then the following hold:
\begin{enumerate}
\item \label{enu:The-reduction-map in onto modulo q}The reduction map \emph{$\red_{p^{k}}:\SO_{Q}(\Z_{p})\to\SO_{Q}(\Z/(p^{k}))$}
is onto for all $p\in S_{q}$ and $k\geq1.$
\item \label{enu:-is-isotropic for all p=00005Cin S_q}$Q$ is isotropic
over $\Q_{p}$ for all $p\in S_{q}$.
\end{enumerate}
\end{lem}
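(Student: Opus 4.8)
The plan is to translate ``non-singular modulo $q$'' into a statement at each prime $p\in S_{q}$. By definition it says $\gcd(\det M,q)=1$, where $M$ is the companion matrix of $Q$, so for every $p\in S_{q}$ one has $M\in\GL_{d}(\Z_{p})$; and since $q$ is odd, $p$ (hence $2$) is a unit in $\Z_{p}$. Thus the single input available --- and the only one both items need --- is that $Q$ is a unimodular quadratic form over $\Z_{p}$ with $p$ odd, which in particular makes its reduction $\bar{Q}$ over $\mathbb{F}_{p}$ non-degenerate. I would then treat the two items separately.

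For \eqref{enu:The-reduction-map in onto modulo q}: since $\Z_{p}$ is the inverse limit of the $\Z/(p^{k})$ and $\SO_{Q}$ is cut out by polynomial equations, a compatible system of points of the groups $\SO_{Q}(\Z/(p^{k}))$ defines a point of $\SO_{Q}(\Z_{p})$, so a routine inverse-limit argument reduces the claim to surjectivity of each transition map $\SO_{Q}(\Z/(p^{k+1}))\to\SO_{Q}(\Z/(p^{k}))$, $k\geq1$. For this I would use the infinitesimal lifting trick: given $\bar{g}\in\SO_{Q}(\Z/(p^{k}))$, lift its entries arbitrarily to $g_{0}\in M_{d}(\Z/(p^{k+1}))$; as $(p^{k})^{2}=0$ in $\Z/(p^{k+1})$ for $k\geq1$, one gets $g_{0}^{t}Mg_{0}=M+p^{k}S$ with $S\in M_{d}(\Z/(p))$ symmetric, and a correction $g=g_{0}(I+p^{k}A)$ satisfies $g^{t}Mg=M$ exactly when $MA+A^{t}M=-S$ over $\Z/(p)$, which is solved by $A=-\tfrac{1}{2}M^{-1}S$ since $2$ and $M$ are units mod $p$; moreover $\det g$ is a square root of $1$ in $\Z/(p^{k+1})$ congruent to $1$ mod $p^{k}$, hence $\det g=1$, so $g\in\SO_{Q}(\Z/(p^{k+1}))$ lifts $\bar{g}$. (This computation is nothing but the smoothness of the orthogonal group scheme of $Q$ over $\Z[\tfrac{1}{2},\tfrac{1}{\det M}]$, which one could invoke directly instead.)

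For \eqref{enu:-is-isotropic for all p=00005Cin S_q}: the reduction $\bar{Q}$ is a non-degenerate form over $\mathbb{F}_{p}$ in $d\geq4$ variables, in particular in at least three, so by the Chevalley--Warning theorem (equivalently, every non-degenerate quadratic form in at least three variables over a finite field of odd characteristic is isotropic) there is $v\in\mathbb{F}_{p}^{d}\smallsetminus\{0\}$ with $\bar{Q}(v)=0$. Since the gradient of $Q$ at $v$ equals $2Mv\neq0$ (as $M$ is invertible mod $p$, $v\neq0$, $p$ odd), $v$ is a smooth $\mathbb{F}_{p}$-point of $\{Q=0\}$, and Hensel's lemma in several variables --- applied to $Q$ exactly as in the proof of Corollary \ref{cor:transitivity of ASL_times_SO_d}, via \cite[Section~2, Theorem~1]{Serre_coure_in_arith} --- lifts $v$ to $\tilde{v}\in\Z_{p}^{d}$ with $Q(\tilde{v})=0$ and $\tilde{v}\equiv v\not\equiv0\pmod{p}$, whence $\tilde{v}\neq0$ and $Q$ is isotropic over $\Q_{p}$.

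The only thing to be careful about is the bookkeeping in \eqref{enu:The-reduction-map in onto modulo q}: one must verify that the square-zero correction eliminates the error term exactly modulo $p^{k+1}$ (not merely improves its valuation) and that the corrected matrix lands in $\SO_{Q}$ rather than only in the full orthogonal group --- both points relying crucially on $2$ and $\det M$ being units mod $p$. Once \eqref{enu:The-reduction-map in onto modulo q} is in place, \eqref{enu:-is-isotropic for all p=00005Cin S_q} is routine; alternatively, for $d\geq5$ one could bypass the finite-field input and simply cite that every non-degenerate quadratic form in at least five variables over a $p$-adic field is isotropic, but the good-reduction argument above covers $d=4$ on the same footing.
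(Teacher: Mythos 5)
Your proposal is correct. For item \eqref{enu:The-reduction-map in onto modulo q} it is essentially the paper's argument: both reduce, via the inverse limit, to surjectivity of the transition maps $\SO_{Q}(\Z/(p^{k+1}))\to\SO_{Q}(\Z/(p^{k}))$ and then produce an explicit correction whose existence rests on $2$ and $\det M$ being units mod $p$; the paper corrects additively, replacing a lift $F\in\GL_{d}(\Z_{p})$ by $F+S$ with $S=\tfrac12\left(M^{-1}(F^{t})^{-1}M-F\right)$, whereas you correct multiplicatively by $g_{0}(I+p^{k}A)$ with $A=-\tfrac12 M^{-1}S$ -- the same smoothness computation in different clothing. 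A small plus of your write-up is that you verify $\det g=1$ (a square root of $1$ congruent to $1$ mod $p^{k}$, $p$ odd), a normalization the paper leaves implicit. For item \eqref{enu:-is-isotropic for all p=00005Cin S_q} you take a genuinely different, though equally standard, route: the paper diagonalizes $M$ over $\Z_{p}$ (Cassels, Ch.~8, Thm.~3.1), observes that all diagonal entries are units because $\left|\det M\right|_{p}=1$, and cites the fact that a unit diagonal form in $d\geq4$ variables over $\Q_{p}$ is isotropic; you instead apply Chevalley--Warning to the non-degenerate reduction $\bar{Q}$ over $\mathbb{F}_{p}$ in $d\geq3$ variables and lift the resulting smooth zero (gradient $2Mv\not\equiv0$) by multivariable Hensel, exactly the tool the paper already invokes in Corollary \ref{cor:transitivity of ASL_times_SO_d}. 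Your version avoids the diagonalization step and keeps the argument self-contained modulo Hensel; the paper's version outsources the finite-field input to Cassels. Both are complete and cover $d=4$ on the same footing.
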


\begin{proof}
\begin{enumerate}
\item Fix $p\in S_{q}$. To prove that $\vartheta_{p^{k}}:\SO_{Q}(\Z_{p})\to\SO_{Q}(\Z/(p^{k}))$
is onto, we will prove that the natural projection
\[
\pi_{k}:\SO_{Q}(\Z/(p^{k+1}))\to\SO_{Q}(\Z/(p^{k}))
\]
is onto for all $k\geq1$. We let $\bar{g}\in\SO_{Q}(\Z/(p^{k}))$
and we take $F\in M_{d}(\Z_{p})$ such that $\vartheta_{p^{k}}(F)=\bar{g}$.
Since $\det(\bar{g})=1$, it follows that $\det\left(F\right)\in\Z_{p}^{\times}$,
which implies that $F\in\GL_{d}(\Z_{p})$. Fix a symmetric matrix
$M\in M_{d}(\Z)$ such that
\[
Q(\mathbf{x})=\mathbf{x}^{t}M\mathbf{x}
\]
Since $Q$ is non-singular modulo $q$ it follows that $\det(M)\in\Z_{p}^{\times}$
for all $p\in S_{q}$, namely $M\in\GL_{d}(\Z_{p})$ for all $p\in S_{q}$.
We may now define $S\in M_{d}(\Z_{p})$ by 
\begin{equation}
S\df\frac{1}{2}\left(M^{-1}\left(F^{t}\right)^{-1}M-F\right).\label{eq:S mod p^k}
\end{equation}
By noting that $\vartheta_{p^{k}}(F^{t}MF)=\vartheta_{p^{k}}(M)$
we obtain that $\vartheta_{p^{k}}(S)=0$, so that in particular $\vartheta_{p^{k}}(F+S)=\bar{g}$.
To finish the proof, it is sufficient to show that $\vartheta_{p^{k+1}}(F+S)\in\SO_{Q}(\Z/(p^{k+1})).$
We observe that 
\begin{equation}
\left(F+S\right)^{t}M(F+S)=F^{t}MF+F^{t}MS+S^{t}MF+S^{t}MS.\label{eq:conjugation of M}
\end{equation}
We treat each of the terms appearing in \eqref{eq:conjugation of M}
separately.
\begin{enumerate}
\item The term $F^{t}MS$. By substituting \eqref{eq:S mod p^k} in $S$,
we obtain that 
\[
F^{t}MS=\frac{1}{2}F^{t}M\left(M^{-1}\left(F^{-1}\right)^{t}M-F\right)=\frac{1}{2}M-\frac{1}{2}F^{t}MF.
\]
\item The term $S^{t}MF$. By substituting \eqref{eq:S mod p^k} in $S^{t}$,
we obtain that
\[
S^{t}MF=\frac{1}{2}\left(M^{t}F^{-1}\left(M^{t}\right)^{-1}-F^{t}\right)MF\underbrace{=}_{M^{t}=M}\frac{1}{2}M-\frac{1}{2}F^{t}MF.
\]
Hence we deduce by the above that
\[
\left(F+S\right)^{t}M(F+S)=M+S^{t}MS,
\]
Since $\vartheta_{p^{k}}(S)=0,$ we obtain that $\vartheta_{p^{2k}}(S^{t}MS)=0$.
Namely $\vartheta_{p^{k+1}}\left(\left(F+S\right)^{t}M(F+S)\right)=\vartheta_{p^{k+1}}(M)$,
which completes the proof.
\end{enumerate}
\item Let $M$ be the companion matrix of $Q$. By definition of non-singularity
modulo $q$ (see Definition \ref{def:non-singularity modq}) we have
that $\left|\det(M)\right|_{p}=1$ for all $p\in S_{q}$, where $\left|\cdot\right|_{p}$
denotes the p-adic valuation. Fix $p\in S_{q}$. By \cite[Chapter 8, Theorem 3.1]{CASSELS_quad}
there exists $g\in\GL_{d}(\Z_{p})$ such that 
\[
g^{t}Mg=\left(\begin{array}{ccc}
a_{1}\\
 & \ddots\\
 &  & a_{d}
\end{array}\right),
\]
where $a_{1},...,a_{d}\in\Z_{p}$. Now 
\[
|a_{1}|_{p}\cdot...\cdot|a_{d}|_{p}=\left|\det(g^{t}Mg)\right|_{p}=\left|\det(M)\right|_{p}=1.
\]
Hence $|a_{1}|_{p}=...=|a_{d}|_{p}=1$, and by \cite[Chapter 3, Lemma 1.7]{CASSELS_quad}
we get that $Q(g\mathbf{x})=a_{1}x_{1}^{2}+...+a_{d}x_{d}^{2}$ has
an isotropic vector over $\Q_{p}$.
\end{enumerate}
\end{proof}
For $g\in\SL_{d}(\Z)$ and $\gamma\in\SL_{d-1}(\Q)$ we define a quadratic
form $\ef_{g}^{\gamma}:\Q^{d-1}\to\Q,$ by 
\begin{equation}
\ef_{g}^{\gamma}(u)\df Q^{*}\circ g\circ\gamma(u)\label{eq:quadratic form phi_g^=00005Cgamma}
\end{equation}
 (see definition of $Q^{*}$ in \eqref{eq:def of Q^*}), where we
identify $\Q^{d-1}$ with $\Q^{d-1}\times\left\{ 0\right\} $. We
will denote $\ef_{g}\df\ef_{g}^{I_{d-\text{1}}}.$

Let $\hat{g}\in M_{d\times d-1}(\R)$ be the matrix formed by the
first $d-1$ columns of $g$. Then the matrix
\begin{equation}
M_{\ef_{g}^{\gamma}}\df\gamma^{t}\hat{g}^{t}M^{-1}\hat{g}\gamma\label{eq:def of companion matrix =00005Cef_g^=00005Cgamma}
\end{equation}
 is a companion matrix for the form $\ef_{g}^{\gamma}$.
\begin{lem}
\label{lem:discreminant of M_ef_g^=00005Cgamma}It holds that $\det\left(M_{\ef_{g}^{\gamma}}\right)=\frac{1}{\det(M)}Q(\ovec(g)).$
\end{lem}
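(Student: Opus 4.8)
The plan is to compute the determinant of the companion matrix $M_{\ef_g^\gamma} = \gamma^t \hat g^t M^{-1} \hat g \gamma$ directly, reducing first to the case $\gamma = I_{d-1}$ and then extracting the $(d,d)$-cofactor-type quantity $\det(\hat g^t M^{-1}\hat g)$ via the adjugate formula, exactly as was done in the proof of Lemma~\ref{lem:the covolume of lambda_v and the map theta}.

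First I would dispose of the $\gamma$-dependence. Since $M_{\ef_g^\gamma} = \gamma^t \left(\hat g^t M^{-1}\hat g\right)\gamma$ and $\gamma \in \SL_{d-1}(\Q)$, we have $\det\left(M_{\ef_g^\gamma}\right) = \det(\gamma)^2 \det\left(\hat g^t M^{-1}\hat g\right) = \det\left(\hat g^t M^{-1}\hat g\right) = \det\left(M_{\ef_g}\right)$. So it suffices to prove the identity for $\gamma = I_{d-1}$, i.e. to show $\det\left(\hat g^t M^{-1}\hat g\right) = \frac{1}{\det(M)} Q(\ovec(g))$.

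Next I would relate $\hat g^t M^{-1}\hat g$ to a single entry of a $d\times d$ matrix. Recall $\ovec(g) = \o(g)\mathbf{e}_d = (g^t)^{-1}\mathbf{e}_d$, so $Q(\ovec(g)) = \ovec(g)^t M \ovec(g) = \mathbf{e}_d^t g^{-1} M (g^t)^{-1}\mathbf{e}_d = \left\langle \mathbf{e}_d, \left(g^t M^{-1} g\right)^{-1}\mathbf{e}_d\right\rangle$, using that $\left(g^t M^{-1} g\right)^{-1} = g^{-1} M (g^t)^{-1}$. Now $\left(g^t M^{-1} g\right)^{-1} = \frac{1}{\det(g^t M^{-1} g)}\,\text{adj}(g^t M^{-1} g) = \det(M)\,\text{adj}(g^t M^{-1} g)$, since $\det(g^t M^{-1}g) = \det(M)^{-1}$. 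The $(d,d)$-entry of $\text{adj}(g^t M^{-1} g)$ is the $(d,d)$-cofactor of $g^t M^{-1} g$, which is the determinant of the matrix obtained by deleting the last row and column — and that is precisely $\det\left(\hat g^t M^{-1}\hat g\right)$, because $\hat g$ consists of the first $d-1$ columns of $g$ (so $\hat g^t M^{-1}\hat g$ is the top-left $(d-1)\times(d-1)$ block of $g^t M^{-1} g$). Combining, $Q(\ovec(g)) = \mathbf{e}_d^t\left(g^t M^{-1}g\right)^{-1}\mathbf{e}_d = \det(M)\cdot\det\left(\hat g^t M^{-1}\hat g\right)$, which is the claim.

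There is essentially no serious obstacle here; the only point requiring a little care is the bookkeeping identifying the $(d,d)$-cofactor of $g^t M^{-1}g$ with $\det(\hat g^t M^{-1}\hat g)$ (i.e.\ that deleting the last row and column of $g^t M^{-1}g$ yields $\hat g^t M^{-1}\hat g$), and keeping track of the sign, which is $(-1)^{d+d} = +1$. Everything else is a direct substitution using $\o(g) = (g^t)^{-1}$, the definition \eqref{eq:def of Q^*} of $Q^*$, and the standard $A^{-1} = \det(A)^{-1}\text{adj}(A)$ formula.
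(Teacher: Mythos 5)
Your proof is correct and matches the paper's proof essentially step for step: reduce to $\gamma = I_{d-1}$ by multiplicativity of the determinant, express $Q(\ovec(g))$ as the $(d,d)$ entry of $\left(g^t M^{-1}g\right)^{-1}$, and extract $\det(\hat g^t M^{-1}\hat g)$ via the adjugate formula $\left(g^t M^{-1}g\right)^{-1} = \det(M)\,\text{adj}(g^t M^{-1}g)$. The bookkeeping point you flag (that the $(d,d)$-cofactor of $g^t M^{-1}g$ is $\det(\hat g^t M^{-1}\hat g)$) is exactly the observation the paper also relies on.
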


\begin{proof}
First, by the multiplicativity of the determinant, we get that $\det\left(M_{\ef_{g}^{\gamma}}\right)=\det\left(M_{\ef_{g}}\right).$
Next, we observe that

\begin{align*}
Q(\ovec(g))= & \left\langle \left(g^{t}\right)^{-1}\mathbf{e}_{d},M\left(g^{t}\right)^{-1}\mathbf{e}_{d}\right\rangle \\
= & \left\langle \mathbf{e}_{d},\left(g^{t}M^{-1}g\right)^{-1}\mathbf{e}_{d}\right\rangle ,
\end{align*}
which is the $d,d$ entry of the matrix $\left(g^{t}M^{-1}g\right)^{-1}$.
Now

\begin{align*}
\left(g^{t}M^{-1}g\right)^{-1}= & \frac{1}{\det(g^{t}M^{-1}g)}\text{adj}\left(g^{t}M^{-1}g\right)\\
= & \det(M)\cdot\text{adj}\left(g^{t}M^{-1}g\right).
\end{align*}
We note that the $d,d$ entry of $\text{adj}\left(g^{t}M^{-1}g\right)$
is given by the minor $\det\left(\hat{g}^{t}M^{-1}\hat{g}\right)=\det\left(M_{\ef_{g}}\right)$,
which proves our claim.
\end{proof}
Consider the natural map
\[
\pi_{\SL_{d-1}}:\ASL_{d-1}\to\SL_{d-1},
\]
given by
\[
\left(\begin{array}{cc}
m & *\\
 & 1
\end{array}\right)\mapsto m.
\]

\begin{lem}
\label{lem:the second factor of L_g is orthogonal group of a form in d-1 var}We
have \emph{
\[
\gamma^{-1}\pi_{\SL_{d-1}}\left(g^{-1}\o\left(\mathbf{H}_{\ovec(g)}(\R)\right)g\right)\gamma=\SO_{\ef_{g}^{\gamma}}(\R).
\]
}
\end{lem}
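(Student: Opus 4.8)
The statement identifies the second factor of the stabilizer group $\mathbf{L}_g(\R)$ (transported to $\SL_{d-1}$ via $\pi_{\SL_{d-1}}$ and conjugated by $\gamma$) with the real points of the orthogonal group of the form $\ef_g^\gamma$ defined in \eqref{eq:quadratic form phi_g^=00005Cgamma}. The plan is to unwind both sides to a common description in terms of the linear action on $\R^{d-1}$ and then compare. First I would recall from Lemma \ref{lem:stabilizer lemma} that the second factor of $\mathbf{L}_g(\R)$ is exactly $g^{-1}\o\left(\mathbf{H}_{\ovec(g)}(\R)\right)g$, and that by the argument in that lemma every element $g^{-1}\o(w)g$ with $w\in\mathbf{H}_{\ovec(g)}(\R)$ lies in $\ASL_{d-1}(\R)$, i.e. fixes $\mathbf{e}_d$ under the left action $\eqref{eq:sl_d action by involution}$; equivalently $g^{-1}\o(w)g$ (as an ordinary matrix) stabilizes $\mathbf{e}_d$, so applying $\pi_{\SL_{d-1}}$ just records its action on $\text{Span}_\R\{\mathbf{e}_1,\dots,\mathbf{e}_{d-1}\}\cong\R^{d-1}$.

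The key geometric input is Lemma \ref{lem:F(SO_Q) and stabilizer of a vector}: $\o(\SO_Q(\R))=\SO_{Q^*}(\R)$, and $\o\left(\mathbf{H}_{\ovec(g)}(\R)\right)$ is precisely the subgroup of $\SO_{Q^*}(\R)$ fixing $M\ovec(g)$, hence (by part \eqref{enu:orthogonal hyperplan to tau(g)}) the subgroup preserving the decomposition $\R^d = \text{Span}_\R\{\mathbf g_1,\dots,\mathbf g_{d-1}\}\oplus\text{Span}_\R\{M\ovec(g)\}$ and acting as the identity on the second summand. Conjugating by $g$ turns $\SO_{Q^*}(\R)$-action on $\R^d$ into the $\SO_{g^* Q^*}$-action, where $g^*Q^* = Q^*\circ g$ has companion matrix $\hat{} $... more precisely $Q^*\circ g(\mathbf x) = \mathbf x^t g^t M^{-1} g\, \mathbf x$; and the summand $\text{Span}_\R\{\mathbf g_1,\dots,\mathbf g_{d-1}\}$ pulls back under $g$ to $\text{Span}_\R\{\mathbf e_1,\dots,\mathbf e_{d-1}\}$ while $\text{Span}_\R\{M\ovec(g)\}$ pulls back to $\text{Span}_\R\{\mathbf e_d\}$ (this is exactly the content of the orthogonality relation \eqref{eq:orthogonality of theta} together with Lemma \ref{lem:F(SO_Q) and stabilizer of a vector}\eqref{enu:orthogonal hyperplan to tau(g)}). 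So $g^{-1}\o\left(\mathbf H_{\ovec(g)}(\R)\right)g$ consists exactly of the matrices in $\SL_d(\R)$ which preserve $Q^*\circ g$, fix $\mathbf e_d$, and act as the identity on $\text{Span}_\R\{\mathbf e_d\}$ — i.e., after applying $\pi_{\SL_{d-1}}$, exactly the elements of $\SL_{d-1}(\R)$ preserving the restriction of $Q^*\circ g$ to $\R^{d-1}\times\{0\}$, which is $\ef_g$. This gives $\pi_{\SL_{d-1}}\left(g^{-1}\o\left(\mathbf H_{\ovec(g)}(\R)\right)g\right)=\SO_{\ef_g}(\R)$, and then conjugating by $\gamma$ on $\R^{d-1}$ replaces $\ef_g$ by $\ef_g^\gamma=\ef_g\circ\gamma$, so $\gamma^{-1}\SO_{\ef_g}(\R)\gamma = \SO_{\ef_g^\gamma}(\R)$; the companion matrix bookkeeping matches \eqref{eq:def of companion matrix =00005Cef_g^=00005Cgamma}.

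The step I expect to be the main obstacle is verifying cleanly that applying $\pi_{\SL_{d-1}}$ to an element of $g^{-1}\o\left(\mathbf H_{\ovec(g)}(\R)\right)g$ loses no information and lands surjectively onto $\SO_{\ef_g}(\R)$ — in other words, that an element of $\SL_{d-1}(\R)$ preserving $\ef_g$ is the $\pi_{\SL_{d-1}}$-image of a (unique) element of the stabilizer. For injectivity of $\pi_{\SL_{d-1}}$ restricted to this group, note that a matrix in $\ASL_{d-1}(\R)$ preserving a nondegenerate quadratic form and fixing $\mathbf e_d$ has no freedom in its last column because the form pairs $\mathbf e_d$ nontrivially with the hyperplane (nondegeneracy of $Q^*\circ g$, guaranteed when $Q(\ovec(g))\neq 0$ via Lemma \ref{lem:discreminant of M_ef_g^=00005Cgamma}); so the upper-right block is forced. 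For surjectivity one takes $m\in\SO_{\ef_g}(\R)$, extends it to the block matrix fixing $\mathbf e_d$ with the forced upper-right entries, checks it preserves $Q^*\circ g$ on all of $\R^d$ (using the orthogonal decomposition), and transports back through $g$ and $\o$ to an element of $\mathbf H_{\ovec(g)}(\R)$. I would phrase this as: the assignment $w\mapsto\pi_{\SL_{d-1}}(g^{-1}\o(w)g)$ is a group isomorphism $\mathbf H_{\ovec(g)}(\R)\xrightarrow{\ \sim\ }\SO_{\ef_g}(\R)$, then conjugate by $\gamma$.
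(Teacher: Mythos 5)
Your overall approach matches the paper's: identify $\o(\mathbf{H}_{\ovec(g)}(\R))$ as the stabilizer of $M\ovec(g)$ inside $\SO_{Q^{*}}(\R)$, observe that conjugation by $g$ converts $Q^{*}$ into $Q^{*}\circ g$ and the invariant hyperplane $\text{Span}\{\mathbf{g}_{1},\ldots,\mathbf{g}_{d-1}\}$ into $\R^{d-1}\times\{0\}$, and then read off that $\pi_{\SL_{d-1}}$ returns the action on $\R^{d-1}\times\{0\}$, so the image is $\SO_{\ef_{g}}(\R)$; conclude by conjugating by $\gamma$. That is exactly the paper's argument, and your extra discussion of injectivity/surjectivity of $\pi_{\SL_{d-1}}$ on this subgroup is a legitimate way to flesh out the sentence ``the restriction is all of $\SO_{\ef_{g}}(\R)$.''

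However, there is a repeated slip about $\mathbf{e}_{d}$ that you should repair. You assert that an element of $\ASL_{d-1}(\R)$ ``as an ordinary matrix stabilizes $\mathbf{e}_{d}$,'' and later that $\text{Span}_{\R}\{M\ovec(g)\}$ pulls back under $g^{-1}$ to $\text{Span}_{\R}\{\mathbf{e}_{d}\}$, and that the elements of $g^{-1}\o(\mathbf{H}_{\ovec(g)}(\R))g$ ``fix $\mathbf{e}_{d}$.'' None of these is correct in general. A matrix $\eta=\left(\begin{smallmatrix}m & v\\ 0&1\end{smallmatrix}\right)\in\ASL_{d-1}(\R)$ satisfies $\eta\mathbf{e}_{d}=\binom{v}{1}$, which is $\mathbf{e}_{d}$ only when $v=0$; it fixes $\mathbf{e}_{d}$ under the \emph{twisted} action \eqref{eq:sl_d action by involution}, i.e.\ $\o(\eta)\mathbf{e}_{d}=\mathbf{e}_{d}$, which is a different statement. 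What is true, and sufficient, is that $\eta$ preserves the hyperplane $\R^{d-1}\times\{0\}$, and $\pi_{\SL_{d-1}}(\eta)=m$ records the restriction to that hyperplane. Likewise, $g^{-1}$ carries $\text{Span}_{\R}\{M\ovec(g)\}$ to $\text{Span}_{\R}\{g^{-1}M\ovec(g)\}$, which is the $Q^{*}\!\circ g$--orthogonal complement of $\R^{d-1}\times\{0\}$ but is generically \emph{not} $\text{Span}_{\R}\{\mathbf{e}_{d}\}$; the elements of $g^{-1}\o(\mathbf{H}_{\ovec(g)}(\R))g$ fix the vector $g^{-1}M\ovec(g)$, not $\mathbf{e}_{d}$. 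Your injectivity and surjectivity arguments should therefore be phrased with respect to the decomposition $\R^{d}=(\R^{d-1}\times\{0\})\oplus\text{Span}_{\R}\{g^{-1}M\ovec(g)\}$ (these elements act by $m$ on the first summand and trivially on the second, which determines the $v$--block uniquely and lets you build extensions of any $m\in\SO_{\ef_{g}}(\R)$). With those corrections the proof goes through and coincides with the paper's.
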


\begin{proof}
We recall by Lemma \ref{lem:F(SO_Q) and stabilizer of a vector} that
$\o\left(\mathbf{H}_{\ovec(g)}(\R)\right)$ is the subgroup of $\SO_{Q^{*}}(\R)$
that preserves the hyperplane $\text{Span}_{\R}\{\mathbf{g}_{1},...,\mathbf{g}_{d-1}\}$,
where $\mathbf{g}_{i}$ denotes the $i$'th column of $g.$ Therefore
group $g^{-1}\o\left(\mathbf{H}_{\ovec(g)}(\R)\right)g$ is the subgroup
of $\SO_{Q^{*}\circ g}(\R)$ which preserves the hyperplane $\R^{d-1}\times\left\{ 0\right\} $
by the left linear action. Hence $\pi_{\SL_{d-1}}\left(g^{-1}\o\left(\mathbf{H}_{\ovec(g)}(\R)\right)g\right)$
is the restriction of $\SO_{Q^{*}\circ g}(\R)$ to the hyperplane
$\R^{d-1}\times\left\{ 0\right\} $, which shows $\pi_{\SL_{d-1}}\left(g^{-1}\o\left(\mathbf{H}_{\ovec(g)}(\R)\right)g\right)=\SO_{\ef_{g}}(\R)$.
Finally we note that
\[
\gamma^{-1}\SO_{\ef_{g}}(\R)\gamma=\SO_{\ef_{g}^{\gamma}}(\R).
\]
\end{proof}
\begin{lem}
\label{lem:gcd of companion matrix}Let \emph{$A\in M_{d}(\Z)\cap\GL_{d}(\Q)$}.
Then for any \emph{$g\in\SL_{d}(\Z)$}, the g.c.d of the entries of
the integral matrix \emph{
\[
A_{g}\df\hat{g}^{t}A\hat{g}
\]
}where $\hat{g}$ is the matrix formed by the first $d-1$ columns
of $g$, is at-most $\det A$.
\end{lem}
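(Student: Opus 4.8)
We must show that for $A \in M_d(\mathbb{Z}) \cap \GL_d(\mathbb{Q})$ and $g \in \SL_d(\mathbb{Z})$, the g.c.d. of the entries of the integral matrix $A_g \df \hat{g}^t A \hat{g}$ (where $\hat{g}$ consists of the first $d-1$ columns of $g$) divides $\det A$. Note $A_g$ is indeed integral since $A$ and $g$ are integral.

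**Proof plan.** The plan is to reduce the statement to a single prime $p$ and to localize: it suffices to prove that for every prime $p$, the $p$-adic valuation of $\gcd(\text{entries of }A_g)$ is at most $v_p(\det A)$. Fix a prime $p$ and let $e = v_p(\det A)$. Suppose for contradiction that $p^{e+1}$ divides every entry of $A_g = \hat{g}^t A \hat{g}$.

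The key step will be to extend $\hat{g}$ to a unimodular matrix. Since $g \in \SL_d(\mathbb{Z})$, the matrix $g$ itself is a unimodular extension: writing $g = (\hat{g} \mid \mathbf{g}_d)$ where $\mathbf{g}_d$ is the last column, I would consider the full product $g^t A g \in M_d(\mathbb{Z})$. Its top-left $(d-1)\times(d-1)$ block is exactly $A_g = \hat{g}^t A \hat{g}$, so by assumption all those entries are divisible by $p^{e+1}$. Now $\det(g^t A g) = \det(A)$ since $\det g = 1$, so $v_p(\det(g^t A g)) = e$. Working over $\mathbb{Z}/(p^{e+1})$, reduce $g^t A g$: its top-left $(d-1)\times(d-1)$ block vanishes, so modulo $p^{e+1}$ the matrix $g^t A g$ has the block form $\left(\begin{smallmatrix} 0 & \mathbf{b} \\ \mathbf{c}^t & * \end{smallmatrix}\right)$ with $\mathbf{b}, \mathbf{c} \in (\mathbb{Z}/(p^{e+1}))^{d-1}$. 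Expanding the determinant along the first $d-1$ rows (or using the generalized Laplace expansion / the fact that such a matrix has all $(d-1)\times(d-1)$ minors obtained from the first $d-1$ rows and columns equal to zero except possibly those involving the last column), every term in $\det(g^t A g)$ picks up at least $d-2$ factors from the zero block...

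Let me reconsider — the cleaner route: I would show directly that $p^{2e+2}$ divides $\det(g^t A g)$, contradicting $v_p(\det(g^t A g)) = e$ (assuming $e \geq 0$, this already gives a contradiction since $2e+2 > e$). To see the divisibility, reduce $N \df g^t A g$ modulo $p^{e+1}$ to get a matrix $\bar N$ over $\mathbb{Z}/(p^{e+1})$ whose top-left $(d-1)\times(d-1)$ block is zero. A matrix whose top-left $(d-1)\times(d-1)$ block vanishes has determinant equal to a single term: expanding along the first column, the only nonzero contributions come from the $(d,1)$ entry, and recursively, $\det \bar N = \pm \bar N_{d,1} \bar N_{d-1,2}\cdots$ — actually the determinant of a matrix with a zero $(d-1)\times(d-1)$ top-left block equals $\pm \bar N_{1,d} \bar N_{2,d} \cdots$? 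No. The correct statement: if the top-left $(d-1)\times(d-1)$ block is zero, then in the Leibniz expansion $\det = \sum_\sigma \pm \prod_i N_{i,\sigma(i)}$, any permutation $\sigma$ sending some $i \le d-1$ to some $j \le d-1$ contributes zero; the surviving permutations send $\{1,\dots,d-1\}$ into $\{d\}$ among the column values at rows $\le d-1$ — impossible for more than one row. Hence if $d - 1 \ge 2$, i.e. $d \ge 3$, no permutation survives and $\det \bar N = 0$ in $\mathbb{Z}/(p^{e+1})$, so $p^{e+1} \mid \det N = \det A$, contradicting $v_p(\det A) = e$. (The case $d = 2$, where $d-1 = 1$, is degenerate and can be handled by hand, or is excluded by the standing assumption $d \ge 4$.)

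**The main obstacle.** The substantive point is the combinatorial observation that a $d \times d$ matrix whose top-left $(d-1) \times (d-1)$ submatrix is zero has zero determinant when $d \ge 3$: indeed a permutation $\sigma \in S_d$ with nonzero Leibniz term must avoid sending any index in $\{1,\dots,d-1\}$ to an index in $\{1,\dots,d-1\}$, so $\sigma$ maps $\{1,\dots,d-1\}$ into $\{d\}$, which is impossible once $d - 1 \ge 2$. Everything else is routine localization and the identity $\det(g^t A g) = \det(A)$. I would write the argument for a fixed prime $p$, conclude $v_p(\gcd) \le v_p(\det A)$ for all $p$, and deduce $\gcd \mid \det A$, which is slightly stronger than the "at most $\det A$" phrasing in the statement.
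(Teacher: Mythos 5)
Your argument is correct, and it proves the sharper divisibility statement $\gcd(\text{entries of }A_g)\mid\det A$; the only dimension restriction ($d\geq 3$, needed so that no permutation in the Leibniz expansion can avoid the zero block) is harmless under the paper's standing assumption $d\geq 4$. However, your route is genuinely different from the paper's. You localize at each prime $p$, pass to the full matrix $N=g^{t}Ag$, use $\det N=\det A$ (since $\det g=1$), and observe that if $p^{e+1}$ with $e=v_{p}(\det A)$ divided every entry of the top-left $(d-1)\times(d-1)$ block $A_g$, then over $\Z/(p^{e+1})$ every Leibniz term of $\det N$ would contain a factor from that zero block, forcing $p^{e+1}\mid\det A$, a contradiction. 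The paper instead argues globally and constructively: it picks a primitive integral vector $\mathbf{u}_{1}\in\hat{g}\Q^{d-1}\cap(A\mathbf{g}_{d})^{\perp}$, writes $\mathbf{u}_{1}=\hat{g}\mathbf{a}$ with $\mathbf{a}\in\Z^{d-1}$, invokes the classical fact (Cassels, geometry of numbers) that for a primitive $\mathbf{u}_{1}$ the g.c.d.\ $\alpha$ of the entries of $A\mathbf{u}_{1}$ divides $\det A$, and then exhibits $\mathbf{b}\in\Z^{d-1}$ with $\mathbf{b}^{t}A_{g}\mathbf{a}=\alpha$, so the g.c.d.\ of the entries of $A_{g}$ divides $\alpha\mid\det A$. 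What your approach buys is self-containedness: the block-determinant vanishing over $\Z/(p^{e+1})$ is elementary and you avoid both the external reference and the (mild) existence argument for a primitive vector in the intersection of two rational hyperplanes; what the paper's approach buys is an explicit pair of integral vectors witnessing a small value of the bilinear form, in the spirit of the geometric argument of \cite[Lemma 3.2]{AESgrids} that it generalizes. One presentational remark: the middle of your write-up wanders (the abandoned $p^{2e+2}$ estimate and the false starts about which single term survives); in a final version keep only the clean statement that a $d\times d$ matrix over a commutative ring whose top-left $(d-1)\times(d-1)$ block vanishes has zero determinant once $d-1\geq2$, proved by the Leibniz expansion.
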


\begin{proof}
To prove our claim it is sufficient to show that there exist two integral
vectors $\mathbf{b},\mathbf{a}\in\Z^{d-1}$ such that 
\[
\mathbf{b}^{t}A_{g}\mathbf{a}=\alpha,
\]
for $\alpha\in\Z$ satisfying that $\alpha\mid\det(A)$. This will
be done by a variation on the geometric argument given in the proof
of \cite[Lemma 3.2]{AESgrids}. Let $\mathbf{u}_{1}\in\Z_{prim}^{d}\bigcap\left(A\mathbf{g}_{d}\right)^{\perp}\bigcap\hat{g}\Q^{d-1}$
where $\mathbf{g}_{d}\df g\mathbf{e}_{d}$ (such a vector exists since
$\left(A\mathbf{g}_{d}\right)^{\perp}\bigcap\hat{g}\Q^{d-1}$ is the
intersection of two rational hyperplanes). Namely, we choose $\mathbf{u}_{1}\in\Z_{prim}^{d}$
such that 
\[
\mathbf{u}_{1}=\hat{g}\mathbf{a},\ \mathbf{a}\in\Z^{d-1},
\]
(the entries of $\mathbf{a}$ are integral since the columns of $g$
form a $\Z$ basis for $\Z^{d}$) and 
\begin{equation}
0=\left(A\mathbf{g}_{d}\right)^{t}\mathbf{u}_{1}=\mathbf{g}_{d}^{t}A\mathbf{u}_{1}.\label{eq:orthogonality to Ag_d}
\end{equation}
Let $\alpha\in\N$ be the g.c.d. of the entries of $A\mathbf{u}_{1}$.
Since $\mathbf{u}_{1}\in\Z_{\text{prim}}^{d}$, we may use \cite[Chapter 1, Theorem 1.B]{Cas_geo_num}
to deduce that $\alpha\mid\det(A)$. Let $\tilde{\mathbf{u}}_{2}\in\Z^{d}$
such that 
\begin{equation}
\tilde{\mathbf{u}}_{2}^{t}\left(A\mathbf{u}_{1}\right)=\alpha.\label{eq:inner product is g.c.d}
\end{equation}
 Since $\mathbf{g}_{1},...,\mathbf{g}_{d}$ form a $\Z$-basis for
$\Z^{d}$, we may write 
\[
\tilde{\mathbf{u}}_{2}=\hat{g}\mathbf{b}+b_{d}\mathbf{g}_{d},\ \mathbf{b}\in\Z^{d-1},\ b_{d}\in\Z,
\]
then by \eqref{eq:orthogonality to Ag_d} and \eqref{eq:inner product is g.c.d}
we obtain
\[
\alpha=\left(\hat{g}\mathbf{b}\right)^{t}\left(A\hat{g}\mathbf{a}\right)=\mathbf{b}^{t}A_{g}\mathbf{a},
\]
which completes the proof.
\end{proof}

\section{\label{sec:A-revisit-to-s-arith-thm-aes}A revisit to the $S$-arithemetic
theorem of \cite{AESgrids}}

The purpose of this section is to prove Theorem \ref{thm:AESgrids thm}
below, which concerns the equidistribution of a sequence of compact
orbits in an $S$-arithemetic space. We note that Theorem \ref{thm:AESgrids thm}
generalizes Theorem 3.1 of \cite{AESgrids} by taking into account
more general quadratic forms and by also taking into account more
than one prime. Yet, we note that our proof of Theorem \ref{thm:AESgrids thm}
strongly relies on the ideas and methods which already appear in the
proof of Theorem 3.1 of \cite{AESgrids}.

In the following we consider algebraic groups defined over $\Q$,
and we follow the notations and conventions as in \cite{platonov_rapinchuk},
Chapter 2.

\textbf{ }To ease the notation, we introduce 
\[
\G_{1}\overset{\text{def}}{=}\text{SO}{}_{Q},\ \G_{2}\overset{\text{def}}{=}\text{ASL}{}_{d-1},\ \G\overset{\text{def}}{=}\G_{1}\times\G_{2},
\]
where we recall that $Q$ is as in our \nameref{subsec:Standing-Assumption}.
For a finite set of primes $S$, we denote by $\Q_{S}\df\prod_{p\in S}\Q_{p}$,
where $\Q_{p}$ is the field of p-adic numbers, by $\Z_{S}\df\prod_{p\in S}\Z_{p}$,
where $\Z_{p}$ is the ring of p-adic integers, and by $\ZS\df\Z\left[\frac{1}{p};p\in S\right]$.

We consider
\[
\G(\R\times\Q_{S})\df\G(\R)\times\G(\Q_{S}),
\]
we define $\G\left(\ZS\right)\leq\G(\R\times\Q_{S})$ by
\[
\G\left(\ZS\right)\df\left\{ (\gamma_{1},\gamma_{2},\gamma_{1},\gamma_{2})\mid\gamma_{i}\in\G_{i}(\ZS)\right\} ,
\]
we recall that $\G(\ZS)$ is a lattice in $\G(\R\times\Q_{S})$ (see
\cite{platonov_rapinchuk}, Chapter 5), and we define
\[
\Y_{S}\overset{\text{def}}{=}\G(\R\times\Q_{S})/\G\left(\ZS\right)
\]
(we use the above notation in this section only. Note not to be confused
with the notation $\Y$ for the space of oriented grids). Let $g\in\SL_{d}(\Z)$
such that $Q(\ovec(g))>0$. Using the transitivity of the $\G(\R)$-action
on $\V_{Q(\ovec(g))}(\R)$ (see Corollary \ref{cor:transitivity of ASL_times_SO_d}),
we choose $t_{g}=\left(\left(t_{g}\right)_{1},\left(t_{g}\right)_{2}\right)\in\G(\R)$
such that 
\begin{equation}
g=a_{Q(\ovec(g))}\cdot t_{g}\underbrace{=}_{\text{definition of the \ensuremath{\G} action on \ensuremath{\SL_{d}}}}\o\left(\left(t_{g}\right)_{1}^{-1}\right)a_{Q(\ovec(g))}\left(t_{g}\right)_{2},\label{eq:def of theta_g}
\end{equation}
where $a_{Q(\ovec(g))}\in\V_{Q(\ovec(g))}(\R)$ was defined in Definition
\ref{def:def of a_T}.

We define the twisted orbit
\begin{equation}
O_{g,S}\df(t_{g},e_{S})\mathbf{L}_{g}(\R\times\Q_{S})\G(\ZS),\label{eq:def of O_g,s}
\end{equation}
where $\mathbf{L}_{g}\leq\G$ is the stabilizer of $g$ (see Lemma
\ref{lem:stabilizer lemma}).

We observe that $\mathbf{L}_{g}(\R)$ is a compact group since by
assuming that $Q(\ovec(g))>0$, it follows that $Q(\ovec(g))=TQ(\mathbf{e}_{d})$
for $T>0$ implying that $\mathbf{H}_{\ovec(g)}(\R)$ (which is isomorphic
to $\mathbf{L}_{g}(\R)$) is conjugate to $\mathbf{H}_{\mathbf{e}_{d}}(\R)$
(the action of $\SO_{Q}(\R)$ is transitive on $\H_{Q(\sqrt{T}\mathbf{e}_{d})}(\R)$),
which is compact by our \nameref{subsec:Standing-Assumption}. Then
by \cite[Theorem 5.7]{platonov_rapinchuk} we obtain that $\mathbf{L}_{g}(\R\times\Q_{S})\G\left(\ZS\right)\subseteq\Y_{S}$
is a compact orbit, and we define
\begin{equation}
\mu_{g,S}\df\left(t_{g},e_{S}\right)_{*}\mu_{\mathbf{L}_{g}(\R\times\Q_{S})\G\left(\ZS\right)}\label{eq:=00005Cmu_g,S}
\end{equation}
 where $\mu_{\mathbf{L}_{g}(\R\times\Q_{S})\G\left(\ZS\right)}$ is
the $\mathbf{L}_{g}(\R\times\Q_{S})$-invariant probability measure
supported on $\mathbf{L}_{g}(\R\times\Q_{S})\G\left(\ZS\right)$.
\begin{thm}
\emph{\label{thm:AESgrids thm}} Assume that $S$ is a finite set
of odd primes such that $Q$ is isotropic over $\Q_{p}$ for all $p\in S$.
Let \emph{$\left\{ g_{n}\right\} _{n=1}^{\infty}\subseteq\SL_{d}(\Z)$}
such that $Q(\ovec(g_{n}))>0$ for all $n\in\N$, such that $Q(\ovec(g_{n}))\to\infty$,
and such that there exists $p_{0}\in S$ for which $\ovec(g_{n})$
is $(Q,p_{0})$ co-isotropic for all $n\in\N$ \emph{(}see Definition
\ref{def:Isotropicity definition}\emph{)}. Then,
\[
\mu_{g_{n},S}\overset{\text{weak * }}{\longrightarrow}\mu_{\Y_{S}},
\]
where $\mu_{\Y_{S}}$ is the $\G(\R\times\Q_{S})$-invariant probability
measure on $\Y_{S}$.
\end{thm}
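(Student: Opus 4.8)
The plan is to deduce Theorem~\ref{thm:AESgrids thm} from the general equidistribution machinery for compact orbits of semisimple groups in $S$-arithmetic quotients (as in \cite{GO} and the revisited form of \cite[Theorem~3.1]{AESgrids}), following the scheme of \cite{AESgrids} but tracking the extra second factor $\ASL_{d-1}$ and the finitely many places in $S$ simultaneously. First I would unpack the structure of $\mathbf{L}_{g_n}$: by Lemma~\ref{lem:stabilizer lemma} it is the graph of $w\mapsto g_n^{-1}\o(w)g_n$ over $\mathbf{H}_{\ovec(g_n)}$, and by Lemma~\ref{lem:the second factor of L_g is orthogonal group of a form in d-1 var} its projection to the second factor is, after conjugation, $\SO_{\ef_{g_n}}$, the orthogonal group of an explicit integral quadratic form $\ef_{g_n}$ in $d-1\ge 3$ variables whose discriminant is $\tfrac{1}{\det M}Q(\ovec(g_n))$ (Lemma~\ref{lem:discreminant of M_ef_g^\gamma}). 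In particular $\mathbf{L}_{g_n}$ is a semisimple $\Q$-group (the orthogonal group of a nondegenerate ternary-or-higher form), so standard theory applies, and its $\R$-points are compact by the \nameref{subsec:Standing-Assumption} as already noted in the excerpt.

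The heart of the argument is showing that any weak-$*$ limit $\mu$ of $\mu_{g_n,S}$ is the Haar measure $\mu_{\Y_S}$. I would proceed in three moves. \textbf{Step 1 (no escape of mass).} Show the orbits $O_{g_n,S}$ do not escape to infinity: the total mass is $1$, so it suffices to rule out partial escape. This uses that $\mathbf{L}_{g_n}(\R)$ is compact and a quantitative non-divergence / reduction-theory input (à la Eskin--McMullen, Dani--Margulis, or the height bounds in \cite{AESgrids,GO}) controlling the geometry of the twisted orbit $(t_{g_n},e_S)\mathbf{L}_{g_n}(\R\times\Q_S)\G(\ZS)$; one shows that the $\Q_{p_0}$-factor, where $\ovec(g_n)$ is $(Q,p_0)$ co-isotropic, forces $\mathbf{L}_{g_n}(\Q_{p_0})$ to contain a nontrivial $\Q_{p_0}$-split torus or unipotent, and this noncompact direction keeps the mass in a fixed compact set. \textbf{Step 2 (the limit is invariant under extra unipotents).} By the $(Q,p_0)$ co-isotropy, for each $n$ the group $\mathbf{L}_{g_n}(\Q_{p_0})$ is noncompact; passing to the limit one produces, via Ratner's theorem in the $S$-arithmetic / $p$-adic setting (Ratner, Margulis--Tomanov), a limit measure $\mu$ invariant under a nontrivial unipotent subgroup of $\G(\Q_{p_0})$. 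Then one invokes the classification of intermediate subgroups: combined with the invariance already present from the varying $\R$-directions and the fact that $\G=\SO_Q\times\ASL_{d-1}$ has no proper normal $\Q$-subgroups interfering, the only possibility is that $\mu$ is $\G(\R\times\Q_S)$-invariant, i.e.\ $\mu=\mu_{\Y_S}$. \textbf{Step 3 (linearization to exclude intermediate orbits).} Use the linearization technique to show the limit cannot concentrate on a proper homogeneous subvariety: the relevant intermediate subgroups $\mathbf{M}$ with $\mathbf{L}_{g_n}\le\mathbf{M}\subsetneq\G$ would have $\mathbf{M}(\ZS)$-periodic orbits of bounded volume, and since $\covol(\mathbf{L}_{g_n}(\R\times\Q_S)\G(\ZS))\to\infty$ (because $\operatorname{disc}(\ef_{g_n})=\tfrac{1}{\det M}Q(\ovec(g_n))\to\infty$ makes the class number / orbit volume grow, cf.\ \cite{bhc} and the volume estimates in \cite{AESgrids}), the orbits cannot be trapped in such $\mathbf{M}$ for large $n$. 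This is where the hypothesis $Q$ isotropic over $\Q_p$ for all $p\in S$ enters: it guarantees $\G(\Q_p)$-strong approximation holds on $\SO_Q$ over these places, so the limit is genuinely Haar on the full $S$-arithmetic quotient rather than on a product of a Haar measure at $\infty$ with something smaller at finite places.

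The main obstacle I anticipate is \textbf{Step 3}: controlling the volumes of the compact orbits $\mathbf{L}_{g_n}(\R\times\Q_S)\G(\ZS)$ and, more delicately, ruling out the intermediate subgroups uniformly in $n$. One must identify all $\Q$-subgroups $\mathbf{M}$ of $\G=\SO_Q\times\ASL_{d-1}$ containing a conjugate of $\SO_{\ef_{g_n}}$ — these involve the "diagonal"-type subgroups pairing $\SO_Q$ with its image in $\ASL_{d-1}$, precisely the subtlety that makes the second ($\ASL_{d-1}$) factor nontrivial compared to the classical Linnik setting — and then show, using Borel--Prasad volume formulas or the explicit covolume growth of $\SO_{\ef_{g_n}}(\ZS)$ as $\operatorname{disc}(\ef_{g_n})\to\infty$, that no fixed $\mathbf{M}$ can host the orbits for all large $n$. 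A secondary technical point is handling several primes in $S$ at once in the non-divergence and linearization estimates, but this is largely a bookkeeping extension of \cite{AESgrids}; the genuinely new input is the multi-variable discriminant computation already provided by Lemma~\ref{lem:discreminant of M_ef_g^\gamma} together with the fact, guaranteed by $d\ge 4$, that $\ef_{g_n}$ has at least $3$ variables so that $\SO_{\ef_{g_n}}$ is noncommutative and the $(Q,p_0)$-co-isotropy produces an honest unipotent to feed into Ratner's theorem.
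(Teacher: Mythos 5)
Your outline has the right general shape (any weak-$*$ limit is homogeneous; one must exclude proper intermediate groups), but the decisive step is exactly the one you leave open, and the mechanism you propose for it is not the one that works here. The paper does not run linearization or Borel--Prasad volume estimates at all: it feeds the orbits into the theorem of \cite{GO} (Theorem \ref{thm:go thm}), which already packages non-divergence and the algebraicity of limits, and \emph{in addition} supplies conjugators $\gamma_{n}\in\G(\ZS)$ with $\gamma_{n}^{-1}\mathbf{L}_{g_{n}}\gamma_{n}\subseteq\M$ and elements $l_{n}$ with $t_{n}l_{n}\gamma_{n}\to t_{0}$. Proper $\M$ is then excluded not by volume growth but by maximality of $\mathbf{H}_{\ovec(g_{n})}(\R)$ and of the groups $\SO_{\ef_{g_{n}}}(\R)$ (Lemma \ref{lem:LEmma 3.4 of AES :)}), which forces proportionality relations such as $\delta_{g_{i}}^{-1}\ovec(g_{i})=\alpha_{j}\delta_{g_{j}}^{-1}\ovec(g_{j})$ or $\alpha_{j}\ef_{g_{j}}^{\eta_{g_{j}}}=\ef_{g_{i}}^{\eta_{g_{i}}}$; primitivity of the vectors, the convergence $t_{n}l_{n}\gamma_{n}\to t_{0}$, $p$-adic norm bounds and the g.c.d.\ bound of Lemma \ref{lem:gcd of companion matrix} then show the scalars lie in $\ZS^{\times}$ and are bounded above and away from $0$, contradicting $Q(\ovec(g_{n}))\to\infty$ via the discriminant identity of Lemma \ref{lem:discreminant of M_ef_g^=00005Cgamma}. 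Your Step 3, as written, would require lower bounds for the covolumes of the \emph{varying} orbits of $\mathbf{L}_{g_{n}}$ and a uniform treatment of the conjugating elements, none of which is supplied; also note that in the paper the roles are the reverse of what you describe in Steps 1--2: non-escape of mass comes from the anisotropic-centralizer hypothesis of Theorem \ref{thm:go thm} (verified through maximality), while the $(Q,p_{0})$ co-isotropy is the strong-isotropy input for the measure classification, not a non-divergence device.

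Two structural points are missing from your plan and cannot be skipped. First, $\G_{2}=\ASL_{d-1}$ is not semisimple, so the classification machinery cannot be applied in $\Y_{S}$ directly; the paper first pushes down to $\Gbar=\G_{1}\times\SL_{d-1}$, proves equidistribution there, and then upgrades by applying Theorem \ref{thm:go thm} inside $\G_{1}\times\SL_{d}$, where the one extra possibility $\M=\G_{1}\times\SL_{d-1}^{\mathbf{t}}$ (translated copies of $\SL_{d-1}$) must be ruled out by a separate arithmetic argument producing the identity $\alpha_{g_{n}}Q(\ovec(g_{n}))=N$ with $N$ fixed. Your remark about ``diagonal-type subgroups'' gestures at the first factor pairing but not at this unipotent-radical issue, which is where the $\ASL$-structure genuinely bites. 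Second, the theorem of \cite{GO} applies to orbits of $\pi\bigl(\tilde{\mathbf{L}}_{g_{n}}(\R\times\Q_{S})\bigr)$, not of $\mathbf{L}_{g_{n}}(\R\times\Q_{S})$: because the spinor norm is nontrivial at the finite places, $\mu_{g_{n},S}$ must be decomposed into the finitely many pieces $\mu_{g_{n},S,\mathbf{i}}$ indexed by $\prod_{p\in S}\Q_{p}^{\times}/(\Q_{p}^{\times})^{2}$, each shown to converge to the corresponding piece $\mu_{\Y_{S,\mathbf{i}}}$, and the hypothesis that $Q$ is isotropic at every $p\in S$ is used (via Borel--Tits and the exact sequences for the spinor norm) to identify $\pi(\tilde{\Gbar}(\Q_{S}))$ with $\Gbar^{+}(\Q_{S})$ and reassemble $\mu_{\Y_{S}}$. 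Your appeal to strong approximation points in this direction, but without the spinor-class decomposition the argument only yields invariance under $\pi(\tilde{\G}(\R\times\Q_{S}))$, which is strictly weaker than convergence to the $\G(\R\times\Q_{S})$-invariant measure claimed in the theorem.
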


\subsection{\label{subsec:Proof-of-AES-Theorem}Proof of Theorem \ref{thm:AESgrids thm}}

The key input for the proof of Theorem \ref{thm:AESgrids thm} is
\cite[Theorem 4.6]{GO}, which we state in a simplified form in Theorem
\ref{thm:go thm} below.

For the rest of the section, we will denote the simply connected covering
of a semi-simple algebraic group $\mathbb{L}$ defined over $\Q$
by $\tilde{\mathbb{L}}$ and the universal covering map by $\pi:\tilde{\mathbb{L}}\to\mathbb{L}$
(for more details see e.g. \cite[Section 2.1.13]{platonov_rapinchuk}).
\begin{thm}
\label{thm:go thm}Let $\mathbf{G}$ be a connected semi-simple algebraic
group defined over $\Q$, let $S$ be a finite set of primes and let
$\mathbb{L}_{n}$, $n\in\N,$ be a sequence of connected semi-simple
$\Q$-subgroups of $\mathbf{G}$. Consider a sequence $\left\{ t_{n}\right\} _{n=1}^{\infty}\subseteq\mathbf{G}(\R\times\Q_{S})$
and let $\nu_{n}\df\left(t_{n}\right)_{*}\mu_{\pi\left(\tilde{\mathbb{L}}_{n}\left(\R\times\Q_{S}\right)\right)\mathbf{G}(\ZS)}$,
where $\mu_{\pi\left(\tilde{\mathbb{L}}_{n}\left(\R\times\Q_{S}\right)\right)\mathbf{G}(\ZS)}$
is the unique $\pi\left(\tilde{\mathbb{L}}_{n}\left(\R\times\Q_{S}\right)\right)$-invariant
probability measure supported on $\pi\left(\tilde{\mathbb{L}}_{n}\left(\R\times\Q_{S}\right)\right)\mathbf{G}\left(\ZS\right)$.
\begin{elabeling}{00.00.0000}
\item [{$\$1$\label{=0000A71}}] Assume that there exists $p\in S$ such
that for all $n\in\N$ and all connected non-trivial normal $\Q_{p}$-subgroups
$\mathbf{N}\trianglelefteqslant\mathbb{L}_{n}$ it holds that $\mathbf{N}(\Q_{p})$
is non-compact \emph{(}in terms of \cite{GO}, $S$ is strongly isotropic\emph{)}.
\end{elabeling}
Let $\nu$ be a probability measure on $\mathbf{G}(\R\times\Q_{S})/\mathbf{G}(\ZS)$
which is a weak-star limit of $\left\{ \nu_{n}\right\} _{n=1}^{\infty}$.
Then:
\begin{enumerate}
\item \label{enu:go_1}There exists a connected $\Q$-algebraic subgroup
$\M\leq\G$ such that $\nu=(t_{0})_{*}\mu_{M\G(\ZS)}$ where $M$
is a closed finite index subgroup of $\M(\R\times\Q_{S})$, $t_{0}\in\mathbf{G}(\R\times\Q_{S})$
and $\mu_{M\G(\ZS)}$ is the left $M$-invariant probability measure
supported on $M\G(\ZS)$.
\item \label{enu:go_2}There exists a sequence $\left\{ \gamma_{n}\right\} _{n=1}^{\infty}\subseteq\G(\ZS),$
such that for all large enough $n$ it holds that
\[
\gamma_{n}^{-1}\mathbb{L}_{n}\gamma_{n}\subseteq\M.
\]
\item \label{enu:go_3}There exists a sequence $\left\{ l_{n}\right\} _{n=1}^{\infty}\subseteq\pi(\tilde{\mathbb{L}}_{n}(\R\times\Q_{S}))$
such that 
\[
\lim_{n\to\infty}t_{n}l_{n}\gamma_{n}=t_{0}.
\]
\end{enumerate}
In addition,
\begin{elabeling}{00.00.0000}
\item [{$\$2$\label{=0000A72}}] assume that for all $n\in\N$ the centralizer
of $\mathbf{L}_{n}$ in $\mathbf{G}$ is $\Q$-anisotropic.
\end{elabeling}
Then the sequence of measures $\left\{ \nu_{n}\right\} _{n=1}^{\infty}$
is relatively compact in the space of probability measures on $\mathbf{G}(\R\times\Q_{S})/\mathbf{G}(\ZS)$,
and the group $\M$ above is semi-simple.
\end{thm}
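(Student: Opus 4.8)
The plan is to deduce Theorem~\ref{thm:go thm} directly from \cite[Theorem 4.6]{GO}: the statement above is simply a convenient specialization of that general result on weak-$\ast$ limits of periods of semisimple subgroups in $S$-arithmetic quotients, so the proof amounts to matching hypotheses and conclusions with loc.\ cit.\ and recalling why its machinery applies in the present setting. First I would note that hypothesis $\$1$ is, by definition, the strong isotropy condition of \cite{GO}, as the statement already points out: for a connected semisimple $\Q$-group $\mathbb{L}$ and a prime $p$, the group $\pi(\tilde{\mathbb{L}}(\Q_p))$ is generated by the $\Q_p$-points of unipotent radicals of proper parabolic $\Q_p$-subgroups exactly when every almost-$\Q_p$-simple factor of $\mathbb{L}$ is $\Q_p$-isotropic, and the requirement that every connected non-trivial normal $\Q_p$-subgroup $\mathbf{N}$ of $\mathbb{L}_n$ have $\mathbf{N}(\Q_p)$ non-compact is the uniform-in-$n$ form of this. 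Consequently each $\nu_n$ is supported on an orbit of a group generated by one-parameter unipotent $\Q_p$-subgroups, which is precisely the input needed to apply $S$-arithmetic measure rigidity.

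Next I would recall the three ingredients behind \cite[Theorem 4.6]{GO}, which are the steps one would have to carry out in a self-contained account. First, quantitative non-divergence of unipotent flows on $S$-arithmetic quotients, of Dani--Margulis type, which shows that no mass of $\{\nu_n\}$ escapes along the unipotent directions inside the $\mathbb{L}_n$, so that every subsequential limit $\nu$ is a probability measure invariant under a unipotently generated group. Second, the classification of ergodic invariant measures for unipotently generated subgroups in the $S$-arithmetic setting (Ratner's theorem, in the form of Margulis--Tomanov), which forces such a $\nu$ to be a homogeneous measure $(t_0)_*\mu_{M\mathbf{G}(\ZS)}$ as in the first conclusion. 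Third, the linearization and avoidance technique of Dani--Margulis, Mozes--Shah and Eskin--Mozes--Shah, carried out in an exterior power representation of $\mathbf{G}$, which upgrades homogeneity of the limit to the geometric conclusions: it produces the integral elements $\gamma_n\in\mathbf{G}(\ZS)$ conjugating $\mathbb{L}_n$ into $\M$ and the correcting elements $l_n\in\pi(\tilde{\mathbb{L}}_n(\R\times\Q_S))$ with $t_nl_n\gamma_n\to t_0$. This third step is where essentially all the genuine work lies and is the main obstacle to a self-contained proof; in this paper it is imported wholesale from \cite{GO}, so the only labour on our side is bookkeeping.

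Finally I would read off the addendum under $\$2$ directly from \cite[Theorem 4.6]{GO}: the $\Q$-anisotropy of the centralizer of each $\mathbb{L}_n$ in $\mathbf{G}$ is precisely the hypothesis under which loc.\ cit., in the tradition of Eskin--Mozes--Shah, rules out escape of mass, so that the whole family $\{\nu_n\}$ is relatively compact in the space of \emph{probability} measures on $\mathbf{G}(\R\times\Q_S)/\mathbf{G}(\ZS)$ and the limiting subgroup $\M$ is semisimple. On our side nothing further is needed beyond checking that $\$2$ coincides with the hypothesis used there and that the three geometric conclusions of Theorem~\ref{thm:go thm} are stated verbatim as in loc.\ cit. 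In short, no new mathematics is created here; the proof is a citation of \cite[Theorem 4.6]{GO} together with the dictionary just described.
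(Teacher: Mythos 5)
Your proposal matches the paper exactly: Theorem \ref{thm:go thm} is stated in the paper as a simplified form of \cite[Theorem 4.6]{GO} and is not re-proved there, so the correct "proof" is precisely the citation plus the dictionary of hypotheses (strong isotropy for $\$1$, $\Q$-anisotropic centralizers for $\$2$) that you describe. Your sketch of the underlying machinery (non-divergence, $S$-arithmetic measure rigidity, linearization) is accurate background but not required, since the paper imports the result wholesale.
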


For the rest of this section, we fix a finite set of odd primes $S$
and a sequence\emph{ }$\left\{ g_{n}\right\} _{n=1}^{\infty}\subseteq\SL_{d}(\Z)$
which meets the assumptions of Theorem \ref{thm:AESgrids thm}.

Recall that our goal is to find the limit of the measures $\mu_{g_{n},S}$
(defined in \eqref{eq:=00005Cmu_g,S}), but note that Theorem \ref{thm:go thm}
applies for a sequence of measures of the form $\left(x_{n}\right)_{*}\mu_{\pi(\tilde{\mathbf{L}}_{g_{n}}(\R\times\Q_{S}))\G\left(\ZS\right)}.$
As we will see in Section \ref{subsec:The-simply-connectedcover of L_g},
the subgroup $\pi\left(\tilde{\mathbf{L}}_{g_{n}}(\R\times\Q_{S})\right)\leq\mathbf{L}_{g_{n}}(\R\times\Q_{S})$
has a fixed finite index for all $n\in\N$ . Using this fact, we will
partition the orbits $O_{g_{n},S}$ (defined in \eqref{eq:def of O_g,s})
into finitely many pieces $O_{g_{n},S,\mathbf{i}}$ (defined in \eqref{eq:def of O_g_n,S,i}
below), and we will be able to apply Theorem \ref{thm:go thm} to
the sequence of natural measures $\mu_{g_{n},S,\mathbf{i}}$ (see
\eqref{eq:def of mu_g_n,S,i}) supported on $O_{g_{n},S,\mathbf{i}}$.
By finding the limiting measure of the sequence of $\mu_{g_{n},S,\mathbf{i}}$
for each choice of $\mathbf{i}$, we will obtain limit of the measures
$\mu_{g_{n},S}$ (see Section \ref{subsec:A-reduction--mu_g_S,i}).

\subsubsection{\label{subsec:The-simply-connectedcover of L_g}The universal covering
of $\mathbf{L}_{g}$ and of $\protect\G_{1}$.}

In the following we will recall some facts concerning the Spin group
which is the universal covering of an orthogonal group of a quadratic
form, and then we will be able to describe the subgroup $\pi\left(\tilde{\mathbf{L}}_{g_{n}}(\R\times\Q_{S})\right)$
in a useful way.

Assume that $m\geq3$ and let $\ef$ be a non-degenerate rational
quadratic form in $m$ variables. We denote by $\SO_{\ef}$ its special
orthogonal group, and for a field $\mathbb{F}\supseteq\Q$ we consider
the spinor norm $\phi:\SO_{\ef}(\mathbb{F})\to\mathbb{F}^{\times}/(\mathbb{F}^{\times})^{2}$
(see e.g. \cite[Chapter 10]{CASSELS_quad} for more details). We recall
that the spin group $\Spin_{\ef}$ is the simply connected covering
of $\SO_{\ef}$ (see \cite[Chapter 10]{CASSELS_quad}, or \cite[Section 2.3.2]{platonov_rapinchuk}
for more details) and we note the following exact sequences \textbf{(}see\textbf{
}\cite[Lemma 1]{elenberg_ven_rep}). For an odd prime $p$ it holds
\begin{equation}
\Spin_{\ef}(\Q_{p})\overset{\pi}{\to}\SO_{\ef}(\Q_{p})\overset{\phi}{\to}\Q_{p}^{\times}/\left(\Q_{p}^{\times}\right)^{2}\to0,\label{eq:p-adic spin exact sequence}
\end{equation}
for a positive definite $\ef$ we have
\begin{equation}
\Spin_{\ef}(\R)\overset{\pi}{\to}\SO_{\ef}(\R)\overset{\phi}{\to}0,\label{eq:exact sequence positive definite}
\end{equation}
and for an indefinite $\ef$ it holds
\begin{equation}
\Spin_{\ef}(\R)\overset{\pi}{\to}\SO_{\ef}(\R)\overset{\phi}{\to}\{\pm1\}\to0.\label{eq:exact sequence indefinite}
\end{equation}
We also note that $\pi(\Spin_{\ef}(\R))$ equals to the connected
component of $\SO_{\ef}(\R)$.

Returning to our case, we let $\ovec(g_{n})^{\perp(Q)}$ be the hyperplane
orthogonal to $\ovec(g_{n})$ with respect to the form $Q$. We observe
that $\text{Spin}_{Q\mid_{\ovec(g_{n})^{\perp(Q)}}}(\mathbb{F})$
naturally identifies with $\tilde{\mathbf{H}}_{\ovec(g_{n})}(\mathbb{F})$
(see \cite[Section 2.4, footnote 6]{elenberg_ven_rep}). Since we
assume that $\mathbf{H}_{\ovec(g_{n})}(\R)$ is compact, it follows
that $Q\mid_{\ovec(g_{n})^{\perp(Q)}}$ is positive definite. Therefore
we may conclude by \eqref{eq:exact sequence positive definite} that
\begin{equation}
\pi\left(\tilde{\mathbf{L}}_{g_{n}}(\R\times\Q_{S})\right)=\mathbf{L}_{g_{n}}(\R)\times\pi\left(\tilde{\mathbf{L}}_{g_{n}}(\Q_{S})\right).\label{eq:pi(L^tilde) is the product of all real with L^tilde p-adic}
\end{equation}
For $\mathbf{i}\in\prod_{p\in S}\Q_{p}^{\times}/\left(\Q_{p}^{\times}\right)^{2}$
we pick $h_{g_{n}}^{(\mathbf{i})}\in\mathbf{H}_{\ovec(g_{n})}(\Q_{S})$
such that $\phi(h_{g_{n}}^{(\mathbf{i})})=\mathbf{i}$. By \eqref{eq:p-adic spin exact sequence}
and \eqref{eq:pi(L^tilde) is the product of all real with L^tilde p-adic}
we deduce that $\left(e_{1,\infty},e_{2,\infty},h_{g_{n}}^{(\mathbf{i})},g_{n}^{-1}\o\left(h_{g_{n}}^{(\mathbf{i})}\right)g_{n}\right),$
$\mathbf{i}\in\prod_{p\in S}\Q_{p}^{\times}/\left(\Q_{p}^{\times}\right)^{2}$
is a complete set of representatives of $\pi\left(\tilde{\mathbf{L}}_{g_{n}}(\R\times\Q_{S})\right)$
cosets in $\mathbf{L}_{g_{n}}(\R\times\Q_{S})$. We define 
\begin{equation}
O_{g_{n},S,\mathbf{i}}\df l_{g_{n}}^{(\mathbf{i})}\pi\left(\tilde{\mathbf{L}}_{g_{n}}(\R\times\Q_{S})\right)\G(\ZS),\label{eq:def of O_g_n,S,i}
\end{equation}
where 
\begin{equation}
l_{g_{n}}^{(\mathbf{i})}\df\left(t_{g_{n}},h_{g_{n}}^{(\mathbf{i})},g_{n}^{-1}\o\left(h_{g_{n}}^{(\mathbf{i})}\right)g_{n}\right),\label{eq:def of l_g^(i)}
\end{equation}
(to recall $t_{g}$, see \eqref{eq:def of theta_g}) and we let 
\begin{equation}
\mu_{g_{n},S,\mathbf{i}}\df\left(l_{g_{n}}^{(\mathbf{i})}\right)_{*}\mu_{\pi\left(\tilde{\mathbf{L}}_{g_{n}}(\R\times\Q_{S})\right)\G(\ZS)},\label{eq:def of mu_g_n,S,i}
\end{equation}
 where $\mu_{\pi\left(\tilde{\mathbf{L}}_{g_{n}}(\R\times\Q_{S})\right)\G(\ZS)}$
is the left $\pi\left(\tilde{\mathbf{L}}_{g_{n}}(\R\times\Q_{S})\right)$-invariant
probability measure on the orbit $\pi\left(\tilde{\mathbf{L}}_{g_{n}}(\R\times\Q_{S})\right)\G(\ZS)$
. 

\subsubsection{\label{subsec:A-reduction--mu_g_S,i}A reduction - The limit of $\mu_{g_{n},S,\mathbf{i}}$
implies Theorem \ref{thm:AESgrids thm}}

We recall the following lemma from \cite{AESgrids}.
\begin{lem}
\label{lem:Lemma about the restriction of meeasure to cosets of subgroup}Let
$N\trianglelefteq K\leq G$ be locally compact groups such that $N$
is of index $k\in\N$ in $K$. Assume that $\Gamma\leq G$ is a lattice,
let $Kx\Gamma$ be a finite volume orbit and denote its $K$-invariant
probability measure by $\mu_{Kx\Gamma}.$ Then 
\[
\mu_{Kx\Gamma}=\frac{1}{k}\sum_{i=1}^{k}\mu_{k_{i}Nx\Gamma},
\]
where $k_{1},...,k_{N}$ is a complete list of representatives for
$N$ cosets in $K$ and $\mu_{k_{i}Nx\Gamma}$ is the $N$-invariant
probability measure on $k_{i}Nx\Gamma=Nk_{i}x\Gamma$.
\end{lem}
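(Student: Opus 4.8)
The plan is to verify that the right-hand side of the asserted identity is a $K$-invariant probability measure supported on the orbit $Kx\Gamma$, and then to invoke the uniqueness of the $K$-invariant probability measure on a finite volume orbit. Set
\[
\mu\df\frac{1}{k}\sum_{i=1}^{k}\mu_{k_{i}Nx\Gamma}.
\]
Since the $k_{i}$ represent the $N$-cosets in $K$ we have $K=\bigsqcup_{i=1}^{k}Nk_{i}$, and using $N\trianglelefteq K$ this gives $Kx\Gamma=\bigcup_{i=1}^{k}Nk_{i}x\Gamma$, each $Nk_{i}x\Gamma=k_{i}Nx\Gamma$ being an $N$-invariant finite volume orbit inside the closed orbit $Kx\Gamma$. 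In particular each such orbit carries its unique $N$-invariant probability measure $\mu_{k_{i}Nx\Gamma}$ and $\mu$ is a well-defined probability measure supported on $Kx\Gamma$.

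The main point is the $K$-invariance of $\mu$, for which it suffices to show $k_{*}\mu=\mu$ for every $k\in K$. Left translation by $k$ permutes the right $N$-cosets of $K$, so there are a permutation $\sigma$ of $\{1,\dots,k\}$ and elements $n_{i}\in N$ with $kk_{i}=n_{i}k_{\sigma(i)}$. Then $k_{*}\mu_{k_{i}Nx\Gamma}$ is a probability measure supported on $k\cdot(Nk_{i}x\Gamma)=Nkk_{i}x\Gamma=Nk_{\sigma(i)}x\Gamma$ (using $kN=Nk$), and it is $N$-invariant: for $n\in N$, writing $nk=k(k^{-1}nk)$ with $k^{-1}nk\in N$ by normality, we get $n_{*}(k_{*}\mu_{k_{i}Nx\Gamma})=k_{*}\bigl((k^{-1}nk)_{*}\mu_{k_{i}Nx\Gamma}\bigr)=k_{*}\mu_{k_{i}Nx\Gamma}$ by $N$-invariance of $\mu_{k_{i}Nx\Gamma}$. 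By uniqueness of the $N$-invariant probability measure on the finite volume orbit $Nk_{\sigma(i)}x\Gamma$ we conclude $k_{*}\mu_{k_{i}Nx\Gamma}=\mu_{k_{\sigma(i)}Nx\Gamma}$; summing over $i$ and using that $\sigma$ is a bijection yields $k_{*}\mu=\mu$.

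Having shown that $\mu$ is a $K$-invariant probability measure on the finite volume orbit $Kx\Gamma$, the uniqueness of such a measure forces $\mu=\mu_{Kx\Gamma}$, which is the claim. I do not expect a genuine obstacle here: the argument is essentially a bookkeeping exercise with coset representatives, and the only place where subtlety enters is the use of the normality $N\trianglelefteq K$ twice — once to rewrite $k_{i}N=Nk_{i}$ so that the orbits $k_{i}Nx\Gamma$ are $N$-stable, and once to push the $N$-invariance of $\mu_{k_{i}Nx\Gamma}$ through the translation by $k$; both steps fail for a non-normal finite index subgroup. The existence and uniqueness of the invariant probability measures being used are standard facts about finite volume homogeneous spaces and are implicit in the statement of the lemma.
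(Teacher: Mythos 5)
Your argument is correct. Note that the paper itself does not prove this lemma at all: it is simply recalled from \cite{AESgrids}, so there is no in-paper proof to compare against. Your route --- showing that $\mu\df\frac{1}{k}\sum_{i}\mu_{k_{i}Nx\Gamma}$ is a $K$-invariant probability measure supported on $Kx\Gamma$ and invoking uniqueness of the invariant probability measure on a finite volume orbit --- is the natural self-contained argument, and the two uses of normality you isolate (to see that $k_{i}Nx\Gamma=Nk_{i}x\Gamma$ is $N$-invariant, and to push $N$-invariance through translation by $k$, via $k_{*}\mu_{k_{i}Nx\Gamma}=\mu_{k_{\sigma(i)}Nx\Gamma}$ and the permutation of cosets) are exactly the right pivot points. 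Two small remarks: the sets $Nk_{i}x\Gamma$ need not be distinct (they are $N$-orbits, hence pairwise equal or disjoint), but your proof never uses disjointness, only that their union is $Kx\Gamma$ and that $\sigma$ is a bijection of the index set, so the formula correctly counts repeated orbits with multiplicity; and the claim that each $N$-orbit has finite volume (so that $\mu_{k_{i}Nx\Gamma}$ exists) deserves at least the one-line justification that a finite-index closed subgroup acts on the finite volume orbit $Kx\Gamma$ with finitely many orbits of positive $\mu_{Kx\Gamma}$-measure --- though, as you say, the existence of these measures is already built into the statement of the lemma, so treating it as given is acceptable.
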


An immediate corollary from Lemma \ref{lem:Lemma about the restriction of meeasure to cosets of subgroup}
is that
\begin{equation}
\mu_{g_{n},S}=\frac{1}{k_{S}}\sum_{\mathbf{i}}\mu_{g_{n},S,\mathbf{i}},\label{eq:pieces of orbit}
\end{equation}
where $k_{S}=\left|\prod_{p\in S}\Q_{p}^{\times}/\left(\Q_{p}^{\times}\right)^{2}\right|$.

Next, for each $\mathbf{i}\in\prod_{p\in S}\Q_{p}^{\times}/\left(\Q_{p}^{\times}\right)^{2}$
we choose $\rho_{S}^{(\mathbf{i})}\in\SO_{Q}(\Q_{S})$ such that $\phi(\rho_{S}^{(\mathbf{i})})=\mathbf{i}$,
and we denote 
\[
\Y_{S,\mathbf{i}}\df(e_{1,\infty},e_{2,\infty},\rho_{S}^{(\mathbf{i})},e_{2,S})\pi\left(\tilde{\G}(\R\times\Q_{S})\right)\G(\ZS).
\]

We claim that
\begin{equation}
\pi\left(\tilde{\G}(\R\times\Q_{S})\right)\G(\ZS)=\left(\G(\R)\times\pi(\tilde{\G}(\Q_{S}))\right)\G(\ZS).\label{eq:the orgbit of the ambient simply connected}
\end{equation}
Indeed, recall that $\G_{2}$ is simply connected, so that 
\begin{equation}
\pi\left(\tilde{\G}(\R\times\Q_{S})\right)\G(\ZS)=\left(\pi(\tilde{\G}_{1}(\R))\times\G_{2}(\R)\times\pi(\tilde{\G}(\Q_{S}))\right)\G(\ZS),\label{eq:pi(G^tilde) times the zs pts}
\end{equation}
hence to prove \eqref{eq:the orgbit of the ambient simply connected}
it is sufficient by \eqref{eq:pi(G^tilde) times the zs pts} to show
that 
\[
\pi(\tilde{\G}_{1}(\R))\G_{1}(\ZS)=\G_{1}(\R)\G_{1}(\ZS).
\]
If $Q$ is positive definite then we deduce by \eqref{eq:exact sequence positive definite}
that $\pi(\tilde{\G}_{1}(\R))=\G_{1}(\R)$. If on the other-hand $Q$
is indefinite, we note that there exists $\gamma\in\G_{1}(\Z)$ with
$\phi(\gamma)=-\R^{\times}/\left(\R^{\times}\right)^{2}$ (there are
integral vectors $\mathbf{v}_{+}$ and $\mathbf{v}_{-}$ such that
$Q(\mathbf{v}_{\pm})\in\pm\R^{\times}$. The orthogonal transformation
$\gamma$ obtained by the composition of the associated reflections
$\gamma=\tau_{\mathbf{v}_{+}}\circ\tau_{\mathbf{v}_{-}}$ has $\phi(\gamma)=-\R^{\times}/\left(\R^{\times}\right)^{2}$),
which shows that

\begin{align*}
\pi(\tilde{\G}_{1}(\R))\G_{1}(\ZS)= & \left(\pi(\tilde{\G}_{1}(\R))\bigcup\pi(\tilde{\G}_{1}(\R))\gamma\right)\G_{1}(\ZS)\\
\underbrace{=}_{\eqref{eq:exact sequence indefinite}} & \G_{1}(\R)\G_{1}(\ZS).
\end{align*}

We let 
\[
\mu_{\Y_{S,\mathbf{i}}}\df(e_{1,\infty},e_{2,\infty},\rho_{S}^{(\mathbf{i})},e_{2,S})_{*}\mu_{\G(\R)\times\pi(\tilde{\G}(\Q_{S}))\G(\ZS)},
\]
where $\mu_{\G(\R)\times\pi(\tilde{\G}(\Q_{S}))\G(\ZS)}$ is the $\G(\R)\times\pi(\tilde{\G}(\Q_{S}))$
invariant probability measure supported on $\pi\left(\tilde{\G}(\R\times\Q_{S})\right)\G(\ZS)$.

Running over $\mathbf{i}\in\prod_{p\in S}\Q_{p}^{\times}/\left(\Q_{p}^{\times}\right)^{2}$
we obtain by \eqref{eq:p-adic spin exact sequence} that $(e_{1,\infty},e_{2,\infty},\rho_{S}^{(\mathbf{i})},e_{2,S})$
is a complete list of representatives of $\G(\R)\times\pi(\tilde{\G}(\Q_{S}))$
cosets in $\G(\R\times\Q_{S})$, and we conclude by Lemma \ref{lem:Lemma about the restriction of meeasure to cosets of subgroup}
that 
\begin{equation}
\mu_{\Y_{S}}=\frac{1}{k_{S}}\sum_{\mathbf{i}}\mu_{\Y_{S,\mathbf{i}}}.\label{eq:pieces of ambient space}
\end{equation}

We note that for each $\mathbf{i}\in\prod_{p\in S}\Q_{p}^{\times}/\left(\Q_{p}^{\times}\right)^{2}$
we have $O_{g_{n},S,\mathbf{i}}\subseteq\Y_{S,\mathbf{i}}$, and our
goal in the following will be to prove that 
\begin{equation}
\mu_{g_{n},S,\mathbf{i}}\to\Y_{S,\mathbf{i}},\label{eq:limit of =00005Cmu_g_n,S,i}
\end{equation}
which by \eqref{eq:pieces of orbit} and \eqref{eq:pieces of ambient space}
will imply Theorem \ref{thm:AESgrids thm}.

For the rest of the proof we fix $\mathbf{i}\in\prod_{p\in S}\Q_{p}^{\times}/\left(\Q_{p}^{\times}\right)^{2}$.
We now proceed to prove \eqref{eq:limit of =00005Cmu_g_n,S,i}, which
will be done in two steps.

\subsubsection{\label{subsec:First-step--}First step - the limit of $\underline{\mu}_{g_{n},S,\mathbf{i}}$}

We cannot apply Theorem \ref{thm:go thm} as is because $\Y_{S}$
is not a quotient of a semi-simple group. Therefore in our first step
below we project to a smaller space in which we can apply Theorem
\ref{thm:go thm}.

Denote $\Gbar_{2}\df\SL_{d-1}$, and define $\Gbar\df\G_{1}\times\Gbar_{2}$.
Consider the natural map
\[
\pi_{\Gbar_{2}}:\G_{2}\to\Gbar_{2},
\]
given by
\[
\left(\begin{array}{cc}
m & *\\
\mathbf{0} & 1
\end{array}\right)\mapsto m,
\]
and let $\pi_{\underline{\G}}:\G\to\Gbar$ be defined by $\pi_{\underline{\G}}(\rho,\eta)\df(\rho,\pi_{\Gbar_{2}}(\eta)),\ \forall\left(\rho,\eta\right)\in\G$.

We define 
\[
\underbar{\ensuremath{\Lbold}}_{g}\df\pi_{\underline{\G}}(\mathbf{L}_{g})=\left\{ \left(h,\pi_{\Gbar_{2}}\left(g^{-1}\o\left(h\right)g\right)\right)\mid\ h\in\mathbf{H}_{\ovec(g)}\right\} .
\]

The following lemma has essentially the same content as \cite[Lemma 3.4 ]{AESgrids}
(the proof is also essentially the same).
\begin{lem}
\label{lem:LEmma 3.4 of AES :)} Let \emph{$g\in\SL_{d}(\Z)$} such
that $Q(\ovec(g))>0$. Then:
\begin{enumerate}
\item \label{enu:AES_lemma3.4_maximality}$\mathbf{H}_{\ovec(g)}(\R)$ \emph{(}resp.
$\pi_{\Gbar_{2}}(g^{-1}\o(\mathbf{H}_{\tau(g)}(\R))g)$\emph{)} is
maximal among connected algebraic subgroups of $\G_{1}(\R)$ \emph{(}resp.
$\Gbar_{2}(\R)$\emph{).}
\item \label{enu:AES_lemma34._cetrelizer anisotropic}Assumption \nameref{=0000A72}
of Theorem \ref{thm:go thm} holds for $\underline{\mathbf{L}}_{g}$.
\item \label{enu:AES_lemma34._L_i strongly iso}Let $p$ be an odd prime,
and assume that there exists $\mathbf{u}\in\Q_{p}\otimes\ovec(g)^{\perp(Q)}$
such that $Q(\mathbf{u})=0$. Then assumption \nameref{=0000A71}
of Theorem \ref{thm:go thm} is valid for $\mathbf{L}_{g}(\Q_{p})$
and for $\underline{\mathbf{L}}_{g}(\Q_{p})\df\{(h,\pi_{\Gbar_{2}}(g^{-1}\o(h)g))\mid h\in\mathbf{H}_{\ovec(g)}\}$.
\end{enumerate}
\end{lem}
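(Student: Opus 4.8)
The plan is to follow closely the strategy of \cite[Lemma 3.4]{AESgrids}, translating each of its three parts into our slightly more general setting (an arbitrary form $Q$ satisfying the Standing Assumption, rather than a fixed split form). Throughout, the key structural input is Lemma \ref{lem:F(SO_Q) and stabilizer of a vector}, which identifies $\o(\mathbf{H}_{\ovec(g)}(\R))$ with the subgroup of $\SO_{Q^{*}}(\R)$ fixing the vector $M\ovec(g)$, together with Lemma \ref{lem:the second factor of L_g is orthogonal group of a form in d-1 var}, which identifies $\pi_{\Gbar_{2}}(g^{-1}\o(\mathbf{H}_{\ovec(g)})g)$ (after conjugation by $\gamma$) with $\SO_{\ef_{g}^{\gamma}}$, the special orthogonal group of an explicit nondegenerate quadratic form in $d-1 \geq 3$ variables. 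Since $\mathbf{H}_{\ovec(g)}(\R) \cong \SO_{Q\mid_{\ovec(g)^{\perp(Q)}}}(\R)$ is compact (equivalently, $Q\mid_{\ovec(g)^{\perp(Q)}}$ is positive definite, as $Q(\ovec(g))>0$ and $\mathbf{H}_{\mathbf{e}_d}(\R)$ is compact under the Standing Assumption), we are dealing with special orthogonal groups of nondegenerate forms in $\geq 3$ variables, for which the needed algebraic-group facts are standard.

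For part \eqref{enu:AES_lemma3.4_maximality}: the stabilizer of a vector $\mathbf{v}$ with $Q(\mathbf{v}) \neq 0$ in $\SO_Q$ is $\SO_{Q\mid_{\mathbf{v}^{\perp}}}$ up to the obvious identification, and $\SO$ of a nondegenerate form in $m \geq 3$ variables is maximal among proper connected algebraic subgroups of $\SO$ of a nondegenerate form in $m+1$ variables. I would quote this as a known fact about orthogonal groups (the same reference used in \cite{AESgrids} applies verbatim, since maximality is a geometric statement insensitive to the particular isomorphism class of the ambient form), applying it once with the ambient group $\G_1 = \SO_Q$ and the stabilized vector $\ovec(g)$, and once with the ambient group $\Gbar_2 = \SL_{d-1}$: here $\SL_{d-1}$ has $\SO$ of a nondegenerate $(d-1)$-variable form as a maximal connected subgroup, and $\pi_{\Gbar_2}(g^{-1}\o(\mathbf{H}_{\tau(g)}(\R))g) = \gamma \SO_{\ef_g^\gamma}(\R) \gamma^{-1}$ is a conjugate of such a group by Lemma \ref{lem:the second factor of L_g is orthogonal group of a form in d-1 var}.

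For part \eqref{enu:AES_lemma34._cetrelizer anisotropic}: I would compute the centralizer of $\underline{\mathbf{L}}_g$ in $\Gbar = \G_1 \times \Gbar_2$ factor by factor. The centralizer of $\mathbf{H}_{\ovec(g)}$ inside $\G_1 = \SO_Q$ is (the identity component of) the subgroup fixing $\ovec(g)^{\perp(Q)}$ pointwise, which acts on the line $\R\ovec(g)$ and on $\ovec(g)^{\perp(Q)}$ by scalars preserving $Q$ — since $\dim \ovec(g)^{\perp(Q)} = d-1 \geq 3$, Schur's lemma forces the scalar on the hyperplane to be $\pm 1$, hence the centralizer is finite, i.e. $\Q$-anisotropic; similarly for the $\SO_{\ef_g^\gamma}$-factor inside $\SL_{d-1}$, whose centralizer consists of scalars in $\SL_{d-1}$ commuting with an irreducible orthogonal action, again a finite group. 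Hence the centralizer of $\underline{\mathbf{L}}_g$ in $\Gbar$ is finite, in particular $\Q$-anisotropic, which is Assumption \nameref{=0000A72}.

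For part \eqref{enu:AES_lemma34._L_i strongly iso}: this is the step I expect to be the main obstacle, as it is where the $(Q,p)$ co-isotropic hypothesis enters and where one must verify that every connected nontrivial normal $\Q_p$-subgroup is noncompact. Both $\mathbf{L}_g(\Q_p) \cong \mathbf{H}_{\ovec(g)}(\Q_p)$ and $\underline{\mathbf{L}}_g(\Q_p)$ are (isomorphic to) the special orthogonal group $\SO_{\psi}(\Q_p)$ of the form $\psi = Q\mid_{\ovec(g)^{\perp(Q)}}$ in $d-1 \geq 3$ variables; the hypothesis provides $\mathbf{u} \in \Q_p \otimes \ovec(g)^{\perp(Q)}$ with $Q(\mathbf{u}) = 0$, i.e. $\psi$ is isotropic over $\Q_p$. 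For $d - 1 \geq 3$, an isotropic nondegenerate form over $\Q_p$ splits off a hyperbolic plane, and I would argue: if $d-1 = 3$ then $\SO_\psi$ is (a form of) $\mathrm{PGL}_2$ or $\mathrm{SL}_2$, which over $\Q_p$ is $\Q_p$-simple and noncompact once isotropic, so the only connected normal $\Q_p$-subgroups are trivial or the whole group, and strong isotropy is immediate; if $d - 1 \geq 4$ the group $\SO_\psi$ is $\Q_p$-almost-simple (its root system is $B_n$ or $D_n$, irreducible, so it has no proper nontrivial connected normal subgroups over $\Q_p$ except possibly the center, which is finite), and again isotropy forces noncompactness of the group, verifying \nameref{=0000A71}. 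The one subtlety to handle carefully is the case $d-1 = 4$, $D_2$, where $\SO_\psi$ can be isogenous to a product $\SL_2 \times \SL_2$ of two $\Q_p$-factors; here one must check that \emph{both} factors are isotropic — this follows because an isotropic $4$-variable form over $\Q_p$ that is not hyperbolic corresponds to $\mathrm{SL}_1(D) \times \mathrm{SL}_2$ with $D$ a quaternion algebra, but an isotropic $4$-form with a hyperbolic plane already has Witt index $\geq 1$, and a case analysis on the discriminant/Hasse invariant over $\Q_p$ (using that $d \geq 5$ so one could even invoke the remark after Definition \ref{def:Isotropicity definition}) shows the relevant normal subgroups are noncompact. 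I would streamline this by noting that under the Standing Assumption $d \geq 4$, so $d - 1 \geq 3$, and citing the classification of semisimple groups of type $B$/$D$ over $p$-adic fields together with the fact (as in \cite{AESgrids}) that isotropy of $\psi$ over $\Q_p$ is exactly the condition making $\SO_\psi(\Q_p)$ strongly isotropic in the sense of \cite{GO}.
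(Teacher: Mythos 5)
Your proposal is correct in substance and reaches all three conclusions, but it takes a partly different route from the paper, and two small misstatements are worth flagging. For part (1) you and the paper both reduce to the standard maximality facts (stabilizer of a non-isotropic vector is maximal connected in $\SO_{Q}$; a special orthogonal group of a non-degenerate $(d-1)$-variable form is maximal connected in $\SL_{d-1}$, the paper phrasing this via compactness as $\SO_{d-1}(\R)\leq\SL_{d-1}(\R)$), so there is no real difference. For part (2) the paper does not compute the centralizer at all: it simply observes that an infinite centralizer would produce a proper connected subgroup strictly containing $\mathbf{H}_{\ovec(g)}(\R)$ (resp. its $\Gbar_{2}$-counterpart), contradicting part (1); your direct Schur-type computation is equally valid and arguably more self-contained, but note the sentence describing the centralizer as ``the subgroup fixing $\ovec(g)^{\perp(Q)}$ pointwise'' is a slip --- the centralizer consists of elements acting as scalars on $\ovec(g)^{\perp(Q)}$ and on the complementary line, which is what your subsequent argument actually uses. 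For part (3) the paper is terse: it cites the proof of Lemma 3.4 of the AES paper for the statement that isotropy of $Q$ on $\Q_{p}\otimes\ovec(g)^{\perp(Q)}$ gives assumption \nameref{=0000A71} for $\mathbf{H}_{\ovec(g)}(\Q_{p})$, and then transfers it to $\mathbf{L}_{g}(\Q_{p})$ and $\underline{\mathbf{L}}_{g}(\Q_{p})$ via the obvious isomorphisms (exactly as you do). Your self-contained case analysis of $\SO_{\psi}$ over $\Q_{p}$ for $\psi$ in $d-1\geq3$ variables is a reasonable substitute, and your identification of the only delicate case ($d-1=4$, type $D_{2}$) is the right instinct; however the parenthetical claim that an isotropic non-hyperbolic $4$-variable form ``corresponds to $\text{SL}_{1}(D)\times\SL_{2}$'' is not accurate: when the discriminant is a square the two quaternion factors coincide, so an isotropic form is automatically hyperbolic and both normal factors are noncompact, while for non-square discriminant the group is obtained by restriction of scalars from a quadratic extension and is $\Q_{p}$-simple, so isotropy of $\psi$ again gives what is needed. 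With that correction your argument closes the $D_{2}$ case and the proof goes through.
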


\begin{proof}
To obtain \eqref{enu:AES_lemma3.4_maximality} we recall by \cite{Dynkin_maximal}
that the stabilizer of a non-isotropic vector in a special orthogonal
group of a non-degenerate quadratic form is a maximal connected Lie
subgroup of $\G_{1}$, hence it follows that $\mathbf{H}_{\ovec(g)}(\R)$
is maximal among connected algebraic subgroups of $\G_{1}(\R)$. Next,
by Lemma \ref{lem:the second factor of L_g is orthogonal group of a form in d-1 var},
$\pi_{\Gbar_{2}}(g^{-1}\o(\mathbf{H}_{\tau(g)}(\R))g))$ is the stabilizer
of a non-degenerate quadratic form in $d-1$ variables, and since
$\mathbf{H}_{\ovec(g)}(\R)$ is compact, we have $\pi_{\Gbar_{2}}(g^{-1}\o(\mathbf{H}_{\tau(g)}(\R))g))\cong\SO_{d-1}(\R)$,
which is a well known maximal Lie subgroup of $\Gbar_{2}(\R)$. Next,
to prove \eqref{enu:AES_lemma34._cetrelizer anisotropic}, it is sufficient
to prove that the centralizer of $\mathbf{H}_{\ovec(g)}(\R)$ (resp.
$\pi_{\Gbar_{2}}(g^{-1}\o(\mathbf{H}_{\tau(g)}(\R))g)$) in $\G_{1}(\R)$
(resp. $\Gbar_{2}(\R)$) is finite. In fact, if not, we would obtain
a proper connected algebraic subgroup containing $\mathbf{H}_{\ovec(g)}(\R)$
(resp. $\pi_{\Gbar_{2}}(g^{-1}\o(\mathbf{H}_{\tau(g)}(\R))g)$, which
is a contradiction to \eqref{enu:AES_lemma3.4_maximality}. Finally,
if we assume that there exists $\mathbf{u}\in\Q_{p}\otimes\ovec(g)^{\perp(Q)}$
for an odd prime $p$ such that $Q(\mathbf{u})=0$, then by the proof
of \cite[Lemma 3.4]{AESgrids} we get that assumption \nameref{=0000A71}
of Theorem \ref{thm:go thm} is valid for $\mathbf{H}_{\ovec(g)}(\Q_{p})$.
Since $\mathbf{H}_{\ovec(g)}(\Q_{p})\cong\mathbf{L}_{g}(\Q_{p})\cong\underline{\mathbf{L}}_{g}(\Q_{p})$,
assumption \nameref{=0000A71} of Theorem \ref{thm:go thm} is valid
for $\mathbf{L}_{g}(\Q_{p})$ and $\underline{\mathbf{L}}_{g}(\Q_{p})$.
\end{proof}
Consider $\vartheta_{\underline{\G}}:\G(\R\times\Q_{S})/\G(\ZS)\to\Gbar(\R\times\Q_{S})/\Gbar(\ZS)$
be the map induced by $\pi_{\underline{\G}}$, and note that $\vartheta_{\underline{\G}}$
has compact fibers. We define $\X_{S,\mathbf{i}}\df\vartheta_{\underline{\G}}(\Y_{S,\mathbf{i}})$,
and $\underline{O}_{g,S,\mathbf{i}}\df\vartheta_{\underline{\G}}\left(O_{g,S,\mathbf{i}}\right)$,
which are equivalently described by
\[
\mathcal{X}_{S,\mathbf{i}}\df\pi\left(\tilde{\Gbar}(\R\times\Q_{S})\right)(e_{1,\infty},e_{2,\infty},\rho_{S}^{(\mathbf{i})},e_{2,S})\Gbar\left(\ZS\right),
\]
\[
\underline{O}_{g,S,\mathbf{i}}\df\underline{l}_{g_{n}}^{(\mathbf{i})}\pi\left(\tilde{\underline{\mathbf{L}}}_{g_{n}}(\R\times\Q_{S})\right)\Gbar(\ZS),
\]
where $\underline{l}_{g_{n}}^{(\mathbf{i})}\df\pi_{\underline{\G}}(l_{g_{n}}^{(\mathbf{i})})$
(see \eqref{eq:def of l_g^(i)} for the definition of $l_{g_{n}}^{(\mathbf{i})}$).

Let $\underbar{\ensuremath{\mu}}_{g_{n},S,\mathbf{i}}\df\left(\vartheta_{\underline{\G}}\right)_{*}\mu_{g_{n},S,\mathbf{i}}$,
and we note that 
\[
\underbar{\ensuremath{\mu}}_{g_{n},S,\mathbf{i}}=(\underline{l}_{g_{n}}^{(\mathbf{i})})_{*}\mu_{\pi(\underline{\tilde{\mathbf{L}}}_{g_{n}}(\R\times\Q_{S}))\G\left(\ZS\right)},
\]
where $\mu_{\pi(\underline{\tilde{\mathbf{L}}}_{g_{n}}(\R\times\Q_{S}))\G\left(\ZS\right)}$
is the $\pi\left(\tilde{\underbar{\ensuremath{\mathbf{L}}}}_{g_{n}}(\R\times\Q_{S})\right)$-invariant
probability measure supported on $\pi\left(\tilde{\underbar{\ensuremath{\mathbf{L}}}}_{g_{n}}(\R\times\Q_{S})\right)\Gbar(\ZS)$.

Let $\nu$ be a weak star limit of a subsequence $\underline{\mu}_{g_{n},S,\mathbf{i}}$,
$n\in C_{1}\subseteq\N$. Then by Lemma \ref{lem:LEmma 3.4 of AES :)},\eqref{enu:AES_lemma34._cetrelizer anisotropic}
and Theorem \ref{thm:go thm}, $\underbar{\ensuremath{\nu}}$ is a
probability measure and there exists a semi-simple connected $\Q$-algebraic
subgroup $\M\leq\G$ such that 
\begin{equation}
\underbar{\ensuremath{\nu}}=\left(t_{0}\right)_{*}\mu_{M\G(\ZS)},\label{eq:nu_bar}
\end{equation}
where $M$ is a closed finite index subgroup of $\M(\R\times\Q_{S})$
and $t_{0}\in\G(\R\times\Q_{S}).$

For the rest of this section, our goal will be to prove that $\M=\underbar{\ensuremath{\G}},$
and as we now show, this will prove that 
\begin{equation}
\underbar{\ensuremath{\nu}}=\mu_{\mathcal{X}_{S,\mathbf{i}}}\df\left(\vartheta_{\underline{\G}}\right)_{*}\mu_{\Y_{S,\mathbf{i}}},\label{eq:nu=00003Dmu_X_S,i}
\end{equation}
which is the unique $\pi\left(\tilde{\Gbar}(\R\times\Q_{S})\right)$-invariant
probability measure on $\X_{S,\mathbf{i}}.$

So assume that $M\leq\Gbar$ is of finite index. We now show that
\begin{equation}
\pi(\tilde{\Gbar}(\R\times\Q_{S}))\subseteq M.\label{eq:M includes the image of the simply connected cover}
\end{equation}
Since $\pi(\tilde{\Gbar}(\R))\times\{e_{S}\}$ is the connected component
of $\Gbar(\R)\times\{e_{S}\}$, we get\textbf{
\begin{equation}
\pi(\tilde{\Gbar}(\R))\times\{e_{S}\}\subseteq M\cap\left(\Gbar(\R)\times\{e_{S}\}\right).\label{eq:pi(G^tilde) in the real component of M}
\end{equation}
}Let $\Gbar^{+}(\Q_{S})$ the group generated by unipotent elements
of $\Gbar(\Q_{S})$. By Corollary 6.7 of \cite{borel_tits_homo_abstrait},
any subgroup of finite index contains the group $\Gbar^{+}(\Q_{S})$.
Since $M\cap\left(\{e_{\infty}\}\times\Gbar(\Q_{S})\right)\leq\{e_{\infty}\}\times\Gbar(\Q_{S})$
is of finite index, we deduce 
\begin{equation}
\{e_{\infty}\}\times\Gbar^{+}(\Q_{S})\subseteq M\cap\left(\{e_{\infty}\}\times\Gbar(\Q_{S})\right).\label{eq:pi(G^tilde) in the s-adic component of M}
\end{equation}
Since we assume that $Q$ is isotropic for all\textbf{ $p\in S$},\textbf{
}by Lemma 1 of \cite{elenberg_ven_rep} we have that $\G_{1}^{+}(\Q_{S})=\pi(\tilde{\G}_{1}(\Q_{S}))$,
and it is well known that $\Gbar_{2}^{+}(\Q_{S})=\Gbar_{2}(\Q_{S})=\pi\left(\tilde{\Gbar}_{2}(\Q_{S})\right)$.
Thus we conclude that
\begin{equation}
\Gbar^{+}(\Q_{S})=\pi(\tilde{\Gbar}(\Q_{S})),\label{eq:G^plus equals}
\end{equation}
and by \eqref{eq:pi(G^tilde) in the real component of M}, \eqref{eq:pi(G^tilde) in the s-adic component of M},
and \eqref{eq:G^plus equals} we deduce that \eqref{eq:M includes the image of the simply connected cover}
holds. Since for all $n$, the measure $\underbar{\ensuremath{\mu}}_{g_{n},S,\mathbf{i}}$
is supported on $\underline{O}_{g,S,\mathbf{i}}\subseteq\mathcal{X}_{S,\mathbf{i}}$,
we deduce that $t_{0}M\Gbar(\ZS)\subseteq\mathcal{X}_{S,\mathbf{i}}$,
and by \eqref{eq:M includes the image of the simply connected cover}
we conclude that $t_{0}M\Gbar(\ZS)\supseteq\mathcal{X}_{S,\mathbf{i}}$,
which shows the implication $\M=\Gbar\implies$ \eqref{eq:nu=00003Dmu_X_S,i}.

Now assume for contradiction that $\M\lneq\Gbar$. Let 
\[
\pi_{1}:\Gbar\to\G_{1},\ \pi_{2}:\Gbar\to\Gbar_{2},
\]
be the natural maps. Since $\M$ is semi-simple and since $\G_{1}$
and $\Gbar_{2}$ have no isomorphic simple Lie factors (due to ambient
dimensions, accidental isomorphisms play no role), it follows that
$\pi_{1}\left(\M\right)\lneq\G_{1}$ or $\pi_{2}\left(\M\right)\lneq\Gbar_{2}$.

By Theorem \ref{thm:go thm}, we let $\left\{ \gamma_{g_{n}}\right\} _{n=1}^{\infty}\subseteq\Gbar(\ZS)$
and a further subsequence $C_{2}\subseteq C_{1}$ such that $\left|C_{1}\smallsetminus C_{2}\right|<\infty$,
which satisfies
\begin{equation}
\gamma_{g_{n}}^{-1}\underbar{\ensuremath{\mathbf{L}}}_{g_{n}}\gamma_{g_{n}}\subseteq\M,\ \forall n\in C_{2},\label{eq:conj of L_v into M}
\end{equation}
and we let $\left\{ l_{n}\right\} _{n=1}^{\infty}\subseteq\pi(\underline{\tilde{\mathbf{L}}}_{g_{n}}(\R\times\Q_{S}))$
such that 
\begin{equation}
(\underline{l}_{g_{n}}^{(\mathbf{i})})l_{g_{n}}\gamma_{g_{n}}\to t_{0}.\label{eq:conv to t_0}
\end{equation}

\subsubsection*{In case $\pi_{1}\left(\protect\M\right)\lneq\protect\G_{1}$}

Let $\delta_{g_{n}}\df\pi_{1}(\gamma_{g_{n}})\in\G_{1}(\ZS)$. Since
$\pi_{1}\left(\M\right)$ is a strict, connected, semi-simple $\Q$
subgroup of $\G_{1}$, we obtain that $\pi_{1}(\M(\R))\lneq\G_{1}(\R)$,
and by maximality of the subgroups $\mathbf{H}_{\ovec(g_{n})}(\R)$,
(see Lemma \ref{lem:LEmma 3.4 of AES :)},\eqref{enu:AES_lemma3.4_maximality})
we obtain that for all $i,j\in C_{2}$ 
\[
\delta_{g_{i}}^{-1}\mathbf{H}_{\ovec(g_{i})}(\R)\delta_{g_{i}}=\delta_{g_{j}}^{-1}\mathbf{H}_{\ovec(g_{j})}(\R)\delta_{g_{j}}=\pi_{1}(\M(\R)),
\]
which implies that 
\begin{equation}
\mathbf{H}_{\delta_{g_{i}}^{-1}\ovec\left(g_{i}\right)}(\R)=\mathbf{H}_{\delta_{g_{j}}^{-1}\ovec(g_{j})}(\R).\label{eq:stab groups equal}
\end{equation}
We fix $i\in C_{2}$, and by \eqref{eq:stab groups equal} we may
deduce that for each $j\in C_{2}$ there exists $0\neq\alpha_{j}\in\ZS$
such that 
\begin{equation}
\delta_{g_{i}}^{-1}\ovec(g_{i})=\alpha_{j}\delta_{g_{j}}^{-1}\ovec(g_{j}).\label{eq:delta_v equality}
\end{equation}
We will now show that $\left\{ \alpha_{j}\right\} _{j\in C_{2}}$
is bounded and bounded away from $0$, which will be a contradiction
since $Q(\ovec(g_{j}))\to\infty$, since $i$ is fixed, and since
by \eqref{eq:delta_v equality} we have
\[
Q\left(\ovec(g_{i})\right)=Q\left(\delta_{g_{i}}^{-1}\ovec(g_{i})\right)=\alpha_{j}^{2}Q\left(\ovec(g_{j})\right).
\]

By recalling that $\left\{ \ovec(g_{j})\right\} _{j\in C_{2}}$ is
a sequence of primitive integral vectors, we deduce that $\left\{ \delta_{g_{j}}^{-1}\ovec(g_{j})\right\} _{j\in C_{2}}$
are primitive vectors in $\ZS^{d}$ considered as a $\ZS$ module.
This implies that $\alpha_{j}\in\ZS^{\times}$ where 
\[
\ZS^{\times}=\left\{ \prod_{p\in S}p^{n_{p}}\mid n_{p}\in\Z\right\} .
\]
By \eqref{eq:conv to t_0} we obtain a sequence $\left\{ h_{g_{j}}\right\} _{j\in C_{2}}$
with $h_{g_{j}}\in\pi\left(\tilde{\mathbf{H}}_{\ovec\left(g_{j}\right)}(\Q_{S})\right)$
such that 
\[
h_{g_{j}}^{(\mathbf{i})}h_{g_{j}}\delta_{g_{j}}\to\pi_{1,S}(t_{0}),
\]
where $\pi_{1,S}:\Gbar(\R\times\Q_{S})\to\G_{1}(\Q_{S})$ is the natural
map and $t_{0}$ is given in \eqref{eq:nu_bar}. By multiplying both
sides of \eqref{eq:delta_v equality} with $h_{g_{j}}^{(\mathbf{i})}h_{g_{j}}\delta_{g_{j}}$,
we obtain that 
\begin{equation}
\lim_{C_{2}\ni j\to\infty}\alpha_{j}\ovec(g_{j})=\pi_{1,S}(t_{0})\delta_{g_{i}}^{-1}\ovec(g_{i}).\label{eq:limit alf_j*v_j}
\end{equation}
Since $\ovec(g_{j})$ is a primitive integral vector, $\norm{\ovec(g_{j})}_{p}$
(the maximum of the p-adic valuations of the entries) is constant
in $j$ for all $p\in S$. Thus, by \eqref{eq:limit alf_j*v_j}, the
p-adic valuation of $\alpha_{j}$ is bounded, and since $\alpha_{j}\in\ZS^{\times}$,
we conclude that $\left\{ \alpha_{j}\right\} _{j\in C_{2}}$ is bounded
and bounded away from $0$.

\subsubsection*{In case $\pi_{2}\left(\protect\M\right)\lneq\protect\Gbar_{2}$}

We will obtain a contradiction in a similar way as we had in the case
that $\pi_{1}\left(\M\right)\lneq\Gbar_{2}$. We denote $\eta_{g_{n}}\df\pi_{2}(\gamma_{g_{n}})\in\Gbar_{2}(\ZS)$.
Since $\pi_{2}\left(\M\right)$ is a strict, connected semi-simple
$\Q$ subgroup of $\Gbar_{2}$, we obtain by maximality (see Lemma
\ref{lem:LEmma 3.4 of AES :)},\eqref{enu:AES_lemma3.4_maximality})
and by recalling \eqref{eq:conj of L_v into M}, that for all $i,j\in C_{2}$
\begin{equation}
\eta_{g_{i}}^{-1}\pi_{\Gbar_{2}}\left(g_{i}^{-1}\o\left(\mathbf{H}_{\ovec(g_{i})}\left(\R\right)\right)g_{_{i}}\right)\eta_{g_{i}}=\eta_{g_{j}}^{-1}\pi_{\Gbar_{2}}\left(g_{j}^{-1}\o\left(\mathbf{H}_{\ovec(g_{j})}(\R)\right)g_{j}\right)\eta_{g_{j}}.\label{eq:equality_eta_v_stab_H_phi}
\end{equation}
By Lemma \ref{lem:the second factor of L_g is orthogonal group of a form in d-1 var},
we find that \eqref{eq:equality_eta_v_stab_H_phi} can be rewritten
by
\[
\SO_{\ef_{g_{j}}^{\eta_{g_{j}}}}(\R)=\SO_{\ef_{g_{i}}^{\eta_{g_{i}}}}(\R),
\]
where the quadratic form $\ef_{g}^{\gamma}$ is a given by \eqref{eq:quadratic form phi_g^=00005Cgamma}.
By recalling Lemma 3.3 of \cite{AESgrids}, we find that there exists
$\alpha_{j}\in\Q^{\times}$ such that 
\begin{equation}
\alpha_{j}\ef_{g_{j}}^{\eta_{g_{j}}}=\ef_{g_{i}}^{\eta_{g_{i}}},\label{eq:quadratic forms are scallar dependent}
\end{equation}
where we fix $i$ and let $j\in C_{2}$ vary. Our plan now is to show
that $\{\alpha_{j}\}_{j\in C_{2}}$ is bounded and bounded away from
$0$. This will be a contradiction since we assume that $Q(\ovec(g_{j}))\to\infty$
and since by Lemma \ref{lem:discreminant of M_ef_g^=00005Cgamma}
we have that $\text{disc}(\ef_{g_{j}}^{\eta_{g_{j}}})=\frac{1}{\text{disc}(Q)}Q(\ovec(g_{j}))$,
where $\text{disc}(\ef)$ denotes the determinant of the companion
matrix of a quadratic form $\ef$.

We recall that (see \eqref{eq:def of companion matrix =00005Cef_g^=00005Cgamma})
\[
\ef_{g}^{\eta}(\mathbf{u})=\mathbf{u}^{t}\left(\eta^{t}\hat{g}^{t}M^{-1}\hat{g}\eta\right)\mathbf{u},
\]
where $\hat{g}$ is the matrix formed by the first $d-1$ columns
of $g$ and where $M$ is the companion matrix of $Q$. Therefore,
by \eqref{eq:quadratic forms are scallar dependent} we deduce 
\[
\alpha_{j}(\eta_{j}^{t}\hat{g_{j}}^{t}M^{-1}\hat{g_{j}}\eta_{j})=\eta_{i}^{t}\hat{g_{i}}^{t}M^{-1}\hat{g_{i}}\eta_{i},
\]
which in turn implies that 
\begin{equation}
\alpha_{j}(\eta_{j}^{t}\hat{g_{j}}^{t}\text{adj}(M)\hat{g_{j}}\eta_{j})=\eta_{i}^{t}\hat{g_{i}}^{t}\text{adj}(M)\hat{g_{i}}\eta_{i},\label{eq:scallar with conjugate adj(M)}
\end{equation}
where $\text{adj}(M)$ is the matrix adjugate of $M$, which has integral
entries as $M$ is integral. We denote
\[
\text{\ensuremath{\bar{M}}}_{\ef_{g}^{\eta}}\df\eta^{t}\hat{g}^{t}\text{adj}(M)\hat{g}\eta,
\]
and for $l\in C_{2}$ we let $q_{l}\in\N$ be defined by 
\[
q_{l}\df g.c.d(\hat{g_{l}}^{t}\text{adj}(M)\hat{g_{l}}).
\]
We rewrite \eqref{eq:scallar with conjugate adj(M)} to
\begin{equation}
\frac{\alpha_{j}q_{j}}{q_{i}}\left(\frac{1}{q_{j}}\bar{M}_{\ef_{g_{j}}^{\eta_{j}}}\right)=\frac{1}{q_{i}}\bar{M}_{\ef_{g_{i}}^{\eta_{i}}},\label{eq:equality of M_phi}
\end{equation}
and by noting that $\frac{1}{q_{l}}\hat{g_{l}}^{t}\text{adj}(M)\hat{g_{l}}$
has co-primes entries, we may deduce that $\frac{\alpha_{j}q_{j}}{q_{i}}\in\left(\ZS\right)^{\times}$.

By \eqref{eq:conv to t_0} there exists a sequence $\left\{ k_{g_{j}}\right\} _{j\in C_{2}}$
with $k_{g_{j}}\in\SO_{\ef_{g_{j}}}(\Q_{S})$ such that 
\begin{equation}
\underbar{\ensuremath{k}}_{g_{j}}^{(\mathbf{i})}k_{g_{j}}\eta_{g_{j}}\to\pi_{2,S}\left(t_{0}\right),\label{eq:limit of l_j's}
\end{equation}
where $\underbar{\ensuremath{k}}_{g_{j}}^{(\mathbf{i})}\df\pi_{\Gbar_{2}}\left(g_{j}^{-1}\o\left(h_{g_{j}}^{(\mathbf{i})}\right)g_{j}\right)\in\SO_{\ef_{g_{j}}}(\Q_{S})$,
$\pi_{2,S}:\Gbar(\R\times\Q_{S})\to\Gbar_{2}(\Q_{S})$ is the natural
map and $t_{0}$ is given in \eqref{eq:nu_bar}. We conclude by denoting
$\bar{M}_{\ef_{g_{j}}}=\bar{M}_{\ef_{g_{j}}^{e}}$, and by noting
that $\bar{M}_{\ef_{g_{j}}}$ is a multiple of the companion matrix
of the quadratic form $\ef_{g_{j}}$, that

\begin{align}
\left(\left(\underbar{\ensuremath{k}}_{g_{j}}^{(\mathbf{i})}k_{g_{j}}\eta_{g_{j}}\right)^{t}\right)^{-1}\bar{M}_{\ef_{g_{j}}^{\eta_{g_{j}}}}\left(\underbar{\ensuremath{k}}_{g_{j}}^{(\mathbf{i})}k_{g_{j}}\eta_{g_{j}}\right)^{-1}= & \left(\left(\underbar{\ensuremath{k}}_{g_{j}}^{(\mathbf{i})}k_{g_{j}}\right)^{-1}\right)^{t}\bar{M}_{\ef_{g_{j}}}\left(\underbar{\ensuremath{k}}_{g_{j}}^{(\mathbf{i})}k_{g_{j}}\right)^{-1}\label{eq:conju of M_ef_g_j^=00005Ceta}\\
= & \bar{M}_{\ef_{g_{j}}}.\nonumber 
\end{align}
To simplify notation, we denote the fixed matrix $\frac{1}{q_{i}}\bar{M}_{\ef_{g_{i}}^{\eta_{g_{i}}}}$
by $B$ and we deduce by \eqref{eq:equality of M_phi} and \eqref{eq:conju of M_ef_g_j^=00005Ceta}
that 
\begin{equation}
\frac{\alpha_{j}q_{j}}{q_{i}}\left(\frac{1}{q_{j}}\bar{M}_{g_{j}}\right)=\left(\left(\underbar{\ensuremath{k}}_{g_{j}}^{(\mathbf{i})}k_{g_{j}}\eta_{g_{j}}\right)^{t}\right)^{-1}B\left(\left(\underbar{\ensuremath{k}}_{g_{j}}^{(\mathbf{i})}k_{g_{j}}\eta_{g_{j}}\right)\right)^{-1}.\label{eq:conjugation of companion matrices by product of l_g's}
\end{equation}
We conclude by \eqref{eq:limit of l_j's} and \eqref{eq:conjugation of companion matrices by product of l_g's}
that the p-adic norm of $\frac{\alpha_{j}q_{j}}{q_{i}}\left(\frac{1}{q_{j}}\bar{M}_{g_{j}}\right)$
is bounded for all $p\in S$, and since $\frac{1}{q_{j}}\bar{M}_{g_{j}}$
is a primitive integral matrix, the p-adic norm of $\frac{\alpha_{j}q_{j}}{q_{i}}\left(\frac{1}{q_{j}}\bar{M}_{g_{j}}\right)$
equals to the p-adic valuation $\left|\frac{\alpha_{j}q_{j}}{q_{i}}\right|_{p}$
for all $p\in S$. Since $\left\{ \frac{\alpha_{j}q_{j}}{q_{i}}\right\} _{j\in C_{2}}\subseteq\left(\ZS\right)^{\times}$,
we conclude that $\left\{ \frac{\alpha_{j}q_{j}}{q_{i}}\right\} _{j\in C_{2}}$
is bounded in absolute value from above and away from $0$.

Finally, using Lemma \ref{lem:gcd of companion matrix} we deduce
that $q_{j}$ is uniformly bounded in $j\in C_{2}$ from above and
below, which implies in turn that $\alpha_{j}$ is bounded in $j\in C_{2}$
from above and away from $0$.

\subsubsection{Second step - Upgrading to $\protect\G$}

In a summary of the first step, it holds that
\begin{equation}
\left(\vartheta_{\underline{\G}}\right)_{*}\mu_{g_{n},S,\mathbf{i}}=\underbar{\ensuremath{\mu}}_{g_{n},S,\mathbf{i}},\label{eq:pi_S of mu_v_n,S}
\end{equation}
and it holds that $\underbar{\ensuremath{\mu}}_{g_{n},S,\mathbf{i}}\to\mu_{\mathcal{X}{}_{S,\mathbf{i}}}$,
where 
\begin{equation}
\left(\vartheta_{\underline{\G}}\right)_{*}\mu_{\Y_{S,\mathbf{i}}}=\mu_{\mathcal{X}{}_{S,\mathbf{i}}}.\label{eq:pi_s of Y_s}
\end{equation}
Let $\nu$ be a weak-star limit of a subsequence $\left\{ \mu_{g_{n},S,\mathbf{i}}\right\} _{n\in C_{1}}$,
for $C_{1}\subseteq\N$. Using \eqref{eq:pi_S of mu_v_n,S} and \eqref{eq:pi_s of Y_s},
we deduce that $\nu$ is a probability measure.

In order to prove that $\nu=\mu_{\Y_{S,\mathbf{i}}}$, we will apply
Theorem \ref{thm:go thm} in the ambient space
\[
\G'(\R\times\Q_{S})/\G'(\ZS),
\]
where $\G'\df\G_{1}\times\SL_{d}$.

By Theorem \ref{thm:go thm} and Lemma \ref{lem:LEmma 3.4 of AES :)},\eqref{enu:AES_lemma34._L_i strongly iso}
there exists a connected $\Q$-algebraic subgroup $\M\leq\G'$ such
that 
\begin{equation}
\nu=\left(t_{0}\right){}_{*}\mu_{M\G'(\ZS)},\label{eq:nu}
\end{equation}
where $M\leq\M(\R\times\Q_{S})$ is a closed finite index subgroup
and $t_{0}\in\G'(\R\times\Q_{S}).$

As explained in \cite{AESgrids} (see below equation $(4.5)$ in \cite{AESgrids}),
it follows that $\M\le\G$, that $t_{0}\in\G(\R\times\Q_{S})$, that
there exists a sequence $\left\{ \gamma_{g_{n}}\right\} _{n\in C_{2}}\subseteq\G(\ZS)$,
where $C_{2}\subseteq C_{1}$, $\left|C_{1}\smallsetminus C_{2}\right|<\infty$
such that 
\begin{equation}
\gamma_{g_{n}}^{-1}\mathbf{L}_{g_{n}}\gamma_{g_{n}}\subseteq\M,\label{eq:conjugation of L_g into M}
\end{equation}
and that either $\M=\G$ or $\M=\G_{1}\times\SL_{d-1}^{\mathbf{t}}$
where 
\[
\SL_{d-1}^{\mathbf{t}}\df c_{\mathbf{t}}\iota\left(\SL_{d-1}\right)c_{\mathbf{t}}^{-1},\ c_{\mathbf{t}}\df\left(\begin{array}{cc}
I_{d-1} & \mathbf{t}\\
\mathbf{0} & 1
\end{array}\right),\ \mathbf{t}\in\Q^{d-1},
\]
and $\iota:\SL_{d-1}\to\ASL_{d-1}$ is the natural embedding which
maps $m\mapsto\left(\begin{array}{cc}
m & \mathbf{0}\\
\mathbf{0} & 1
\end{array}\right)$. As in Section \ref{subsec:First-step--} (by the same argument),
the proof will be done once we show that $\M=\G$.

Assume by contradiction that $\M=\G_{1}\times\SL_{d-1}^{\mathbf{t}}$.
We let $\pi_{2}:\G\to\G_{2}$ be the coordinate map, and we denote
$\eta_{g}\df\pi_{2}(\gamma_{g})$. By definition of $\SL_{d-1}^{\mathbf{t}}$
and by \eqref{eq:conjugation of L_g into M} we obtain $\forall n\in C_{2}$
that 
\begin{equation}
c_{\mathbf{t}}^{-1}\eta_{g_{n}}^{-1}g_{n}^{-1}\o\left(\mathbf{H}_{\ovec(g_{n})}\right)g_{n}\eta_{g_{n}}c_{\mathbf{t}}\subseteq\iota\left(\SL_{d-1}\right).\label{eq:inclusion of H_v by conjugation of c_t}
\end{equation}
We fix $N\in\N$ such that $N\mathbf{t}\in\Z_{\text{prim }}^{d-1}$.
By using that $\iota\left(\SL_{d-1}\right)$ fixes $\mathbf{e}_{d}$
and by using \eqref{eq:inclusion of H_v by conjugation of c_t} we
conclude that
\begin{equation}
\tilde{\mathbf{v}}_{n}\df\left(g_{n}\eta_{g_{n}}c_{\mathbf{t}}\right)\left(Ne_{d}\right),\label{eq:def_of_v^tilde_g}
\end{equation}
is fixed by the left linear action of $\o\left(\mathbf{H}_{\ovec(g_{n})}\right)$.
By Lemma \ref{lem:F(SO_Q) and stabilizer of a vector}, the group
$\o(\mathbf{H}_{\ovec(g_{n})})$ is the stabilizer subgroup of the
non-isotropic vector $M\ovec(g_{n})$ under the left linear action
of $\SO_{Q^{*}}$. The space of fixed vectors for such groups is one-dimensional,
hence there exists $\alpha_{g_{n}}\in\Q$ such that
\begin{equation}
\alpha_{g_{n}}\left(M\ovec(g_{n})\right)=\tilde{\mathbf{v}}_{n}.\label{eq:equality v_n and v^tilde_n}
\end{equation}
Again as above, we will show that $\left\{ \alpha_{g_{n}}\right\} _{n\in C_{2}}$
is bounded and bounded away from $0$.

Before continuing, we will now explain why the boundedness of $\left\{ \alpha_{g_{n}}\right\} _{n\in C_{2}}$
yields a contradiction. By definition of $\tilde{\mathbf{v}}_{n}$
in \eqref{eq:def_of_v^tilde_g}, we may express $\tilde{\mathbf{v}}_{n}$
by 
\[
\tilde{\mathbf{v}}_{n}=\sum_{i=1}^{d-1}a_{i}\left(g_{n}\mathbf{e}_{i}\right)+N\left(g_{n}\mathbf{e}_{d}\right),
\]
where $a_{1},...,a_{d-1}\in\Q$. We now observe that

\begin{align*}
\alpha_{g_{n}}Q(\ovec(g_{n}))= & \alpha_{g_{n}}\ovec(g_{n})^{t}M\ovec(g_{n})\underbrace{=}_{\eqref{eq:equality v_n and v^tilde_n}}\ovec(g_{n})^{t}\tilde{\mathbf{v}}_{n}\\
= & \sum_{i=1}^{d-1}a_{i}\underbrace{\left\langle \ovec(g_{n}),g_{n}\mathbf{e}_{i}\right\rangle }_{=0}+N\underbrace{\left\langle \ovec(g_{n}),g_{n}\mathbf{e}_{d}\right\rangle }_{=1}\\
= & N,
\end{align*}
and since $Q(\tau(g_{n}))\to\infty$ and $N$ is fixed, this will
be a contradiction.

We now proceed to show the boundedness of $\left\{ \alpha_{g_{n}}\right\} _{n\in C_{2}}$.
We denote for $n\in\N$ by $q_{g_{n}}$ the g.c.d of $M\ovec(g_{n})$,
and we rewrite \eqref{eq:equality v_n and v^tilde_n} by 
\[
(\alpha_{g_{n}}q_{g_{n}})\left(\frac{1}{q_{g_{n}}}M\ovec(g_{n})\right)=g_{n}\eta_{g_{n}}(Nc_{\mathbf{t}}\mathbf{e}_{d})
\]
Using that $\frac{1}{q_{g_{n}}}M\ovec(g_{n})$ and $Nc_{\mathbf{t}}\mathbf{e}_{d}$
are primitive integral vectors, we deduce by the preceding equality
that $\alpha_{g_{n}}q_{g_{n}}\in\ZS^{\times}$. By Theorem \ref{thm:go thm},\eqref{enu:go_3}
there exists a sequence $\left\{ h_{g_{n}}\right\} _{n\in C_{2}}$
with $h_{g_{n}}\in\pi\left(\tilde{\mathbf{H}}_{\ovec(g)}(\Q_{S})\right)$
such that 
\[
h_{g_{n}}^{(\mathbf{i})}h_{g_{n}}\delta_{g_{n}}\to\pi_{1,S}\left(t_{0}\right),
\]
where $\delta_{g_{n}}\df\pi_{1}\left(\gamma_{g_{n}}\right)$, $\pi_{1,S}:\G(\R\times\Q_{S})\to\G_{1}(\Q_{S})$
is the natural map, $t_{0}$ is given in \eqref{eq:nu}, and 
\begin{equation}
g_{n}^{-1}\o\left(h_{g_{n}}^{(\mathbf{i})}h_{g_{n}}\right)g_{n}\eta_{g_{n}}\to\pi_{2,S}\left(t_{0}\right),\label{eq:limit of conjugate h_v^(i)h_v_j  in grids case}
\end{equation}
where $\eta_{g_{n}}\df\pi_{2}\left(\gamma_{g_{n}}\right)$, $\pi_{2,S}:\G(\R\times\Q_{S})\to\G_{2}(\Q_{S})$
is the natural map. We obtain by \eqref{eq:equality v_n and v^tilde_n}
and by recalling that $\o(h_{g_{n}}^{(\mathbf{i})}h_{g_{n}})$ stabilizes
$M\ovec(g_{n})$ (see Lemma \ref{lem:F(SO_Q) and stabilizer of a vector}),
that
\begin{equation}
\begin{aligned}\alpha_{g_{n}}M\ovec(g_{n})= & \o(h_{g_{n}}^{(\mathbf{i})}h_{g_{n}})\tilde{\mathbf{v}}_{n}\\
\underbrace{=}_{\text{recalling \eqref{eq:def_of_v^tilde_g}}} & g_{n}\left(g_{n}^{-1}\o(h_{g_{n}}^{(\mathbf{i})}h_{g_{n}})g_{n}\eta_{g_{n}}\right)c_{\mathbf{t}}\left(Ne_{d}\right).
\end{aligned}
\label{eq:g*limiting to g_2,S*c_T}
\end{equation}
Since $g_{n}\in\SL_{d}(\Z)$, we get for any $p\in S$ that
\[
\norm{g_{n}\left(g_{n}^{-1}\o(h_{g_{n}}^{(\mathbf{i})}h_{g_{n}})_{p}g_{n}\eta_{g_{n}}\right)c_{\mathbf{t}}\left(Ne_{d}\right)}_{p}=\norm{\left(g_{n}^{-1}\o(h_{g_{n}}^{(\mathbf{i})}h_{g_{n}})_{p}g_{n}\eta_{g_{n}}\right)c_{\mathbf{t}}\left(Ne_{d}\right)}_{p},
\]
where $\norm{\cdot}_{p}$ is the maximum of p-adic valuations of the
entries, and $\o(h_{g_{n}}^{(\mathbf{i})}h_{g_{n}})_{p}$ is the $p$'th
component of $\o(h_{g_{n}}^{(\mathbf{i})}h_{g_{n}})\in\G(\Q_{S})$.
By \eqref{eq:limit of conjugate h_v^(i)h_v_j  in grids case} and
\eqref{eq:g*limiting to g_2,S*c_T} we deduce for all $p\in S$ that
the $p$-adic valuation of $\norm{\alpha_{g_{n}}M\ovec(g_{n})}_{p}$
is bounded. Since $\frac{1}{q_{g_{n}}}M\ovec(g_{n})$ is a primitive
integral vector, we get that 
\[
\norm{\alpha_{g_{n}}q_{g_{n}}\left(\frac{1}{q_{g_{n}}}M\ovec(g_{n})\right)}_{p}=\left|\alpha_{g_{n}}q_{g_{n}}\right|_{p},
\]
which implies in turn that $\left|\alpha_{g_{n}}q_{g_{n}}\right|_{p}$
is bounded in $n\in\N$, for all $p\in S$. By recalling that $\alpha_{g_{n}}q_{g_{n}}\in(\ZS)^{\times},$
we conclude that $\left\{ \alpha_{g_{n}}q_{g_{n}}\right\} _{n\in C_{2}}$
is bounded and bounded away from $0$. Finally, since $M\Z^{d}\subseteq\Z^{d}$
and since $M\ovec(g_{n})\in M\Z^{d}$ is a primitive vector in the
lattice $M\Z^{d}$, we get by \cite{Cas_geo_num}, Chapter 1, Theorem
1,B. that $q_{g_{n}}\leq\det(M)$, which completes the proof.

\section{\label{sec:Equivalence-classes-of integral points} Equivalence classes
of integral points and their relation to the S-arithmetic orbits}

In this section we define for each $T>0$ an equivalence relation
on $\V_{T}(\Z)$ for which there are finitely many equivalence classes
$E_{g_{1}}\bigsqcup...\bigsqcup E_{g_{N}}=\V_{T}(\Z)$, see Section
\ref{subsec:The-equivelence-relation}. The motivation for this equivalence
relation is a connection established in Section \ref{subsec:duality principle}
between each equivalence class $E_{g}$ and the orbit $O_{g,S}$ (the
main result is Corollary \ref{cor:main correspondence of E_g and O_g,S}).

\subsubsection*{Outline for the rest of the paper}

The current section may be viewed as a prelude to Section \ref{sec:Statistics-of-the-equivelence-classes}
in which we use the aforementioned connection (Corollary \ref{cor:main correspondence of E_g and O_g,S})
and Theorem \ref{thm:AESgrids thm} to prove Theorem \ref{thm:refinement of the main theorem-1},
which gives the limiting distribution of the normalized counting measures
on the subsets $\left\{ (\pi_{\V_{Q(\ovec(g))}}(x),\vartheta_{q}(x))\mid x\in E_{g}\right\} \subseteq\V_{Q(\mathbf{e}_{d})}(\R)\times\V_{a}(\Z/(q))$,
as $Q(\ovec(g))\to\infty$.

In Section \ref{sec:Proof-of-theorems} we achieve our main goal of
proving Theorems \ref{thm:main thm for Z} - \ref{thm:main_thm_with_congruences-forZ}
concerning the limit of the normalized counting measures supported
on $\left\{ (\pi_{\V_{T}}(x),\vartheta_{q}(x))\mid x\in\V_{T}(\Z)\right\} $,
$T\in\N$, by rewriting the counting measures on $\left\{ (\pi_{\V_{T}}(x),\vartheta_{q}(x))\mid x\in\V_{T}(\Z)\right\} $
as an average of the counting measures on $\left\{ (\pi_{\V_{T}}(x),\vartheta_{q}(x))\mid x\in E_{g}\right\} $
and by employing Theorem 8.1.

\subsection{\label{subsec:The-equivelence-relation}The equivalence relation}

A natural way to ``generate'' integral points on $\V_{T}(\Z)$ from
a given $g\in\V_{T}(\Z)$ is to view $g$ as a point in $\V_{T}(\Q_{S})$
and to consider the intersection of orbits
\begin{equation}
E_{g}\df g\cdot\G(\ZS)\bigcap g\cdot\G(\Z_{S})\label{eq:definition of E_g}
\end{equation}

(to recall the definition of the right action of $\G$ on $\V_{T}$
see \eqref{eq:definion of action of =00005Cmatbb=00007BG=00007D on Z_t}).
We define our equivalence relation on $\V_{T}(\Z)$ by $g\sim g'\iff E_{g}=E_{g'}$.
Clearly, the equivalence class of each $g\in\V_{T}(\Z)$ is given
by $E_{g}$.
\begin{lem}
\label{lem:finitely many ASL_d-1 orbits.}For each $T>0$, it holds
that each equivalence class $E_{g}$ is composed of finitely many
\emph{$\G(\Z)$} orbits, and it holds that there are finitely many
equivalence classes.
\end{lem}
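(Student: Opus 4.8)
The plan is to identify the equivalence classes $E_g$ with orbits of the single arithmetic group $\G(\ZS)$ on the set of integral points, and then to invoke finiteness theorems for class numbers of algebraic groups. First I would observe that $\V_T(\Z) = \V_T(\Q) \cap \SL_d(\Z)$, so every $g \in \V_T(\Z)$ lies in $\V_T(\Q_p)$ for every prime $p$ and in $\V_T(\R)$; in particular $g \in \V_T(\Z_S)$ automatically, since $g$ has integral (hence $S$-integral) entries. Thus $g \cdot \G(\Z_S) \subseteq \V_T(\Z_S)$, and the intersection in \eqref{eq:definition of E_g} should be understood as living inside $\V_T(\ZS)$: an element of $g \cdot \G(\ZS)$ that also lies in $g\cdot\G(\Z_S)$ must be $S$-integral and integral at all primes outside $S$, hence integral. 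The key point is therefore that $E_g = g\cdot\G(\ZS) \cap \V_T(\Z)$, i.e.\ $E_g$ is precisely the set of integral points in the $\G(\ZS)$-orbit of $g$.

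Next I would decompose $E_g$ into $\G(\Z)$-orbits. Since $\G(\Z) \leq \G(\ZS)$, the orbit $g\cdot\G(\ZS)$ breaks into $\G(\Z)$-orbits indexed by a set of double cosets; concretely, if $g' = g\cdot\gamma$ with $\gamma \in \G(\ZS)$ and $g'$ integral, then the $\G(\Z)$-orbit of $g'$ depends only on the class of $\gamma$ in $\mathbf{L}_g(\ZS)\backslash\G(\ZS)/\G(\Z)$ intersected with the condition that the representative be integral. Finiteness of the number of $\G(\Z)$-orbits in $E_g$ then follows from finiteness of the relevant double coset space, which in turn reduces to finiteness of $\mathbf{L}_g(\Z_S)\backslash\G(\Z_S)/K$ for a suitable compact open subgroup $K$ of $\G(\Z_S)$ — a standard consequence of the compactness of $\G(\Z_S)$ and the fact (already used repeatedly in the paper) that $\mathbf{L}_g(\R)$ is compact, so that $\mathbf{L}_g$ has finite class number. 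Alternatively, and perhaps more cleanly, I would note that by Corollary \ref{cor:transitivity of ASL_times_SO_d}\eqref{enu:finitely many orbits in V_x(Z)} there are already finitely many $\G(\Z)$-orbits in \emph{all} of $\V_N(\Z)$, which immediately gives finitely many $\G(\Z)$-orbits inside each $E_g$ and finitely many equivalence classes at once, since $\V_T(\Z)$ is the disjoint union of the $E_g$'s and each is a union of $\G(\Z)$-orbits.

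So the cleanest route: invoke Corollary \ref{cor:transitivity of ASL_times_SO_d}\eqref{enu:finitely many orbits in V_x(Z)} to get $\V_N(\Z) = \bigsqcup_{i=1}^{m} g_i\cdot\G(\Z)$ for finitely many $g_i$; then each $E_g$, being $\G(\Z)$-invariant (indeed $\G(\ZS)$-invariant) and contained in $\V_N(\Z)$, is a union of some of these finitely many orbits, whence both assertions follow. The only genuine content to check is that $E_g$ is $\G(\Z)$-invariant, which is immediate from \eqref{eq:definition of E_g} since both $g\cdot\G(\ZS)$ and $g\cdot\G(\Z_S)$ are right $\G(\Z)$-invariant (as $\G(\Z) \leq \G(\ZS) \cap \G(\Z_S)$), and that $g \sim g'$ is genuinely an equivalence relation with class $E_g$, which is formal. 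The main obstacle — really the only subtlety — is pinning down the correct reading of the intersection \eqref{eq:definition of E_g} as a subset of $\V_T(\Z)$ and checking that $E_g$ is exactly the integral points in the $\G(\ZS)$-orbit; once that bookkeeping is done, finiteness is inherited for free from the already-established Corollary \ref{cor:transitivity of ASL_times_SO_d}. I would also remark that this identification $E_g = g\cdot\G(\ZS)\cap\V_T(\Z)$ is precisely what sets up the correspondence with the $S$-arithmetic orbit $O_{g,S}$ used in Section \ref{subsec:duality principle}.
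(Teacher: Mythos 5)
Your final argument --- $E_g$ is right $\G(\Z)$-invariant because $\G(\Z)$ is contained in both $\G(\ZS)$ and $\G(\Z_S)$, so each class is a union of $\G(\Z)$-orbits, and Corollary \ref{cor:transitivity of ASL_times_SO_d}\eqref{enu:finitely many orbits in V_x(Z)} already bounds the number of $\G(\Z)$-orbits in all of $\V_T(\Z)$ --- is exactly the paper's proof and is complete; the preliminary class-number/double-coset discussion is not needed for the lemma.

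One caution about a side claim you present as the key point: the identification $E_g=g\cdot\G(\ZS)\cap\V_T(\Z)$ is not justified. Integrality only gives the inclusion $E_g\subseteq g\cdot\G(\ZS)\cap\V_T(\Z)$ (a point lying in both orbits is $\ZS$-rational and $\Z_S$-integral, hence integral). The converse --- that every integral point of the $\G(\ZS)$-orbit already lies in the $\G(\Z_S)$-orbit of $g$ --- is a nontrivial local condition at the primes of $S$ (transitivity of $\G(\Z_p)$ on the relevant integral points is not automatic, e.g.\ when $p$ divides $Q(\ovec(g))$ or the discriminant), and it is precisely this extra condition that makes the intersection with $g\cdot\G(\Z_S)$ meaningful later: in Section \ref{subsec:duality principle} it corresponds to restricting $O_{g,S}$ to the clopen set $\U_S$ and to passing from $M$ to the subset $M_0$. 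Since your finiteness proof never uses this identification, the lemma stands, but the identification itself should not be relied on elsewhere.
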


\begin{proof}
Note that each equivalence class is $\G(\Z)$ invariant, hence each
equivalence class is composed of $\G(\Z)$ orbits. There are finitely
many $\G(\Z)$ orbits in $\V_{T}(\Z)$ by Corollary \ref{cor:transitivity of ASL_times_SO_d},
\eqref{enu:finitely many orbits in V_x(Z)}, which proves our claim.
\end{proof}

\subsection{\label{subsec:Decomposition-of-the}A decomposition of the orbits
$O_{g,S}$}

For the rest of this section we fix a finite set of primes $S$, and
we take $g\in\SL_{d}(\Z)$ such that $Q(\ovec(g))>0$.

The goal of this section is to deduce the decomposition \eqref{eq:decomposition of O_v,p cap U},
which is a technical fact that we will need in Section \ref{subsec:duality principle}
to relate the orbit $O_{g,S}$ with $E_{g}$.

We recall the definition of $O_{g,S}$ and we rewrite it as follows
\begin{align}
O_{g,S}= & (t_{g},e_{S})\mathbf{L}_{g}(\R\times\Q_{S})\G(\ZS)\label{eq:rewriting O_g,S}\\
= & \left(t_{g}\mathbf{L}_{g}(\R)t_{g}^{-1}\times\mathbf{L}_{g}(\Q_{S})\right)(t_{g},e_{S})\G(\ZS),\nonumber 
\end{align}
where $t_{g}$ is defined in \eqref{eq:def of theta_g}. By Lemma
\ref{cor:Special alligned base points} we deduce that $t_{g}\mathbf{L}_{g}(\R)t_{g}^{-1}=H$
(where $H=\mathbf{L}_{I_{d}}(\R)$) and by \eqref{eq:rewriting O_g,S}
we deduce that
\begin{equation}
O_{g,S}=H\times\mathbf{L}_{g}(\Q_{S})(t_{g},e_{S})\G(\ZS).\label{eq:O_g,S=00003DH=00005CtimesL_g(Q_S)}
\end{equation}

We have that $\Lbold_{g}$ is a $\Q$-group, hence we obtain
\begin{equation}
\Lbold_{g}(\Q_{S})=\bigsqcup_{h\in M}\Lbold_{g}(\Z_{S})h\Lbold_{g}(\ZS),\label{eq:decomposition of SO_V_perp}
\end{equation}
where $M=M(g)$ is a finite set of representatives of the double coset
space (see \cite[Chapter 5]{platonov_rapinchuk}). Using \eqref{eq:O_g,S=00003DH=00005CtimesL_g(Q_S)}
and \eqref{eq:decomposition of SO_V_perp} we obtain the decomposition
\begin{equation}
O_{g,S}=\bigsqcup_{h\in M}O_{g,S,h},\label{eq:decomposition _v,p}
\end{equation}
where
\begin{equation}
O_{g,S,h}\overset{\text{def}}{=}(H\times\Lbold_{g}(\Z_{S}))\left(t_{g},h\right)\G\left(\ZS\right).\label{eq:O_g,S,h}
\end{equation}

\subsubsection{Intersection with the principle genus}

We will be actually interested in the intersection $O_{g,S}\cap\U_{S}$,
where $\U_{S}\subseteq\G(\R\times\Q_{S})/\G(\ZS)$ is the clopen orbit
of the clopen subgroup $\G(\R\times\Z_{S})$ passing through the identity
coset $\G(\ZS)$, namely
\begin{equation}
\U_{S}\df\G(\R\times\Z_{S})\G(\ZS).\label{eq:def of U_S}
\end{equation}
Since $\G(\Z)\leq\G(\R\times\Z_{S})$, where $\G(\Z)\subseteq\G(\R\times\Z_{S})$
is diagonally embedded, is the stabilizer subgroup stabilizing $\G(\ZS)$
by the natural left action, we conclude that $\U_{S}$ is naturally
identified with $\G(\R\times\Z_{S})/\G(\Z)$, where each element $\left(g_{\infty},g_{S}\right)\G(\ZS)\in\U_{S}$
viewed as a point in $\G(\R\times\Q_{S})/\G(\ZS)$ identifies with
$\left(g_{\infty}\gamma^{-1},g_{S}\gamma^{-1}\right)\G(\Z)\in\G(\R\times\Z_{S})/\G(\Z)$,
where $\gamma\in\G(\ZS)$ is an arbitrary element which gives that
$g_{S}\gamma^{-1}\in\G(\Z_{S}).$

We observe that $O_{g,S,h}\cap\U_{S}\neq\emptyset$ if and only if
$O_{g,S,h}\subseteq\U_{S}$, which shows that 
\begin{equation}
O_{g,S}\cap\U_{S}=\bigsqcup_{h\in M_{0}}O_{g,S,h},\label{eq:decomposition of O_v,p cap U}
\end{equation}
where $M_{0}=M_{0}(g)\subseteq M(g)$ is a finite subset.

For all $h\in M_{0}$, since $O_{g,S,h}\subseteq\U_{S}$, we obtain
that $h\in\mathbf{L}_{g}(\Q_{S})\cap\G(\R\times\Z_{S})\G(\ZS)$. Namely
there are $c\in\G(\Z_{S})$ and $\gamma\in\G(\ZS)$ such that 
\begin{equation}
h=c\gamma^{-1}.\label{eq:decomposition of h}
\end{equation}
Then, for $h\in M_{0}$, we get that the orbit $O_{g,S,h}$ (defined
in \eqref{eq:O_g,q,h}) is identified by
\begin{equation}
O_{g,S,h}=(H\times\Lbold_{g}(\Z_{S}))(t_{g}\gamma,c)\G(\Z).\label{eq:identification of O_g,S,h in G(R=00005Ctimes=00005CZ_S)/G(Z)}
\end{equation}

\subsection{\label{subsec:duality principle}A duality principle relating $E_{g}$
with $O_{g,S}\cap\protect\U_{S}$}

The main idea that stands behind the relation of $E_{g}$ with $O_{g,S}$
can be roughly described as a ``duality'' principle by which we
transfer a left $(H\times\Lbold_{g}(\Z_{S}))$-orbit \eqref{eq:identification of O_g,S,h in G(R=00005Ctimes=00005CZ_S)/G(Z)}
in $\G(\R\times\Z_{S})/\G(\Z)$ to a right $\G(\Z)$-orbit in $(H\times\Lbold_{g}(\Z_{S}))\backslash\G(\R\times\Z_{S})$
via the following diagram of natural maps
\begin{equation}
\xymatrix{ & \G(\R\times\Z_{S})\ar[dr]\ar[dl]\\
(H\times\Lbold_{g}(\Z_{S}))\backslash\G(\R\times\Z_{S}) &  & \G(\R\times\Z_{S})/\G(\Z)
}
\label{eq:duality diagram}
\end{equation}
Namely, by \eqref{eq:duality diagram}, we transfer an orbit \eqref{eq:identification of O_g,S,h in G(R=00005Ctimes=00005CZ_S)/G(Z)}
to a $\text{\ensuremath{\G}(\ensuremath{\Z})}$-orbit $\mathcal{Q}_{g,S,h}\subseteq(H\times\Lbold_{g}(\Z_{S}))\backslash\G(\R\times\Z_{S})$
passing through the base point $(H\times\Lbold_{g}(\Z_{S}))(t_{g}\gamma,c)$.
By using the right action of $\G(\R\times\Z_{S})$ on $\V_{Q(\mathbf{e}_{d})}(\R)\times\V_{Q(\ovec(g))}(\Z_{S})$,
and by recalling that $H\times\Lbold_{g}(\Z_{S})$ is the stabilizer
of $(I_{d},g)$, we may identify $\mathcal{Q}_{g,S,h}$ with (where
we abuse notations)
\begin{equation}
\mathcal{Q}_{g,S,h}=(I_{d}\cdot(t_{g}\gamma),g\cdot c)\cdot\G(\Z)\subseteq\V_{Q(\mathbf{e}_{d})}(\R)\times\V_{Q(\ovec(g))}(\Z_{S}).\label{eq:O_g,S,h in the variety}
\end{equation}

To relax notations, we denote the homeomorphism $\pi_{\V_{T}}:\V_{T}(\R)\to\V_{Q(\mathbf{e}_{d})}(\R)$
(defined in \eqref{eq:def of pi_Z}) by $\pi_{\V}$. The lemma below
gives the key correspondence between $O_{g,S}\cap\U_{S}$ and $E_{g}$.
\begin{lem}
\label{lem:the union of O_g,S,h in the variety}It holds that $\bigsqcup_{h\in M_{0}}\mathcal{Q}_{g,S,h}=\{(\pi_{\V}(x),x)\mid x\in E_{g}\}$,
and that $|M_{0}|=\left|E_{g}/\G(\Z)\right|$.
\end{lem}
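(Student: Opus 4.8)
The plan is to push the ``duality'' of diagram \eqref{eq:duality diagram} all the way down to the variety $\V_{Q(\mathbf e_d)}(\R)\times\V_{Q(\ovec(g))}(\Z_S)$ and reduce everything to the single identity $g\cdot c=g\cdot\gamma$ built into a decomposition $h=c\gamma^{-1}$ of an element of $\Lbold_g(\Q_S)$. First I would record three facts. (a) The group $H\times\Lbold_g(\Z_S)$ is precisely the stabilizer of $(I_d,g)$ under the right $\G(\R\times\Z_S)$-action on $\V_{Q(\mathbf e_d)}(\R)\times\V_{Q(\ovec(g))}(\Z_S)$: this is Lemma \ref{lem:stabilizer lemma} over $\R$ — where $\Lbold_{I_d}(\R)=H$ by \eqref{eq:def_of_H} — and over $\Z_S$. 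Hence $(H\times\Lbold_g(\Z_S))\backslash\G(\R\times\Z_S)$ embeds $\G(\Z)$-equivariantly into the variety as the orbit of $(I_d,g)$, and the duality sends the $\G(\Z)$-orbit $O_{g,S,h}$, written as in \eqref{eq:identification of O_g,S,h in G(R=00005Ctimes=00005CZ_S)/G(Z)}, to the subset $\mathcal{Q}_{g,S,h}=(I_d\cdot(t_g\gamma),g\cdot c)\cdot\G(\Z)$ of \eqref{eq:O_g,S,h in the variety}. (b) $I_d\cdot t_g=\pi_{\V}(g)$: this follows from $g=a_{Q(\ovec(g))}\cdot t_g$ (see \eqref{eq:def of theta_g}), from $\pi_\V(a_{Q(\ovec(g))})=I_d$ (by \eqref{eq:def of pi_Z}), and from the $\G(\R)$-equivariance of $\pi_\V$; as $\G(\ZS)$ acts through its archimedean coordinate, the same equivariance gives $\pi_\V(g)\cdot\gamma=\pi_\V(g\cdot\gamma)$ for $\gamma\in\G(\ZS)$. (c) If $\tilde h\in\Lbold_g(\Z_S)\,h\,\Lbold_g(\ZS)$ then $(H\times\Lbold_g(\Z_S))(t_g,\tilde h)\G(\ZS)=O_{g,S,h}$; writing $\tilde h=uhv$ with $u\in\Lbold_g(\Z_S)$ and $v\in\Lbold_g(\ZS)$, this is an absorption of $u$ on the left, of the $S$-coordinate of $v$ on the right through $\G(\ZS)$, and of the archimedean coordinate of $v^{-1}$ back into $H$ using $t_g\Lbold_g(\R)t_g^{-1}=H$ (Corollary \ref{cor:Special alligned base points}), exactly as in \eqref{eq:rewriting O_g,S}.

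Granting this, the set equality is a direct computation. Fix $h\in M_0$ and a decomposition $h=c\gamma^{-1}$, $c\in\G(\Z_S)$, $\gamma\in\G(\ZS)$. Since $h\in\Lbold_g(\Q_S)$ stabilizes $g$ we get $g\cdot c=g\cdot\gamma$, so $x_h:=g\cdot\gamma$ lies in $g\cdot\G(\ZS)\cap g\cdot\G(\Z_S)=E_g$ (and in $\V_{Q(\ovec(g))}(\Z)$). By (a) and (b),
\[
(I_d\cdot(t_g\gamma),\,g\cdot c)=(\pi_\V(g)\cdot\gamma,\,g\cdot\gamma)=(\pi_\V(x_h),x_h),
\]
hence $\mathcal{Q}_{g,S,h}=\{(\pi_\V(x_h)\cdot\delta,\,x_h\cdot\delta)\mid\delta\in\G(\Z)\}=\{(\pi_\V(x),x)\mid x\in x_h\cdot\G(\Z)\}$, using equivariance of $\pi_\V$ and the $\G(\Z)$-invariance of $E_g$ (Lemma \ref{lem:finitely many ASL_d-1 orbits.}); this proves $\bigsqcup_{h\in M_0}\mathcal{Q}_{g,S,h}\subseteq\{(\pi_\V(x),x)\mid x\in E_g\}$. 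Conversely, given $x\in E_g$ write $x=g\cdot\alpha=g\cdot\beta$ with $\alpha\in\G(\ZS)$, $\beta\in\G(\Z_S)$, and set $\tilde h:=\beta\alpha^{-1}\in\Lbold_g(\Q_S)$; it lies in $\Lbold_g(\Z_S)h\Lbold_g(\ZS)$ for some $h\in M$, and by (c), $O_{g,S,h}=(H\times\Lbold_g(\Z_S))(t_g,\tilde h)\G(\ZS)$. In the coordinates of \eqref{eq:identification of O_g,S,h in G(R=00005Ctimes=00005CZ_S)/G(Z)} this meets $\U_S$ because $(t_g,\tilde h)\cdot\alpha=(t_g\alpha,\beta)\in\G(\R\times\Z_S)$, so $h\in M_0$; running the previous computation with $(c,\gamma)$ replaced by $(\beta,\alpha)$ gives $(\pi_\V(x),x)\in\mathcal{Q}_{g,S,h}$, which is the reverse inclusion.

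For the cardinality, the assignment $h\mapsto x_h\cdot\G(\Z)$ defines a map $M_0\to E_g/\G(\Z)$. It is well defined because any two decompositions $h=c\gamma^{-1}=c'\gamma'^{-1}$ satisfy $c'^{-1}c=\gamma'^{-1}\gamma\in\G(\Z_S)\cap\G(\ZS)=\G(\Z)$, so $\gamma,\gamma'$ differ by a right $\G(\Z)$-factor; it is surjective by the reverse inclusion; and it is injective, since $x_h\cdot\G(\Z)=x_{h'}\cdot\G(\Z)$ forces $\mathcal{Q}_{g,S,h}=\mathcal{Q}_{g,S,h'}$ (each being a single $\G(\Z)$-orbit), hence $O_{g,S,h}=O_{g,S,h'}$ and therefore $h=h'$ by the disjointness in \eqref{eq:decomposition _v,p}. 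Thus $|M_0|=|E_g/\G(\Z)|$, and in particular the union $\bigsqcup_{h\in M_0}\mathcal{Q}_{g,S,h}$ is disjoint. The main obstacle I anticipate is organizational rather than conceptual: keeping the five groups $\G(\R)$, $\G(\Q_S)$, $\G(\Z_S)$, $\G(\ZS)$, $\G(\Z)$ and the embeddings among them straight, and making the absorptions in fact (c) completely precise — in particular routing the stray archimedean factor back through $H$ via $t_g\Lbold_g(\R)t_g^{-1}=H$.
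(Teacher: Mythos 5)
Your proposal is correct and follows essentially the same route as the paper's proof: the identity $g\cdot c=g\cdot\gamma$ coming from a decomposition $h=c\gamma^{-1}$, the computation $\pi_{\V}(g\cdot\gamma)=I_{d}\cdot(t_{g}\gamma)$ via equivariance, the stabilizer/duality identification of $\mathcal{Q}_{g,S,h}$, and disjointness of the double-coset pieces for the count $|M_{0}|=|E_{g}/\G(\Z)|$. Your write-up is merely more explicit than the paper in two places — the absorption argument (your fact (c)) used in the reverse inclusion, and the explicit bijection $M_{0}\to E_{g}/\G(\Z)$ replacing the paper's orbit count — but these are the same argument with finer bookkeeping, not a different method.
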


\begin{proof}
Let us first show that for each $h\in M_{0}$ it holds that $\mathcal{Q}_{g,S,h}\subseteq\{(\pi_{\V}(x),x)\mid x\in E_{g}\}.$
By writing $h=c\gamma^{-1}$ for $c\in\G(\Z_{S})$ and $\gamma\in\G(\ZS)$
and by noting that $g=g\cdot h=g\cdot(c\gamma^{-1})$, we obtain that
$g\cdot\gamma=g\cdot c$. Then, $g'$ defined by
\[
g'\df g\cdot\gamma=g\cdot c.
\]
is in $E_{g}$. By definition of $t_{g}$ (see \eqref{eq:def of theta_g})
we have
\[
g'=g\cdot\gamma=a_{Q(\ovec(g))}\cdot(t_{g}\gamma),
\]
and by using the equivariance of the map $\pi_{\V}$, we get
\[
\pi_{\V}(g')=\pi_{\V}(g\cdot\gamma)=\pi_{\V}(a_{Q(\ovec(g))}\cdot(t_{g}\gamma))\underbrace{=}_{\text{recalling }\eqref{eq:def of pi_Z}}I_{d}\cdot(t_{g}\gamma).
\]
We may now conclude that 
\[
(\pi_{\V}(g'),g')\cdot\G(\Z)=\mathcal{Q}_{g,S,h},
\]
and by using equivariance of $\pi_{\V}$, we deduce that
\[
(\pi_{\V}(g'),g')\cdot\G(\Z)=\left\{ (\pi_{\V}(g'\cdot\gamma),g'\cdot\gamma)\mid\gamma\in\G(\Z)\right\} \subseteq\{(\pi_{\V}(x),x)\mid x\in E_{g}\}.
\]

We will now prove the inclusion in the opposite direction. We let
$g'\in E_{g}$ and we note that, according to the definition of $E_{g}$
(see \eqref{eq:definition of E_g}), there are $c\in\G(\Z_{S})$ and
$\gamma\in\G(\ZS)$ such that
\[
g'=g\cdot\gamma=g\cdot c.
\]
We can deduce from the preceding equality that $h\df c\gamma^{-1}$
is an element of $\mathbf{L}_{g}(\Q_{S})\cap\G(\R\times\Z_{S})\G(\ZS)$,
and we conclude by the preceding paragraph that $(\pi_{\V}(g'),g')\in\mathcal{Q}_{g,S,h}$.

Finally, since $O_{g,S,h}$, $h\in M_{0}$, are disjoint $(H\times\Lbold_{g}(\Z_{S}))$-orbits
in $\G(\R\times\Z_{S})/\G(\Z)$, it follows that $\mathcal{Q}_{g,S,h}$,
$h\in M_{0}$, are disjoint $\G(\Z)$-orbits in $(H\times\Lbold_{g}(\Z_{S}))\backslash\G(\R\times\Z_{S})$.
As $\{(\pi_{\V}(x),x)\mid x\in E_{g}\}/\G(\Z)$ is in bijection with
$E_{g}/\G(\Z)$, it follows that $M_{0}=\left|E_{g}/\G(\Z)\right|$.
\end{proof}
We are actually interested in the set $\{(\pi_{\V}(x),\vartheta_{q}(x))\mid x\in E_{g}\}$,
and in order to relate it to the orbits $O_{g,S,h}$ we will consider
the projection modulo $q$ in the following subsection.

\subsubsection{Taking the residue modulo $q$}

We note that the natural ring homomorphism 
\[
\vartheta_{p^{k}}:\Z_{p}\to\Z_{p}/p^{k}\Z_{p}\cong\Z/(p^{k}),
\]
induces a homomorphism $\red_{p^{k}}:\G(\Z_{p})\to\G(\Z/(p^{k}))$.
Let $q\in\N$ and assume that $S$ includes the primes $S_{q}$ appearing
in the prime decomposition of $q$. The Chinese remainder theorem
yields the identification
\[
\prod_{p_{i}\in S',}\G(\Z/p_{i}^{k_{i}}\Z)\cong\G(\Z/(q)),
\]
and so we obtain the map $\red_{q}:\G(\Z_{S})\to\G(\Z/(q))$ in the
obvious way. We also note that $\vartheta_{q}(\mathbf{L}_{g}(\Z_{S}))\subseteq\mathbf{L}_{\vartheta_{q}(g)}(\Z/(q)).$

We consider the map $\left(id_{\infty}\times\red_{q}\right):\G(\R\times\Z_{S})\to\G(\R\times\Z/(q))$
given by 
\[
\left(id_{\infty}\times\red_{q}\right)(g_{\infty},g_{S})\df(g_{\infty},\vartheta_{q}(g_{S})),
\]
 and we upgrade Diagram \eqref{eq:duality diagram} to the following
diagram
\[
\xymatrix{ & \G(\R\times\Z_{S})\ar[dr]\ar[dl]\\
(H\times\Lbold_{g}(\Z_{S}))\backslash\G(\R\times\Z_{S})\ar[d]_{\left(id_{\infty}\times\red_{q}\right)} &  & \G(\R\times\Z_{S})/\G(\Z)\ar[d]_{\left(id_{\infty}\times\red_{q}\right)}\\
(H\times\Lbold_{\vartheta_{q}(g)}(\Z/(q)))\backslash\G(\R\times\Z/(q)) &  & \G(\R\times\Z/(q))/\G_{(q)}(\Z)
}
\]
where
\[
\G_{\left(q\right)}(\Z)\df\left\{ \left(u,\red_{q}\left(u\right)\right)\mid u\in\G(\Z)\right\} \leq\G\left(\R\times\Z/(q)\right).
\]
We let 
\begin{equation}
\begin{aligned}O_{g,q,h}\df & \left(id_{\infty}\times\red_{q}\right)\circ O_{g,S,h}\\
= & (H\times\vartheta_{q}(\Lbold_{g}(\Z_{S})))(t_{g}\gamma,\vartheta_{q}(c))\G_{(q)}(\Z)
\end{aligned}
\label{eq:O_g,q,h}
\end{equation}
and we let
\begin{equation}
\begin{aligned}\mathcal{Q}_{g,q,h} & \df\left(id_{\infty}\times\red_{q}\right)(\mathcal{Q}_{g,S,h})\end{aligned}
\label{eq:O_g,q,h in the variety}
\end{equation}
where $\mathcal{Q}_{g,q,h}$ is the right $\G_{(q)}(\Z)$-orbit passing
through $(H\times\Lbold_{\vartheta_{q}(g)}(\Z/(q)))(t_{g}\gamma,\vartheta_{q}(c))$.
\begin{lem}
\label{lem:the projection of orbit to real place}Let $h,h'\in M_{0}$
be two different elements and let $\gamma,\gamma'\in\G(\ZS)$ which
appear in a decomposition \eqref{eq:decomposition of h} of $h,h'$
correspondingly. Then $Ht_{g}\gamma\G(\Z)\cap Ht_{g}\gamma'\G(\Z)=\emptyset$.
\end{lem}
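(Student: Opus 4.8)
\textbf{Proof plan for Lemma \ref{lem:the projection of orbit to real place}.}
The plan is to argue by contradiction: suppose there exist $\delta,\delta'\in\G(\Z)$ with $Ht_g\gamma\delta = Ht_g\gamma'\delta'$ in $\G(\R\times\Q_S)/\G(\ZS)$ (working in the real place only, this is an equality of $H$-cosets in $\G(\R)$ after projecting, but the key is to track the $\G(\ZS)$-coset too). Rewriting, this says $t_g\gamma\delta\,\delta'^{-1}\gamma'^{-1}t_g^{-1}\in H = t_g\mathbf{L}_g(\R)t_g^{-1}$ (using Corollary \ref{cor:Special alligned base points}), hence $\gamma\delta\delta'^{-1}\gamma'^{-1}\in\mathbf{L}_g(\R)\cap\G(\ZS)=\mathbf{L}_g(\ZS)$. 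First I would unwind what this means for the associated orbits $O_{g,S,h}$ and $O_{g,S,h'}$: by the identification \eqref{eq:identification of O_g,S,h in G(R=00005Ctimes=00005CZ_S)/G(Z)}, we have $O_{g,S,h}=(H\times\Lbold_g(\Z_S))(t_g\gamma,c)\G(\Z)$ and similarly for $h'$. The claimed intersection $Ht_g\gamma\G(\Z)\cap Ht_g\gamma'\G(\Z)\ne\emptyset$ in the real place would, combined with the $S$-adic data, force $O_{g,S,h}$ and $O_{g,S,h'}$ to coincide — but these are disjoint by construction (they come from distinct double-coset representatives in the decomposition \eqref{eq:decomposition of SO_V_perp}–\eqref{eq:decomposition _v,p}).

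More precisely, the key step is to show that an equality $Ht_g\gamma\G(\Z)\cap Ht_g\gamma'\G(\Z)\neq\emptyset$ propagates to an equality of the full $(H\times\Lbold_g(\Z_S))$-orbits in $\G(\R\times\Z_S)/\G(\Z)$. Suppose $Ht_g\gamma\delta=Ht_g\gamma'\delta'$ for $\delta,\delta'\in\G(\Z)$; then $w\df t_g\gamma\delta\delta'^{-1}\gamma'^{-1}t_g^{-1}\in H$, so $\gamma\delta\delta'^{-1}\gamma'^{-1}=t_g^{-1}wt_g\in\mathbf{L}_g(\R)$. Since $\gamma,\gamma',\delta,\delta'\in\G(\ZS)$, this element lies in $\mathbf{L}_g(\R)\cap\G(\ZS)=\mathbf{L}_g(\ZS)$ (as $\mathbf{L}_g$ is a $\Q$-group, its $\ZS$-points are exactly the $\ZS$-integral points of $\mathbf{L}_g(\R)$). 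Now recall $h=c\gamma^{-1}$, $h'=c'\gamma'^{-1}$ with $c,c'\in\G(\Z_S)$; then in $\mathbf{L}_g(\Q_S)$ we compute the double coset of $h$ relative to $h'$. Writing $\ell\df\gamma\delta\delta'^{-1}\gamma'^{-1}\in\mathbf{L}_g(\ZS)$, one checks that $h=c\gamma^{-1}$ and $h'=c'\gamma'^{-1}$ lie in the same double coset $\mathbf{L}_g(\Z_S)\,\cdot\,\mathbf{L}_g(\ZS)$: indeed $h' {}^{-1} h = \gamma' c' {}^{-1} c \gamma^{-1}$, and modulo the $\Z_S$-points on the left (absorbing $c',c$) and the $\ZS$-points on the right (absorbing $\gamma,\gamma'$ up to the factor $\ell\in\mathbf{L}_g(\ZS)$) this collapses, so $h$ and $h'$ represent the same class in $\mathbf{L}_g(\Z_S)\backslash\mathbf{L}_g(\Q_S)/\mathbf{L}_g(\ZS)$. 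Since $M$ (and hence $M_0$) is chosen to be a set of \emph{distinct} double-coset representatives, this forces $h=h'$, contradicting the hypothesis.

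I would present this cleanly by invoking the disjointness already recorded in \eqref{eq:decomposition _v,p}: the orbits $O_{g,S,h}$, $h\in M$, are pairwise disjoint, and under the duality correspondence of Section \ref{subsec:duality principle} (Diagram \eqref{eq:duality diagram}) the real-place projection $O_{g,S,h}\mapsto Ht_g\gamma\G(\Z)$ is compatible with the fibration $\G(\R\times\Z_S)/\G(\Z)\to$ (real component). The content of the lemma is exactly that this projection remains injective on the finite index set $M_0$ — equivalently, that the $S$-adic coordinate is determined (up to $\mathbf{L}_g(\Z_S)$ and $\G(\Z)$) by the real coordinate, which is what the computation above establishes via $\mathbf{L}_g(\R)\cap\G(\ZS)=\mathbf{L}_g(\ZS)$.

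\textbf{Main obstacle.} The delicate point is the bookkeeping in the double-coset identity: one must be careful that the element $\ell=\gamma\delta\delta'^{-1}\gamma'^{-1}$ lands in $\mathbf{L}_g(\ZS)$ and not merely in $\mathbf{L}_g(\Q_S)$, and that it can legitimately be absorbed into the right $\mathbf{L}_g(\ZS)$-factor of the double coset. This hinges on the fact that $\mathbf{L}_g$ is defined over $\Q$ and that $\mathbf{L}_g(\R)\cap\SL_d(\ZS)$ (via the explicit description $\{(w,g^{-1}\o(w)g):w\in\mathbf{H}_{\ovec(g)}\}$ from Lemma \ref{lem:stabilizer lemma}) equals $\mathbf{L}_g(\ZS)$ — i.e., that being $\ZS$-integral as a matrix pair is the same as being a $\ZS$-point of the group scheme. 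Everything else is a routine manipulation of cosets, so I expect the write-up to be short once this integrality point is stated carefully.
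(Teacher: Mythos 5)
Your proposal is correct and follows essentially the same route as the paper: assume the real cosets meet, deduce that $\ell=\gamma\delta\delta'^{-1}\gamma'^{-1}\in\mathbf{L}_{g}(\R)\cap\G(\ZS)=\mathbf{L}_{g}(\ZS)$, and conclude that $h$ and $h'$ lie in the same class of $\mathbf{L}_{g}(\Z_{S})\backslash\mathbf{L}_{g}(\Q_{S})/\mathbf{L}_{g}(\ZS)$, contradicting the choice of representatives. The one point to make explicit in the write-up (slightly glossed in your ``absorbing $c',c$'' phrasing, since $c,c'$ need not individually lie in $\mathbf{L}_{g}$) is that the combined $\Z_{S}$-side factor, e.g. $c'\delta'\delta^{-1}c^{-1}=h'\ell^{-1}h^{-1}$, lies in $\mathbf{L}_{g}(\Q_{S})\cap\G(\Z_{S})=\mathbf{L}_{g}(\Z_{S})$ — the same $\Q$-group intersection argument you already use for $\ell$ on the $\ZS$ side, and exactly what the paper does with its element $cuc'^{-1}$.
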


\begin{proof}
Assume for contradiction that $Ht_{g}\gamma\G(\Z)\cap Ht_{g}\gamma'\G(\Z)\neq\emptyset$.
Then there exists $\kappa\in H$ and $u\in\G(\Z)$ such that 
\begin{equation}
t_{g}^{-1}\kappa t_{g}\gamma u=\gamma'.\label{eq:=00005Ctheta_g ^-1=00005Ckappa=00005Ctheta_g is in G(=00005CZS)}
\end{equation}
This gives that 
\begin{equation}
\left(t_{g}^{-1}\kappa t_{g}\right)h^{-1}\left(cuc'^{-1}\right)=h'^{-1},\label{eq:equivalence of h and h'}
\end{equation}
where $c,c'\in\G(\Z_{S})$ appear in the decomposition \eqref{eq:decomposition of h}
of $h,h'$ correspondingly. By the definition of $t_{g}$ and by \eqref{eq:=00005Ctheta_g ^-1=00005Ckappa=00005Ctheta_g is in G(=00005CZS)}
we conclude that $t_{g}^{-1}\kappa t_{g}\in\Lbold_{g}(\R)\cap\G(\ZS)=\mathbf{L}_{g}(\ZS)$,
and by \eqref{eq:equivalence of h and h'} we get $\left(cuc'^{-1}\right)\in\Lbold_{g}(\Q_{S})\cap\G(\Z_{S})=\mathbf{L}_{g}(\Z_{S})$.
Hence \eqref{eq:equivalence of h and h'} shows that $h$ and $h'$
are equivalent, which is a contradiction since $h,h'$ are representatives
for two different cosets in the space $\mathbf{L}_{g}(\Z_{S})\backslash\mathbf{L}_{g}(\Q_{S})/\mathbf{L}_{g}(\ZS)$.
\end{proof}
From Lemma \ref{lem:the union of O_g,S,h in the variety}, we obtain
the following corollary, which is the main conclusion of our discussion
in this section.
\begin{cor}
\label{cor:main correspondence of E_g and O_g,S} It holds that $\bigsqcup_{h\in M_{0}}\mathcal{Q}_{g,q,h}=\{(\pi_{\V}(x),\vartheta_{q}(x))\mid x\in E_{g}\}$,
and that $M_{0}=\left|E_{g}/\G(\Z)\right|$.
\end{cor}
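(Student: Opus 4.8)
The plan is to deduce Corollary \ref{cor:main correspondence of E_g and O_g,S} directly from Lemma \ref{lem:the union of O_g,S,h in the variety} together with the behaviour of the map $\left(id_\infty\times\red_q\right)$ on the orbits $\mathcal{Q}_{g,S,h}$. First I would recall that Lemma \ref{lem:the union of O_g,S,h in the variety} already gives $\bigsqcup_{h\in M_0}\mathcal{Q}_{g,S,h}=\{(\pi_\V(x),x)\mid x\in E_g\}$ inside $\V_{Q(\mathbf{e}_d)}(\R)\times\V_{Q(\ovec(g))}(\Z_S)$, together with $|M_0|=\left|E_g/\G(\Z)\right|$. Applying the (set-theoretic) map $\left(id_\infty\times\red_q\right)$ to both sides, the right-hand side becomes $\{(\pi_\V(x),\vartheta_q(x))\mid x\in E_g\}$, since $E_g\subseteq\V_{Q(\ovec(g))}(\Z)\subseteq\V_{Q(\ovec(g))}(\Z_S)$ and on the integral points the composite $\red_q$ restricted from $\Z_S$ agrees with $\vartheta_q$ on $\Z$; and by definition \eqref{eq:O_g,q,h in the variety} the image of $\mathcal{Q}_{g,S,h}$ is exactly $\mathcal{Q}_{g,q,h}$. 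This already yields the equality of sets $\bigcup_{h\in M_0}\mathcal{Q}_{g,q,h}=\{(\pi_\V(x),\vartheta_q(x))\mid x\in E_g\}$; the cardinality statement $M_0=\left|E_g/\G(\Z)\right|$ is carried over verbatim from Lemma \ref{lem:the union of O_g,S,h in the variety}, as it does not involve the mod-$q$ reduction at all.

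The remaining point, and the one worth spelling out, is that the union $\bigcup_{h\in M_0}\mathcal{Q}_{g,q,h}$ is in fact \emph{disjoint}, so that it is genuinely a $\bigsqcup$. Here I would argue as in Lemma \ref{lem:the union of O_g,S,h in the variety}: each $\mathcal{Q}_{g,q,h}$ is a single right $\G_{(q)}(\Z)$-orbit, and it is the $\left(id_\infty\times\red_q\right)$-image of the $\G(\Z)$-orbit $\mathcal{Q}_{g,S,h}$, whose real coordinate is the $\G(\Z)$-orbit $I_d\cdot(t_g\gamma)\cdot\G(\Z)$ — equivalently the coset $Ht_g\gamma\G(\Z)$ after identifying $\V_{Q(\mathbf{e}_d)}(\R)\cong H\backslash(\SO_Q\times\ASL_{d-1})(\R)$. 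Since $\left(id_\infty\times\red_q\right)$ leaves the real (archimedean) coordinate untouched, two orbits $\mathcal{Q}_{g,q,h}$ and $\mathcal{Q}_{g,q,h'}$ can intersect only if their real coordinates already intersect, i.e.\ only if $Ht_g\gamma\G(\Z)\cap Ht_g\gamma'\G(\Z)\neq\emptyset$. But Lemma \ref{lem:the projection of orbit to real place} rules this out for distinct $h,h'\in M_0$. Hence the orbits $\mathcal{Q}_{g,q,h}$, $h\in M_0$, are pairwise disjoint, and we may write the union as a disjoint union, completing the proof.

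I do not anticipate a serious obstacle here: the corollary is essentially a formal consequence of the work already done in Lemmas \ref{lem:the union of O_g,S,h in the variety} and \ref{lem:the projection of orbit to real place}. The one mild subtlety to be careful about is that $\left(id_\infty\times\red_q\right)$ is applied to subsets of a quotient space and one must check it is well defined on those quotients — but this is guaranteed by the commuting diagram just above \eqref{eq:O_g,q,h}, in which $\G(\ZS)$-equivalence descends to $\G_{(q)}(\Z)$-equivalence and $\left(H\times\Lbold_g(\Z_S)\right)$-equivalence descends to $\left(H\times\Lbold_{\vartheta_q(g)}(\Z/(q))\right)$-equivalence (using $\vartheta_q(\mathbf{L}_g(\Z_S))\subseteq\mathbf{L}_{\vartheta_q(g)}(\Z/(q))$, noted in the text). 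With these identifications in place the argument is just: apply the map to the identity of Lemma \ref{lem:the union of O_g,S,h in the variety}, and restore disjointness via Lemma \ref{lem:the projection of orbit to real place}.
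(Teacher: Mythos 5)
Your argument is correct and is essentially the paper's own proof: the paper likewise applies $\left(id_{\infty}\times\red_{q}\right)$ to the identity of Lemma \ref{lem:the union of O_g,S,h in the variety} and invokes Lemma \ref{lem:the projection of orbit to real place} (via the untouched real coordinate) to see that the union of the $\mathcal{Q}_{g,q,h}$ remains disjoint, with the cardinality statement inherited directly. Your extra remarks on well-definedness and on why disjointness reduces to the real place only make explicit what the paper leaves implicit.
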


\begin{proof}
By Lemma \ref{lem:the projection of orbit to real place} it follows
that $\bigsqcup_{h\in M_{0}}\mathcal{Q}_{g,q,h}$ is indeed a disjoint
union, and by using Lemma \ref{lem:the union of O_g,S,h in the variety}
we obtain that
\[
\begin{aligned}\bigsqcup_{h\in M_{0}}\mathcal{Q}_{g,q,h}= & \left(id_{\infty}\times\red_{q}\right)\left(\bigsqcup_{h\in M_{0}}\mathcal{Q}_{g,S,h}\right)\\
= & \left(id_{\infty}\times\red_{q}\right)\left(\{(\pi_{\V}(x),x)\mid x\in E_{g}\}\right)\\
= & \{(\pi_{\V}(x),\vartheta_{q}(x))\mid x\in E_{g}\}.
\end{aligned}
\]
\end{proof}

\section{\label{sec:Statistics-of-the-equivelence-classes}Statistics of the
equivalence classes $E_{g}$}

We are now ready to study the statistics of $E_{g}$ as $Q(\ovec(g))\to\infty$
by using the limiting distribution of the orbits $O_{g,S}$ (Theorem
\ref{thm:AESgrids thm}), and by exploiting the connection between
the equivalence classes $E_{g}$ and the orbits $O_{g,S}$ (Corollary
\ref{cor:main correspondence of E_g and O_g,S}).

We now list the assumptions that will hold throughout this section,
which will allow us to employ Theorem \ref{thm:AESgrids thm}.
\begin{itemize}
\item $Q$ is a form as in our \nameref{subsec:Standing-Assumption} and
$q\in2\N+1$ is such that $Q$ is non-singular modulo $q$.
\item $\left\{ g_{n}\right\} _{n=1}^{\infty}\subseteq\SL_{d}(\Z)$ satisfy
that $Q(\ovec(g_{n}))\to\infty$ and for all $n\in\N$
\begin{itemize}
\item $Q(\ovec(g_{n}))>0$
\item there is a prime $p_{0}$ for which $\ovec(g_{n})$ is $(Q,p_{0})$
co-isotropic (see Definition \ref{def:Isotropicity definition}),
\item The reduction mod $q$ is fixed in $n$, namely $\red_{q}(g_{n})=\bar{g}$,
for all $n\in\N$.
\end{itemize}
\item $S_{q}$ denotes the set of primes decomposing $q$ and $S\df S_{q}\cup\{p_{0}\}$.
\end{itemize}
By Lemma \ref{lem:properties of a form non-singular modulo q}\eqref{enu:-is-isotropic for all p=00005Cin S_q},
we deduce that the assumptions of Theorem \ref{thm:AESgrids thm}
indeed hold for $S$ and the sequence $\left\{ g_{n}\right\} _{n=1}^{\infty}$.

We denote $a\df Q(\ovec(\bar{g}))\in\Z/(q)$ and consider the measures
on $\V_{Q(\mathbf{e}_{d})}(\R)\times\V_{a}(\Z/(q))$ given by 
\[
\nu_{g_{n}}^{q}\df\frac{1}{\left|E_{g_{n}}/\G(\Z)\right|}\sum_{x\in E_{g_{n}}}\delta_{\left(\pi_{\V}(x),\vartheta_{q}(x)\right)},\ n\in\N.
\]

Our main goal in this section is to prove the following theorem.
\begin{thm}
\label{thm:refinement of the main theorem-1} Consider $O_{\bar{g}}\subseteq\V_{a}(\Z/(q))$
defined by 
\[
O_{\bar{g}}\df\bar{g}\cdot\G(\Z/(q)),
\]
 and let $\mu_{O_{\bar{g}}}$ be the normalized counting measure on
$O_{\bar{g}}$. Then for all $f\in C_{c}(\V_{Q(\mathbf{e}_{d})}(\R)\times\V_{a}(\Z/(q)))$
it holds that 
\[
\lim_{n\to\infty}\nu_{g_{n}}^{q}(f)=\mu_{\V}\otimes\mu_{O_{\bar{g}}}(f).
\]
\end{thm}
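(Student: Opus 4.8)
\textbf{Proof plan for Theorem \ref{thm:refinement of the main theorem-1}.}

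The plan is to translate the counting problem on the equivalence class $E_{g_{n}}$ into an equidistribution statement for the $S$-arithmetic orbit $O_{g_{n},S}$, to which Theorem \ref{thm:AESgrids thm} applies directly. First I would invoke Corollary \ref{cor:main correspondence of E_g and O_g,S}, which gives $\bigsqcup_{h\in M_{0}(g_{n})}\mathcal{Q}_{g_{n},q,h}=\{(\pi_{\V}(x),\vartheta_{q}(x))\mid x\in E_{g_{n}}\}$ with $|M_{0}(g_{n})|=|E_{g_{n}}/\G(\Z)|$. Thus $\nu_{g_{n}}^{q}$ is, up to the normalization $1/|M_{0}(g_{n})|$, the sum over $h\in M_{0}(g_{n})$ of the counting measures on the $\G_{(q)}(\Z)$-orbits $\mathcal{Q}_{g_{n},q,h}$. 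Via the ``duality'' diagram of Section \ref{subsec:duality principle}, each $\mathcal{Q}_{g_{n},S,h}$ corresponds to the left $(H\times\Lbold_{g_{n}}(\Z_{S}))$-orbit $O_{g_{n},S,h}$ inside $\U_{S}=\G(\R\times\Z_{S})\G(\ZS)$, and $\bigsqcup_{h\in M_{0}}O_{g_{n},S,h}=O_{g_{n},S}\cap\U_{S}$ by \eqref{eq:decomposition of O_v,p cap U}. So the key point is to unfold the push-forward of a $C_{c}$ test function under $\nu_{g_{n}}^{q}$ as an integral of a fixed (in $n$) function against $\mu_{g_{n},S}$ restricted and renormalized to $\U_{S}$, then take $n\to\infty$ using $\mu_{g_{n},S}\to\mu_{\Y_{S}}$.

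More concretely, I would proceed as follows. Fix $f\in C_{c}(\V_{Q(\mathbf{e}_{d})}(\R)\times\V_{a}(\Z/(q)))$. Using the right $\G(\R\times\Z_{S})$-action on $\V_{Q(\mathbf{e}_{d})}(\R)\times\V_{Q(\ovec(g_{n}))}(\Z_{S})$ and the identification of $\U_{S}$ with $\G(\R\times\Z_{S})/\G(\Z)$, define a function $\tilde{F}$ on $\U_{S}$ by evaluating $f$ at the pair obtained by acting on the base point $(I_{d},g_{n})$ — but this base point depends on $n$ only through $g_{n}$, whose reduction mod $q$ is the fixed $\bar g$, and the real coordinate is always $I_{d}$; hence after reducing the $\Z_{S}$-coordinate mod $q$ the relevant function depends only on $\bar g$, not on $n$. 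Then $\nu_{g_{n}}^{q}(f)=\frac{1}{|M_{0}|}\sum_{h\in M_0}(\text{counting measure on }\mathcal{Q}_{g_{n},q,h})(f)$ equals the integral of $\tilde F$ against the probability measure which is $\mu_{g_{n},S}$ conditioned on $\U_{S}$ (this uses that $O_{g_{n},S,h}\subseteq\U_{S}$ or is disjoint from it, and that $\mu_{\mathbf{L}_{g_{n}}(\R\times\Q_{S})\G(\ZS)}$ restricted to $\U_{S}$ is, after normalization, the average of the $(H\times\Lbold_{g_{n}}(\Z_{S}))$-invariant measures on the pieces $O_{g_{n},S,h}$, weighted equally since $\Lbold_{g_{n}}$ is a $\Q$-group and the double-coset pieces in \eqref{eq:decomposition of SO_V_perp} all have equal $\mathbf{L}_{g_{n}}(\Z_S)$-volume). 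By Theorem \ref{thm:AESgrids thm}, $\mu_{g_{n},S}\to\mu_{\Y_{S}}$; since $\U_{S}$ is clopen, $\mu_{g_{n},S}(\U_{S})\to\mu_{\Y_{S}}(\U_{S})>0$ and the conditioned measures converge to $\mu_{\Y_{S}}$ conditioned on $\U_{S}$, i.e. to the $\G(\R\times\Z_{S})$-invariant probability measure on $\U_{S}$. Evaluating $\tilde F$ against this limit and unwinding the duality gives $\mu_{\V}\otimes\mu_{O_{\bar g}}(f)$: the $\G(\R\times\Z_{S})$-invariance forces the real coordinate to equidistribute according to $\mu_{\V}$ (this is exactly the content of the measure $\mu_{\V}$ being $(\SO_{Q}\times\ASL_{d-1})(\R)$-invariant, cf. \eqref{eq:def of mu_Z}) and the mod-$q$ coordinate to be uniform on the $\G(\Z/(q))$-orbit $O_{\bar g}$.

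Two technical points require care and constitute the main obstacles. The first is the bookkeeping of normalizations: one must check that the normalized counting measure $\frac{1}{|M_0|}\sum_h$ on the orbits $\mathcal{Q}_{g_{n},q,h}$ matches, under the duality correspondence and after conditioning on $\U_{S}$, the measure $\mu_{g_{n},S}|_{\U_{S}}$ renormalized to a probability measure — this uses the Weil normalization conventions of Section \ref{subsec:Measures-as measures on fibre bundles} together with the fact that $\mu_{\mathbf{L}_{g_{n}}(\R\times\Q_{S})\G(\ZS)}$ decomposes evenly over the pieces $O_{g_{n},S,h}$. The second, and the genuinely delicate step, is to pass from the $(H\times\Lbold_{g_n}(\Z_S))$-invariant structure back to a measure on $\V_{Q(\mathbf e_d)}(\R)\times\V_a(\Z/(q))$ independently of $n$: the stabilizer $\Lbold_{g_n}$ varies with $n$, but because its real part $H$ is a \emph{fixed} compact group (Corollary \ref{cor:Special alligned base points}) and the $\Z_S$-part only enters through the orbit $O_{g_n,S}\cap\U_S$ whose image mod $q$ is controlled by $\bar g$, the function $\tilde F$ and the conditioned limit measure are genuinely $n$-independent, so the limit exists and has the stated form. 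Once these normalization and uniformity bookkeeping issues are settled, the convergence $\nu_{g_n}^q(f)\to\mu_{\V}\otimes\mu_{O_{\bar g}}(f)$ follows immediately from Theorem \ref{thm:AESgrids thm} applied on $\U_S$.
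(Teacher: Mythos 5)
Your overall strategy coincides with the paper's: combine the duality of Section \ref{subsec:duality principle} (Corollary \ref{cor:main correspondence of E_g and O_g,S}) with the restriction of $\mu_{g_{n},S}$ to the clopen set $\U_{S}$ and the equidistribution of Theorem \ref{thm:AESgrids thm}. The genuine gap is the step where you identify $\nu_{g_{n}}^{q}(f)$ with the integral of a fixed function against $\mu_{g_{n},S}$ conditioned on $\U_{S}$, on the grounds that the pieces $O_{g_{n},S,h}$, $h\in M_{0}$, carry equal mass because ``the double-coset pieces in \eqref{eq:decomposition of SO_V_perp} all have equal $\Lbold_{g_{n}}(\Z_{S})$-volume''. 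That is not the relevant quantity, and the asserted equality of weights is false in general: the mass that $\mu_{g_{n},S}\mid_{\U_{S}}$ assigns to $O_{g_{n},S,h}=(H\times\Lbold_{g_{n}}(\Z_{S}))(t_{g_{n}}\gamma,c)\G(\Z)$ is inversely proportional to the order of the finite stabilizer $(H\times\Lbold_{g_{n}}(\Z_{S}))\cap(t_{g_{n}}\gamma,c)\G(\Z)(t_{g_{n}}\gamma,c)^{-1}$ (this is exactly \eqref{eq:measure of eta_g,S(O_g,S,h)}), and this order varies with $h$ and $n$. For the same reason your ``unwinding the duality'' from orbit integrals to the plain point count over $E_{g_{n}}$ is not exact: a $\G(\Z)$-orbit in $E_{g_{n}}$ whose corresponding double coset has non-trivial stabilizer is weighted incorrectly. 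So the claimed identity between $\nu_{g_{n}}^{q}(f)$ and the conditioned-orbit integral does not hold on the nose, and only an asymptotic version can be true; your proposal gives no mechanism to establish it.

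What is missing (and what occupies most of Section \ref{sec:Statistics-of-the-equivelence-classes}) is a proof that these defects are asymptotically negligible. Concretely: Lemma \ref{lem:equal weights} shows the weights $\alpha_{h}^{(n)}$ all equal the maximal one except possibly at cosets lying in the set $\mathcal{E}$ of \eqref{eq:the set E} (points where $\mathbf{H}_{\ovec(I_{d}\cdot g_{\infty})}(\R)$ meets $\G_{1}(\Z)$ non-trivially), and are uniformly comparable everywhere (boundedness of finite subgroups of $\GL_{d}(\Z)$); Lemma \ref{lem:exeptional set of weights is rare} shows $\mathcal{E}$ is closed of measure zero, being essentially the singular set of the orbifold $\mathbf{H}_{\mathbf{e}_{d}}(\R)\backslash\G_{1}(\R)/\G_{1}(\Z)$, so by Corollary \ref{cor:s-arithemetic measures converge to haar-1} the proportion of $\mathcal{R}_{g_{n},q}$ inside $\mathcal{E}$ tends to zero; and since the set $\mathcal{F}$ of non-trivially $\Gamma$-stabilized cosets satisfies $\mathcal{F}\subseteq\mathcal{E}$ (Lemma \ref{lem:the set E  contains set F are same}), the unfolding Lemma \ref{lem:unwinding with negliglbe non-triv stab-1} converts equidistribution of the \emph{uniform} counting measures on $\mathcal{R}_{g_{n},q}$ in $K\backslash G/\Gamma$ into the statement for the uniform count on $E_{g_{n}}$, i.e.\ for $\nu_{g_{n}}^{q}$. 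Without an argument of this type, showing that non-trivial stabilizers occur with vanishing frequency along the sequence, your normalization bookkeeping cannot be completed and the plan does not close.
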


\subsection{Outline of proof for Theorem \ref{thm:refinement of the main theorem-1}}

We now outline the method we will use in the proof of Theorem \ref{thm:refinement of the main theorem-1},
building on Theorem \ref{thm:AESgrids thm} and the link between the
equivalence classes $E_{g_{n}}$ and the orbits $O_{g_{n},S}$.

We denote
\begin{equation}
G\df\G(\R\times\Z/(q)),\ K\df(H\times\Lbold_{\bar{g}}(\Z/(q))),\ \Gamma=\G_{(q)}(\Z),\label{eq:notations of K G =00005CGamma}
\end{equation}
and we consider the following diagram of natural maps 
\[
\xymatrix{K\backslash G\ar[dr]^{\pi_{\Gamma}} &  & G/\Gamma\ar[dl]_{\pi_{K}}\\
 & K\backslash G/\Gamma
}
\]
where $\pi_{K}$ and $\pi_{\Gamma}$ denote the natural quotient map.

We recall that $\bigsqcup_{h\in M_{0}}O_{g,q,h}$ is a disjoint union
of finitely many $(H\times\vartheta_{q}(\Lbold_{g_{n}}(\Z_{S})))$-orbits
and we recall that $(H\times\vartheta_{q}(\Lbold_{g_{n}}(\Z_{S})))\subseteq K$.
Hence $\mathcal{R}_{g_{n},q}\subseteq K\backslash G/\Gamma$ defined
by

\begin{equation}
\mathcal{R}_{g_{n},q}\df\pi_{K}(\bigsqcup_{h\in M_{0}}O_{g,q,h}),\label{eq:def_of_R_g,q}
\end{equation}
is a finite set, and by Lemma \ref{lem:the projection of orbit to real place}
we obtain that $\left|\mathcal{R}_{g_{n},q}\right|=|M_{0}|=\left|E_{g_{n}}/\G(\Z)\right|$.
In Section \ref{subsec:Equidistribution-in K=00005CG/=00005CGaam},
we will prove, by relying on Theorem \ref{thm:AESgrids thm}, that
the uniform probability counting measures $\lambda_{g_{n},q}$ on
$\mathcal{R}_{g_{n},q}$ equidistribute towards the natural probability
measure $\mu_{K\backslash G/\Gamma}$ on $K\backslash G/\Gamma$.

To deduce the limit of our counting measures that are supported on
$\{(\pi_{\V}(x),\vartheta_{q}(x))\mid x\in E_{g_{n}}\}$ by the equidistribution
of $\{\lambda_{g_{n},q}\}_{n=1}^{\infty}$ we observe that $\mathcal{R}_{g_{n},q}$
can be also described by
\begin{equation}
\mathcal{R}_{g_{n},q}\df\pi_{\Gamma}(\bigsqcup_{h\in M_{0}}\mathcal{Q}_{g,q,h})=\pi_{\Gamma}(\{(\pi_{\V}(x),\vartheta_{q}(x))\mid x\in E_{g}\}),\label{eq:R_g,q as the set of =00005CGammaorbits}
\end{equation}
and we use ``unfolding'' technique (similarly to Section \ref{subsec:Unwinding})
to lift the measure $\lambda_{g_{n},q}$ for $n\in\N$ to the counting
measure on $K\backslash G$ supported on $\{(\pi_{\V}(x),\vartheta_{q}(x))\mid x\in E_{g}\}$.

\subsubsection{Unfolding}

We now discuss the ``unfolding'' process mentioned above which lifts
an equidistribution result in $K\backslash G/\Gamma$ to an equidistribution
result in $K\backslash G$.

Let $m_{G}$, $m_{G/\Gamma}$, be $G$-invariant measures on $G,\ G/\Gamma$
respectively, such that $m_{G/\Gamma}$ is a probability measure and
all the measures are Weil normalized (a notion introduced in Section
\ref{subsec:Measures-as measures on fibre bundles}), namely such
that for all $\ef\in C_{c}(G)$
\begin{equation}
\int_{G}\ef(g)dm_{G}(g)=\int_{G/\Gamma}\left(\sum_{\gamma\in\Gamma}\ef(g\gamma)\right)dm_{G/\Gamma}(g\Gamma).\label{eq:unwinding of a measure on G-1}
\end{equation}

We define a measure on $K\backslash G$ by $\mu_{K\backslash G}\df\left(\pi_{K}\right)_{*}m_{G},$
and a measure on $K\backslash G/\Gamma$ by $\mu_{K\backslash G/\Gamma}\df\left(\pi_{K}\right)_{*}m_{G/\Gamma}$
(which is well defined, since we assume that $K$ is compact).

Assume that $S_{n}\subseteq K\backslash G/\Gamma$ is a finite set,
and consider the measures $\bar{\nu}_{n}$ supported on $K\backslash G$
defined by 
\[
\bar{\nu}_{n}\df\frac{1}{|S_{n}|}\sum_{x\in\left(\pi_{\Gamma}\right)^{-1}(S_{n})}\delta_{x}.
\]
Let
\begin{equation}
\begin{aligned}\mathcal{F}\df & \left\{ Kg\Gamma\mid\left|\text{Stab}_{\Gamma}(Kg)\right|>1\right\} \\
= & \left\{ Kg\Gamma\mid\left|g^{-1}Kg\cap\Gamma\right|>1\right\} .
\end{aligned}
\label{eq:definintio of set of points that are non-triv stab by Gam-1}
\end{equation}

\begin{lem}
\label{lem:unwinding with negliglbe non-triv stab-1} Assume that
$S_{n}\subseteq K\backslash G/\Gamma$, $n\in\N$, are finite sets
such that the probability counting measures supported on $S_{n}$
converge weakly to $\mu_{K\backslash G/\Gamma}$, and that
\begin{equation}
\frac{|\mathcal{F}\cap S_{n}|}{\left|S_{n}\right|}\to0.\label{eq:negligability of non-trivialy stabilized points-1}
\end{equation}
Then for every $f\in C_{c}(K\backslash G)$, it holds that $\bar{\nu}_{n}(f)\to\mu_{K\backslash G}(f)$.
\end{lem}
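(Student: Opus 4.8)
\textbf{Proof plan for Lemma \ref{lem:unwinding with negliglbe non-triv stab-1}.}

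The plan is to reduce the statement to a computation with the Weil-normalized measures via the standard unfolding identity \eqref{eq:unwinding of a measure on G-1}, paying attention only to the points with nontrivial stabilizer, which by \eqref{eq:negligability of non-trivialy stabilized points-1} form a negligible fraction. First I would fix $f\in C_{c}(K\backslash G)$ and write $f=f\circ\pi_{K}$ abusively, so that the pullback $\tilde f\df f\circ\pi_{K}$ lies in $C_{c}(G)$ (here compactness of $K$ is used, exactly as in Lemma \ref{lem:star bar map is onto}). Define $\bar f\in C_{c}(K\backslash G/\Gamma)$ by $\bar f(Kg\Gamma)\df\sum_{\gamma\in\Gamma}f(Kg\gamma)$; the sum is finite and locally uniformly bounded by Lemma \ref{lem:uniform bound intersection with cpct set} applied to $\operatorname{supp}(f)$, so $\bar f$ is indeed continuous and compactly supported. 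Since the probability counting measures $\mu_{S_{n}}$ on $S_{n}$ converge weakly to $\mu_{K\backslash G/\Gamma}$, we get
\[
\frac{1}{|S_{n}|}\sum_{Kg\Gamma\in S_{n}}\bar f(Kg\Gamma)\xrightarrow[n\to\infty]{}\int_{K\backslash G/\Gamma}\bar f\,d\mu_{K\backslash G/\Gamma}=\int_{K\backslash G}f\,d\mu_{K\backslash G},
\]
where the last equality is the unfolding identity \eqref{eq:unwinding of a measure on G-1} pushed down by $\pi_{K}$ (that $\pi_K$ intertwines the two unfoldings is where Weil normalization of all four measures enters).

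Next I would compare the left-hand side with $\bar\nu_{n}(f)$. By definition $\bar\nu_{n}(f)=\frac{1}{|S_{n}|}\sum_{x\in(\pi_{\Gamma})^{-1}(S_{n})}f(x)$, and for a fixed coset $Kg\Gamma\in S_{n}$ the fiber $(\pi_{\Gamma})^{-1}(Kg\Gamma)\cap(\pi_{\Gamma})^{-1}(S_n)$ consists of the points $Kg\gamma$ with $\gamma$ ranging over $\Gamma$ modulo $\operatorname{Stab}_{\Gamma}(Kg)=g^{-1}Kg\cap\Gamma$. Hence
\[
\sum_{x\in(\pi_{\Gamma})^{-1}(Kg\Gamma)}f(x)=\frac{1}{|\operatorname{Stab}_{\Gamma}(Kg)|}\sum_{\gamma\in\Gamma}f(Kg\gamma)=\frac{\bar f(Kg\Gamma)}{|\operatorname{Stab}_{\Gamma}(Kg)|},
\]
so that $\bar\nu_{n}(f)$ differs from $\frac{1}{|S_n|}\sum_{Kg\Gamma\in S_n}\bar f(Kg\Gamma)$ only through the cosets in $\mathcal{F}$ (where the stabilizer exceeds $1$). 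On those cosets both expressions are bounded by $\|\bar f\|_{\infty}$ in absolute value, and there are at most $|\mathcal{F}\cap S_n|$ of them, so the difference is bounded by $2\|\bar f\|_{\infty}\,|\mathcal{F}\cap S_n|/|S_n|$, which tends to $0$ by \eqref{eq:negligability of non-trivialy stabilized points-1}. Combining with the displayed limit gives $\bar\nu_{n}(f)\to\mu_{K\backslash G}(f)$.

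The one genuinely delicate point, rather than the main obstacle, is bookkeeping the normalizations: one must check that $(\pi_K)_*m_{G/\Gamma}=\mu_{K\backslash G/\Gamma}$ and $(\pi_K)_*m_G=\mu_{K\backslash G}$ are compatible in the sense that the identity $\int_{K\backslash G}f\,d\mu_{K\backslash G}=\int_{K\backslash G/\Gamma}\bar f\,d\mu_{K\backslash G/\Gamma}$ holds on the nose (not up to a constant). This follows by integrating \eqref{eq:unwinding of a measure on G-1} against the $K$-averaged pullback of $f$ and using left $K$-invariance of $m_G$ and $m_{G/\Gamma}$, together with the fact that $\Gamma$ acts on the right and $K$ on the left so the two quotients commute; no real obstacle, but it is the step where an incorrect normalization would silently break the argument. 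Everything else is the routine unfolding/weak-convergence manipulation together with the stated negligibility hypothesis.
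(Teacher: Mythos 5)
Your argument is correct and is essentially the paper's own proof, which is carried out in the appendix (Section \ref{sec:Unfolding}) in the slightly more general setting $K\backslash G/\tilde{\Gamma}$ for a subgroup $\tilde{\Gamma}\leq\Gamma$ and then specialized: the same unfolding $\bar f(Kg\Gamma)=\sum_{\gamma\in\Gamma}f(Kg\gamma)$, the same use of the weak-$*$ convergence on $K\backslash G/\Gamma$ together with the Weil-normalization identity, and the same control of the nontrivially stabilized cosets via \eqref{eq:negligability of non-trivialy stabilized points-1} and the uniform finiteness bound of Lemma \ref{lem:uniform bound intersection with cpct set}. The only cosmetic difference is that you compare $\bar\nu_n(f)$ with the unfolded averages directly through the exact weights $1/\left|\mathrm{Stab}_{\Gamma}(Kg)\right|$, whereas the paper (Corollary \ref{cor:unwinding with negliglbe non-triv stab}) sandwiches $\bar\nu_n(f)$ between two auxiliary measures $\nu_n^{-}$ and $\nu_n$ for nonnegative $f$.
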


The proof of Lemma \ref{lem:unwinding with negliglbe non-triv stab-1}
involves elementary tools, hence we decided to include the complete
details in the appendix.

Our goal in the following section is to verify the assumptions of
Lemma \ref{lem:unwinding with negliglbe non-triv stab-1} for $S_{n}=\mathcal{R}_{g_{n},q}$,
which will prove Theorem \ref{thm:refinement of the main theorem-1}.

\subsection{\label{subsec:Equidistribution-in K=00005CG/=00005CGaam}Equidistribution
in $K\backslash G/\Gamma$}

Let $\tilde{\eta}_{g_{n},S}$ be the measure supported on $O_{g_{n},S}\cap\U_{S}$,
given by
\begin{equation}
\tilde{\eta}_{g_{n},S}\df\mu_{g_{n},S}\mid_{\U_{S}},\label{eq:def_of_eta}
\end{equation}
where $\U_{S}=\G(\R\times\Z_{S})\G(\ZS)\cong\G(\R\times\Z_{S})/\G(\Z$)
and $\mu_{g_{n},S}$ defined in \eqref{eq:=00005Cmu_g,S} is the natural
probability measure supported on $O_{g_{n},S}$. We consider the following
probability measure $\eta_{g_{n},q}$ on $K\backslash G/\Gamma$ supported
on $\mathcal{R}_{g_{n},q}$ (by Corollary \ref{cor:main correspondence of E_g and O_g,S})
defined by
\[
\eta_{g_{n},q}\df(\pi_{K}\circ(id_{\infty}\times\vartheta_{q}))_{*}\tilde{\eta}_{g_{n},S}.
\]

We obtain the following corollary which follows by Theorem \ref{thm:AESgrids thm}.
\begin{cor}
\label{cor:s-arithemetic measures converge to haar-1}It holds that
\begin{equation}
\eta_{g_{n},q}\to\mu_{K\backslash G/\Gamma},\label{eq:equidisitrbution of omega modulo}
\end{equation}
where $\mu_{K\backslash G/\Gamma}$ is the push-forward by the natural
quotient map $\pi_{K}$ of the unique $G$-invariant probability measure
on $G/\Gamma$.
\end{cor}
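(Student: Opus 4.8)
The plan is to deduce Corollary~\ref{cor:s-arithemetic measures converge to haar-1} directly from Theorem~\ref{thm:AESgrids thm} by transporting the weak-$*$ convergence $\mu_{g_n,S}\to\mu_{\Y_S}$ through the chain of maps $\tilde\eta_{g_n,S}=\mu_{g_n,S}\!\mid_{\U_S}$, then $(id_\infty\times\vartheta_q)_*$, then $(\pi_K)_*$, and to identify the resulting limit with $\mu_{K\backslash G/\Gamma}$. First I would record that the restriction of $\mu_{g_n,S}$ to the clopen set $\U_S$ behaves well under weak-$*$ limits: since $\U_S=\G(\R\times\Z_S)\G(\ZS)$ is a clopen (hence continuity) subset of $\Y_S$, and since the limit measure $\mu_{\Y_S}$ gives $\U_S$ positive mass, we have $\mu_{g_n,S}(\U_S)\to\mu_{\Y_S}(\U_S)$ and the normalized restrictions converge: $\tilde\eta_{g_n,S}/\mu_{g_n,S}(\U_S)\to\mu_{\Y_S}\!\mid_{\U_S}/\mu_{\Y_S}(\U_S)$ in the weak-$*$ sense on $\U_S$. (One must check that $\mu_{g_n,S}(\U_S)$ is actually constant in $n$ — this follows because $E_{g_n}\neq\emptyset$ for all $n$, so $O_{g_n,S}\cap\U_S\neq\emptyset$; combined with the genus-theoretic description of $O_{g,S}\cap\U_S$ in Section~\ref{subsec:duality principle} and the fact that the reduction $\red_q(g_n)=\bar g$ is fixed, the mass is pinned down. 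Alternatively one simply works with the un-normalized restrictions throughout and normalizes at the very end.)

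Next I would push forward through $(id_\infty\times\vartheta_q):\U_S\cong\G(\R\times\Z_S)/\G(\Z)\to G/\Gamma=\G(\R\times\Z/(q))/\G_{(q)}(\Z)$. This map is continuous, proper, and surjective with the image of the $\G(\R\times\Z_S)$-invariant measure on $\U_S$ equal to the $G$-invariant probability measure $m_{G/\Gamma}$ on $G/\Gamma$ (this is just the statement that the Haar measure on $\Z_S$ pushes to the counting measure on $\Z/(q)$, applied to each $p$-adic factor and assembled via the Chinese Remainder Theorem). Therefore $(id_\infty\times\vartheta_q)_*\big(\mu_{\Y_S}\!\mid_{\U_S}\big)$ is a constant multiple of $m_{G/\Gamma}$. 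Finally I would apply $(\pi_K)_*$, using that $K=H\times\Lbold_{\bar g}(\Z/(q))$ is compact so $\pi_K$ is proper on $G/\Gamma$ and $(\pi_K)_*m_{G/\Gamma}=\mu_{K\backslash G/\Gamma}$ by definition; composing, $\eta_{g_n,q}=(\pi_K\circ(id_\infty\times\vartheta_q))_*\tilde\eta_{g_n,S}$ converges weakly to a constant multiple of $\mu_{K\backslash G/\Gamma}$, and since both are probability measures on the finite-plus-real space $K\backslash G/\Gamma$ the constant is $1$.

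The two technical points requiring care are: (i) passing weak-$*$ convergence through the restriction to $\U_S$ and through the proper maps — this is routine provided one invokes that $\U_S$ is clopen (so $\mathbf{1}_{\U_S}$ is continuous and bounded) and that $(id_\infty\times\vartheta_q)$ and $\pi_K$ are proper, so that $f\mapsto f\circ(\pi_K\circ(id_\infty\times\vartheta_q))$ maps $C_c(K\backslash G/\Gamma)$ into $C_c(\U_S)$; and (ii) verifying that the total masses match, i.e.\ that $\mu_{\Y_S}(\U_S)$ equals the appropriate normalization — equivalently, that $\tilde\eta_{g_n,S}$ is a probability measure in the limit. I expect the main obstacle to be bookkeeping point (ii): one must confirm that, although $\mu_{g_n,S}$ need not concentrate entirely on $\U_S$ for each $n$, the fraction of its mass on $\U_S$ is exactly the fraction of $\mu_{\Y_S}$'s mass on $\U_S$, which reduces to a genus-count (a principal genus statement) for the forms involved — but under our standing non-singularity-mod-$q$ hypothesis and the $(Q,p_0)$ co-isotropy, Lemma~\ref{lem:properties of a form non-singular modulo q} together with the surjectivity of $\red_{p^k}:\SO_Q(\Z_p)\to\SO_Q(\Z/(p^k))$ forces the local orbits at primes in $S$ to fill $\G(\Z_S)$, so no mass is lost and $\eta_{g_n,q}$ is a genuine probability measure for every $n$, and likewise in the limit. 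With this in hand the identification of the limit with $\mu_{K\backslash G/\Gamma}$ is immediate.
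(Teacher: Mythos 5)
Your proposal follows essentially the same route as the paper's proof: restrict $\mu_{g_n,S}$ to the clopen orbit $\U_S$ so that Theorem \ref{thm:AESgrids thm} gives $\tilde\eta_{g_n,S}\to\mu_{\U_S}$, then push forward by $(id_\infty\times\vartheta_q)$ and $\pi_K$, using Lemma \ref{lem:properties of a form non-singular modulo q}\eqref{enu:The-reduction-map in onto modulo q}, the Chinese remainder theorem and $\vartheta_q(\ASL_{d-1}(\Z_S))=\ASL_{d-1}(\Z/(q))$ to get $\vartheta_q(\G(\Z_S))=\G(\Z/(q))$, whence the invariant measure on $\U_S$ pushes to the invariant measure on $G/\Gamma$ and then to $\mu_{K\backslash G/\Gamma}$. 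The only divergence is your mass bookkeeping: the per-$n$ claim that no mass is lost (i.e.\ $O_{g_n,S}\subseteq\U_S$) is not what Lemma \ref{lem:properties of a form non-singular modulo q} gives — that would be a class-number-type statement for $\mathbf{L}_{g_n}$ — but your fallback (work with the restrictions and normalize at the end, using clopen-ness and non-escape of mass to control the total masses) is exactly what is needed and is how the paper implicitly treats the normalization.
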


\begin{proof}
Since $\U_{S}\subseteq\G(\R\times\Q_{S})/\G(\ZS)$ is a clopen set,
we get by Theorem \ref{thm:AESgrids thm} that
\begin{equation}
\tilde{\eta}_{g_{n},S}\overset{\text{weak *}}{\longrightarrow}\mu_{\U_{S}},\label{eq:weak convergence to haar of eta-1}
\end{equation}
where $\mu_{\U_{S}}$ is the unique $\G(\R\times\Z_{S})$ invariant
probability on 
\[
\U_{S}\cong\G(\R\times\Z_{S})/\G(\Z).
\]
By Lemma \ref{lem:properties of a form non-singular modulo q},\eqref{enu:The-reduction-map in onto modulo q},
by the Chinese remainder theorem, and by noting that $\vartheta_{q}\left(\ASL_{d-1}(\Z_{S})\right)=\ASL_{d-1}(\Z/(q))$,
we conclude that 
\[
\vartheta_{q}\left(\G(\Z_{S})\right)=\G(\Z/(q)).
\]
Hence $(id_{\infty}\times\vartheta_{q}):\G(\R\times\Z_{S})/\G(\Z)\to G/\Gamma$
is onto. It now follows that
\[
\begin{aligned}\eta_{g_{n},q}=(\pi_{K}\circ(id_{\infty}\times\vartheta_{q}))_{*}\tilde{\eta}_{g_{n},S}\to & (\pi_{K})_{*}(id_{\infty}\times\vartheta_{q}){}_{*}\mu_{\U_{S}}\\
= & (\pi_{K})_{*}\mu_{G/\Gamma}\\
= & \mu_{K\backslash G/\Gamma}.
\end{aligned}
\]
\end{proof}

\subsubsection{Weights of the measures $\eta_{g_{n},q}$}

In the following we study the weights of the atoms of the measures
$\eta_{g_{n},q}$ which are supported on the finite sets $\mathcal{R}_{g_{n},q}$.

We express $\eta_{g_{n},q}$ by

\begin{equation}
\eta_{g_{n},q}=\sum_{h\in M_{0}}\alpha_{h}^{(n)}\delta_{\pi_{K}\left(O_{g_{n},q,h}\right)},\label{eq:rh_Dk_rh_infty_eta-1}
\end{equation}
and by recalling \eqref{eq:def_of_eta} and the decomposition \eqref{eq:decomposition of O_v,p cap U}
of $O_{g_{n},S}\cap\U_{S}$, we conclude that
\[
\alpha_{h}^{(n)}=\tilde{\eta}_{g_{n},S}\left((\pi_{K}\circ(id_{\infty}\times\vartheta_{q}))^{-1}\left(O_{g_{n},q,h}\right)\right)\underbrace{=}_{\text{Lemma }\ref{lem:the projection of orbit to real place}}\tilde{\eta}_{g_{n},S}(O_{g_{n},S,h}).
\]
It follows that 
\begin{equation}
\alpha_{h}^{(n)}=\tilde{\eta}_{g_{n},S}(O_{g_{n},S,h})=\frac{\alpha^{(n)}}{\left|\text{stab}_{H\times\mathbf{L}_{g_{n}}(\Z_{S})}\left((t_{g_{n}}\gamma,c)\G(\Z)\right)\right|},\label{eq:measure of eta_g,S(O_g,S,h)}
\end{equation}
where $c\in\G(\Z_{S}),$ $\gamma\in\G(\ZS)$ decompose $h$ as in
\eqref{eq:decomposition of h}, where $\text{stab}_{H\times\mathbf{L}_{g_{n}}(\Z_{S})}\left(x\right)$
for $x\in\G(\R\times\Z_{S})/\G(\Z)$ is the stabilizer of $x$ under
the natural left action of $H\times\mathbf{L}_{g_{n}}(\Z_{S})$, and
where $\alpha^{(n)}\in\R_{>0}$ is a normalizing factor which turns
$\eta_{g_{n},q}$ to a probability measure.
\begin{lem}
\label{lem:eqaulity of stabilizer of H=00005CtimesL_g and interal ortohoganl mat stab a vec}
Let \emph{$g\in\left\{ g_{n}\right\} _{n=1}^{\infty}$}, and let $h\in M_{0}$
be such that $h=c\gamma{}^{-1}$, for $\gamma\in\G\left(\ZS\right)$
and $c\in\G(\Z_{S})$. Then 
\[
\left|\text{stab}_{H\times\mathbf{L}_{g_{n}}(\Z_{S})}\left((t_{g}\gamma,c)\G(\Z)\right)\right|\leq\left|\mathbf{H}_{\ovec\left(I_{d}\cdot(t_{g}\gamma)\right)}(\R)\cap\G_{1}(\Z)\right|.
\]
\end{lem}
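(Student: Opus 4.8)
\textbf{Plan for the proof of Lemma \ref{lem:eqaulity of stabilizer of H=00005CtimesL_g and interal ortohoganl mat stab a vec}.}
The plan is to unwind the definition of the stabilizer and exhibit an injection from it into $\mathbf{H}_{\ovec(I_d\cdot(t_g\gamma))}(\R)\cap\G_1(\Z)$. First I would recall that $g=a_{Q(\ovec(g))}\cdot t_g$, so $g'\df g\cdot\gamma=a_{Q(\ovec(g))}\cdot(t_g\gamma)$ and, by equivariance of $\pi_{\V}$ together with \eqref{eq:def of pi_Z}, we have $\pi_{\V}(g')=I_d\cdot(t_g\gamma)$. An element of $\mathrm{stab}_{H\times\mathbf{L}_{g}(\Z_{S})}\left((t_g\gamma,c)\G(\Z)\right)$ is a pair $(\kappa,\ell)\in H\times\mathbf{L}_{g}(\Z_{S})$ together with $u\in\G(\Z)$ such that $\kappa\cdot(t_g\gamma)\cdot u=t_g\gamma$ in the real coordinate and $\ell\cdot c\cdot u=c$ in the $\Z_S$-coordinate. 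The key observation is that the map $(\kappa,\ell,u)\mapsto u$ (or equivalently $(\kappa,\ell)\mapsto$ the induced element in the ambient $\SO_Q\times\ASL_{d-1}$) is determined by $u$ alone: from $\kappa\cdot(t_g\gamma)u=t_g\gamma$ one reads off $\kappa$ as the unique element of $H$ conjugating $t_g\gamma u$ back to $t_g\gamma$, and from the second equation $\ell=cu^{-1}c^{-1}$ is determined by $u$. Hence the stabilizer injects into $\{u\in\G(\Z)\mid (t_g\gamma)u(t_g\gamma)^{-1}\in H\ \text{and}\ cuc^{-1}\in\mathbf{L}_{g}(\Z_S)\}$, and in particular into $\{u\in\G(\Z)\mid (t_g\gamma)u(t_g\gamma)^{-1}\in H\}$.

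Next I would identify this last set with $\mathbf{H}_{\ovec(\pi_{\V}(g'))}(\R)\cap\G_1(\Z)$. Recall from \eqref{eq:def_of_H} and Corollary \ref{cor:Special alligned base points} that $H=\{(w,\o(w))\mid w\in\mathbf{H}_{\mathbf{e}_d}(\R)\}$ and that $H=t_g\mathbf{L}_g(\R)t_g^{-1}$; more precisely $H$ is the stabilizer in $(\SO_Q\times\ASL_{d-1})(\R)$ of the point $\pi_{\V}(g')=I_d\cdot(t_g\gamma)$ under the action \eqref{eq:definion of action of =00005Cmatbb=00007BG=00007D on Z_t}. Therefore the condition $(t_g\gamma)u(t_g\gamma)^{-1}\in H$ says exactly that $u\in\G(\Z)$ lies in the stabilizer of $\pi_{\V}(g')$, i.e. $u\in\mathbf{L}_{\pi_{\V}(g')}(\R)\cap\G(\Z)$. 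By Lemma \ref{lem:stabilizer lemma} the first coordinate projection identifies $\mathbf{L}_{\pi_{\V}(g')}(\R)$ with $\mathbf{H}_{\ovec(\pi_{\V}(g'))}(\R)$, and this projection is injective and sends $\G(\Z)$ into $\G_1(\Z)$. Composing the injections gives
\[
\left|\mathrm{stab}_{H\times\mathbf{L}_{g}(\Z_{S})}\left((t_g\gamma,c)\G(\Z)\right)\right|\le\left|\mathbf{L}_{\pi_{\V}(g')}(\R)\cap\G(\Z)\right|\le\left|\mathbf{H}_{\ovec(\pi_{\V}(g'))}(\R)\cap\G_1(\Z)\right|,
\]
which is the claimed bound since $\pi_{\V}(g')=I_d\cdot(t_g\gamma)$.

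The step I expect to be the main technical point is verifying carefully that a stabilizing triple $(\kappa,\ell,u)$ is genuinely determined by $u$ — that is, that the "diagonal" nature of the action (same $u\in\G(\Z)$ acting on both the real and the $\Z_S$ coordinate through the diagonal embedding $\G(\Z)\hookrightarrow\G(\R\times\Z_S)$) forces $\kappa$ and $\ell$ to be uniquely recoverable, so that one really gets an injection and not merely a finite-to-one map with uncontrolled fibers. This requires unpacking the identification of $\U_S$ with $\G(\R\times\Z_S)/\G(\Z)$ described right after \eqref{eq:def of U_S}, and checking that the left $H\times\mathbf{L}_g(\Z_S)$-action there is free on the relevant coordinates modulo the real/$\Z_S$ components of the stabilizer, whose orders are then both controlled by $\left|\mathbf{H}_{\ovec(\pi_{\V}(g'))}(\R)\cap\G_1(\Z)\right|$. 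Everything else — the computation $\pi_{\V}(g')=I_d\cdot(t_g\gamma)$, the identification of $H$ as the real stabilizer, and the application of Lemma \ref{lem:stabilizer lemma} — is routine given the machinery already set up in Sections \ref{sec:On-the-deity} and \ref{sec:A-revisit-to-s-arith-thm-aes}.
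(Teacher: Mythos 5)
Your proof is correct and uses essentially the same argument as the paper, merely organized in the opposite direction: you parametrize the stabilizer by $u\in\G(\Z)$, discard the $\Z_{S}$-condition to land in $\mathbf{L}_{\pi_{\V}(g')}(\R)\cap\G(\Z)$, and then project to the $\SO_{Q}$-factor via the graph structure of Lemma \ref{lem:stabilizer lemma}, whereas the paper intersects $H\times\mathbf{L}_{g}(\Z_{S})$ with $(t_{g}\gamma,c)\G(\Z)(t_{g}\gamma,c)^{-1}$, projects to the $\G_{1}$-coordinates first and then drops the $\Z_{S}$-place using the diagonal embedding -- the same ingredients yielding the same bound. One slip of the pen: $H$ is the stabilizer of $I_{d}$ (equivalently of $a_{T}$, Corollary \ref{cor:Special alligned base points}), not of $\pi_{\V}(g')=I_{d}\cdot(t_{g}\gamma)$, whose stabilizer is the conjugate $(t_{g}\gamma)^{-1}H(t_{g}\gamma)=\mathbf{L}_{\pi_{\V}(g')}(\R)$; since this corrected statement is exactly what your next sentence invokes, the argument is unaffected.
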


\begin{proof}
We have that 
\[
\text{stab}_{H\times\mathbf{L}_{g}(\Z_{S})}\left((t_{g}\gamma,c)\G(\Z)\right)=\left(H\times\mathbf{L}_{g}(\Z_{S})\right)\cap x_{g,h}\G(\Z)x_{g,h}^{-1}.
\]
where $x_{g,h}=(t_{g}\gamma,c)$. We recall that $H\times\mathbf{L}_{g}(\Z_{S})$
is a graph of a function $f:\mathbf{H}_{\mathbf{e}_{d}}(\R)\times\mathbf{H}_{\ovec(g)}(\Z_{S})\to\G(\R)\times\G(\ZS)$,
(see Lemma \ref{lem:stabilizer lemma}), which gives
\[
\left|\left(H\times\mathbf{L}_{g}(\Z_{S})\right)\cap x_{g,h}\G(\Z)x_{g,h}^{-1}\right|\leq\left|\left(\mathbf{H}_{\mathbf{e}_{d}}(\R)\times\mathbf{H}_{\ovec(g)}(\Z_{S})\right)\cap\pi_{1}(x_{g,h})\G_{1}(\Z)\pi_{1}(x_{g,h})^{-1}\right|,
\]
where $\pi_{1}:\G\to\G_{1}$ is the natural projection, and $\pi_{1}(x_{g,h})=\left(\pi_{1}(t_{g}\gamma),\pi_{1}(c)\right)$.
We observe that

\[
\begin{aligned}\left|\left(\mathbf{H}_{\mathbf{e}_{d}}(\R)\times\mathbf{H}_{\ovec(g)}(\Z_{S})\right)\bigcap\pi_{1}(x_{g,h})\G_{1}(\Z)\pi_{1}(x_{g,h})^{-1}\right|\\
=\left|\pi_{1}(x_{g,h})^{-1}\left(\mathbf{H}_{\mathbf{e}_{d}}(\R)\times\mathbf{H}_{\ovec(g)}(\Z_{S})\right)\right. & \left.\pi_{1}(x_{g,h})\bigcap\G_{1}(\Z)\right|\\
\underbrace{\leq}_{\G_{1}(\Z)\text{ is diagonally embedded}} & \left|\pi_{1}(t_{g}\gamma)^{-1}\mathbf{H}_{\mathbf{e}_{d}}(\R)\pi_{1}(t_{g}\gamma)\bigcap\G_{1}(\Z)\right|.
\end{aligned}
\]
We conclude that
\[
\left|\text{stab}_{H\times\mathbf{L}_{g}(\Z_{S})}\left((t_{g}\gamma,c)\G(\Z)\right)\right|\leq\left|\pi_{1}(t_{g}\gamma)^{-1}\mathbf{H}_{\mathbf{e}_{d}}(\R)\pi_{1}(t_{g}\gamma)\bigcap\G_{1}(\Z)\right|,
\]
and we note that we may finish the proof by verifying that 
\begin{equation}
\pi_{1}(t_{g}\gamma)^{-1}\mathbf{H}_{\mathbf{e}_{d}}(\R)\pi_{1}(t_{g}\gamma)=\mathbf{H}_{\ovec\left(I_{d}\cdot(t_{g}\gamma)\right)}(\R).\label{eq:conj of H_e_d by tau_g_gamma}
\end{equation}
To prove the latter equality we recall that the right $\SO_{Q}(\R)$
actions on $\SL_{d}(\R)$ and on $\R^{d}\smallsetminus\{\mathbf{0}\}$
are equivariant with respect to $\ovec:\SL_{d}(\R)\to\R^{d}\smallsetminus\{\mathbf{0}\}$
(to recall, see \eqref{eq:equivariance of the so_Q action w.r.t. tau}),
which shows that
\[
\mathbf{e}_{d}\cdot\pi_{1}(t_{g}\gamma)=\tau(I_{d}\cdot(t_{g}\gamma)),
\]
and which in turn implies \eqref{eq:conj of H_e_d by tau_g_gamma}.
\end{proof}
For $g_{\infty}\in\G(\R)$, $\eta\in H$ and $u\in\G(\Z)$ we note
that 
\[
\left|\mathbf{H}_{\ovec(I_{d}\cdot g_{\infty})}(\R)\cap\G_{1}(\Z)\right|=\left|\mathbf{H}_{\ovec(I_{d}\cdot(\eta g_{\infty}u))}(\R)\cap\G_{1}(\Z)\right|,
\]
and we define $\mathcal{E}\subseteq K\backslash G/\Gamma$ by
\begin{equation}
\mathcal{E}\df\left\{ \left(H\times\mathbf{L}_{\bar{g}}(\Z/(q))\right)(g_{\infty},g_{(q)})\G_{(q)}(\Z)\mid\left|\mathbf{H}_{\ovec(I_{d}\cdot g_{\infty})}(\R)\cap\G_{1}(\Z)\right|>1\right\} .\label{eq:the set E}
\end{equation}

\begin{lem}
\label{lem:equal weights}We denote $\alpha_{\text{max}}^{(n)}=\max_{h\in M_{0}}\{\alpha_{h}^{(n)}\}$,
where $\alpha_{h}^{(n)}$ are the weights of the atoms of $\eta_{g_{n},q}$
\emph{(}see \emph{\eqref{eq:rh_Dk_rh_infty_eta-1})}. Then there exists
$m>0$ such that $\frac{\alpha_{\text{max}}^{(n)}}{m}\leq\alpha_{h}^{(n)}\leq\alpha_{\text{max}}^{(n)}$,
$\forall n\in\N$. Moreover, for all $h\in M_{0}$ such that $\pi_{K}\left(O_{g_{n},q,h}\right)\notin\mathcal{E},$
it holds that $\alpha_{h}^{(n)}=\alpha_{\text{max}}^{(n)}$.
\end{lem}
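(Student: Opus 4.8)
The plan is to trace through the formula \eqref{eq:measure of eta_g,S(O_g,S,h)} for the weights $\alpha_h^{(n)}$ and use Lemma \ref{lem:eqaulity of stabilizer of H=00005CtimesL_g and interal ortohoganl mat stab a vec} together with a uniform finiteness bound. Recall from \eqref{eq:measure of eta_g,S(O_g,S,h)} that
\[
\alpha_h^{(n)}=\frac{\alpha^{(n)}}{\left|\text{stab}_{H\times\mathbf{L}_{g_{n}}(\Z_{S})}\left((t_{g_{n}}\gamma,c)\G(\Z)\right)\right|},
\]
where $\alpha^{(n)}>0$ is a normalizing constant independent of $h$. So the weights differ only through the cardinality of the stabilizer in the denominator, and $\alpha_{\max}^{(n)}$ corresponds to those $h$ for which this stabilizer is trivial (of cardinality $1$). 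The point is that this stabilizer is always a finite group, and moreover its cardinality is bounded above by a constant independent of $n$ and $h$; then $\alpha_{\max}^{(n)}/\alpha_h^{(n)}$ is bounded above by that same constant, which is the first assertion with $m$ that constant.

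First I would establish the uniform upper bound on the stabilizer cardinality. By Lemma \ref{lem:eqaulity of stabilizer of H=00005CtimesL_g and interal ortohoganl mat stab a vec},
\[
\left|\text{stab}_{H\times\mathbf{L}_{g_{n}}(\Z_{S})}\left((t_{g_{n}}\gamma,c)\G(\Z)\right)\right|\leq\left|\mathbf{H}_{\ovec\left(I_{d}\cdot(t_{g_{n}}\gamma)\right)}(\R)\cap\G_{1}(\Z)\right|,
\]
so it suffices to bound $\left|\mathbf{H}_{\mathbf{v}}(\R)\cap\G_1(\Z)\right|$ uniformly over $\mathbf{v}\in\H_{Q(\mathbf{e}_d)}(\R)$ (note $\ovec(I_d\cdot(t_{g_n}\gamma))$ lies on this level set, since $t_{g_n}\in\G(\R)$ preserves $\V_{Q(\mathbf{e}_d)}(\R)$ and $\gamma$ acts on the right). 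Here $\mathbf{H}_{\mathbf{v}}(\R)$ is compact by the \nameref{subsec:Standing-Assumption} (for $T>0$ all such stabilizers are conjugate to $\mathbf{H}_{\mathbf{e}_d}(\R)$), and $\G_1(\Z)=\SO_Q(\Z)$ is discrete, so the intersection is a finite group. For the uniformity, I would use the fact that $\{\mathbf{H}_{\mathbf{v}}(\R)\cap\G_1(\Z):\mathbf{v}\in\H_{Q(\mathbf{e}_d)}(\R)\}$ all sit inside a fixed compact set times $\G_1(\Z)$, so a single precompactness argument (as in Lemma \ref{lem:uniform bound intersection with cpct set}) gives a uniform cardinality bound $c_0$; set $m=c_0$.

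For the second assertion, observe that by the definition \eqref{eq:the set E} of $\mathcal{E}$ and \eqref{eq:O_g,q,h}, if $\pi_K(O_{g_n,q,h})\notin\mathcal{E}$ then $\left|\mathbf{H}_{\ovec(I_d\cdot(t_{g_n}\gamma))}(\R)\cap\G_1(\Z)\right|=1$ (this is exactly the condition defining the complement of $\mathcal{E}$, since $\mathcal{E}$ is invariant under left $H\times\mathbf{L}_{\bar g}(\Z/(q))$ and right $\G_{(q)}(\Z)$, as noted just above \eqref{eq:the set E}). Combining with Lemma \ref{lem:eqaulity of stabilizer of H=00005CtimesL_g and interal ortohoganl mat stab a vec} forces the stabilizer $\left|\text{stab}_{H\times\mathbf{L}_{g_{n}}(\Z_{S})}\left((t_{g_{n}}\gamma,c)\G(\Z)\right)\right|$ to be trivial, hence $\alpha_h^{(n)}=\alpha^{(n)}$, which is the maximal value $\alpha_{\max}^{(n)}$. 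I expect the only mildly delicate point to be the uniform finiteness bound $c_0$ — making precise that all the relevant stabilizer-intersection groups can be captured by a single compactness argument rather than estimated form-by-form — but this is routine given the compactness of $\mathbf{H}_{\mathbf{e}_d}(\R)$ and the machinery already invoked (Lemma \ref{lem:uniform bound intersection with cpct set}); everything else is bookkeeping with the formula \eqref{eq:measure of eta_g,S(O_g,S,h)}.
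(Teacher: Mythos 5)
Your overall route is the paper's: starting from \eqref{eq:measure of eta_g,S(O_g,S,h)} and the stabilizer-comparison lemma you invoke, one gets the sandwich $\alpha^{(n)}/\bigl|\mathbf{H}_{\ovec(I_{d}\cdot(t_{g_{n}}\gamma))}(\R)\cap\G_{1}(\Z)\bigr|\leq\alpha_{h}^{(n)}\leq\alpha^{(n)}$, and the second assertion is then immediate from the definition \eqref{eq:the set E} of $\mathcal{E}$ (if $\pi_{K}(O_{g_{n},q,h})\notin\mathcal{E}$ the intersection has cardinality one, so $\alpha_{h}^{(n)}=\alpha^{(n)}\geq\alpha_{\max}^{(n)}$, forcing equality). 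All of that is fine and matches the paper.

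The gap is precisely in the step you flagged as routine: the uniform bound on $\bigl|\mathbf{H}_{\mathbf{v}}(\R)\cap\G_{1}(\Z)\bigr|$ over $\mathbf{v}\in\H_{Q(\mathbf{e}_{d})}(\R)$. Your claim that these groups ``all sit inside a fixed compact set times $\G_{1}(\Z)$'' so that a single precompactness argument in the spirit of Lemma \ref{lem:uniform bound intersection with cpct set} applies is not correct in the main case of interest. When $Q$ is indefinite (e.g.\ $Q=x_{d}^{2}-\sum_{i<d}x_{i}^{2}$, the paper's headline example), the level set $\H_{Q(\mathbf{e}_{d})}(\R)$ is non-compact and $\mathbf{H}_{\mathbf{v}}(\R)=\rho^{-1}\mathbf{H}_{\mathbf{e}_{d}}(\R)\rho$ with $\rho\in\SO_{Q}(\R)$ unbounded as $\mathbf{v}$ ranges over the level set; the union of these conjugates is contained in no compact subset of $\G_{1}(\R)$. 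Moreover, Lemma \ref{lem:uniform bound intersection with cpct set} cannot be adapted: its mechanism is uniform discreteness of the lattice against a \emph{left translate} $g^{-1}\cdot(\text{fixed compact set})$, whereas here one intersects the lattice with a \emph{conjugate} $g^{-1}\mathbf{H}_{\mathbf{e}_{d}}(\R)g$, whose diameter blows up with $g$, so the number of uniform-discreteness neighborhoods needed to cover it, and hence the resulting bound, is not uniform in $g$. The uniformity really needs a different input, and it is where the paper's proof diverges from yours: since $\mathbf{H}_{\mathbf{v}}(\R)$ is a compact group and $\G_{1}(\Z)$ is discrete, the intersection is a \emph{finite subgroup} of $\GL_{d}(\Z)$, and by Minkowski's classical theorem (the paper cites Friedland) the order of a finite subgroup of $\GL_{d}(\Z)$ is bounded by a constant depending only on $d$; that constant is the $m$ of the lemma. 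Note that the subgroup structure is essential here: knowing only that each intersection point is a torsion element of $\GL_{d}(\Z)$, or only that each individual intersection is finite, would not give uniformity. With this replacement your argument closes; the rest is identical to the paper.
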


\begin{proof}
It follows by Lemma \ref{lem:eqaulity of stabilizer of H=00005CtimesL_g and interal ortohoganl mat stab a vec}
and by \eqref{eq:measure of eta_g,S(O_g,S,h)} that
\begin{equation}
\frac{\alpha^{(n)}}{\left|\mathbf{H}_{\ovec\left(I_{d}\cdot(t_{g_{n}}\gamma)\right)}(\R)\cap\G_{1}(\Z)\right|}\leq\alpha_{h}^{(n)}\leq\alpha^{(n)}.\label{eq:sandwich alpha_h^(n)}
\end{equation}
We recall that 
\[
\pi_{K}\left(O_{g_{n},q,h}\right)=K\left(t_{g_{n}}\gamma,\red_{q}\left(c\right)\right)\Gamma,
\]
and we conclude by \eqref{eq:the set E} and \eqref{eq:sandwich alpha_h^(n)}
that
\[
\alpha_{h}^{(n)}=\alpha_{\text{max}}^{(n)}=\alpha^{(n)}\Longleftarrow\pi_{K}\left(O_{g_{n},q,h}\right)\notin\mathcal{E}.
\]
Finally, we show that $\left|\mathbf{H}_{\ovec\left(I_{d}\cdot(t_{g_{n}}\gamma)\right)}(\R)\cap\G_{1}(\Z)\right|$
is uniformly bounded from above. Indeed, since $\mathbf{H}_{\ovec(I_{d}\cdot(t_{g_{n}}\gamma))}(\R)$
is compact (being a conjugate of $\mathbf{H}_{\mathbf{e}_{d}}(\R)$,
which is compact by our \nameref{subsec:Standing-Assumption}), we
obtain that the subgroup $\mathbf{H}_{\ovec\left(I_{d}\cdot(t_{g_{n}}\gamma)\right)}(\R)\cap\G(\Z)\leq\GL_{d}(\Z)$
is finite. For a fixed $d\in\N$, the size of finite subgroups of
$\GL_{d}(\Z)$ is uniformly bounded (see for example \cite{Bound_on_order_of_fin_groups_friedland}),
which implies that there exists $m>0$ such that $\frac{\alpha_{\text{max}}^{(n)}}{m}\leq\alpha_{h}^{(n)}$.
\end{proof}
\begin{lem}
\label{lem:exeptional set of weights is rare}It holds that $\frac{\left|\mathcal{R}_{g_{n},q}\cap\mathcal{E}\right|}{|\mathcal{R}_{g_{n},q}|}\to0$
as \textbf{$n\to\infty$.}
\end{lem}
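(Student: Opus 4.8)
The plan is to reduce the counting ratio to the weighted mass $\eta_{g_{n},q}(\mathcal{E})$ of the equidistributing measures and then show that $\mathcal{E}$ is a closed, $\mu_{K\backslash G/\Gamma}$-null set. For the reduction: by Lemma \ref{lem:equal weights} every atom $\alpha_{h}^{(n)}$ of $\eta_{g_{n},q}$ lies in $[\alpha_{\text{max}}^{(n)}/m,\alpha_{\text{max}}^{(n)}]$, while the total mass is $\sum_{h\in M_{0}}\alpha_{h}^{(n)}\le|M_{0}|\,\alpha_{\text{max}}^{(n)}$; since the atoms $\pi_{K}(O_{g_{n},q,h})$, $h\in M_{0}$, are precisely the $|\mathcal{R}_{g_{n},q}|=|M_{0}|$ points of $\mathcal{R}_{g_{n},q}$ (Corollary \ref{cor:main correspondence of E_g and O_g,S}), we obtain
\[
\frac{\left|\mathcal{R}_{g_{n},q}\cap\mathcal{E}\right|}{\left|\mathcal{R}_{g_{n},q}\right|}\;\le\;m\,\frac{\eta_{g_{n},q}(\mathcal{E})}{\eta_{g_{n},q}(K\backslash G/\Gamma)}.
\]
By Corollary \ref{cor:s-arithemetic measures converge to haar-1} the denominators converge to $\mu_{K\backslash G/\Gamma}(K\backslash G/\Gamma)=1$, so it suffices to prove $\eta_{g_{n},q}(\mathcal{E})\to0$. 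Since the convergence in Corollary \ref{cor:s-arithemetic measures converge to haar-1} is weak-$*$ convergence of finite measures whose total masses tend to that of the probability measure $\mu_{K\backslash G/\Gamma}$, the portmanteau inequality $\limsup_{n}\eta_{g_{n},q}(F)\le\mu_{K\backslash G/\Gamma}(F)$ holds for every closed set $F$; applying it to $F=\mathcal{E}$ reduces us to the two claims: $\mathcal{E}$ is closed, and $\mu_{K\backslash G/\Gamma}(\mathcal{E})=0$.

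For closedness, recall from the discussion preceding \eqref{eq:the set E} that the condition ``$\left|\mathbf{H}_{\ovec(I_{d}\cdot g_{\infty})}(\R)\cap\G_{1}(\Z)\right|>1$'' is well defined on $K\backslash G/\Gamma$, and it says exactly that the vector $\mathbf{v}(g)\df\ovec(I_{d}\cdot g_{\infty})\in\H_{Q(\mathbf{e}_{d})}(\R)$ is fixed by some $\gamma\in\SO_{Q}(\Z)\smallsetminus\{I_{d}\}$ (here $g=(g_{\infty},g_{(q)})\in\G(\R)\times\G(\Z/(q))$). Given $x_{k}\to x$ in $K\backslash G/\Gamma$ with $x_{k}\in\mathcal{E}$, lift along the open quotient map to $g_{k}\to g$ in $G$, so $\mathbf{v}(g_{k})\to\mathbf{v}(g)$ inside the compact set $K_{0}\df\{\mathbf{v}(g_{k})\}\cup\{\mathbf{v}(g)\}\subseteq\H_{Q(\mathbf{e}_{d})}(\R)$. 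Because $\mathbf{H}_{\mathbf{e}_{d}}(\R)$ is compact (our \nameref{subsec:Standing-Assumption}), each $\mathbf{H}_{\mathbf{w}}(\R)$ with $Q(\mathbf{w})>0$ is a $\SO_{Q}(\R)$-conjugate of it by a conjugating element depending continuously and properly on $\mathbf{w}$, so $\bigcup_{\mathbf{w}\in K_{0}}\mathbf{H}_{\mathbf{w}}(\R)$ is compact; hence only finitely many $\gamma\in\SO_{Q}(\Z)$ fix any vector of $K_{0}$. Passing to a subsequence along which the corresponding $\gamma_{k}$ is a fixed $\gamma$ and letting $k\to\infty$ gives $\gamma\,\mathbf{v}(g)=\mathbf{v}(g)$, so $x\in\mathcal{E}$. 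For nullity, observe that $\mathcal{E}$ is contained in the countable union, over $\gamma\in\SO_{Q}(\Z)\smallsetminus\{I_{d}\}$, of the sets where $\mathbf{v}(g)\in\ker(\gamma-I_{d})$, a proper linear subspace (by Lemma \ref{lem:F(SO_Q) and stabilizer of a vector}, $\o(\mathbf{H}_{\mathbf v}(\R))$ fixes a line); the preimage in $G$ of each such set is a cylinder over a positive-codimension subset of $\SO_{Q}(\R)$, hence Haar-null, and pushing down (via $\mu_{K\backslash G/\Gamma}=(\pi_{K})_{*}m_{G/\Gamma}$, cf. Section \ref{subsec:Measures-as measures on fibre bundles}) shows each is $\mu_{K\backslash G/\Gamma}$-null; a countable union of null sets is null.

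The step I expect to be the main obstacle is the closedness of $\mathcal{E}$ in the indefinite case, where $\SO_{Q}(\Z)$ is infinite and one cannot argue naively: the content is exactly that the integral isometries that can fix a vector lying in a prescribed compact part of the hyperboloid $\H_{Q(\mathbf{e}_{d})}(\R)$ form a finite set, which is forced by the compactness of $\mathbf{H}_{\mathbf{e}_{d}}(\R)$ (and hence of every $\mathbf{H}_{\mathbf{w}}(\R)$ with $Q(\mathbf{w})>0$) in our \nameref{subsec:Standing-Assumption}; when $Q$ is positive definite $\SO_{Q}(\Z)$ is already finite and the argument trivializes. A secondary routine point is confirming that the push-forward of $\mu_{K\backslash G/\Gamma}$ to $\H_{Q(\mathbf{e}_{d})}(\R)$ is in the Lebesgue class, which follows by unwinding the definitions of $\mu_{K\backslash G/\Gamma}$ and $\mu_{\V}$ through the duality of Section \ref{subsec:duality principle}.
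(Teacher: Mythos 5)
Your proposal is correct, and its first half --- bounding $\left|\mathcal{R}_{g_{n},q}\cap\mathcal{E}\right|/|\mathcal{R}_{g_{n},q}|$ by $m\,\eta_{g_{n},q}(\mathcal{E})$ via Lemma \ref{lem:equal weights} and then applying the portmanteau inequality to the closed set $\mathcal{E}$ using Corollary \ref{cor:s-arithemetic measures converge to haar-1} --- is exactly the reduction the paper uses. Where you genuinely diverge is in proving that $\mathcal{E}$ is closed and $\mu_{K\backslash G/\Gamma}$-null. The paper projects $\mathcal{E}$ to $\mathbf{H}_{\mathbf{e}_{d}}(\R)\backslash\G_{1}(\R)/\G_{1}(\Z)$, identifies the image as the singular set of this orbifold, and imports closedness and measure zero from the orbifold literature (the singular set is closed with empty interior and is locally a finite union of images of submanifolds). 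You instead argue by hand: closedness by lifting a convergent sequence and observing that the elements of the discrete group $\G_{1}(\Z)$ which can fix a vector lying in a compact part of $\H_{Q(\mathbf{e}_{d})}(\R)$ form a finite set --- this is where the compactness of $\mathbf{H}_{\mathbf{e}_{d}}(\R)$ from the Standing Assumption enters --- and nullity by covering $\mathcal{E}$ with countably many fixed-vector conditions, one for each nontrivial $\gamma\in\G_{1}(\Z)$, each cutting out a Haar-null subset of the $\SO_{Q}(\R)$-coordinate. This buys a self-contained, elementary proof in place of the orbifold citations, at the cost of two details you should spell out: (i) rather than a ``continuous and proper'' choice of conjugators, it is cleaner to note that the orbit map $\SO_{Q}(\R)\to\H_{Q(\mathbf{e}_{d})}(\R)$, $\rho\mapsto\rho^{-1}\mathbf{e}_{d}$, is proper because the stabilizer $\mathbf{H}_{\mathbf{e}_{d}}(\R)$ is compact, so that $\bigcup_{\mathbf{w}\in K_{0}}\mathbf{H}_{\mathbf{w}}(\R)$ is contained in a compact set and meets the discrete group $\G_{1}(\Z)$ in finitely many points; (ii) for nullity you must check that no connected component of $\H_{Q(\mathbf{e}_{d})}(\R)$ lies in the fixed subspace of a nontrivial $\gamma$ (immediate, since a level set of a quadratic form at a nonzero value cannot be contained in a proper subspace), so each fixed-vector condition is a proper analytic condition, hence of measure zero on each component of $\SO_{Q}(\R)$, and descends to a null set of $\mu_{K\backslash G/\Gamma}$. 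With these routine points filled in, your argument is a valid alternative to the paper's proof; the paper's route is shorter given the cited orbifold facts and yields slightly finer structural information about $\mathcal{E}$, while yours avoids any appeal to orbifold theory.
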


\begin{proof}
We claim that in order to prove $\lim_{n\to\infty}\frac{\left|\mathcal{R}_{g_{n},q}\cap\mathcal{E}\right|}{|\mathcal{R}_{g_{n},q}|}=0$,
it is sufficient to show that $\mathcal{E}\subseteq K\backslash G/\Gamma$
is closed and that
\begin{equation}
\mu_{K\backslash G/\Gamma}(\mathcal{E})=0.\label{eq:measure of eps}
\end{equation}
Indeed, by assuming the preceding limit, the proof will be complete
since

\begin{align*}
0= & \mu_{K\backslash G/\Gamma}(\mathcal{E})\underbrace{\geq}_{\text{Corollary \ref{cor:s-arithemetic measures converge to haar-1}}}\limsup_{n\to\infty}\eta_{g_{n},q}(\mathcal{E})\\
= & \limsup_{n\to\infty}\frac{\eta_{g_{n},q}(\mathcal{E}\cap R_{g_{n},q})}{\eta_{g_{n},q}\left(\mathcal{R}_{g_{n},q}\right)}\underbrace{\geq}_{\text{Lemma \ref{lem:equal weights}}}\limsup_{n\to\infty}\frac{\frac{\alpha_{\text{max}}^{(n)}}{m}\left|\mathcal{R}_{g_{n},q}\cap\mathcal{E}\right|}{\alpha_{\text{max}}^{(n)}|\mathcal{R}_{g_{n},q}|}.
\end{align*}

We will now proceed to prove \eqref{eq:measure of eps}. Consider
the natural projection
\[
\text{p}:\left(H\times\mathbf{L}_{\bar{g}}(\Z/(q))\right)\backslash\G(\R\times\Z/(q))/\G_{(q)}(\Z)\to\mathbf{H}_{\mathbf{e}_{d}}(\R)\backslash\G_{1}(\R)/\G_{1}(\Z),
\]
and note that 
\[
\text{p}(\mathcal{E})=\left\{ \mathbf{H}_{\mathbf{e}_{d}}(\R)\rho\G_{1}(\Z)\mid\rho^{-1}\mathbf{H}_{\mathbf{e}_{d}}(\R)\rho\cap\G_{1}(\Z)\neq\{e\}\right\} .
\]
We now recall some basic facts concerning orbifolds (we follow \cite{Riem_geom_orbif}).
Since $\mathbf{H}_{\mathbf{e}_{d}}(\R)$ is compact, it follows that
$\mathbf{H}_{\mathbf{e}_{d}}(\R)\backslash\G_{1}(\R)/\G_{1}(\Z)$
is an orbifold, and the set $\text{p}(\mathcal{E})$ is known as its
singular set (see \cite[Definition 25]{Riem_geom_orbif}). The singular
set is closed and has empty interior, see \cite[Proposition 26]{Riem_geom_orbif},
hence in particular $\mathcal{E}$ is closed (as a preimage of a closed
set). Now since $\mathbf{H}_{\mathbf{e}_{d}}(\R)$ is compact, it
is known that there exists a $\G_{1}(\R)$ right invariant Riemannian
metric on $\mathbf{H}_{\mathbf{e}_{d}}(\R)\backslash\G_{1}(\R)$.
Hence by \cite[Proposition 34]{Riem_geom_orbif}, the singular set
is locally the image of a union of finitely many sub-manifolds of
$\mathbf{H}_{\mathbf{e}_{d}}(\R)\backslash\G_{1}(\R)$ under the natural
quotient map. Therefore 
\[
\mu_{\mathbf{H}_{\mathbf{e}_{d}}(\R)\backslash\G_{1}(\R)/\G_{1}(\Z)}(\text{p}(\mathcal{E}))=0,
\]
which implies \eqref{eq:measure of eps}.
\end{proof}
\begin{lem}
\label{lem:the set E  contains set F are same} It holds that $\mathcal{F}\subseteq\mathcal{E}$,
where $\mathcal{F}\subseteq K\backslash G/\Gamma$ is given by \eqref{eq:definintio of set of points that are non-triv stab by Gam-1}.
\end{lem}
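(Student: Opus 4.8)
The plan is a direct unwinding of the definitions of $\mathcal{F}$ and $\mathcal{E}$ together with the graph structure of $H$. I would fix a double coset lying in $\mathcal{F}$ and choose a representative $g=(g_{\infty},g_{(q)})\in\G(\R)\times\G(\Z/(q))$, writing $g_{\infty}=((g_{\infty})_{1},(g_{\infty})_{2})$ according to $\G=\G_{1}\times\G_{2}=\SO_{Q}\times\ASL_{d-1}$. By the definition of $\mathcal{F}$ in \eqref{eq:definintio of set of points that are non-triv stab by Gam-1} there is some $\gamma\in\Gamma\smallsetminus\{e\}$ with $g\gamma g^{-1}\in K$; write $\gamma=(u,\vartheta_{q}(u))$ with $u=(u_{1},u_{2})\in\G(\Z)=\SO_{Q}(\Z)\times\ASL_{d-1}(\Z)$ and $u\neq e$. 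The goal is then to show $u_{1}\in\mathbf{H}_{\ovec(I_{d}\cdot g_{\infty})}(\R)\cap\G_{1}(\Z)$ with $u_{1}\neq e$, which by \eqref{eq:the set E} places our double coset in $\mathcal{E}$.

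First I would compare $\G(\R)$-components in $g\gamma g^{-1}\in K=H\times\mathbf{L}_{\bar{g}}(\Z/(q))$ to get $g_{\infty}ug_{\infty}^{-1}\in H$. Since $H=\{(w,\o(w))\mid w\in\mathbf{H}_{\mathbf{e}_{d}}(\R)\}$ (see \eqref{eq:def_of_H}) is the graph of $\o$ over $\mathbf{H}_{\mathbf{e}_{d}}(\R)$, this forces $(g_{\infty})_{1}u_{1}(g_{\infty})_{1}^{-1}\in\mathbf{H}_{\mathbf{e}_{d}}(\R)$ together with $(g_{\infty})_{2}u_{2}(g_{\infty})_{2}^{-1}=\o\!\left((g_{\infty})_{1}u_{1}(g_{\infty})_{1}^{-1}\right)$; in particular, if $u_{1}=e$ then $u_{2}=e$, contradicting $u\neq e$, so $u_{1}\neq e$. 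Hence $u_{1}\in(g_{\infty})_{1}^{-1}\mathbf{H}_{\mathbf{e}_{d}}(\R)(g_{\infty})_{1}\cap\G_{1}(\Z)$.

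The remaining step is to identify $(g_{\infty})_{1}^{-1}\mathbf{H}_{\mathbf{e}_{d}}(\R)(g_{\infty})_{1}$ with $\mathbf{H}_{\ovec(I_{d}\cdot g_{\infty})}(\R)$. Since $\ovec$ is right $\ASL_{d-1}(\R)$-invariant and $(g_{\infty})_{2}\in\ASL_{d-1}(\R)$, one computes $\ovec(I_{d}\cdot g_{\infty})=\ovec\!\left(\o((g_{\infty})_{1}^{-1})(g_{\infty})_{2}\right)=\ovec\!\left(\o((g_{\infty})_{1}^{-1})\right)=(g_{\infty})_{1}^{-1}\mathbf{e}_{d}=\mathbf{e}_{d}\cdot(g_{\infty})_{1}$, and from $\mathbf{H}_{\mathbf{v}\cdot\rho}(\R)=\rho^{-1}\mathbf{H}_{\mathbf{v}}(\R)\rho$ for the right $\SO_{Q}(\R)$-action \eqref{eq:action of SO_Q on R^d} on $\R^{d}$ it then follows that $\mathbf{H}_{\ovec(I_{d}\cdot g_{\infty})}(\R)=(g_{\infty})_{1}^{-1}\mathbf{H}_{\mathbf{e}_{d}}(\R)(g_{\infty})_{1}$; this is precisely the identity \eqref{eq:conj of H_e_d by tau_g_gamma} from the proof of Lemma \ref{lem:eqaulity of stabilizer of H=00005CtimesL_g and interal ortohoganl mat stab a vec}, now applied with a general $g_{\infty}$ in place of $t_{g}\gamma$. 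Combining the above gives $u_{1}\in\mathbf{H}_{\ovec(I_{d}\cdot g_{\infty})}(\R)\cap\G_{1}(\Z)\smallsetminus\{e\}$, hence $\left|\mathbf{H}_{\ovec(I_{d}\cdot g_{\infty})}(\R)\cap\G_{1}(\Z)\right|>1$ and $\mathcal{F}\subseteq\mathcal{E}$. There is no substantive obstacle here: the argument is bookkeeping with the diagonal embedding of $\Gamma=\G_{(q)}(\Z)$ and the graph structures of $H$ and $K$, and the one delicate point is that a nontrivial stabilising element of $\Gamma$ must have nontrivial $\SO_{Q}$-component, which is exactly where the graph structure of $H$ over its first factor is used.
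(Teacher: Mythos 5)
Your argument is correct and follows essentially the same route as the paper's proof: take a nontrivial $\gamma\in\Gamma$ conjugated into $K$, use the graph structure of $H$ over its $\SO_{Q}$-factor to see that the $\G_{1}$-component of the corresponding $u\in\G(\Z)$ is nontrivial, and then identify $\pi_{1}(g_{\infty})^{-1}\mathbf{H}_{\mathbf{e}_{d}}(\R)\pi_{1}(g_{\infty})$ with $\mathbf{H}_{\ovec(I_{d}\cdot g_{\infty})}(\R)$ via the equivariance of $\ovec$. Your write-up only makes explicit a point the paper treats tersely, namely that $u_{1}=e$ would force $u_{2}=e$ and hence $\gamma=e$.
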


\begin{proof}
We recall that $\mathcal{F}$ is given by 
\[
\mathcal{F}=\left\{ K(g_{\infty},g_{(q)})\Gamma\mid\left|(g_{\infty},g_{(q)})^{-1}K(g_{\infty},g_{(q)})\cap\Gamma\right|>1\right\} .
\]
We let $K(g_{\infty},g_{(q)})\Gamma\in\mathcal{F},$ and upon recalling
the notations of $K$, $G$ and $\Gamma$ in \eqref{eq:notations of K G =00005CGamma},
we deduce that there exists $u\in\G(\Z)\smallsetminus\{e\}$ and $h_{\infty}\in H$
such that 
\[
g_{\infty}^{-1}h_{\infty}g_{\infty}=u.
\]
By recalling the definition of $H$ (see \eqref{eq:def_of_H}) we
obtain that 
\begin{equation}
\pi_{1}(g_{\infty}^{-1}h_{\infty}g_{\infty})=\pi_{1}(u)\in\G_{1}(\Z)\smallsetminus\{e\},\label{eq:pi_1(conj of h_infity) not trivial in G_1(Z)}
\end{equation}
where $\pi_{1}:\G\to\G_{1}$ is the natural projection. We have that
\[
\pi_{1}(g_{\infty}^{-1}h_{\infty}g_{\infty})=\pi_{1}(g_{\infty})^{-1}\pi_{1}(h_{\infty})\pi_{1}(g_{\infty}),
\]
and that $\pi_{1}(h_{\infty})\in\pi_{1}(H)=\mathbf{H}_{\mathbf{e}_{d}}(\R),$
which implies by \eqref{eq:pi_1(conj of h_infity) not trivial in G_1(Z)}
that 
\begin{equation}
\left|\pi_{1}(g_{\infty})^{-1}\mathbf{H}_{\mathbf{e}_{d}}(\R)\pi_{1}(g_{\infty})\cap\G_{1}(\Z)\right|>1.\label{eq:H_e_d conju intersect G_1(Z) non-trivialy}
\end{equation}
By \eqref{eq:H_e_d conju intersect G_1(Z) non-trivialy}, by observing
that
\[
\pi_{1}(g_{\infty})^{-1}\mathbf{H}_{\mathbf{e}_{d}}(\R)\pi_{1}(g_{\infty})=\mathbf{H}_{\mathbf{e}_{d}\cdot\pi_{1}(g_{\infty})}(\R)\underbrace{=}_{\eqref{eq:equivariance of the so_Q action w.r.t. tau}}\mathbf{H}_{\ovec(I_{d}\cdot g_{\infty})}(\R),
\]
and by recalling \eqref{eq:the set E} which defines $\mathcal{E}$,
we obtain that $K(g_{\infty},g_{(q)})\Gamma\in\mathcal{E}$.
\end{proof}
We now state the key corollary of this section, which verifies the
assumptions of Lemma \ref{lem:unwinding with negliglbe non-triv stab-1}
and finishes our proof of Theorem \ref{thm:refinement of the main theorem-1}.
\begin{cor}
\label{cor:weights are equal}It holds that 
\[
\lim_{n\to\infty}\frac{\left|\mathcal{F}\cap\mathcal{R}_{g_{n},q}\right|}{|\mathcal{R}_{g_{n},q}|}=0,
\]
and it holds that the sequence probability counting measures $\lambda_{g_{n},q}$
supported on $\mathcal{R}_{g_{n},q}$ for $n\in\N$ converges to $\mu_{K\backslash G/\Gamma}$.
\end{cor}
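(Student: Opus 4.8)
The plan is to verify the two assumptions of Lemma \ref{lem:unwinding with negliglbe non-triv stab-1} for the sequence of finite sets $S_n = \mathcal{R}_{g_n,q}$, and then to invoke that lemma. Concretely, there are two things to establish: first, that the probability counting measures $\lambda_{g_n,q}$ supported on $\mathcal{R}_{g_n,q}$ converge weakly to $\mu_{K\backslash G/\Gamma}$; second, that $\frac{|\mathcal{F}\cap\mathcal{R}_{g_n,q}|}{|\mathcal{R}_{g_n,q}|}\to 0$.

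For the second point, this follows almost immediately from what has already been assembled: by Lemma \ref{lem:the set E  contains set F are same} we have $\mathcal{F}\subseteq\mathcal{E}$, hence $|\mathcal{F}\cap\mathcal{R}_{g_n,q}|\le|\mathcal{E}\cap\mathcal{R}_{g_n,q}|$, and Lemma \ref{lem:exeptional set of weights is rare} gives $\frac{|\mathcal{E}\cap\mathcal{R}_{g_n,q}|}{|\mathcal{R}_{g_n,q}|}\to0$. So the negligibility of non-trivially stabilized points in $\mathcal{R}_{g_n,q}$ is handled.

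The first point is where the real work sits, and it is the step I expect to be the main obstacle. By Corollary \ref{cor:s-arithemetic measures converge to haar-1} we know that the \emph{weighted} measures $\eta_{g_n,q}$, which are supported exactly on $\mathcal{R}_{g_n,q}$, converge weakly to $\mu_{K\backslash G/\Gamma}$. The issue is that $\eta_{g_n,q}$ is not the uniform counting measure $\lambda_{g_n,q}$ on $\mathcal{R}_{g_n,q}$; its atoms carry weights $\alpha_h^{(n)}$ which vary with $h$. However, Lemma \ref{lem:equal weights} tells us precisely how much they vary: all weights lie in $[\alpha_{\max}^{(n)}/m,\alpha_{\max}^{(n)}]$ for a fixed $m$ independent of $n$, and moreover the weights are \emph{all equal} to $\alpha_{\max}^{(n)}$ except on the subset of $h$ with $\pi_K(O_{g_n,q,h})\in\mathcal{E}$, which by Lemma \ref{lem:exeptional set of weights is rare} is a vanishing proportion of $\mathcal{R}_{g_n,q}$. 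So the plan is a standard comparison argument: write $\lambda_{g_n,q}-\eta_{g_n,q}/\eta_{g_n,q}(\mathcal{R}_{g_n,q})$ — noting $\eta_{g_n,q}$ is already a probability measure on $\mathcal{R}_{g_n,q}$ — and bound the total variation of the difference by $|f|_\infty$ times (a constant, from the $[1/m,1]$ weight ratio) times the vanishing proportion $\frac{|\mathcal{E}\cap\mathcal{R}_{g_n,q}|}{|\mathcal{R}_{g_n,q}|}$. This shows $|\lambda_{g_n,q}(f)-\eta_{g_n,q}(f)|\to0$ for every $f\in C_c(K\backslash G/\Gamma)$, and combined with $\eta_{g_n,q}(f)\to\mu_{K\backslash G/\Gamma}(f)$ yields $\lambda_{g_n,q}\to\mu_{K\backslash G/\Gamma}$.

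With both assumptions of Lemma \ref{lem:unwinding with negliglbe non-triv stab-1} in place (taking $G$, $K$, $\Gamma$ as in \eqref{eq:notations of K G =00005CGamma}, and $S_n=\mathcal{R}_{g_n,q}$, which is legitimate since $K=H\times\mathbf{L}_{\bar g}(\Z/(q))$ is compact), the lemma delivers the equidistribution of the ``unfolded'' counting measures $\bar\nu_n$ on $K\backslash G$ supported on $(\pi_\Gamma)^{-1}(\mathcal{R}_{g_n,q})$, which by \eqref{eq:R_g,q as the set of =00005CGammaorbits} is exactly $\{(\pi_{\V}(x),\vartheta_q(x))\mid x\in E_{g_n}\}$. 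Identifying $K\backslash G$ with $\V_{Q(\mathbf{e}_d)}(\R)\times\V_a(\Z/(q))$ via the duality of Section \ref{subsec:duality principle} (here using that $H$ is the stabilizer of $I_d$ and $\mathbf{L}_{\bar g}(\Z/(q))$ the stabilizer of $\bar g$), and checking that $\mu_{K\backslash G}$ transports to $\mu_{\V}\otimes\mu_{O_{\bar g}}$ — a direct computation with the product structure of $G$ and the fact that $\vartheta_q(\G(\Z_S))=\G(\Z/(q))$ so that the $\V_a(\Z/(q))$-marginal of $\mu_{K\backslash G}$ is the uniform measure on the orbit $O_{\bar g}=\bar g\cdot\G(\Z/(q))$ — gives exactly the statement $\lim_{n\to\infty}\nu_{g_n}^q(f)=\mu_{\V}\otimes\mu_{O_{\bar g}}(f)$ of Theorem \ref{thm:refinement of the main theorem-1}.
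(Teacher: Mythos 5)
Your proposal is correct and follows essentially the same route as the paper: the first claim via $\mathcal{F}\subseteq\mathcal{E}$ (Lemma \ref{lem:the set E  contains set F are same}) together with Lemma \ref{lem:exeptional set of weights is rare}, and the second claim by comparing $\lambda_{g_{n},q}$ with $\eta_{g_{n},q}$ using the weight control of Lemma \ref{lem:equal weights} and the vanishing proportion of atoms in $\mathcal{E}$, then invoking Corollary \ref{cor:s-arithemetic measures converge to haar-1}. The paper states the comparison step tersely as $\eta_{g_{n},q}-\lambda_{g_{n},q}\to0$; your write-up simply supplies the routine estimate behind it.
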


\begin{proof}
By Lemma \ref{lem:exeptional set of weights is rare} and Lemma \ref{lem:the set E  contains set F are same},
we deduce that $\lim_{n\to\infty}\frac{\left|\mathcal{F}\cap\mathcal{R}_{g_{n},q}\right|}{|\mathcal{R}_{g_{n},q}|}=0$.
By Corollary \ref{lem:equal weights}, Lemma \ref{lem:exeptional set of weights is rare},
we obtain 
\begin{equation}
\eta_{g_{n},q}-\lambda_{g_{n},q}\to0,\label{eq:difference between proj_nu and omega}
\end{equation}
and by \eqref{eq:equidisitrbution of omega modulo}, we deduce that
$\lambda_{g_{n},q}\to\mu_{K\backslash G/\Gamma}.$
\end{proof}

\section{\label{sec:Proof-of-theorems}Proof of theorems \ref{thm:moduli main thm }
and \ref{thm:moduli_main_thm_with_congruences} for $\protect\V$}

We let $Q$ be as in our \nameref{subsec:Standing-Assumption}. We
consider a sequence $\left\{ T_{n}\right\} _{n=1}^{\infty}\subseteq\N$
such that $T_{n}\to\infty$, and assume that there is an odd prime
$p_{0}$ for which it holds that $T_{n}$ has the $(Q,p_{0})$ co-isotropic
property for all $n\in\N$ (see Definition \ref{def:Isotropicity definition}).

For each $n\in\N$, let $g_{1,n},..,g_{m(n),n}\in\V_{T_{n}}(\Z)$
be a complete set of representatives for the equivalence relation
defined in Section \ref{sec:Equivalence-classes-of integral points},
namely 
\[
E_{g_{1},n}\bigsqcup...\bigsqcup E_{g_{m(n)},n}=\V_{T_{n}}(\Z).
\]
We claim that each of vector of the list $\ovec(g_{1,n}),..,\ovec(g_{m(n),n})$
is also $(Q,p_{0})$ co-isotropic (see Definition \ref{def:Isotropicity definition}).
Indeed, by Witt's theorem, the action of $\SO_{Q}(\Q)$ is transitive
on $\H_{T_{n}}(\Q)$, and if $\mathbf{v}\in\H_{T_{n}}(\Q)$ is $(Q,p)$
co-isotropic, then it follows that $\rho\mathbf{v}$ is $(Q,p)$ co-isotropic,
for $\rho\in\SO_{Q}(\Q)$.

We now fix an arbitrary sequence $\left\{ g_{j_{n},n}\right\} _{n=1}^{\infty}$
for $1\leq j_{n}\leq m(n)$, we fix $q\in2\N+1$ such that $Q$ is
non-singular modulo $q$ and we let $S=S_{q}\cup\{p_{0}\}$ where
$S_{q}$ is the set of primes appearing in the prime decomposition
of $q$.

\subsection{Proof of Theorem \ref{thm:main thm for Z}}

We partition the sequence $\left\{ g_{j_{n},n}\right\} _{n=1}^{\infty}$
into finitely many subsequences $\left\{ g_{j_{n},n}\right\} _{n\in C}$,
$C\subseteq\N$ such that for all $n\in C$ the reduction mod $q$
is fixed, say $\bar{g}\df\vartheta_{q}(g_{j_{n},n}),\ \forall n\in C$.
Then, we may apply Theorem \ref{thm:refinement of the main theorem-1}
to any of those unbounded subsequences.

We let $f\in C_{c}(\V_{Q(\mathbf{e}_{d})}(\R))$ and we consider $\tilde{f}\in C_{c}\left(\V_{Q(\mathbf{e}_{d})}(\R)\times\V_{a}(\Z/(q))\right)$
defined by $\tilde{f}(x,y)\df f(x)$, where $a\df Q(\bar{g})\in\Z/(q)$.
Then, in the notations of Theorem \ref{thm:refinement of the main theorem-1},
we have
\[
\lim_{C\ni n\to\infty}\nu_{g_{j_{n},n}}^{q}(\tilde{f})=\mu_{\V}(f),
\]
which implies in turn that for the full sequence (namely, without
the assumption that $\vartheta_{q}(g_{j_{n},n})$ is fixed in $n$)
it holds that
\begin{equation}
\lim_{n\to\infty}\nu_{g_{j_{n},n}}^{q}(\tilde{f})=\mu_{\V}(f).\label{eq:limit of arb seq of equiv rep (withou congruences)}
\end{equation}
We recall that 
\[
\nu_{g_{j,n}}^{q}=\frac{1}{\left|E_{g_{j,n}}/\G(\Z)\right|}\sum_{x\in E_{g_{j,n}}}\delta_{\left(\pi_{\V_{T_{n}}}(x),\vartheta_{q}(x)\right)},
\]
and that (see \eqref{eq:congruence counting measures on nu_Z_Q(E_d))})
\[
\nu_{T_{n}}^{\V,q}=\frac{1}{\left|\V_{T_{n}}(\Z)/\G(\Z)\right|}\sum_{x\in\V_{T_{n}}(\Z)}\delta_{\left(\pi_{\V_{T_{n}}}(x),\vartheta_{q}(x)\right)}.
\]
It follows that
\begin{equation}
\sum_{j=1}^{m(n)}\left(\sum_{x\in E_{g_{j,n}}}\delta_{\left(\pi_{\V_{T_{n}}}(x),\vartheta_{q}(x)\right)}\right)=\sum_{x\in\V_{T_{n}}(\Z)}\delta_{\left(\pi_{\V_{T_{n}}}(x),\vartheta_{q}(x)\right).},\label{eq:sum of counting func for rep (without cong)}
\end{equation}
and that
\begin{equation}
\sum_{j=1}^{m_{n}}\left|E_{g_{j_{n},n}}/\G(\Z)\right|=\left|\V_{T_{n}}(\Z)/\G(\Z)\right|=\left|\H_{T_{n},\text{prim}}(\Z)/\G_{1}(\Z)\right|.\label{eq:sum of equiv class g(z) orb}
\end{equation}
We now note the following elementary lemma (which we give without
a proof).
\begin{lem}
\label{lem:elementary lemma}Let $\left\{ a_{i,n}\right\} _{i=1,n=1}^{m_{n},\infty}\left\{ b_{i,n}\right\} _{i=1,n=1}^{m_{n},\infty}$
be positive real sequences. Assume $a_{i_{n},n}/b_{i_{n},n}\to L$,
for any sequence $\left\{ i_{n}\right\} _{n=1}^{\infty}$ such that
$i_{n}\in\{1,..,m_{n}\}$. Then $\frac{\sum_{i=1}^{m_{n}}a_{i,n}}{\sum_{i=1}^{m_{n}}b_{i,n}}\to L.$
\end{lem}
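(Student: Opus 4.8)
\textbf{Proof of Lemma \ref{lem:elementary lemma}.}

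The plan is to reduce the statement about ratios of sums to the hypothesis about ratios of individual terms by a standard ``squeeze'' argument. First I would fix $\epsilon>0$. By the hypothesis applied to an arbitrary choice of indices, I claim there exists $N$ such that for all $n\geq N$ and all $i\in\{1,\dots,m_n\}$ one has
\[
(L-\epsilon)\,b_{i,n}\leq a_{i,n}\leq (L+\epsilon)\,b_{i,n}.
\]
Indeed, if this failed, then for every $N$ there would be some $n\geq N$ and some index $i(n)\in\{1,\dots,m_n\}$ violating one of the two inequalities; selecting such an $n$ for each $N$ (and padding the finitely many remaining values of the sequence $\{i_n\}$ arbitrarily) produces a sequence $\{i_n\}_{n=1}^\infty$ with $i_n\in\{1,\dots,m_n\}$ along which $a_{i_n,n}/b_{i_n,n}$ does not converge to $L$, contradicting the hypothesis. (Here one uses that all $b_{i,n}>0$ so division is legitimate.)

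Given the uniform two-sided bound, I would sum over $i=1,\dots,m_n$. Since each $b_{i,n}>0$, summing the inequalities preserves them:
\[
(L-\epsilon)\sum_{i=1}^{m_n} b_{i,n}\ \leq\ \sum_{i=1}^{m_n} a_{i,n}\ \leq\ (L+\epsilon)\sum_{i=1}^{m_n} b_{i,n}.
\]
Dividing through by $\sum_{i=1}^{m_n} b_{i,n}>0$ gives, for all $n\geq N$,
\[
\left|\frac{\sum_{i=1}^{m_n} a_{i,n}}{\sum_{i=1}^{m_n} b_{i,n}}-L\right|\leq\epsilon.
\]
Since $\epsilon>0$ was arbitrary, this proves $\dfrac{\sum_{i=1}^{m_n} a_{i,n}}{\sum_{i=1}^{m_n} b_{i,n}}\to L$, as desired. \qed

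The only subtle point — and the step I would treat most carefully — is the extraction of a single ``bad'' sequence $\{i_n\}$ from the failure of the uniform bound, since one must make sure the resulting sequence is defined for \emph{every} $n$ (hence the remark about padding the undefined initial segment arbitrarily, which does not affect the tail behaviour) and that the selected subsequence genuinely witnesses $a_{i_n,n}/b_{i_n,n}\not\to L$. Everything else is a routine monotonicity-of-summation argument; no analytic input beyond positivity of the $b_{i,n}$ is needed. This lemma is then applied with $a_{j,n}=\sum_{x\in E_{g_{j,n}}}\tilde f\bigl(\pi_{\V_{T_n}}(x),\vartheta_q(x)\bigr)$ (or rather with $\tilde f$ replaced by a fixed nonnegative bump function dominating $|\tilde f|$ to keep the sequences positive, treating $\tilde f$ itself by linearity) and $b_{j,n}=|E_{g_{j,n}}/\G(\Z)|$, so that $a_{j_n,n}/b_{j_n,n}=\nu^q_{g_{j_n,n}}(\tilde f)\to\mu_\V(f)$ by \eqref{eq:limit of arb seq of equiv rep (withou congruences)}, and \eqref{eq:sum of counting func for rep (without cong)}–\eqref{eq:sum of equiv class g(z) orb} identify the ratio of the summed quantities with $\nu^{\V,q}_{T_n}(\tilde f)$, yielding Theorem \ref{thm:main thm for Z}.
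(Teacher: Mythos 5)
Your proof is correct: the contradiction argument (padding the undefined indices to produce a genuine sequence $\{i_n\}$ witnessing the failure of the uniform bound) legitimately yields $(L-\epsilon)b_{i,n}\leq a_{i,n}\leq(L+\epsilon)b_{i,n}$ for all large $n$ and all $i$, and the summation-and-division squeeze then closes the argument, using only $b_{i,n}>0$. The paper states this lemma explicitly without proof, calling it elementary, and your argument is precisely the standard one intended, so there is nothing to compare beyond noting that your careful handling of the index-sequence extraction is the only point requiring any thought.
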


We may now deduce by Lemma \ref{lem:elementary lemma} and \eqref{eq:limit of arb seq of equiv rep (withou congruences)},
\eqref{eq:sum of counting func for rep (without cong)}, \eqref{eq:sum of equiv class g(z) orb}
that 
\[
\nu_{T_{n}}^{\V}(f)\underbrace{=}_{\text{recalling }\eqref{eq:counting mease on z}}\frac{1}{\left|\H_{T_{n},\text{prim}}(\Z)/\G_{1}(\Z)\right|}\sum_{x\in\V_{T_{n}}(\Z)}f(\pi_{\z_{T_{n}}}(x))=\nu_{T_{n}}^{\V,q}(\tilde{f})\to\mu_{\V}(f),
\]
which proves Theorem \ref{thm:main thm for Z}.

\subsection{Proof of Theorem \ref{thm:main_thm_with_congruences-forZ}}

We assume further that there is a fixed $a\in\left(\Z/(q)\right)^{\times}$
such that $\red_{q}\left(T_{n}\right)=a,\ \forall n\in\N$.

By Corollary \ref{cor:transitivity of ASL_times_SO_d}, \emph{\eqref{enu:mod_q_transitively}}
\[
\text{\ensuremath{\V}}_{a}(\Z/(q))=\vartheta_{q}(g_{j,n})\cdot\G(\Z/(q)),\ \forall n\in\N,\ \forall j\leq m(n)
\]
 Then, by using Theorem \ref{thm:refinement of the main theorem-1},
and following the same arguments as above, we obtain Theorem \ref{thm:main_thm_with_congruences-forZ}.

\appendix

\section{\label{sec:Unfolding}Unfolding}

In the following we let $G$ be locally compact second countable group,
$\Gamma\leq G$ be a lattice, $\tilde{\Gamma}\leq\Gamma$, and $K\leq G$
be a compact subgroup. We will discuss in this section a mechanism
which lifts an equidistribution result in $K\backslash G/\Gamma$
to an equidistribution result in $K\backslash G/\tilde{\Gamma}$ (see
Corollary \ref{cor:unwinding with negliglbe non-triv stab}).

Let $m_{G}$, $m_{G/\Gamma}$, $m_{G/\tilde{\Gamma}}$ be $G$-invariant
measures on $G,\ G/\Gamma,\ G/\tilde{\Gamma}$ respectively, such
that $m_{G/\Gamma}$ is a probability measure and such that all the
measures are Weil normalized (a notion introduced in Section \ref{subsec:Measures-as measures on fibre bundles}),
namely such that for all $\ef\in C_{c}(G)$
\begin{equation}
\int_{G}\ef(g)dm_{G}(g)=\int_{G/\Gamma}\left(\sum_{\gamma\in\Gamma}\ef(g\gamma)\right)dm_{G/\Gamma}(g\Gamma)=\int_{G/\tilde{\Gamma}}\left(\sum_{\tilde{\gamma}\in\tilde{\Gamma}}\ef(g\tilde{\gamma})\right)dm_{G/\tilde{\Gamma}}(g\tilde{\Gamma})\label{eq:unwinding of a measure on G}
\end{equation}
(such a normalization exists by Theorem 2.51 in \cite{Folland_harmonic}).
Let $f\in C_{c}(G/\tilde{\Gamma})$ and consider 
\begin{equation}
\bar{f}(x\Gamma)\df\sum_{\gamma\tilde{\Gamma}\in\Gamma/\tilde{\Gamma}}f(x\gamma\tilde{\Gamma}).\label{eq:def of f bar in appendix}
\end{equation}
We claim that $\bar{f}\in C_{c}(G/\Gamma)$. Indeed, by \cite[Proposition 2.50]{Folland_harmonic}
there exists $\ef\in C_{c}(G)$ such that 
\[
f(x\tilde{\Gamma})=\sum_{\tilde{\gamma}\in\tilde{\Gamma}}\ef(x\tilde{\gamma}),
\]
which shows that
\begin{equation}
\begin{aligned}\bar{f}(x\Gamma)= & \sum_{\gamma\tilde{\Gamma}\in\Gamma/\tilde{\Gamma}}f(x\gamma\tilde{\Gamma})\\
= & \sum_{\gamma\tilde{\Gamma}\in\Gamma/\tilde{\Gamma}}\sum_{\tilde{\gamma}\in\tilde{\Gamma}}\ef(x\gamma\tilde{\gamma})=\sum_{\gamma\in\Gamma}\ef(x\gamma),
\end{aligned}
\label{eq:f^bar equals sum ef x gamma}
\end{equation}
and we note that $\sum_{\gamma\in\Gamma}\ef(x\gamma)\in C_{c}(G/\Gamma)$.
\begin{lem}
\label{lem:unwinding G/Gamma TIlde}It holds that 
\begin{equation}
\int_{G/\tilde{\Gamma}}f(x\tilde{\Gamma})dm_{G/\tilde{\Gamma}}(g\tilde{\Gamma})=\int_{G/\Gamma}\bar{f}(x\Gamma)dm_{G/\Gamma}(g\Gamma),\label{eq:unwinding of measure on G/Gamma^tilde}
\end{equation}
for all $f\in C_{c}(G/\tilde{\Gamma})$
\end{lem}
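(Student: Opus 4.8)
The plan is to prove this ``unfolding'' identity by the standard Weil formula argument, exploiting the factorization $G/\tilde\Gamma \to G/\Gamma$ with fiber $\Gamma/\tilde\Gamma$. First I would reduce the claim to the level of $G$ itself: as noted just before the statement, given $f \in C_c(G/\tilde\Gamma)$, by \cite[Proposition 2.50]{Folland_harmonic} there exists $\ef \in C_c(G)$ such that $f(x\tilde\Gamma) = \sum_{\tilde\gamma \in \tilde\Gamma} \ef(x\tilde\gamma)$, and then \eqref{eq:f^bar equals sum ef x gamma} already shows $\bar f(x\Gamma) = \sum_{\gamma \in \Gamma} \ef(x\gamma)$. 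So both sides of \eqref{eq:unwinding of measure on G/Gamma^tilde} are expressed as integrals of sums over lattice orbits of the single function $\ef \in C_c(G)$.

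Next I would simply apply the Weil-normalization hypothesis \eqref{eq:unwinding of a measure on G} directly. The right-hand side of \eqref{eq:unwinding of measure on G/Gamma^tilde} is
\[
\int_{G/\Gamma} \Bigl(\sum_{\gamma \in \Gamma} \ef(x\gamma)\Bigr)\, dm_{G/\Gamma}(x\Gamma) = \int_G \ef(g)\, dm_G(g),
\]
using the middle equality in \eqref{eq:unwinding of a measure on G}. The left-hand side of \eqref{eq:unwinding of measure on G/Gamma^tilde} is
\[
\int_{G/\tilde\Gamma} \Bigl(\sum_{\tilde\gamma \in \tilde\Gamma} \ef(x\tilde\gamma)\Bigr)\, dm_{G/\tilde\Gamma}(x\tilde\Gamma) = \int_G \ef(g)\, dm_G(g),
\]
using the outer equality in \eqref{eq:unwinding of a measure on G}. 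Since both sides equal $\int_G \ef\, dm_G$, they are equal to each other, which is exactly \eqref{eq:unwinding of measure on G/Gamma^tilde}.

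The only genuinely delicate points, rather than a real obstacle, are bookkeeping ones that I would address explicitly: that the function $\bar f$ defined by \eqref{eq:def of f bar in appendix} is well-defined and lies in $C_c(G/\Gamma)$ (already handled in the lines preceding the statement via \eqref{eq:f^bar equals sum ef x gamma}), and that the sum defining $f$ in terms of $\ef$ is locally finite so that the manipulations of double sums $\sum_{\gamma\tilde\Gamma \in \Gamma/\tilde\Gamma}\sum_{\tilde\gamma \in \tilde\Gamma}$ versus $\sum_{\gamma \in \Gamma}$ are legitimate rearrangements of a finite sum at each point — this is the content of \cite[Proposition 2.50]{Folland_harmonic} together with properness of the $\Gamma$-action, and no convergence issue arises. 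I expect no substantive obstacle; the proof is essentially a one-line consequence of the chosen compatible (Weil) normalizations, and the main care is just to record that the same $\ef$ can be used to unfold along both $\Gamma$ and $\tilde\Gamma$.
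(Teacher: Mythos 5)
Your proposal is correct and follows essentially the same route as the paper's proof: lift $f$ to $\ef\in C_{c}(G)$ via \cite[Proposition 2.50]{Folland_harmonic}, use the identity $\bar{f}(x\Gamma)=\sum_{\gamma\in\Gamma}\ef(x\gamma)$ from \eqref{eq:f^bar equals sum ef x gamma}, and apply the two Weil-normalization identities in \eqref{eq:unwinding of a measure on G} so that both sides equal $\int_{G}\ef\,dm_{G}$. The only cosmetic difference is that the paper writes this as a single chain of equalities rather than equating the two sides to the common middle term.
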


\begin{proof}
Let $\ef\in C_{c}(G)$, and assume that $f(x\tilde{\Gamma})=\sum_{\tilde{\gamma}\in\tilde{\Gamma}}\ef(x\tilde{\gamma})$.
Then 
\[
\begin{aligned}\int_{G/\tilde{\Gamma}}f(x\tilde{\Gamma})dm_{G/\tilde{\Gamma}}(g\tilde{\Gamma})= & \int_{G/\tilde{\Gamma}}\left(\sum_{\tilde{\gamma}\in\tilde{\Gamma}}\ef(g\tilde{\gamma})\right)dm_{G/\tilde{\Gamma}}(g\tilde{\Gamma})\\
\underbrace{=}_{\eqref{eq:unwinding of a measure on G}} & \int_{G}\ef(g)dm_{G}(g)\\
\underbrace{=}_{\eqref{eq:unwinding of a measure on G}} & \int_{G/\Gamma}\left(\sum_{\gamma\in\Gamma}\ef(g\gamma)\right)dm_{G/\Gamma}(g\Gamma)\\
= & \int_{G/\Gamma}\left(\sum_{\gamma\tilde{\Gamma}\in\Gamma/\tilde{\Gamma}}\left(\sum_{\tilde{\gamma}\in\tilde{\Gamma}}\ef(g\gamma\tilde{\gamma})\right)\right)dm_{G/\Gamma}(g\Gamma)\\
= & \int_{G/\Gamma}\left(\sum_{\gamma\tilde{\Gamma}\in\Gamma/\tilde{\Gamma}}f(x\gamma\tilde{\Gamma})\right)dm_{G/\Gamma}(g\Gamma)\underbrace{=}_{\text{\eqref{eq:def of f bar in appendix}}}\int_{G/\Gamma}\bar{f}(x\Gamma)dm_{G/\Gamma}(g\Gamma).
\end{aligned}
\]
\end{proof}
We denote by $\pi_{K}$ the natural quotient map $\pi_{K}:G\to K\backslash G$.
We define a measure on $K\backslash G/\tilde{\Gamma}$ by $\mu_{K\backslash G/\tilde{\Gamma}}\df\left(\pi_{K}\right)_{*}m_{G/\tilde{\Gamma}},$
and on $K\backslash G/\Gamma$ by $\mu_{K\backslash G/\Gamma}\df\left(\pi_{K}\right)_{*}m_{G/\Gamma}$
(which is well defined, since we assume that $K$ is compact).
\begin{lem}
\label{lem:Unwinding on K=00005CG/Gamma^tilde } Assume that $S_{n}\subseteq K\backslash G/\Gamma$,
$n\in\N$, are finite sets such that the uniform probability measures
supported on $S_{n}$ converge weakly to $\mu_{K\backslash G/\Gamma}$.
Assume that $\left\{ Kg_{i,n}\tilde{\Gamma}\right\} \subseteq K\backslash G/\tilde{\Gamma}$
are representatives for $S_{n}$ (namely a choice of one point in
the preimage of $Kg_{i,n}\Gamma$ under the natural projection for
each $1\leq i\leq\left|S_{n}\right|$). Then for all $f\in C_{c}(K\backslash G/\tilde{\Gamma})$
it holds that
\[
\lim_{n\to\infty}\frac{1}{\left|S_{n}\right|}\sum_{i=1}^{\left|S_{n}\right|}\sum_{\gamma\tilde{\Gamma}\in\Gamma/\tilde{\Gamma}}f(Kg_{i,n}\gamma\tilde{\Gamma})=\mu_{K\backslash G/\tilde{\Gamma}}(f).
\]
\end{lem}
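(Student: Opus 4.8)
The plan is to reduce the statement to the unfolding identity of Lemma~\ref{lem:unwinding G/Gamma TIlde} together with the assumed weak convergence of the uniform measures on $S_n$; the compactness of $K$ is what makes all the relevant quotient maps behave, and no equidistribution input beyond the hypothesis will be needed. First I would manufacture a test function on $K\backslash G/\Gamma$ out of $f$: set $\hat f\df f\circ\pi_K\in C_c(G/\tilde\Gamma)$, where $\pi_K\colon G/\tilde\Gamma\to K\backslash G/\tilde\Gamma$ is the natural projection, and note that $\hat f$ has compact support precisely because $K$ is compact. Applying the construction \eqref{eq:def of f bar in appendix} to $\hat f$, and using the presentation via some $\varphi\in C_c(G)$ exactly as in \eqref{eq:f^bar equals sum ef x gamma}, the function
\[
\overline{\hat f}(x\Gamma)\df\sum_{\gamma\tilde\Gamma\in\Gamma/\tilde\Gamma}\hat f(x\gamma\tilde\Gamma)=\sum_{\gamma\in\Gamma}\varphi(x\gamma)
\]
lies in $C_c(G/\Gamma)$, and Lemma~\ref{lem:unwinding G/Gamma TIlde} gives $\int_{G/\tilde\Gamma}\hat f\,dm_{G/\tilde\Gamma}=\int_{G/\Gamma}\overline{\hat f}\,dm_{G/\Gamma}$. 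Since $\hat f$ is left $K$-invariant, so is each summand $x\mapsto\hat f(x\gamma\tilde\Gamma)$, hence so is $\overline{\hat f}$; as $K$ is compact, $\overline{\hat f}$ therefore descends to $\bar f\in C_c(K\backslash G/\Gamma)$ with $\bar f(Kg\Gamma)=\sum_{\gamma\tilde\Gamma\in\Gamma/\tilde\Gamma}f(Kg\gamma\tilde\Gamma)$, and pushing the preceding integral identity forward under $G/\Gamma\to K\backslash G/\Gamma$ yields $\mu_{K\backslash G/\tilde\Gamma}(f)=\mu_{K\backslash G/\Gamma}(\bar f)$.

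With $\bar f$ in hand the conclusion is immediate. Since the $Kg_{i,n}\tilde\Gamma$, $1\le i\le|S_n|$, are representatives of the distinct points of $S_n\subseteq K\backslash G/\Gamma$ and $\bar f$ depends only on the coset $Kg\Gamma$,
\[
\frac{1}{|S_n|}\sum_{i=1}^{|S_n|}\sum_{\gamma\tilde\Gamma\in\Gamma/\tilde\Gamma}f(Kg_{i,n}\gamma\tilde\Gamma)=\frac{1}{|S_n|}\sum_{i=1}^{|S_n|}\bar f(Kg_{i,n}\Gamma)=\frac{1}{|S_n|}\sum_{x\in S_n}\bar f(x),
\]
which is exactly the integral of $\bar f\in C_c(K\backslash G/\Gamma)$ against the uniform probability measure supported on $S_n$. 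By hypothesis these measures converge weakly to $\mu_{K\backslash G/\Gamma}$, so the right-hand side tends to $\mu_{K\backslash G/\Gamma}(\bar f)=\mu_{K\backslash G/\tilde\Gamma}(f)$, which is the asserted limit.

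The only point in this argument that is not pure formality is the claim that $\overline{\hat f}$ is a genuine compactly supported continuous function on $G/\Gamma$, that is, that the sum over $\Gamma/\tilde\Gamma$ is locally finite and the outcome continuous with compact support. This is precisely what the $\varphi\in C_c(G)$ presentation handles (so that $\overline{\hat f}=\sum_{\gamma\in\Gamma}\varphi(x\gamma)$ is visibly in $C_c(G/\Gamma)$), together with the elementary fact that a left $K$-invariant element of $C_c(G/\Gamma)$ descends to $C_c(K\backslash G/\Gamma)$ when $K$ is compact. Accordingly I expect no serious obstacle; as the surrounding text indicates, this lemma is essentially a formal consequence of the unfolding identity already established in Lemma~\ref{lem:unwinding G/Gamma TIlde}.
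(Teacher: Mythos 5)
Your proposal is correct and follows essentially the same route as the paper: define $\bar f(Kg\Gamma)=\sum_{\gamma\tilde\Gamma\in\Gamma/\tilde\Gamma}f(Kg\gamma\tilde\Gamma)$, verify $\bar f\in C_c(K\backslash G/\Gamma)$ via the lift $f\circ\pi_K\in C_c(G/\tilde\Gamma)$ and a $\varphi\in C_c(G)$ presentation, apply the weak convergence hypothesis to $\bar f$, and identify $\mu_{K\backslash G/\Gamma}(\bar f)=\mu_{K\backslash G/\tilde\Gamma}(f)$ through the unfolding identity of Lemma \ref{lem:unwinding G/Gamma TIlde} and the push-forward definitions of the quotient measures. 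The only cosmetic difference is that you spell out the left $K$-invariance and descent step explicitly, which the paper leaves implicit.
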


\begin{proof}
Let $f\in C_{c}(K\backslash G/\tilde{\Gamma})$, consider 
\[
\bar{f}(Kx\Gamma)\df\sum_{\gamma\tilde{\Gamma}\in\Gamma/\tilde{\Gamma}}f(Kx\gamma\tilde{\Gamma}),
\]
and note that $\bar{f}(Kx\Gamma)\in C_{c}(K\backslash G/\Gamma)$
(indeed, since $f\circ\pi_{K}\in C_{c}(G/\tilde{\Gamma}$), it follows
that $\bar{f}(Kx\Gamma)=\overline{f\circ\pi_{K}}(x\Gamma)\in C_{c}(G/\Gamma)$
by the discussion above Lemma \ref{lem:unwinding G/Gamma TIlde}).
 By the assumption of the lemma, we have that
\begin{equation}
\frac{1}{\left|S_{n}\right|}\sum_{i=1}^{\left|S_{n}\right|}\bar{f}(Kg_{i,n}\Gamma)\to\int_{K\backslash G/\Gamma}\overline{f}(Kg\Gamma)d\mu_{K\backslash G/\Gamma}.\label{eq:lmit of avarages on S_n}
\end{equation}
The proof is complete by observing that the left hand side of \eqref{eq:lmit of avarages on S_n}
may be rewritten by
\[
\frac{1}{\left|S_{n}\right|}\sum_{i=1}^{\left|S_{n}\right|}\bar{f}(Kg_{i,n}\Gamma)=\frac{1}{\left|S_{n}\right|}\sum_{i=1}^{\left|S_{n}\right|}\sum_{\gamma\tilde{\Gamma}\in\Gamma/\tilde{\Gamma}}f(Kg_{i,n}\gamma\tilde{\Gamma}),
\]
and the right hand side of \eqref{eq:lmit of avarages on S_n} may
be rewritten by
\[
\int_{K\backslash G/\Gamma}\overline{f}(Kg\Gamma)dm_{K\backslash G/\Gamma}=\int_{G/\Gamma}\sum_{\gamma\tilde{\Gamma}\in\Gamma/\tilde{\Gamma}}f\circ\pi_{K}(x\gamma\tilde{\Gamma})dm_{G/\Gamma}=
\]
\[
\underbrace{=}_{\eqref{eq:unwinding of measure on G/Gamma^tilde}}\int_{G/\tilde{\Gamma}}f\circ\pi_{K}(x\tilde{\Gamma})dm_{G/\tilde{\Gamma}}=\mu_{K\backslash G/\tilde{\Gamma}}(f).
\]
\end{proof}
Phrased differently, Lemma \ref{lem:Unwinding on K=00005CG/Gamma^tilde }
states that for the locally finite atomic measures 
\[
\nu_{n}\df\frac{1}{\left|S_{n}\right|}\sum_{i=1}^{\left|S_{n}\right|}\sum_{\gamma\tilde{\Gamma}\in\Gamma/\tilde{\Gamma}}\delta_{Kg_{i,n}\gamma\tilde{\Gamma}},
\]
it holds that $\nu_{n}(f)\to\mu_{K\backslash G/\tilde{\Gamma}}(f)$
for all $f\in C_{c}(K\backslash G/\tilde{\Gamma})$. We observe that
$\nu_{n}$ are not uniform measures, namely, some atoms can have different
weights. We let $\pi^{\tilde{\Gamma}}:G/\tilde{\Gamma}\to G/\Gamma$
be the natural map, and we note that the support of $\nu_{n}$ can
be expressed by 
\[
\text{supp}(\nu_{n})\df\left(\pi^{\tilde{\Gamma}}\right)^{-1}(S_{n}).
\]
We define $\bar{\nu}_{n}$ to be the uniform measures supported on
$\text{supp}(\nu_{n})$, namely 
\[
\bar{\nu}_{n}\df\frac{1}{|S_{n}|}\sum_{x\in\left(\pi^{\tilde{\Gamma}}\right)^{-1}(S_{n})}\delta_{x}.
\]
Similarly to Lemma\emph{ }\ref{lem:Unwinding on K=00005CG/Gamma^tilde },
we would like to show $\bar{\nu}_{n}(f)\to\mu_{K\backslash G/\tilde{\Gamma}}(f)$,\emph{
}for all $f\in C_{c}(K\backslash G/\tilde{\Gamma})$\emph{. }This
requires an additional assumption that the points which are counted
more then once are negligible. We define $\mathcal{F}\subseteq K\backslash G/\Gamma$
by 
\begin{equation}
\begin{aligned}\mathcal{F}\df & \left\{ Kg\Gamma\mid\left|\text{Stab}_{\Gamma}(Kg)\right|>1\right\} \\
= & \left\{ Kg\Gamma\mid\left|g^{-1}Kg\cap\Gamma\right|>1\right\} .
\end{aligned}
\label{eq:definintio of set of points that are non-triv stab by Gam_appendix}
\end{equation}

\begin{cor}
\label{cor:unwinding with negliglbe non-triv stab} Assume that $S_{n}\subseteq K\backslash G/\Gamma$,
$n\in\N$, are finite sets such that the uniform probability measures
supported on $S_{n}$ converge weakly to $\mu_{K\backslash G/\Gamma}$,
and assume that
\begin{equation}
\frac{|\mathcal{F}\cap S_{n}|}{\left|S_{n}\right|}\to0.\label{eq:negligability of non-trivialy stabilized points}
\end{equation}
Then it holds that $\bar{\nu}_{n}(f)\to\mu_{K\backslash G/\tilde{\Gamma}}(f)$,
for all $f\in C_{c}(K\backslash G/\tilde{\Gamma})$.
\end{cor}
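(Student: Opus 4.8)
The plan is to deduce Corollary~\ref{cor:unwinding with negliglbe non-triv stab} from Lemma~\ref{lem:Unwinding on K=00005CG/Gamma^tilde } by a comparison argument. Recall the locally finite atomic measures $\nu_{n}\df\frac{1}{|S_{n}|}\sum_{i=1}^{|S_{n}|}\sum_{\gamma\tilde{\Gamma}\in\Gamma/\tilde{\Gamma}}\delta_{Kg_{i,n}\gamma\tilde{\Gamma}}$ introduced in the discussion following Lemma~\ref{lem:Unwinding on K=00005CG/Gamma^tilde } (these depend only on $S_{n}$, not on the chosen representatives $Kg_{i,n}\tilde{\Gamma}$, since left multiplication by an element of $\Gamma$ permutes $\Gamma/\tilde{\Gamma}$ and $\delta_{Kkx\cdot}=\delta_{Kx\cdot}$). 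By that lemma, $\nu_{n}(f)\to\mu_{K\backslash G/\tilde{\Gamma}}(f)$ for all $f\in C_{c}(K\backslash G/\tilde{\Gamma})$. Since $\bar{\nu}_{n}$ is the uniform measure on $(\pi^{\tilde{\Gamma}})^{-1}(S_{n})=\operatorname{supp}(\nu_{n})$, it suffices to prove that $\nu_{n}(f)-\bar{\nu}_{n}(f)\to 0$ for every such $f$.

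First I would record the structural fact that $\nu_{n}\ge\bar{\nu}_{n}$ as positive measures, with $\nu_{n}=\bar{\nu}_{n}$ over every fiber $(\pi^{\tilde{\Gamma}})^{-1}(Kg\Gamma)$ for which $g^{-1}Kg\cap\Gamma=\{e\}$. The inequality is immediate: on that fiber, $\sum_{\gamma\tilde{\Gamma}}\delta_{Kg\gamma\tilde{\Gamma}}$ assigns to each point of $(\pi^{\tilde{\Gamma}})^{-1}(Kg\Gamma)=\{Kg\gamma\tilde{\Gamma}\mid\gamma\tilde{\Gamma}\in\Gamma/\tilde{\Gamma}\}$ a multiplicity $\ge 1$, while $\bar{\nu}_{n}$ assigns each such point multiplicity exactly $1$. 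For the equality over fibers of trivial stabilizer, suppose $Kg\gamma_{1}\tilde{\Gamma}=Kg\gamma_{2}\tilde{\Gamma}$; then $g\gamma_{2}=kg\gamma_{1}\tilde{\gamma}$ for some $k\in K$ and $\tilde{\gamma}\in\tilde{\Gamma}$, so $g^{-1}kg=\gamma_{2}\tilde{\gamma}^{-1}\gamma_{1}^{-1}\in\Gamma$, whence $g^{-1}kg\in g^{-1}Kg\cap\Gamma=\{e\}$, i.e. $k=e$ and $\gamma_{1}\tilde{\Gamma}=\gamma_{2}\tilde{\Gamma}$; thus $\gamma\tilde{\Gamma}\mapsto Kg\gamma\tilde{\Gamma}$ is injective on $\Gamma/\tilde{\Gamma}$ and $\nu_{n}=\bar{\nu}_{n}$ on the fiber. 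Consequently $\nu_{n}-\bar{\nu}_{n}$ is a positive measure that vanishes over the fibers of $S_{n}\smallsetminus\mathcal{F}$ and is supported on the union of the $|\mathcal{F}\cap S_{n}|$ fibers over $\mathcal{F}\cap S_{n}$, where $\mathcal{F}$ is as in \eqref{eq:definintio of set of points that are non-triv stab by Gam_appendix}.

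Finally I would bound the discrepancy using the negligibility hypothesis \eqref{eq:negligability of non-trivialy stabilized points}. Fix $f\in C_{c}(K\backslash G/\tilde{\Gamma})$; then $|f|\in C_{c}(K\backslash G/\tilde{\Gamma})$ as well, and as in the proof of Lemma~\ref{lem:Unwinding on K=00005CG/Gamma^tilde } the function $\overline{|f|}(Kx\Gamma)\df\sum_{\gamma\tilde{\Gamma}\in\Gamma/\tilde{\Gamma}}|f|(Kx\gamma\tilde{\Gamma})$ lies in $C_{c}(K\backslash G/\Gamma)$; put $B\df\|\overline{|f|}\|_{\infty}<\infty$. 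For each $i$, the restriction of $\nu_{n}$ to the single fiber over $Kg_{i,n}\Gamma$ integrates $|f|$ to $\frac{1}{|S_{n}|}\overline{|f|}(Kg_{i,n}\Gamma)\le B/|S_{n}|$. Using that $\nu_{n}-\bar{\nu}_{n}\ge 0$ is supported on the fibers over $\mathcal{F}\cap S_{n}$, where it is dominated by $\nu_{n}$, we get
\[
|\nu_{n}(f)-\bar{\nu}_{n}(f)|\le(\nu_{n}-\bar{\nu}_{n})(|f|)\le\frac{1}{|S_{n}|}\sum_{i\,:\,Kg_{i,n}\Gamma\in\mathcal{F}\cap S_{n}}\overline{|f|}(Kg_{i,n}\Gamma)\le B\,\frac{|\mathcal{F}\cap S_{n}|}{|S_{n}|},
\]
which tends to $0$ by \eqref{eq:negligability of non-trivialy stabilized points}. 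Together with $\nu_{n}(f)\to\mu_{K\backslash G/\tilde{\Gamma}}(f)$ this yields $\bar{\nu}_{n}(f)\to\mu_{K\backslash G/\tilde{\Gamma}}(f)$, proving the corollary. The only step requiring real care is showing that $\nu_{n}-\bar{\nu}_{n}$ is genuinely concentrated on the fibers over $\mathcal{F}\cap S_{n}$ — that is, the injectivity statement above and the multiplicity bookkeeping within each fiber — after which the remaining estimates are soft.
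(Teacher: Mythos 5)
Your proposal is correct and follows essentially the same route as the paper: reduce to Lemma \ref{lem:Unwinding on K=00005CG/Gamma^tilde }, observe via the injectivity computation that $\nu_{n}-\bar{\nu}_{n}$ is a positive measure concentrated on the fibers over $\mathcal{F}\cap S_{n}$, and bound the per-fiber contribution by a constant depending only on $f$ times $1/\left|S_{n}\right|$. The only cosmetic difference is that you control each fiber by $\norm{\overline{|f|}}_{\infty}$ (finiteness of which rests on the same uniform discreteness underlying Lemma \ref{lem:uniform bound intersection with cpct set}), whereas the paper sandwiches $\bar{\nu}_{n}$ between $\nu_{n}^{-}$ and $\nu_{n}$ for positive $f$ and invokes Lemma \ref{lem:uniform bound intersection with cpct set} directly.
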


We require the following basic lemma for the proof of Corollary \eqref{cor:unwinding with negliglbe non-triv stab}.
\begin{lem}
\label{lem:uniform bound intersection with cpct set}Let $U\subseteq K\backslash G/\tilde{\Gamma}$
be a set with compact closure. Then there exists $m_{U}>0$ such that
for all $g\in G$ it holds that
\begin{equation}
\left|\left\{ \gamma\tilde{\Gamma}\in\Gamma/\tilde{\Gamma}\mid Kg\gamma\tilde{\Gamma}\in U\right\} \right|\leq m_{U}.\label{eq:uniform bound for number of intersection w/ cpct}
\end{equation}
\end{lem}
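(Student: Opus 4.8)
The statement is a standard ``uniform local finiteness'' fact, and the plan is to reduce it to the finiteness of the intersection of the discrete group $\Gamma$ with a fixed compact subset of $G$, via a single well-chosen compact lift of $U$. We may assume $U\neq\emptyset$, since otherwise the bound is trivial. Write $\pi_{\tilde\Gamma}:G\to G/\tilde\Gamma$ and $q:G/\tilde\Gamma\to K\backslash G/\tilde\Gamma$ for the natural quotient maps, so that $K\backslash G/\tilde\Gamma = (K\backslash G)/\tilde\Gamma$ and $q$ is the quotient by the continuous left action of the \emph{compact} group $K$ on $G/\tilde\Gamma$. Since quotient maps by compact group actions on Hausdorff spaces are proper, $C_{0}\df q^{-1}(\overline{U})\subseteq G/\tilde\Gamma$ is compact. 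Because $G$ is locally compact second countable and $\pi_{\tilde\Gamma}$ is open, there is a compact set $\tilde C\subseteq G$ with $\pi_{\tilde\Gamma}(\tilde C)=C_{0}$ (cover $C_{0}$ by finitely many images of relatively compact open sets and take closures inside $\pi_{\tilde\Gamma}^{-1}(C_{0})$). As $U\neq\emptyset$ we have $\tilde C\neq\emptyset$.

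Next I would translate the counting condition. If $Kg\gamma\tilde\Gamma\in U$, then $g\gamma\tilde\Gamma\in q^{-1}(\overline U)=C_{0}=\pi_{\tilde\Gamma}(\tilde C)$, hence $g\gamma\in\tilde C\tilde\Gamma$ (the set of products). Therefore
\[
\left\{\gamma\tilde\Gamma\in\Gamma/\tilde\Gamma\mid Kg\gamma\tilde\Gamma\in U\right\}\subseteq\left\{\gamma\tilde\Gamma\in\Gamma/\tilde\Gamma\mid g\gamma\in\tilde C\tilde\Gamma\right\},
\]
so it suffices to bound the cardinality of the right-hand set uniformly in $g$. Let $F\df\Gamma\cap(\tilde C^{-1}\tilde C)$, which is finite since $\Gamma$ is discrete and $\tilde C^{-1}\tilde C$ is compact; note $e\in F$, so $|F|\geq 1$. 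I claim $m_{U}\df|F|$ works.

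For the core step, suppose the right-hand set above is nonempty and fix one coset $\gamma_{0}\tilde\Gamma$ in it, together with a decomposition $g\gamma_{0}=c_{0}\tilde\gamma_{0}$ with $c_{0}\in\tilde C$, $\tilde\gamma_{0}\in\tilde\Gamma$. For any coset $\gamma'\tilde\Gamma$ in the set, write $g\gamma'=c'\tilde\gamma'$ with $c'\in\tilde C$, $\tilde\gamma'\in\tilde\Gamma$. Then
\[
\gamma_{0}^{-1}\gamma'=(g\gamma_{0})^{-1}(g\gamma')=\tilde\gamma_{0}^{-1}(c_{0}^{-1}c')\tilde\gamma',
\]
and since the left side lies in $\Gamma$ and $\tilde\gamma_{0},\tilde\gamma'\in\Gamma$, we get $c_{0}^{-1}c'\in\Gamma\cap(\tilde C^{-1}\tilde C)=F$. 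Reducing the displayed identity modulo $\tilde\Gamma$ on the right and absorbing $\tilde\gamma'$ gives $\gamma_{0}^{-1}\gamma'\tilde\Gamma=\tilde\gamma_{0}^{-1}(c_{0}^{-1}c')\tilde\Gamma$, i.e.
\[
\gamma'\tilde\Gamma=\gamma_{0}\,\tilde\gamma_{0}^{-1}\,(c_{0}^{-1}c')\,\tilde\Gamma,\qquad c_{0}^{-1}c'\in F.
\]
Crucially, the dependence on $g$ has disappeared: $\gamma_{0}$ and $\tilde\gamma_{0}$ are now fixed, and $c_{0}^{-1}c'$ ranges over a subset of the fixed finite set $F$. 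Hence the number of such cosets $\gamma'\tilde\Gamma$ is at most $|F|=m_{U}$, uniformly in $g$, which is the assertion. The only delicate point is the bookkeeping in this last paragraph, namely verifying that $\tilde\gamma'$ can indeed be absorbed and that every $g$-dependence collapses through the identity $(g\gamma_{0})^{-1}(g\gamma')=\gamma_{0}^{-1}\gamma'$; everything else is routine once the compact lift $\tilde C$ is in hand.
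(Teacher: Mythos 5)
Your proof is correct. The skeleton is the same as the paper's (lift $\overline{U}$ to a compact subset of $G$, then reduce to counting lattice points in a fixed compact set), but the way you obtain uniformity in $g$ is genuinely different. The paper chooses a compact $\tilde U\subseteq G$ with $\overline U=K\tilde U\tilde\Gamma$, bounds the number of relevant cosets by $\left|\Gamma\cap g^{-1}K\tilde U\right|$ — a quantity that still depends on $g$ — and then kills the $g$-dependence by uniform discreteness of $\Gamma$: a neighborhood $\mathcal N$ of the identity with $\left|u\mathcal N\cap\Gamma\right|\le 1$ for all $u$, together with a finite cover of $K\tilde U$ by translates of $\mathcal N$. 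You instead fix a base coset $\gamma_0\tilde\Gamma$ and compare every other admissible coset to it, so that $g$ cancels in $(g\gamma_0)^{-1}(g\gamma')=\gamma_0^{-1}\gamma'$; this shows every admissible coset lies in $\left\{\gamma_0\tilde\gamma_0^{-1}f\tilde\Gamma\mid f\in F\right\}$ with $F=\Gamma\cap\tilde C^{-1}\tilde C$, and you only need the elementary fact that a discrete subgroup meets a compact set in finitely many points — no uniform discreteness, at the mild cost of passing to the larger compact set $\tilde C^{-1}\tilde C$ and the base-point bookkeeping (which you carry out correctly: $c_0^{-1}c'=\tilde\gamma_0(\gamma_0^{-1}\gamma')\tilde\gamma'^{-1}\in\Gamma$, and the $\tilde\gamma'$ is absorbed into $\tilde\Gamma$ on the right). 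Your preliminary step, producing the compact lift $\tilde C$ via properness of the quotient by the compact group $K$ plus the standard lifting of compact sets through the open map $G\to G/\tilde\Gamma$, is simply a justification of the compact lift that the paper asserts directly (its $\tilde U$, with $K\tilde U$ playing the role of your $\tilde C$); both are legitimate.
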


\begin{proof}
Let $U\subseteq K\backslash G/\tilde{\Gamma}$ be a set with compact
closure. We let $\tilde{U}\subseteq G$ be a compact set such that
$\overline{U}=K\tilde{U}\tilde{\Gamma}$ (where $\overline{U}$ denotes
the closure of $U$), and we observe that 
\[
\left|\left\{ \gamma\tilde{\Gamma}\in\Gamma/\tilde{\Gamma}\mid Kg\gamma\tilde{\Gamma}\in U\right\} \right|=\left|\left\{ \gamma\tilde{\Gamma}\in\Gamma/\tilde{\Gamma}\mid Kg\gamma\tilde{\Gamma}\in K\tilde{U}\tilde{\Gamma}\right\} \right|\leq\left|\Gamma\cap g^{-1}K\tilde{U}\right|,
\]
for all $g\in G$. We recall that a lattice subgroup is uniformly
discrete, namely, there exists an open neighborhood of identity $\mathcal{N}$
such that $\left|u\mathcal{N}\cap\Gamma\right|\leq1,\ \forall u\in G$.
 Since $K\tilde{U}$ is compact, there exist $u_{1},...,u_{m_{U}}\in G$
such that $u_{1}\mathcal{N}\cup..\cup u_{m_{U}}\mathcal{N}\supseteq K\tilde{U}$.
This implies that $g^{-1}u_{1}\mathcal{N}\cup..\cup g^{-1}u_{m_{U}}\mathcal{N}\supseteq g^{-1}K\tilde{U}$.
Since there is at most one point of $\Gamma$ in each set $g^{-1}u_{i}\mathcal{N}$,
it follows that $\left|\Gamma\cap g^{-1}K\tilde{U}\right|\leq m_{U}$,
which implies \eqref{eq:uniform bound for number of intersection w/ cpct}.
\end{proof}
\begin{proof}[Proof of Corollary \ref{cor:unwinding with negliglbe non-triv stab}]
 We denote by $\left\{ Kg_{i,n}\tilde{\Gamma}\right\} _{i=1}^{|S_{n}|}\subseteq K\backslash G/\tilde{\Gamma}$
a set of representatives for $\left(\pi^{\tilde{\Gamma}}\right)^{-1}(S_{n})$
(a choice of a unique point in each fiber) and we fix a \emph{positive
}function $f\in C_{c}(K\backslash G/\tilde{\Gamma})$.

By noting that the weights of the atoms of $\nu_{n}$ are larger then
the weights of the atoms of $\bar{\nu}_{n},$ we find that
\[
\bar{\nu}_{n}(f)\leq\nu_{n}(f).
\]
We consider the uniform counting measure $\nu_{n}^{-}$ supported
on $\left(\pi^{\tilde{\Gamma}}\right)^{-1}(S_{n}\smallsetminus\mathcal{F})$
where each atom has mass $\frac{1}{|S_{n}|}$. We note that for all
$Kg_{i,n}\tilde{\Gamma}\in\left(\pi^{\tilde{\Gamma}}\right)^{-1}(S_{n}\smallsetminus\mathcal{F})$
and for any two distinct $\gamma_{1}\tilde{\Gamma},\gamma_{2}\tilde{\Gamma}\in\Gamma/\tilde{\Gamma}$
it holds that
\[
Kg_{i,n}\gamma_{1}\tilde{\Gamma}\neq Kg_{i,n}\gamma_{2}\tilde{\Gamma}.
\]
Namely, the weights of the atoms of $\nu_{n}^{-}$ and of $\bar{\nu}_{n}$
are the same on $\left(\pi^{\tilde{\Gamma}}\right)^{-1}(S_{n}\smallsetminus\mathcal{F})$,
which implies that
\[
\nu_{n}^{-}(f)\leq\bar{\nu}_{n}(f).
\]
We observe that 
\[
\nu_{n}(f)-\nu_{n}^{-}(f)=\frac{1}{\left|S_{n}\right|}\sum_{Kg_{i,n}\tilde{\Gamma}\in\left(\pi^{\tilde{\Gamma}}\right)^{-1}(\mathcal{F}\cap S_{n})}\sum_{\gamma\tilde{\Gamma}\in\Gamma/\tilde{\Gamma}}f(Kg_{i,n}\gamma\tilde{\Gamma}).
\]
We denote by $U$ the support of $f$, and we obtain by the triangle
inequality and by Lemma \ref{lem:uniform bound intersection with cpct set}
that 
\[
\nu_{n}(f)-\nu_{n}^{-}(f)\leq\norm f_{\infty}\frac{m_{U}}{\left|S_{n}\right|}\left|S_{N}\cap\mathcal{F}\right|\to0.
\]
Finally, since Lemma \ref{lem:Unwinding on K=00005CG/Gamma^tilde }
gives
\[
\lim_{n\to\infty}\nu_{n}(f)=\mu_{K\backslash G/\tilde{\Gamma}}(f),
\]
then we also get 
\[
\lim_{n\to\infty}\bar{\nu}_{n}(f)=\mu_{K\backslash G/\tilde{\Gamma}}(f).
\]
\end{proof}
\bibliographystyle{alpha}
\bibliography{cite_lib}

\end{document}